\definecolor{light-gray}{gray}{0.7}
\DeclareMathAlphabet{\mathcalligra}{T1}{calligra}{m}{n}
\DeclareFontShape{T1}{calligra}{m}{n}{<->s*[1.5]callig15}{}
\newtheorem{theorem}{Theorem}[section]
\newtheorem*{theoremstar}{Theorem}
\newtheorem{lemma}[theorem]{Lemma}
\newtheorem{proposition}[theorem]{Proposition}
\newtheorem{corollary}[theorem]{Corollary}
\theoremstyle{definition}
\newtheorem{definition}[theorem]{Definition}
\newtheorem{example}[theorem]{Example}
\newtheorem{remark}[theorem]{Remark}
\newtheorem{theorem-definition}[theorem]{Theorem-Definition}
\numberwithin{equation}{section}
\renewcommand{\AA} {\mathbb{A}}
\newcommand{\CC} {\mathbb{C}}
\newcommand{\DD} {\mathbb{D}}
\newcommand{\LL} {\mathbb{L}}
\newcommand{\PP} {\mathbb{P}}
\newcommand{\QQ} {\mathbb{Q}}
\newcommand{\RR} {\mathbb{R}}
\newcommand{\ZZ} {\mathbb{Z}}
\newcommand {\shH} {\mathcal{H}}
\newcommand {\shQ} {\mathcal{Q}}
\newcommand {\shS} {\mathcal{S}}
\newcommand {\shT} {\mathcal{T}}
\newcommand {\shU} {\mathcal{U}}
\newcommand {\shZ} {\mathcal{Z}}
\newcommand {\sE} {\mathscr{E}}
\newcommand {\sF} {\mathscr{F}}
\newcommand {\sG} {\mathscr{G}}
\newcommand {\sI} {\mathscr{I}}
\newcommand {\sK} {\mathscr{K}}
\newcommand {\sL} {\mathscr{L}}
\newcommand {\sN} {\mathscr{N}}
\newcommand {\sO} {\mathscr{O}}
\newcommand {\sP} {\mathscr{P}}
\newcommand {\sQ} {\mathscr{Q}}
\newcommand {\sV} {\mathscr{V}}
\newcommand {\sW} {\mathscr{W}}
\newcommand {\foh}  {\mathfrak{h}}
\newcommand{\blank}{\underline{\hphantom{A}}}
\newcommand {\codim} {\operatorname{codim}}
\newcommand {\Coh} {\operatorname{Coh}}
\newcommand {\Coker} {\operatorname{Coker}}
\newcommand{\sExt}{\mathscr{E} \kern -1pt xt}
\newcommand {\Gr} {\operatorname{Gr}}
\newcommand{\Hilb}{\mathrm{Hilb}}
\newcommand {\Hom} {\operatorname{Hom}}
\newcommand {\sHom}{\mathscr{H}\kern-5pt\mathcalligra{om}}
\newcommand {\Id} {\operatorname{Id}}
\newcommand {\im} {\operatorname{im}}
\renewcommand {\Im} {\operatorname{Im}}
\newcommand {\Jac} {\operatorname{Jac}}
\newcommand {\kk} {\Bbbk}
\renewcommand {\ker } {\operatorname{Ker}}
\newcommand {\Ker} {\operatorname{Ker}}
\newcommand {\Pic} {\operatorname{Pic}}
\newcommand {\Proj} {\operatorname{Proj}}
\newcommand {\pr} {\operatorname{pr}}
\newcommand {\rank} {\operatorname{rank}}
\newcommand {\Span} {\operatorname{Span}}
\newcommand {\Spec} {\operatorname{Spec}}
\newcommand {\supp} {\operatorname{supp}}
\newcommand {\Supp} {\operatorname{Supp}}
\newcommand {\Sym} {\operatorname{Sym}}
\newcommand{\sTor}{\mathscr{T} \kern -3pt or}
\newcommand {\Bl} {\operatorname{Bl}}
\newcommand {\Quot} {\operatorname{Quot}}
\newcommand {\foQuot} {\mathfrak{Quot}}
\title[]{On the Chow theory of Quot schemes of locally free quotients}
\author[Q.Y.\ JIANG]{Qingyuan Jiang}
\address{School of Mathematics, University of Edinburgh, James Clerk Maxwell Building, Peter Guthrie Tait Road, Edinburgh EH9 3FD, United Kingdom.}
\email{qingyuan.jiang@ed.ac.uk}
\begin{document}

\begin{abstract} We prove a formula for Chow groups of $\Quot$-schemes which resolve degeneracy loci of a map between vector bundles, under expected dimension conditions. This result provides a unified way to understand the formulae for various geometric situations such as blowups, Cayley's trick, projectivizations, Grassmannian bundles, as well as Gassmannian type flips/flops and virtual flips. We also give applications to blowups of determinantal ideals, moduli spaces of linear series on curves, and Hilbert schemes of points on surfaces.
\vspace{-2mm} 
\end{abstract}

\maketitle

\vspace{-1 em} 
\section{Introduction}

For a Cohen-Macaulay scheme $X$ over a field $\kk$ of characteristic zero, let 
 $\sG$ be a coherent sheaf on $X$ which has homological dimension $\le 1$. (If $X$ is regular, then this condition is equivalent to $\sExt^i_{\sO_X}(\sG,\sO_X) =0, \forall i \ge 2$.) For any integer $d\ge0$, consider the $\Quot$-scheme $\foQuot_{X,d}(\sG) = \foQuot_{\sG/X/X}^{d, \sO_X}$ of rank $d$ locally free quotients of $\sG$, i.e. for every $T \to X$,
 	$$\foQuot_{X,d}(\sG)(T):= \{(\sE, q) \mid \text{$\sE$ locally free of rank $d$ on $T$}, q \colon \sG_T \twoheadrightarrow \sE ~\text{is $\sO_{T}$-linear}\} / \sim$$
see \S \ref{sec:Quot} and \cite{Gro,Nit}. By convention $\foQuot_{X, 0}(\sG)=X$, and $\foQuot_{X,d} (\sG)= \emptyset$ if $d<0$. Let
	$$\sK : = \sExt^1_{\sO_X}(\sG,\sO_X), \qquad \delta: = \rank \sG.$$
It is shown in the noncommutative counterpart of this paper \cite{J20} that under expected dimension conditions, the following relation holds in the Grothendieck ring 
 $K_0(Var_{\,\kk})$: 
	\begin{align*} [ \foQuot_{X,d}(\sG) ] = \sum_{j=0}^{\min\{d, \,\delta\}} \LL^{(d-j)(\delta-j)+\ell} \cdot [\foQuot_{X,d-j}(\sK)]^{\oplus b_{\ell}^{(j,\delta)}} \in K_0(Var_{\,\kk}),
	\end{align*} 
where $\LL = [\AA^1]$, and $b^{(d,k)}_{i} := b_{2i}(\Gr_d(k))$ is the $2i$-th Betti number of the Grassmannian $\Gr_d(k)$. In this paper we verify the relations on the level of Chow groups and motifs. 
 \begin{theoremstar}[Quot--formula, See Thm. \ref{thm:main} \& Cor. \ref{cor:main.motif}] If the degeneracy loci of $\sG$ have expected dimensions, then there is an decomposition of integral Chow groups: $\forall k \in \ZZ$,
	$$ CH^k(\foQuot_{X,d}(\sG) ) \simeq \bigoplus_{j=0}^{\min\{d,\delta\}} \bigoplus_{\ell=0}^{j(\delta-j)} CH^{k - (d-j)(\delta - j) - \ell} (\foQuot_{X,d-j}(\sK))^{\oplus b_{\ell}^{(j, \delta)}},$$
and under regular conditions a decomposition of contravariant Chow motives:
	$$h(\foQuot_{X,d}(\sG)) = \bigoplus_{j=0}^{\min\{d,\delta\}} \bigoplus_{\ell=0}^{j(\delta-j)} \left(h(\foQuot_{X,d-j}(\sK)) \otimes L^{(d-j)(\delta - j) + \ell}\right)^{\oplus b_{\ell}^{(j,\delta)}},$$
where $L = 1(-1)$ is the (contravariant) Lefschetz motives. 
 \end{theoremstar}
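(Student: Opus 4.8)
\emph{The plan is to} pass to a two-term presentation $\sG=\Coker(\sigma)$, realize both sides of the formula as zero loci of regular sections in Grassmann bundles over $X$, build the decomposition from an explicit correspondence, and then promote the $K_0(Var_{\,\kk})$-identity of \cite{J20} to integral Chow groups (and Chow motives) by a localization/Noetherian-induction argument. Since $\sG$ has homological dimension $\le 1$, one may choose (assuming, as one may after a standard reduction, that $X$ carries such a resolution — e.g. $X$ quasi-projective) an exact sequence $0\to\sF_1\xrightarrow{\sigma}\sF_0\to\sG\to 0$ with $\sF_i$ locally free, so $\sK\cong\Coker(\sigma^\vee\colon\sF_0^\vee\to\sF_1^\vee)$. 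Writing $\pi\colon G_d:=\Gr_d(\sF_0)\to X$ with universal rank-$d$ quotient $\sQ_d$, the functor of points identifies $\foQuot_{X,d}(\sG)$ with the zero locus $Z(\bar\sigma)\subseteq G_d$ of the tautological section $\bar\sigma\in\H^0(G_d,\sHom(\pi^*\sF_1,\sQ_d))$ (the composite $\pi^*\sF_1\to\pi^*\sF_0\twoheadrightarrow\sQ_d$), and symmetrically $\foQuot_{X,d-j}(\sK)$ with a zero locus of an analogous section in $\Gr_{d-j}(\sF_1^\vee)$. The hypothesis that the degeneracy loci of $\sG$ — equivalently, the rank loci of $\sigma$ — have expected dimension says exactly that all these sections are \emph{regular}: the zero loci have the expected codimensions $d\,\rk\sF_1$, resp. $(d-j)\,\rk\sF_0$, hence stay Cohen-Macaulay, which is what allows the construction to be iterated.

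For the decomposition map, for each $j$ with $0\le j\le\min\{d,\delta\}$ one has a relative incidence variety $\Gamma_j$ equipped with maps to $\foQuot_{X,d}(\sG)$ and to a flag bundle $\mathrm{Fl}_j\to\foQuot_{X,d-j}(\sK)$ whose fibres are modelled on $\Gr_j(\delta)$ — morally, a point of $\Gamma_j$ is a rank-$d$ quotient $\sG_x\twoheadrightarrow E$, a rank-$(d-j)$ quotient of $\sK_x$, and a compatible $j$-plane in the ``generic $\delta$-dimensional part'' of $\sG$. Pushing cycles forward along $\Gamma_j\to\foQuot_{X,d}(\sG)$, pulling back from $\foQuot_{X,d-j}(\sK)$ through $\mathrm{Fl}_j$, and expanding along the $\Gr_j(\delta)$-direction in the Schubert basis — one summand for each Schubert class of codimension $\ell$, which is precisely where the Betti numbers $b_\ell^{(j,\delta)}$ enter, the codimension $(d-j)(\delta-j)$ of the incidence accounting for the rest of the shift — one assembles a homomorphism
\[
\Phi_k\colon \bigoplus_{j=0}^{\min\{d,\delta\}}\bigoplus_{\ell=0}^{j(\delta-j)} CH^{\,k-(d-j)(\delta-j)-\ell}\!\big(\foQuot_{X,d-j}(\sK)\big)^{\oplus b_\ell^{(j,\delta)}} \longrightarrow CH^k\!\big(\foQuot_{X,d}(\sG)\big),
\]
which is the Chow-level shadow of the semiorthogonal decomposition underlying \cite{J20}; when $d\le\delta$, the block $j=d$ (where $\foQuot_{X,0}(\sK)=X$) is just the classical Grassmann-bundle contribution $\bigoplus_\ell CH^{k-\ell}(X)^{\oplus b_\ell^{(d,\delta)}}$.

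To prove $\Phi_k$ an isomorphism I would argue by Noetherian induction on $X$ along the rank stratification $X=\bigsqcup_{m\ge 0}X_m$, $X_m:=\{x\in X:\dim\sK_x=m\}$. On (the reduction of) each stratum $X_m$ the sheaves $\sG$ and $\sK$ become locally free, so the Quot schemes restrict to Grassmann bundles over $X_m$ and the restriction of $\Phi_k$ there is an assembly of the Grassmann-bundle isomorphism; the localization exact sequences for Chow groups — exact with $\ZZ$-coefficients — together with the compatibility of the $\Gamma_j$ with these sequences then propagate the isomorphism from the dense open part and the closed strata to all of $\foQuot_{X,d}(\sG)$, the simplest instance of the propagation step (two strata meeting in codimension one) being exactly the blow-up/Cayley-trick formula. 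Equivalently, one may run the generalized Cayley trick: $Z(\bar\sigma)$ is governed by a relative hyperplane section in $\PP_{G_d}\big((\sQ_d\otimes\pi^*\sF_1^\vee)^\vee\big)$, whose Chow groups split off $d\,\rk\sF_1-1$ twisted copies of $CH(\Gr_d(\sF_0))$, reducing the problem to the dual side and, inductively, to the blow-up formula. Finally, when $X$ is regular every scheme in sight is smooth and each arrow used — Gysin maps of regular embeddings, Grassmann- and projective-bundle projections, graphs of structural morphisms — is realized by an algebraic correspondence, so the decomposition lifts verbatim to the category of Chow motives, the Lefschetz twist $L^{(d-j)(\delta-j)+\ell}$ recording the codimension shift.

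\emph{The main obstacle} I expect to be this last step, in two respects. First, closing up the induction: over the deeper strata one must still control the dimensions of the auxiliary Quot schemes and of the incidence loci $\Gamma_j$ — failing that, all naive pullbacks have to be replaced throughout by Fulton's refined Gysin operations — which is where the full strength of the expected-dimension hypothesis, and its symmetry under $\sG\leftrightarrow\sK$, is used. Second, the combinatorial bookkeeping: one must check that the single correspondence distributes into precisely the multiplicities $b_\ell^{(j,\delta)}$ with the stated shifts $(d-j)(\delta-j)+\ell$, which comes down to understanding how Schubert classes on $\Gr_d(\sF_0)$ restrict to $Z(\bar\sigma)$. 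By contrast, once $\Phi_k$ is known to be a split injection of the correct size on each stratum, bijectivity — and hence the statement for motives — follows formally from the five-lemma applied to the localization sequences together with the $K_0(Var_{\,\kk})$-identity of \cite{J20}.
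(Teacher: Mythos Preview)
Your stratification-and-localization outline does mirror the paper's architecture, but two key steps are underspecified to the point of being genuine gaps.

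First, the local model on a stratum is not ``an assembly of the Grassmann-bundle isomorphism.'' Over a stratum $X_m$ where $\sG$ and $\sK$ become locally free, \emph{both} sides of the formula restrict to Grassmannian bundles --- with fiber $\Gr_d(\delta+m+1)$ on the left and various $\Gr_{d-j}(m+1)$ on the right --- and comparing them is a new statement the paper isolates as the \emph{virtual Grassmannian flip} (Thm.~\ref{thm:main:local}). The correspondence itself is simpler than you propose: it is the bare fiber product $\Gamma_{(d-j,d)} = \shZ_{d-j}^- \times_X \shZ_d^+$ (no auxiliary flag bundle), and the component indexed by a Young diagram $\nu \in B_{j,\delta-j}$ is $\alpha \mapsto r_{+*}\big(\Delta_\nu(-\shU_+) \cap r_-^*\alpha\big)$; the multiplicities $b_\ell^{(j,\delta)}$ arise because $B_{j,\delta-j}$ indexes the Schubert cells of $\Gr_j(\delta)$. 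On a stratum, an excess-bundle computation (Lem.~\ref{lem:stratum}, \ref{lem:commute}) converts this into a map $\Psi^\nu$ carrying a factor $c_{\rm top}(\shQ_-^\vee \otimes \shU_+^\vee)$, and the paper produces explicit left inverses $\Psi_\nu^{\rm std}$ together with a \emph{semiorthogonality} relation $\Psi_\tau^{\rm std} \circ \Psi^\nu = 0$ whenever $(i,\nu) \nsucceq (j,\tau)$ in a partial order on the index set, proved by a Littlewood--Richardson calculation (Lem.~\ref{lem:virtual:flip}).

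Second, your injectivity argument does not go through. The localization sequence for Chow groups is only right exact, so there is no five-lemma to invoke, and the $K_0(Var_\kk)$ identity of \cite{J20} says nothing about integral Chow groups (``split injection of the correct size'' is not a meaningful constraint here). The paper's injectivity proof instead runs an inverse induction along the partial order on pairs $(j,\nu^{(j)})$: given $\Gamma^*(k_*\gamma)=0$, the compatibility $\Gamma^* k_* = j_* \Psi^*$ and the semiorthogonality above allow one to peel off the components $k_*\gamma_{(j,\nu^{(j)})}$ one at a time, starting from the maximal index, and show each vanishes. This semiorthogonal structure --- not any counting argument --- is the essential missing ingredient.
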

 
 There are in general two types of behaviours of $\foQuot_{X,d}(\sG)$ depending on the value of $d$:
	\begin{itemize}
 		\item If $d \le \rank \sG \equiv \delta$, then $\pi \colon \foQuot_{X,d}(\sG) \to X$ is generically a Grassmannian bundle of fiber $\Gr_{d}(\delta)$. The theorem implies there is a part of $CH(\foQuot_{X,d}(\sG))$ given by the same formula as a Grassmannian $\Gr_{d}(\delta)$-bundle over $X$ of \S \ref{sec:Gr} , with complementary summands given by Chow groups of resolutions of degeneracy loci \S \ref{sec:deg} of $\sG$.
		\item If $d > \rank \sG \equiv \delta$, then $\foQuot_{X,d}(\sG)$ and $\foQuot_{X,d-\delta}(\sK)$ both maps birationally to the degeneracy locus $X^{\ge d}(\sG) = \{x \mid \rank \sG(x) \ge d\} \subseteq X$, see \S \ref{sec:deg}.
		\begin{enumerate}
			\item[(i)] If $\delta=0$, then $\foQuot_{X,d}(\sG) \dashrightarrow \foQuot_{X,d-\delta}(\sK)$ is a flop (arising from two different Springer type desingularizations of the degeneracy loci), and the theorem implies there is an isomorphism $CH(\foQuot_{X,d}(\sK)) \simeq CH(\foQuot_{X,d}(\sG))$. 
			\item[(ii)] If $\delta>0$, then $\foQuot_{X,d}(\sG) \dashrightarrow \foQuot_{X,d-\delta}(\sK)$ is a flip. The theorem implies there is an embedding $CH(\foQuot_{X,d-\delta}(\sK)) \hookrightarrow CH(\foQuot_{X,d}(\sG))$, with complementary summands explicitly given by resolutions of higher degeneracy loci.
		\end{enumerate}
   	\end{itemize}
	
This ``Quot--formula" also provides a uniformed way to understand different formulae:
	\begin{enumerate}
 		\item If $\sG$ is locally free, then $\foQuot_{X,0}(\sK) = X$ and $\foQuot_{X,d-j}(\sK)=\emptyset$ for $j<d$. The theorem reduces to the well-know formula for {\em Grassmannian bundles} \S \ref{sec:Gr}; 
		\item If $\sG = \Coker (\sO_X \xrightarrow{s} E)$ for a regular section $s \in H^0(X,E)$ of a vector bundle $E$, then $\foQuot_{X,0}(\sK) = X$, $\foQuot_{X,1}(\sK)= Z := {\rm Zeros}(s) \subset X$ and $\foQuot_{X,d-j}(\sK)=\emptyset$ for $j<d-1$. The theorem becomes a formula for {\em generalised Cayley's trick}:
			$$ CH^k(\foQuot_{X,d}(\sG) ) \simeq \bigoplus_{\ell=0}^{(d-1)(\delta-d+1)} (CH^{k-(\delta-d+1)-\ell}(Z))^{\oplus b_{\ell}^{(d-1,\delta)}} \oplus  \bigoplus_{\ell=0}^{d (\delta-d)} (CH^{k-\ell}(X))^{\oplus b_{\ell}^{(d,\delta-1)}},$$
		 see Thm. \ref{thm:Cayley} in \S \ref{sec:Cayley} for more details. This itself generalizes both the {\em blowup formula} ($d=\delta$) and {\em Cayley's trick} in \cite{J19} ($d =1$), see Example \ref{ex:cayley.blowup}.
		\item if $d=1$, this becomes the {\em projectivization formula} proved in \cite{J19}
		 which itself has many applications such as to symmetric powers of curves, nested Hilbert schemes, and the situation of Voisin maps for cubic fourfolds, see \cite{J19}. Notice our assumption corresponds to condition (B) in \cite{J19}.
 	\end{enumerate}
   
\begin{remark}
We call this ``Quot--formula", as it is a sequel and a generalization of the ``projectivization formula" of \cite{J19, JL18}. This verifies a conjecture in \cite{J20}, where the behaviour of the derived categories is studied. 
 \end{remark}

 \subsection{Applications to blowup of determinantal subscheme} \label{sec:intro:blowup} If $d = \delta$, then $\foQuot_d(\sG) = \Bl_Z X$ is the blowup of $X$ along the determinantal subscheme $Z = X^{\ge \delta+1}(\sG)$, see Lem. \ref{lem:Quot=Bl}, where $Z$ is Cohen--Macaulay codimension $\delta+1$. There is a stratification $\cdots \subset Z_{1} \subset Z_{0} = Z$ of $Z$ by the rank of $\sG$, and $\foQuot_{X, i+1}(\sK)=: \widetilde{Z_i} $ is a IH-small resolution of $Z_i$, for $i \ge 0$.
 
\begin{theoremstar}[See Thm. \ref{thm:Bl:det}] Let $X$ be an irreducible scheme, $Z \subset X$ be a determinantal subscheme of codimension $\delta+1$ whose strata satisfy expected dimension conditions as the Quot-formula. Then for any $k \ge 0$, there is an isomorphism of Chow groups:
	\begin{align*}
	CH^k(\Bl_{Z} X) \simeq CH^k(X) \oplus \bigoplus_{\ell =0}^{\delta-1} CH^{k-1-\ell}(\widetilde{Z}) \oplus \bigoplus_{i =2}^{\delta} \bigoplus_{\ell=0}^{i(\delta-i)} CH^{k- i^2-\ell}(\widetilde{Z_{i-1}})^{\oplus b_{\ell}(i,\delta)}.
	\end{align*}
A similar decomposition holds for Chow motives if the schemes are smooth projective.
\end{theoremstar}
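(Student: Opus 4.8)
The plan is to deduce the blowup formula directly from the main Quot--formula by specializing to $d=\delta$ and then reorganizing the summands. First I would invoke Lem.~\ref{lem:Quot=Bl} to identify $\foQuot_{X,\delta}(\sG)$ with $\Bl_Z X$, where $Z = X^{\ge \delta+1}(\sG)$ is the determinantal subscheme; this is the key geometric input that converts a statement about Quot--schemes into one about blowups. With this identification in hand, the Quot--formula gives
\[
CH^k(\Bl_Z X) \simeq \bigoplus_{j=0}^{\min\{\delta,\delta\}} \bigoplus_{\ell=0}^{j(\delta-j)} CH^{k-(\delta-j)(\delta-j)-\ell}\bigl(\foQuot_{X,\delta-j}(\sK)\bigr)^{\oplus b_\ell^{(j,\delta)}},
\]
so I need only rewrite the right-hand side in terms of the resolutions $\widetilde{Z_i}$.

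The reindexing is the combinatorial heart of the argument. I would set $i = \delta - j$, so that $j = \delta - i$ and $(\delta-j)(\delta-j) = i^2$, while $j(\delta-j) = (\delta-i)\,i = i(\delta-i)$, and $b_\ell^{(j,\delta)} = b_\ell^{(\delta-i,\delta)} = b_\ell^{(i,\delta)}$ by the symmetry $\Gr_{j}(\delta) \cong \Gr_{\delta-j}(\delta)$ of Grassmannians (hence of their Betti numbers). Next I would translate $\foQuot_{X,\delta-j}(\sK) = \foQuot_{X,i}(\sK)$ into the resolution notation: by the discussion in \S\ref{sec:intro:blowup}, $\foQuot_{X,i+1}(\sK) = \widetilde{Z_i}$ for $i \ge 0$, so $\foQuot_{X,i}(\sK) = \widetilde{Z_{i-1}}$ for $i \ge 1$, while $\foQuot_{X,0}(\sK) = X$ (the $i=0$, i.e. $j=\delta$, term). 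Separating out the extreme cases $i=0$ (giving a single copy of $CH^k(X)$, since $b_\ell^{(0,\delta)}=\delta_{\ell,0}$ and $i^2=0$) and $i=1$ (giving $\bigoplus_{\ell=0}^{\delta-1} CH^{k-1-\ell}(\widetilde{Z})$, using $b_\ell^{(1,\delta)} = 1$ for $0 \le \ell \le \delta-1$, $i^2 = 1$, and $\widetilde{Z} = \widetilde{Z_0} = \foQuot_{X,1}(\sK)$), and collecting $i \ge 2$ into the remaining double sum, yields exactly the claimed formula.

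The only genuine verification points — which I expect to be routine rather than obstacles — are (a) checking that the expected-dimension hypotheses stated for the determinantal strata $Z_i$ are precisely the ones required to apply the Quot--formula to $\sG$ with $d=\delta$ (this should be a matter of unwinding Def. of the degeneracy loci in \S\ref{sec:deg} and matching $X^{\ge d}(\sG)$-conditions with the stratification of $Z$); and (b) confirming that $\sG$ has homological dimension $\le 1$ in this determinantal setting, so that $\sK = \sExt^1_{\sO_X}(\sG,\sO_X)$ is the relevant sheaf and the $\widetilde{Z_i}$ are the asserted IH-small resolutions — this is where one uses that $Z$ has the expected codimension $\delta+1$ and is Cohen--Macaulay. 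For the motivic statement, I would simply rerun the same reindexing on the Chow--motive decomposition of Cor.~\ref{cor:main.motif}, replacing each $CH$-shift by a Lefschetz twist $L^{i^2+\ell}$; the smooth-projective hypothesis is exactly what is needed to have that decomposition available, and no new argument is required beyond the bookkeeping already carried out.
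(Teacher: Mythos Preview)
Your proposal is correct and follows essentially the same approach as the paper: the Chow decomposition is deduced directly from the main Quot--formula (Thm.~\ref{thm:main}) applied with $d=\delta$, together with the identification $\foQuot_{X,\delta}(\sG)\simeq \Bl_Z X$ of Lem.~\ref{lem:Quot=Bl}, followed by exactly the reindexing $i=\delta-j$ you describe. The paper's proof of Thm.~\ref{thm:Bl:det} in fact says only ``We only need to show the IH-small statements'' before turning to the codimension estimate for $\widetilde{Z_i}\to Z_i$, so the Chow and motive parts of the starred theorem are, as you anticipated, pure bookkeeping on top of Thm.~\ref{thm:main} and Cor.~\ref{cor:main.motif}.
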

In particular, if the schemes are smooth projective varieties over $\CC$, then via Betti realisation there is an isomorphism of rational Hodge structure for any $k \in \ZZ$:
	\begin{align*}
	H^k(\Bl_{Z} X;\QQ) \simeq H^k(X,\QQ) \oplus \bigoplus_{\ell =0}^{\delta-1} IH^{k-2-2\ell}(Z,\QQ) \oplus \bigoplus_{i =2}^{\delta} \bigoplus_{\ell=0}^{i(\delta-i)} IH^{k- 2i^2-2\ell}(Z_{i-1},\QQ)^{\oplus b_{\ell}(i,\delta)}, 
	\end{align*}
where $IH$ denotes the intersection cohomology.	

The theorem is a generalisation of the usual blowup formula along locally complete intersection subschems Ex. \ref{ex:cayley.blowup}, and along Cohen--Macaulay codimension $2$ subscheme \cite{J19}, see Example \ref{ex:blowup:det}. The theorem shows how (resolutions of) degeneracy loci $Z_i, i\ge1$ of the centre $Z=Z_0$ contributes to the Chow group/cohomology of the blowup, if the centre $Z$ is singular determinantal subscheme of codimension $\ge 3$. Similar phenomenon  also occurs for derived categories 
\cite{J20}.

 \subsection{Applications to linear series on curves}
Let $C$ be a complex smooth projective curve of genus $g \ge 1$, and $G_k^{r}(C)= \{\text{$g_k^r$'s  on $C$}\}$ be the scheme parametrizing linear series  of degree $k$ and dimension $r$ on $C$, see \cite{ACGH}. By convention $G_k^{-1}: = \Pic^k(C)$). Then:

\begin{theoremstar}[See Thm. \ref{thm:curves}] If $C$ is a general complex smooth projective curve of genus $g \ge 1$, then for any $n \ge 0$, $r \ge 0$, there is an isomorphism of Chow groups:
	$$ CH^k(G_{g-1+n}^r(C)) \simeq \bigoplus_{j=0}^{\min\{n,r+1\}} \bigoplus_{\ell=0}^{j(n-j)} CH^{k - (r+1-j)(n - j) - \ell} (G_{g-1-n}^{r-j}(C))^{\oplus b_{\ell}^{(j, n)}},$$
and an isomorphism of contravariant integral Chow motives:
	$$
	 h(G_{g-1+n}^r(C)) \simeq \bigoplus_{j=0}^{\min\{n,r+1\}} \bigoplus_{\ell=0}^{j(n-j)} \big(h(G_{g-1-n}^{r-j}(C)) \otimes L^{(r+1-j)(n - j) + \ell)} \big)^{\oplus b_{\ell}^{(j, n)}}.
	$$
\end{theoremstar}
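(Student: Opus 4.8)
The plan is to deduce the statement about linear series on curves directly from the ``Quot--formula'' (Thm.~\ref{thm:main} together with Cor.~\ref{cor:main.motif}) by exhibiting $G_{g-1+n}^r(C)$ as a $\foQuot$-scheme of locally free quotients of a suitable coherent sheaf of homological dimension $\le 1$ on a Cohen--Macaulay base, and then identifying the terms $\foQuot_{X,d-j}(\sK)$ with the Brill--Noether loci $G_{g-1-n}^{r-j}(C)$. Concretely, the base will be $X = \Pic^{g-1+n}(C)$, and the sheaf $\sG$ will be (a twist of) $R^0\pi_* \shL$ for a Poincar\'e line bundle $\shL$ on $C \times \Pic^{g-1+n}(C)$; one checks that $\sG$ has homological dimension $\le 1$ because $R\pi_* \shL$ is represented by a two-term complex of vector bundles $[E^0 \to E^1]$ by cohomology and base change, and that $\foQuot_{X,r+1}(\sG)$ is exactly $G_{g-1+n}^r(C)$, the variety of $(r+1)$-dimensional subspaces (equivalently rank-$(r+1)$ locally free quotients) of the space of sections. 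Here $\delta := \rank \sG = h^0 - h^1$ along the generic point; by Riemann--Roch, for degree $g-1+n$ the generic value of $h^0 - h^1$ on $\Pic^{g-1+n}(C)$ is $n$, so $\delta = n$, which matches the exponents $(r+1-j)(n-j)+\ell$ and the ranges $0 \le \ell \le j(n-j)$, $0 \le j \le \min\{n, r+1\}$ appearing in the target formula.

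The second key step is the identification $\sK := \sExt^1_{\sO_X}(\sG,\sO_X) \cong$ (a twist of) $R^1\pi_*$ of the dual family, i.e.\ by relative Serre duality on $C \times X$ the sheaf $\sK$ is $R^0$ of the family $\shL^\vee \otimes \omega_C$ of degree $2g-2-(g-1+n) = g-1-n$. Consequently $\foQuot_{X,d-j}(\sK)$ is the variety of rank-$(d-j)$ locally free quotients of the space of sections of degree-$(g-1-n)$ line bundles, which upon taking $d = r+1$ is $G_{g-1-n}^{(r-j+1)-1}(C) = G_{g-1-n}^{r-j}(C)$, using the convention $G^{-1} = \Pic$ to cover the boundary case $j = r+1$. (One must be slightly careful here: $\foQuot_{X,d-j}(\sK)$ a priori lives over $\Pic^{g-1+n}(C)$ via $\sK$, but under the identification $L \mapsto L^\vee\otimes\omega_C$ this is canonically the Brill--Noether locus over $\Pic^{g-1-n}(C)$; this is the content of the classical Serre-duality isomorphism $G_k^r(C) \cong G_{2g-2-k}^{r-k+g-1}(C)$.) The generic-genus-$g$ hypothesis enters exactly to guarantee the ``expected dimension'' conditions required by the Quot--formula: by the Gieseker--Petri theorem, for $C$ general the Brill--Noether loci $G_k^s(C)$ are smooth of the expected dimension $\rho = g - (s+1)(g-k+s)$ (or empty), which is precisely what is needed for the degeneracy loci of $\sG$ to have expected dimension and, in the motivic statement, for the relevant schemes to be smooth (projective over $\CC$).

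I would then assemble the argument as follows: (i) set up the families $\shL$, $\shL^\vee\otimes\omega_C$, the two-term resolutions, and verify $\hdim \sG \le 1$ and the Quot-scheme identifications; (ii) verify the expected-dimension/regularity hypotheses via Gieseker--Petri, so that both Thm.~\ref{thm:main} and Cor.~\ref{cor:main.motif} apply; (iii) plug in, matching $d = r+1$, $\delta = n$, and $\foQuot_{X,d-j}(\sK) = G_{g-1-n}^{r-j}(C)$, and observe that the indexing $\bigoplus_{j=0}^{\min\{n,r+1\}}\bigoplus_{\ell=0}^{j(n-j)}$ and Betti numbers $b_\ell^{(j,n)}$ transcribe verbatim; (iv) for the motivic statement, note all schemes involved are smooth projective over $\CC$ by Gieseker--Petri, so Cor.~\ref{cor:main.motif} gives the decomposition of contravariant integral Chow motives, with $L = \mathbf 1(-1)$. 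The main obstacle I anticipate is not the formal plug-in but the bookkeeping in step (ii)--(iii): one must check that \emph{every} stratum $X^{\ge e}(\sG)$ (not just the generic one) has the expected dimension, which requires knowing that all the Brill--Noether loci $G_{g-1+n}^{s}(C)$ for $s \ge r$ \emph{simultaneously} have expected dimension for general $C$ --- this is still Gieseker--Petri, but one has to be careful that the degeneracy loci of $\sG$ are exactly these $G$'s (including possibly the locus where $h^0$ jumps because $h^1$ jumps), and to handle the twisting and the duality bijection on degrees consistently. A secondary subtlety is the boundary term $j = r+1$ (forcing the convention $G^{-1}_{k} = \Pic^k$) and, if $n > r+1$, the ``flip'' regime where $\foQuot_{X,r+1}(\sG)$ and $\foQuot_{X,r+1-n}(\sK)$ are not of the same dimension; here one checks $r+1-n \le 0$ forces $\delta = n > 0$ so the formula is the ``flip'' case of the introduction, which is automatically consistent since $\foQuot_{X,m}(\sK) = \emptyset$ for $m < 0$ and $= \Pic^{g-1-n}(C)$ for $m = 0$.
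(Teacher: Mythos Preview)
Your proposal is correct and follows essentially the same approach as the paper: set $X=\Pic^{g-1+n}(C)$, take $\sG$ from a two-term presentation of the pushforward of a Poincar\'e bundle (so that $\foQuot_{X,r+1}(\sG)=G_{g-1+n}^r(C)$ and the degeneracy strata $X^{\ge r+1}(\sG)$ are the Brill--Noether loci $W_{g-1+n}^r$), identify $\sK$ via Serre duality with the degree-$(g-1-n)$ family so that $\foQuot_{X,r+1-j}(\sK)\simeq G_{g-1-n}^{r-j}(C)$, verify the expected-dimension hypotheses by the Gieseker--Petri theorem for general $C$, and apply Thm.~\ref{thm:main} and Cor.~\ref{cor:main.motif}. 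One minor slip: the fiber of $\sG$ is $H^0(C,\sL)^\vee$ rather than $H^0(C,\sL)$, so rank-$(r+1)$ locally free quotients of $\sG$ correspond to $(r+1)$-dimensional \emph{subspaces} of sections (not quotients), but this does not affect the structure of your argument.
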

This generalises the formula of symmetric powers in \cite{J19}, which is the case  $r=0$:
	\begin{align*}
	 CH_{k}(C^{(g-1+n)}) \simeq CH_{k-n}(C^{(g-1-n)}) \oplus \bigoplus_{i=0}^{n-1} CH_{k-(n-1)+i}(\Jac(C)).
	\end{align*}
For specific $r$ or $g$, the requirement of $C$ being general could be relaxed, for example, the formula for symmetric powers above holds for {\em any} curve \cite{J19}. However, in general the schemes $G_{d}^r(C)$ may not have expected dimensions and may not be reduced or irreducible.

 \subsection{Applications to (nested) Hilbert schemes of point on surface.} Let $S$ be a smooth complex surface, for any $n \ge 0$, denote $\Hilb_n$ the Hilbert scheme of ideals of $S$ of colength $n$. For any $d \ge 1$, consider the generlised nested Hilbert scheme:
 	$$\Hilb_{n,n+d}^{\dagger}(S) : = \{ (I_n \supset I_{n+d}) \mid I_{n}/I_{n+d} \simeq \kappa(p)^{\oplus d} \text{~ for some $p \in S$}\} \subset \Hilb_n \times \Hilb_{n+d}.$$
By convention, we set $\Hilb_{n,n}^{\dagger}(S) = \Hilb_n \times S$, and $\Hilb_{n,n+d}^{\dagger}(S) = \emptyset$ if $d<0$. Notice that if $d=1$, $\Hilb_{n,n+1}^{\dagger}(S) = \Hilb_{n,n+1}(S)$ is the usual nested Hilbert scheme. 
 
 \begin{theoremstar}[See Thm. \ref{thm:Hilb}] For any $n,d \ge 1$ and any $k \ge 1$,  there is an isomorphism:
 	$$CH^k(\Hilb_{n,n+d}^{\dagger}(S)) \simeq CH^{k-d}(\Hilb_{n-d,n}^{\dagger}(S)) \oplus CH^{k}(\Hilb_{n-d+1,n}^{\dagger}(S)).$$
 \end{theoremstar}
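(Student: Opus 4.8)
The plan is to realize both sides of the claimed isomorphism as instances of the Quot--formula. The key observation is that the generalised nested Hilbert scheme $\Hilb_{n,n+d}^\dagger(S)$ is itself a $\Quot$-scheme of locally free quotients. Indeed, on $\Hilb_n \times S$ let $\shI_n \subset \shO$ denote the universal ideal sheaf and set $\sG := \shI_n$ (or more precisely its pushforward along the projection $p\colon \Hilb_n \times S \to \Hilb_n$ suitably interpreted), so that the data of a length-$d$ quotient $I_n \twoheadrightarrow I_n/I_{n+d} \simeq \kappa(p)^{\oplus d}$ supported at a point is precisely a rank-$d$ locally free quotient of $\sG$ over the base $X = \Hilb_n$. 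First I would verify that $\sG$ has homological dimension $\le 1$ on $X$ (this follows because $\shI_n$ is the ideal of a codimension-$2$ Cohen--Macaulay subscheme, so it is resolved by a length-$1$ locally free complex, Hilbert--Burch), that $\rk \sG = \delta = 0$ in the relevant sense (the quotient is torsion, supported at points), and that $\sK := \sExt^1_{\shO_X}(\sG,\shO_X)$ is again of the same shape, with $\foQuot_{X,d}(\sK) \simeq \Hilb_{n-d,n}^\dagger(S)$ — this last identification is the heart of the matter and I expect it is where the Hilbert--Burch description of $\shI_n$ and a duality/transpose argument for the presenting matrix does the work.

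Next I would check the expected-dimension hypothesis of the Quot--formula for this $\sG$: the degeneracy loci $X^{\ge d}(\sG) \subset \Hilb_n$ must have the expected codimensions. Since we are in the $\delta = 0$ regime (case (i) of the dichotomy in the introduction), the Quot--formula specializes dramatically: with $\delta = 0$, the index $j$ in the sum $\sum_{j=0}^{\min\{d,\delta\}}$ is forced to be $0$, the Grassmannian factors $\Gr_j(\delta)$ collapse, and the formula degenerates to the single-term statement that $CH(\foQuot_{X,d}(\sG)) \simeq CH(\foQuot_{X,d}(\sK))$ up to a shift — but that is the $\delta = 0$ flop case and gives only one summand, not two. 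So in fact the correct setup must make $\delta = 1$: one takes $\sG$ to be not $\shI_n$ itself but a rank-one-twist thereof, or equivalently one runs the Quot--formula with the bundle $E$ of the Hilbert--Burch resolution $0 \to \shF \xrightarrow{\varphi} E \to \shI_n \to 0$ in mind, so that $\delta = \rk E - \rk \shF$ is arranged to be $1$. Then $\min\{n,r+1\}$-type truncation gives exactly $j \in \{0,1\}$, producing two summands: the $j=0$ term contributes $CH^{k}(\foQuot_{X,d}(\sK)) = CH^k(\Hilb_{n-d+1,n}^\dagger(S))$ (after checking the exponent $(d-0)(\delta - 0) + \ell$ with $\delta = 1$, $\ell$ ranging over $\{0\}$ since $j(\delta-j)=0$, contributes the degree shift $0$), and the $j=1$ term contributes $CH^{k-d}(\foQuot_{X,d-1}(\sK)) = CH^{k-d}(\Hilb_{n-d,n}^\dagger(S))$ (with shift $(d-1)(\delta-1) + \ell = 0 + \ell$, $\ell \in \{0,\dots,0\}$, hmm — I need the shift to be $d$, so the bookkeeping requires that the correct $X$ is $\Hilb_{n+d}$, not $\Hilb_n$, reading the nesting from the larger ideal down). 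The careful identification of which Hilbert scheme plays the role of the base $X$ and which auxiliary sheaf plays the role of $\sG$, so that the two output summands land on $\Hilb_{n-d,n}^\dagger$ and $\Hilb_{n-d+1,n}^\dagger$ with the shifts $d$ and $0$ respectively, is the combinatorial core of the argument.

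Concretely, the cleanest route is: take $X = \Hilb_{n+d}(S)$, let $\shI_{n+d}$ be the universal ideal, and let $\sG$ be a sheaf built from $\shI_{n+d}$ (again via its codimension-$2$ Cohen--Macaulay / Hilbert--Burch structure) whose rank-$d$ locally free quotients over $X$ parametrize ideals $I_n \supset I_{n+d}$ with the prescribed quotient — so $\foQuot_{X,d}(\sG) = \Hilb_{n,n+d}^\dagger(S)$. Then identify $\sK = \sExt^1(\sG, \shO_X)$, show $\foQuot_{X,d-j}(\sK)$ is the nested Hilbert scheme $\Hilb_{n-?, n+d}^\dagger$ or similar for $j=0,1$, confirm $\delta = 1$ and the expected-dimension condition (using smoothness of $\Hilb_{n+d}(S)$ for a surface and the known geometry of nested Hilbert schemes, e.g. results of Cheah), and finally invoke the Quot--formula (Thm.~\ref{thm:main}) and Corollary~\ref{cor:main.motif}.

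\textbf{The main obstacle} I anticipate is precisely the identification of $\sK$ and its $\Quot$-schemes with the shifted nested Hilbert schemes $\Hilb_{n-d,n}^\dagger(S)$ and $\Hilb_{n-d+1,n}^\dagger(S)$: this requires a concrete duality statement saying that the $\sExt^1$-dual of the sheaf presenting one nested pair is the sheaf presenting the ``complementary'' nested pair, which should follow from the self-duality of Hilbert--Burch resolutions (the Buchsbaum--Eisenbud structure theorem) but needs to be spelled out carefully, together with the verification that the relevant degeneracy loci — stratified by how the colength jumps — have the expected dimension, a fact that for points on a \emph{smooth surface} is true but relies on the local structure of punctual Hilbert schemes. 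Everything downstream (the two-term collapse of the formula, the degree shifts, the motivic upgrade) is then bookkeeping given the general theorem. Once these identifications are in place, one applies the Quot--formula with $\delta=1$ and reads off
\[
CH^k(\Hilb_{n,n+d}^{\dagger}(S)) \simeq CH^{k-d}(\Hilb_{n-d,n}^{\dagger}(S)) \oplus CH^{k}(\Hilb_{n-d+1,n}^{\dagger}(S)),
\]
as claimed.
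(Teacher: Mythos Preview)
Your overall strategy---apply the Quot--formula with $\delta=1$ and read off the two summands---is exactly what the paper does. But you have the base scheme wrong, and this is what generates all of your back-and-forth about $\delta=0$ versus $\delta=1$ and about whether $X$ should be $\Hilb_n$ or $\Hilb_{n+d}$.

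The correct base is $X=\Hilb_n\times S$, not $\Hilb_n$. The sheaf is $\sG=\sI_{Z_n}$, the universal ideal sheaf of the universal length-$n$ subscheme $Z_n\subset\Hilb_n\times S$. As an ideal sheaf of a codimension-$2$ Cohen--Macaulay subscheme this has rank $\delta=1$ and homological dimension $\le 1$ from the start---no twisting or Hilbert--Burch bookkeeping is needed to arrange $\delta=1$. The point $p\in S$ where the quotient is supported is tracked by the $S$-factor of the base; that is precisely why you cannot push forward to $\Hilb_n$. The paper proves (in a short lemma) that $\foQuot_{\Hilb_n\times S,\,d}(\sI_{Z_n})\simeq\Hilb_{n,n+d}^\dagger(S)$ and, by the same argument applied to the dualizing sheaf, $\foQuot_{\Hilb_n\times S,\,d}(\omega_{Z_n})\simeq\Hilb_{n-d,n}^\dagger(S)$. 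Since $\sK=\sExt^1(\sI_{Z_n},\sO_X)\simeq\omega_{Z_n}$, this gives the identifications you were worried about directly, without any self-duality of Hilbert--Burch matrices.

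With $\delta=1$ the formula in contravariant indexing reads: the $j=0$ term has shift $(d-0)(1-0)=d$ and gives $CH^{k-d}(\foQuot_{X,d}(\sK))=CH^{k-d}(\Hilb_{n-d,n}^\dagger)$; the $j=1$ term has shift $(d-1)(1-1)=0$ and gives $CH^k(\foQuot_{X,d-1}(\sK))=CH^k(\Hilb_{n-d+1,n}^\dagger)$. You had these two shifts swapped. The expected-dimension hypothesis $\codim(X^{\ge 1+i}(\sI_{Z_n}))=i(i+1)$ is a genuine input, not automatic from smoothness of the surface; the paper cites a separate note for this.
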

 
 Notice if $d=1$ this recovers the formula for usual nested Hilbert scheme \cite{J19}. 
 
 \subsection{Applications to Brill--Noether theory of moduli space of K3 categories} Another fruitful source of examples comes from the Brill--Noether theory of Bridgeland moduli space of stable objects on K3 surfaces \cite{Markman, AT}, and more generally in K3 categories \cite{B,BCJ2}. The results of this paper apply verbatim to the correspondence spaces in these situations where the Brill--Noether loci are shown to be of expected dimensions.
  
\subsection*{Convention} In the introduction we use {\bf cotravariant} conventions to compare with the computations in Grothendieck group, however in the main body of the paper we will use homological indices and the {\bf covariant} convention of \cite{Ful}.
We fix a field $\kk$ of characteristic zero, and all schemes and morphisms are defined over $\kk$. Throughout this paper $X$ will be a {\bf reduced} locally Noetherian scheme of pure dimension, and $\sG$ be a coherent sheaf over $X$. For any set $\shS$, $\alpha,\beta \in \shS$, we use $\delta_{\alpha,\beta}$ to denote the Kronecker  delta function, i.e. $\delta_{\alpha,\beta} = 0$ if $\alpha \ne \beta$ and $\delta_{\alpha,\alpha} = 1 \in \ZZ$.
For motives we use the {\bf covariant} convention of \cite{Ful}, which is compatible with homological indices.  For a smooth scheme $X$ over some ground field $\kk$, denote by $\foh(X)$ its class $(X, \Id_X,0)$ in the Grothendieck's category of covariant Chow motives over $\kk$ (compared with $h$ in the introduction for contravariant motives). We use $\foh(X)(i)$ to denote the Tate twist $\foh(X) \otimes L^{i}$, where $L = (\PP^1, p =[\PP^1 \times \{0\}] )$ is the Tate motif. Notice that under the {\em covariant} convention, a morphism $f \colon X \to Y$ induces $f_* \colon \foh(X) \to \foh(Y)$ and $f^* \colon \foh(X) \to \foh(Y) (\dim Y - \dim X)$. Hence $\foh(\PP^1) = 1 \oplus L = 1 \oplus 1(1)$, where $1 = \foh(\Spec k)$. Also $CH_{k}(\foh(X)(n)) = CH_{k-n}(X)$. 

\subsection*{Acknowledgement} 
The author would like to thank Arend Bayer for many helpful discussions throughout this work, and thank Dougal Davis and Konstanze Rietsch for many helpful discussions on the combinatorial details of the main theorem. This work was supported by the Engineering and Physical Sciences Research Council [EP/R034826/1].

\section{Preliminaries}


\subsection{Quot-scheme} \label{sec:Quot} The Quot schemes introduced by Grothendieck \cite{Gro}, further developed by Mumford and by Altman--Kleiman, etc, plays an important role in modern algebraic geometry, especially for deformation theory and moduli problems. See Nitsure \cite{Nit} for a nice survey of the construction. 
We will be mainly concerned with Quot-schems of {\em Grassmannian type}, that is, for $\sE$ a coherent sheaf on $X$, consider the functor $\foQuot_{d}(\sE) := \foQuot_{\sE/X/X}^{d, \sO_X}$ which associates to any morphism $T \to X$ the set equivalence classes:
 	$$\foQuot_{X,d}(\sE)(T):= \{(\sP, q) \mid \text{$\sP \in \Coh(T)$ locally free of rank $d$}, q \colon \sE_T \twoheadrightarrow \sP ~\text{is $\sO_{T}$-linear}\} / \sim,$$
 where $\sE_T$ is the base-change of $\sE$ along $T \to X$, and two pairs are equivalent $(\sP,q) \sim (\sP',q')$ if  $\ker(q) = \ker(q')$. The functor is {\em representable} by a projective $X$-scheme, denoted by the same notation, together with a {\em tautological quotient bundle} $\sQ_d$ of rank $d$ and a quotient map $\pi^* \sE \twoheadrightarrow \sQ_d$, where $\pi \colon \foQuot_{X,d}(\sE) \to X$ is the natural projection map.
 
 \begin{example} \label{ex:proj}  The {\em projectivization} of $\sE$, denoted by $\PP(\sE) = \PP_X(\sE) :=\Proj \Sym_{\sO_X}^\bullet \sE$, represents the Quot-scheme of rank $d=1$ locally free quotients: $\PP(\sE) = \foQuot_{X,1}(\sE)$. Therefore for any $X$-scheme $f \colon T \to X$, to give a $X$-morphism $\phi \colon T \to \PP_X(\sE)$ is equivalent to give a line bundle $\sL$ over $T$ together with a surjective $\sO_T$-module map $f^* \sE \twoheadrightarrow \sL$. If $\sE$ is locally free, we will also use notation $\PP_{\rm sub}(\sE) : = \PP(\sE^\vee)$.
\end{example}

\begin{example} If $\sE$ is locally free of rank $r$ over $X$, then for any integer $1 \le d \le r$, the {\em rank $d$ Grassmannian bundle of $\sE$ over $X$} is $\Gr_d(\sE) : = \Gr(\sE^\vee, d) : =  \foQuot_{d}(\sE^\vee)$ where $\sE^\vee: = \sHom_{\sO_X}(\sE,\sO_X).$ Therefore $\Gr_d(\sE)$ parametrizes rank $d$ sub-bundles of $\sE$, or equivalently rank $d$ locally free quotients of $\sE^\vee$. If $X = \Spec \kk$ and $\sE = V$ is a $\kk$-vector space of rank $r$, then we will simply call $\Gr_d(V) = \Gr_d(r)$ the {\em Grassmannian}. 
\end{example}

\subsection{Degeneracy loci} \label{sec:deg} Standard references are \cite{FP, Ful, GKZ, GG, Laz04}.
\begin{definition}
\begin{enumerate}[leftmargin=*]
	\item  Let $\sG$ be a coherent sheaf of (generic) rank $r$ over a scheme $X$. Denote 
	$$X^{ \ge k}(\sG): = \{x \in X \mid \rank \sG(x) \ge k\}  \quad \text{for} \quad k \in \ZZ$$
the degeneracy locus of $\sG$. Notice that $X^{ \ge k}(\sG) = X$ if $k \le r$, so by convention the {\em first degeneracy locus} or the {\em singular locus} of $\sG$ is defined to be $X_{\mathrm{sg}}(\sG) : = X^{\ge r+1}(\sG)$. 
	\item Let $\sigma: \sF \to \sE$ a map of $\sO_X$ modules between locally free sheaves $\sF$ and $\sE$ on $X$. The {\em degeneracy locus of $\sigma$ of rank $\ell$} is:
	$$D_\ell(\sigma) := \{ x \in X \mid \rank \sigma (x) \le \ell \}.$$
\end{enumerate}
\end{definition}
The degeneracy loci $X^{\ge k}(\sG)$ and $D_\ell(\sigma)$ are closed {\em subschemes} of $X$, with ideals generated by minors of the map $\sigma$, see for example \cite[\S 7,2]{Laz04}. The two notions are related as follows: for $\sigma \colon \sF \to \sE$ and $\sG: = \Coker \sigma$ to be the cokernel, then $X^{\ge k}(\sG) = D_{\rank \sE - k}(\sigma)$. 

The expected codimension of $D_{\ell}(\sigma) \subset X$ is $(\rank \sE -\ell)(\rank \sF -\ell)$, and if $\sG$ has homological dimension $\le 1$ (for example if $\sG: = \Coker \sigma$ such that $\delta := \rank \sG = \rank \sE - \rank \sF$) then the expected codimension of $X^{\ge \delta+i}(\sG) \subset X$ is $i(\delta+i)$ for $i \ge 0$.

If we consider the universal case $X=\Hom_\kk(W,V)$, the total space of maps between vector spaces $W$ and $V$ over a field $\kk$, and consider the tautological map $\sigma(A) = A$ for $A \in \Hom(W,V)$. Then the next lemma can be found or easily deduced from \cite{FP, GKZ, GG}. 

\begin{lemma} \label{lem:deg:universal} Let $A \in D_\ell \subset \Hom(W,V)$ to be a regular point of $D_\ell$, i.e. $A \in D_\ell \backslash D_{\ell-1}$. 
	\begin{enumerate}
		\item $T_{A} D_\ell = \{T \in \Hom(W,V) \mid T(\ker A) \subseteq \Im A\}$.
		\item $N_{D_\ell} X |_{A} = \Hom (\Ker A, \Coker A)$.
		\item $N_{D_\ell}^* X|_{A} = \{D \in \Hom(W,V) \mid DA = 0, AD = 0 \} = \Hom (\Coker A, \Ker A)$.
		\item $T_{A}^* D_\ell  
		= \Hom(W,V)/  \Hom (\Coker A, \Ker A)$.
	\end{enumerate}
\end{lemma}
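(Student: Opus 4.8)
The plan is to identify each of the four spaces by a direct local calculation at the regular point $A$, choosing convenient bases adapted to $A$. Since $A \in D_\ell \setminus D_{\ell-1}$, the rank of $A$ is exactly $\ell$; choose a splitting $W = \Ker A \oplus W'$ and $V = \Im A \oplus V''$, so that $A$ restricts to an isomorphism $W' \xrightarrow{\sim} \Im A$ and $\Coker A \cong V''$, with $\dim W' = \ell = \dim \Im A$. In block form with respect to these decompositions, $A = \begin{pmatrix} 0 & A' \\ 0 & 0 \end{pmatrix}$ where $A' \colon W' \to \Im A$ is invertible. I would carry out all four computations in these coordinates.

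\emph{Part (1).} The locus $D_\ell$ is cut out near $A$ by the vanishing of the $(\ell+1)\times(\ell+1)$ minors of the varying matrix. Writing a nearby point as $A + T$ with $T = \begin{pmatrix} T_{11} & T_{12} \\ T_{21} & T_{22}\end{pmatrix}$, a standard row/column reduction using invertibility of $A'$ shows that, to first order, the rank condition $\rank(A+T) \le \ell$ forces the Schur-complement block to vanish: $T_{21} = 0$ up to higher-order terms. Hence $T_A D_\ell = \{T : T_{21} = 0\}$, i.e. $T$ has no component $\Ker A \to \Coker A$ under the block decomposition; equivalently $T(\Ker A) \subseteq \Im A$. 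This gives (1). (One should remark that the expected-codimension/regularity hypothesis is exactly what guarantees $D_\ell$ is smooth at $A$ of the expected dimension, so that this first-order computation indeed computes the full Zariski tangent space.)

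\emph{Parts (2)--(4).} For (2), the normal space is $N_{D_\ell}X|_A = T_A\Hom(W,V)/T_A D_\ell = \Hom(W,V)/\{T_{21}=0\}$, which is canonically the block $\Hom(\Ker A, \Coker A)$; this is (2). For (3), dualize: $N^*_{D_\ell}X|_A = (N_{D_\ell}X|_A)^\vee = \Hom(\Ker A, \Coker A)^\vee = \Hom(\Coker A, \Ker A)$, using the trace pairing on $\Hom(W,V)$. It remains to check this subspace of $\Hom(W,V)$ is exactly $\{D : DA = 0,\ AD = 0\}$: writing $D$ in block form and imposing $AD=0$ and $DA=0$ with $A'$ invertible kills every block except the one $\Coker A \to \Ker A$, giving (3). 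Finally (4) is immediate: $T^*_A D_\ell = T^*_A\Hom(W,V)/N^*_{D_\ell}X|_A = \Hom(W,V)/\Hom(\Coker A, \Ker A)$ (again via the trace pairing identifying $\Hom(W,V)$ with its own dual).

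The only genuine subtlety — the ``main obstacle'' — is Part (1): one must justify that the naive first-order analysis of the minor equations computes the honest tangent space, which requires knowing that $D_\ell$ is smooth at $A$ of the expected dimension $\ell(\dim W + \dim V) - (\dim W - \ell)(\dim V - \ell)$. This is where one invokes the classical structure theory of generic determinantal varieties (the references \cite{FP, GKZ, GG}): $D_\ell \setminus D_{\ell-1}$ is smooth, being the total space of a bundle over the Grassmannian of $\ell$-dimensional quotients of $W$ (or a homogeneous space under $\GL(W)\times\GL(V)$), so the dimension count matches and the first-order computation is exact. Everything else is bookkeeping in adapted bases.
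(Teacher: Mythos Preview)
Your proof is correct and complete. Note, however, that the paper does not actually prove this lemma: it states the result and says it ``can be found or easily deduced from \cite{FP, GKZ, GG}.'' So there is no ``paper's own proof'' to compare against --- your direct local computation in adapted bases is precisely the standard argument one finds (or reconstructs) from those references, and it is exactly what the paper is gesturing at.

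One small remark: in part~(3) the paper writes $D \in \Hom(W,V)$, but for the compositions $DA$ and $AD$ to make sense one should read $D \in \Hom(V,W)$, identifying $\Hom(W,V)^\vee \cong \Hom(V,W)$ via the trace pairing. You implicitly handle this correctly when you invoke the trace pairing and then check the block conditions, but it would be worth saying so explicitly, since as literally written in the statement the conditions $DA=0$, $AD=0$ are not type-correct.
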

More generally, if $\sigma: \sF \to \sE$ a map of vector bundles over $X$, and for a fixed $\ell$, consider the regular part of degeneracy loci $D: =  D_\ell(\sigma) \backslash D_{\ell-1}(\sigma)$. We have the following:

\begin{lemma} \label{lem:deg:normal} Assume $X$ is Cohen-Macaulay, and $D: =  D_\ell(\sigma) \backslash D_{\ell-1}(\sigma) \subset X$ has expected codimension $(\rank \sE -\ell)(\rank \sF -\ell)$. Note that by definition $\sigma|_D \colon \sF|_D \to \sF|_D$ has constant rank $\ell$ over $D$. Then $K: = \Ker \sigma|_D$ and $C: = \Coker \sigma|_D$ are locally free sheaves over $D$. Moreover, $D \subset X$ is a locally complete intersection subscheme, with  $N_{D/X} \simeq K^\vee \otimes C$.
\end{lemma}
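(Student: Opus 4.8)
The plan is to reduce to the universal case of Lemma \ref{lem:deg:universal} by pulling back along the classifying map, and then to deduce the local complete intersection property and the normal bundle formula from a dimension/flatness argument. First I would set $m = \rank \sF$, $n = \rank \sE$, and work locally on $X$: pick an open $U \subseteq X$ over which $\sF$ and $\sE$ are trivialized, so that $\sigma|_U$ is classified by a morphism $\varphi \colon U \to \Hom_\kk(W,V)$ with $W = \kk^m$, $V = \kk^n$, and $D_\ell(\sigma)|_U = \varphi^{-1}(D_\ell)$ scheme-theoretically (the minors pull back to the minors). Since $D := D_\ell(\sigma)\setminus D_{\ell-1}(\sigma)$ is assumed to have the expected codimension $(n-\ell)(m-\ell) = \codim_{\Hom(W,V)} D_\ell$, and since over the open stratum $D_\ell \setminus D_{\ell-1}$ the ambient $\Hom(W,V)$ is smooth with $D_\ell$ smooth of that codimension (its smooth locus is exactly $D_\ell\setminus D_{\ell-1}$, with tangent space computed in Lemma \ref{lem:deg:universal}(1)), I would invoke a standard excess-intersection/flatness criterion: a codimension-$c$ locus cut out as the preimage of a codimension-$c$ locally complete intersection, under a morphism from a Cohen--Macaulay scheme, is itself a locally complete intersection, and the morphism $\varphi$ is ``non-characteristic'' along $D$. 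Concretely, the conormal sequence gives a surjection $\varphi^* N^*_{D_\ell/\Hom(W,V)} \twoheadrightarrow N^*_{D/X}$ of locally free sheaves of the same rank $(n-\ell)(m-\ell)$ (the source is locally free since $D_\ell\setminus D_{\ell-1}$ is smooth, the target is locally free of that rank because $D$ has the expected codimension in the Cohen--Macaulay scheme $X$, hence is a local complete intersection — this is the content of the first assertion), and a surjection of locally free sheaves of equal rank is an isomorphism.

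Next I would identify the two sides of that isomorphism. On the one hand, $\sigma|_D$ has constant rank $\ell$ (every point of $D$ lies in $D_\ell\setminus D_{\ell-1}$, so $\rank\sigma(x) = \ell$ for all $x\in D$), hence by the standard fact that a bundle map of locally constant rank has locally free kernel and cokernel, $K := \Ker(\sigma|_D)$ and $C := \Coker(\sigma|_D)$ are locally free on $D$, of ranks $m-\ell$ and $n-\ell$ respectively. On the other hand, by Lemma \ref{lem:deg:universal}(3) the universal conormal bundle $N^*_{D_\ell/\Hom(W,V)}$ at a point $A$ is $\Hom(\Coker A, \Ker A)$, and this identification is natural: pulling back along $\varphi$ sends $A\mapsto \sigma(x)$, $\Ker A \mapsto K_x$, $\Coker A \mapsto C_x$, so $\varphi^* N^*_{D_\ell/\Hom(W,V)} \simeq \sHom_{\sO_D}(C, K) = C^\vee \otimes K$ as a bundle over $D$. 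Dualizing the isomorphism $C^\vee\otimes K \xrightarrow{\ \sim\ } N^*_{D/X}$ gives $N_{D/X} \simeq C\otimes K^\vee \simeq K^\vee\otimes C$, which is the claim.

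I expect the main obstacle to be making the ``equal-rank surjection'' step fully rigorous, i.e. verifying cleanly that the pulled-back conormal bundle $\varphi^* N^*_{D_\ell/\Hom(W,V)}$ really surjects onto $N^*_{D/X}$ and that the latter is locally free of the expected rank. The surjectivity is the right-exactness of the conormal/cotangent sequence for the composition $D \hookrightarrow \varphi^{-1}(D_\ell) \hookrightarrow U$ and is formal; the subtle point is that $N^*_{D/X}$, a priori only a quotient of $\Omega$-type sheaves, is in fact \emph{locally free of rank} $(n-\ell)(m-\ell)$ — equivalently that $D\subseteq X$ is cut out locally by exactly $(n-\ell)(m-\ell)$ equations forming a regular sequence. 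This is precisely where the Cohen--Macaulay hypothesis on $X$ together with the expected-codimension hypothesis on $D$ enters: in a Cohen--Macaulay local ring, any ideal of height $c$ generated by $c$ elements is a complete intersection, and the minors defining $D_\ell$ pull back to $(n-\ell)(m-\ell)$ generators of the ideal of $D$ on a suitable affine chart (after restricting to the open $D_\ell\setminus D_{\ell-1}$, which near $D$ is locally defined by that many minors since it is smooth there). Once this regular-sequence statement is in hand the rest is bookkeeping, and I would also remark that the formula $N_{D/X}\simeq K^\vee\otimes C$ is globally valid on all of $D$ since the local identifications are canonical and hence glue.
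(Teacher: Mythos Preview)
Your proposal is correct and follows essentially the same approach as the paper: both reduce to the universal situation of Lemma~\ref{lem:deg:universal}, use the Cohen--Macaulay hypothesis together with the expected-codimension assumption to obtain the local complete intersection property, and then pull back the normal-bundle identification $N \simeq \Hom(\Ker,\Coker)$ from the universal case. The only cosmetic difference is that the paper passes through the \emph{relative} Hom bundle $H = |\Hom_X(\sF,\sE)| \to X$ with its tautological section $s_\sigma$ (so that $D = s_\sigma^{-1}(\DD)$ globally, and $N_{D/X} = s_\sigma^* N_{\DD/H}$), whereas you trivialize locally and map to the absolute $\Hom_\kk(W,V)$; these are equivalent local models.
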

\begin{proof} Consider $\pi \colon H = |\Hom_X(\sF,\sE)| \to X$ the total Hom space, and let $\DD_\ell \subset H$ be the universal degeneracy loci for the tautological map $\sigma' \colon \pi^* \sF \to \pi^* \sE$. Firstly, the desired results hold for $\DD: = \DD_\ell \backslash \DD_{\ell-1}$. In fact, by considering affine covers of $X$, we may assume $X = \Spec A$ and $\sF = W \otimes_\kk \sO_{\Spec A}$, $\sE = V \otimes_\kk \sO_{\Spec A}$ for vector spaces $W, V$. Then $H = \Hom(W, V) \times X$, $\DD_\ell = D_\ell \times X $, and the results follow from the point case Lem. \ref{lem:deg:universal}. Notice that $\Coker \sigma'$ and $\Ker \sigma'$ are locally free by Nakayama's lemma and our ongoing assumption that $X$ is reduced. 

Next, the map $\sigma \colon \sF \to \sE$ induces a section map $s_{\sigma} \colon X \to H$ such that $\sigma = s_{\sigma}^* \sigma'$, and $D = \DD \times_{X} H$ is the fiber product along the section. As the inclusion $\DD \hookrightarrow H$ is a locally complete intersection, $H$ and $X$ are Cohen-Macaulay, and the expected dimension condition implies ${\rm depth}(D,X) = \codim (D,X)$, hence the inclusion $D \hookrightarrow X$ is also locally complete intersection, with normal bundle $N_{D/X} = s_{\sigma}^* N_{\DD/H}$. Finally $s_{\sigma}^* N_{\DD/H} = K^\vee \otimes C$ since $K =  s_{\sigma}^* \Ker(\sigma')$ and $C = s_{\sigma}^* \Coker(\sigma')$, which follows from pulling back the exact sequence of vector bundles on $\DD$ to $D$.
\end{proof}

\subsection{Young diagram and Schur functions} \label{sec:Young} 
The standand references for this section are \cite{Ful, FulY, Mac}.
A partition $\lambda =  (\lambda_1, \lambda_2, \ldots, \lambda_d)$ of a positive integer $n$ is a sequence of integers whose sum $|\lambda|: = \sum_i \lambda_i = n$ and satisfy $\lambda_1 \ge \lambda_2 \ge \ldots \ge \lambda_d$. A partition $\lambda$ corresponds canonically to a Young diagram, also denoted by $\lambda$ by abuse of notations. There is a natural partial order $\subseteq$ of all Young diagrams given by inclusion; note that $\mu \subseteq \lambda$ if and only if $\mu_i  \le \lambda_i$ for all $i$. Denote for a non-negative integer $m$, denote $(m) : = (m,0,0,\ldots, 0)$ and $(m^d) = (m,m,\ldots,m)$. Denote $B_{d,\ell}$ the set of all Young diagrams subscribed in the rectangle of height $d$ and width $\ell$, denoted by $(d \times \ell)$, i.e $B_{d,\ell} = \{ \lambda \mid (0) \subseteq \lambda \subseteq (\ell^d) \} = \{\lambda = (\lambda_1, \lambda_2, \ldots, \lambda_d) \mid \ell \ge \lambda_1 \ge \ldots \ge \lambda_d \ge 0 \}$. For $\lambda \in B_{d,\ell}$, denote $\lambda^t \in B_{\ell,d}$ the {\em transpose} of the Young diagram $\lambda$, i.e. the diagram $\lambda^t$ obtained from $\lambda$ by reflection in the main diagonal; denote by $\lambda^c  = (\lambda^c_i :=\ell - \lambda_{d+1-i}) \in B_{d,\ell}$ the {\em complement} of $\lambda$ inside $B_{d , \ell}$. We will sometimes use the notation $\lambda^c  \equiv \lambda^{c_{d,\ell}} = (\ell^d) - \lambda = \ell - \lambda$ to indicate the dependence on $d,\ell$. 

For $\lambda \in B_{d_1,\ell_1}$ and $\nu \in B_{d_2,\ell_2}$, we denote by $\lambda \oslash \nu$ the standard Young diagram obtained by placing $\lambda$ below the rectangle $(d_2 \times \ell_1)$ and placing $\nu$ to the right of this rectangle. See the shaded region in the following diagram:
\begin{center}
\begin{tikzpicture}[scale=0.25]
    \draw (0,0) rectangle (10,8);
    \draw (0,5) -- (7,5) -- (7,0);
    \draw (7,8) -- (7,5) -- (10,5);
    
     \draw (-0.4,0) -- (-0.4,5);
    \draw (-0.6,0) -- (-0.2,0);
    \draw (-0.6,5) -- (-0.2,5);
    \node [left] at (-0.4,2.5) {$d_1$};
    
       \draw (0,-0.4) -- (7,-0.4);
    \draw (0,-0.2) -- (0,-0.6);
    \draw (7,-0.2) -- (7,-0.6);
    \node [below] at (3.5,-0.4) {$\ell_1$};
    
      \draw (7,8.4) -- (10,8.4);
    \draw (7,8.2) -- (7,8.6);
    \draw (10,8.2) -- (10,8.6);
    \node [above] at (8.5, 8.4) {$\ell_2$};
    
      \draw (10.4,5) -- (10.4,8);
    \draw (10.2,5) -- (10.6,5);
    \draw (10.2,8) -- (10.6,8);
    \node [right] at (10.4, 6.5) {$d_2$};
    
    \draw [fill=light-gray] (0,0) -- (0,5) -- (6,5) -- (6,4) -- (4,4) -- (4,2) -- (2,2) -- (2,1) -- (1,1) -- (1,0) -- (0,0);
    \node at (2.5,3.5) {$\lambda$};
    
       \draw [fill=light-gray] (7,5) -- (7,8) -- (10,8) -- (10,7) -- (9,7) -- (9,6) -- (8,6) -- (8,5) -- (7,5);
    \node at (8.25,7) {$\nu$};
    
     \draw [fill=light-gray] (0,5) -- (0,8) -- (7,8) -- (7,5) -- (0,5);
     
     \node[left] at (-4,4) {$\lambda \oslash \nu = $};
\end{tikzpicture}
\end{center}
We will simply use ${\scriptsize \young(*\nu,\lambda)}$ to indicate this diagram. Notice this operation depends on the two boxes, e.g. $(0) \oslash (0) = (\ell_1^{d_2})$. We hope its meaning is always clear from the context. 

It is also convenient to consider generalised partitions, i.e. a sequence of integers $\lambda = (\lambda_1 \ge \lambda_2 \ge \ldots \ge \lambda_d)$ whose entries $\lambda_i$ all allowed to be negative. There is an involution and a natural $\ZZ$-action on the set of all generalised Young diagrams of $d$ entries as follows. For $\lambda$, denote the involution of $\lambda$ by $-\lambda := (-\lambda_d, \ldots, -\lambda_1)$; for $k \in \ZZ$, there is an action $\lambda \mapsto \lambda+k :=  (\lambda_i + k) $. For two partitions $\lambda, \mu$, the sum is denoted by $\lambda + \mu : = (\lambda_i + \mu_i)$;  mote then the complement of $\lambda \in B_{d,\ell}$ is $\lambda^c = \ell - \lambda = (\ell^d) - \lambda$, justifying above notations.

For $\lambda \in B_{d,\ell}$ and a formal power series $c = \sum_i c_i t^i \in R[[t]]$ (with coefficients in some fixed commutative ring $R$ with unit $1 \in R$), denote the {\em Schur function} by
	$\Delta_{\lambda}(c): = \det (c_{\lambda_i+j-i})_{1 \le i,j \le d} \in R[t]$. 
For $\lambda, \mu \in B_{d,\ell}$, the {\em skew Schur function} is denoted by
	$\Delta_{\lambda/\mu} (c) : = \det (c_{\lambda_i - \mu_j +j-i})_{1 \le i,j \le d} \in R[t]$. {\em We will only consider the case $c_i = 0$ for $i<0$ and $c_0 = 1$}. The skew Schur function $\Delta_{\lambda/\mu} (c)$ is zero unless $\mu \subseteq \lambda$, in which case it depends only on the {\em skew Young diagram $\lambda/\mu$}, i.e. the set-theoretic difference of the diagrams $\lambda \backslash \mu$. Note that $\Delta_{\lambda}(c) = \Delta_{\lambda/(0)}(c)$; also $\Delta_{\lambda/\mu}(1) = 0$ unless $\lambda = \mu$, in which case $\Delta_{\lambda/\lambda} (1) = \Delta_{(0)}(1) = 1$.
If the formal power series $c = c(E)$ is given by the total Chern class of a vector bundle $E$ on a fixed scheme $X$, we will denote $\Delta_{\lambda/\mu}(E) := \Delta_{\lambda/\mu} (c(E))$. Note that the Schur function for $E$ only depends on its class in the K group $K_0(X)$, therefore the Schur function is naturally defined for the whole K group by $\Delta_{\lambda/\mu}(-E) := \Delta_{\lambda/\mu} (c(E)^{-1})$, and $\Delta_{\lambda/\mu}(E+F) := \Delta_{\lambda/\mu}(c(E) \cdot c(F))$.

For any partition $\lambda,\mu,\nu$ with $|\lambda| = |\mu| + |\nu|$, denote $c_{\mu, \nu}^{\lambda} \in \ZZ_{\ge 0}$ the {\em Littlewood-Richardson (LR) coefficients}, which counts the number of semi-standard Young tableaux of shape $\lambda /\mu$ and weight $\nu$ whose reverse word reading is a lattice permutation, see for example \cite{Ful, FulY, Mac}. Notice that $c_{\mu, \nu}^{\lambda} \ne 0$ implies $\mu \subseteq \lambda$ and $\nu \subseteq \lambda$. For any fixed $c  \in R[[t]]$ :
\begin{lemma}[Littlewood-Richardson rule] \label{lem:LR}
\begin{enumerate}
	\item  For all partitions $\mu, \nu$, the following holds:	
		$$\Delta_{\mu}(c) \cdot \Delta_{\nu}(c) = \sum_{\lambda} c_{\mu, \nu}^{\lambda} \Delta_\lambda (c).$$
	\item For any partitions $\mu \subseteq \lambda$, the following holds:
		$$\Delta_{\lambda/ \mu} (c) = \sum_{\nu \, \mid \,\nu \subseteq \lambda} c_{\mu,\nu}^\lambda \Delta_\nu (c).$$
\end{enumerate}
\end{lemma}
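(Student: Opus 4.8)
The plan is to deduce both identities from the classical Littlewood--Richardson rule for Schur functions by pushing it forward along a specialization homomorphism, after first observing that all the determinants in sight are insensitive to the number of rows used. Since $c_0 = 1$ and $c_i = 0$ for $i < 0$, replacing a partition $\lambda$ (and $\mu$) by the one with an extra zero part appends to the matrix $\bigl(c_{\lambda_i - \mu_j + j - i}\bigr)$ a new bottom row equal to $(c_{-d}, \ldots, c_{-1}, c_0) = (0, \ldots, 0, 1)$, and Laplace expansion along it recovers the original determinant; hence $\Delta_{\lambda/\mu}(c)$, and in particular $\Delta_\lambda(c) = \Delta_{\lambda/(0)}(c)$, does not depend on the chosen number $d$ of rows (as long as $d$ is at least the number of nonzero parts). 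So we may evaluate every determinant appearing in a given identity with one common $d$.

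Next I would introduce the ring of symmetric functions $\Lambda$, viewed as the polynomial ring freely generated by the complete homogeneous symmetric functions $h_0 = 1, h_1, h_2, \ldots$ with $\deg h_i = i$ (extend scalars to $R$ if necessary). Because $c = \sum_{i \ge 0} c_i t^i$ has $c_0 = 1$, there is a well-defined $R$-algebra homomorphism $\phi \colon \Lambda_R \to R[t]$ with $\phi(h_i) = c_i\, t^i$; no hypothesis on the $c_i$ is needed since the $h_i$ are free generators. By the Jacobi--Trudi identities $s_\lambda = \det\bigl(h_{\lambda_i - i + j}\bigr)$ and $s_{\lambda/\mu} = \det\bigl(h_{\lambda_i - \mu_j + j - i}\bigr)$, applying $\phi$ and extracting the scalar factor $t^{\lambda_i - i}$ from row $i$ and $t^{\,j - \mu_j}$ from column $j$ gives
$$\phi(s_{\lambda/\mu}) = t^{\,|\lambda| - |\mu|}\,\Delta_{\lambda/\mu}(c), \qquad \phi(s_\lambda) = t^{\,|\lambda|}\,\Delta_\lambda(c),$$
using that $\sum_{i=1}^d (\lambda_i - i) + \sum_{j=1}^d (j - \mu_j) = |\lambda| - |\mu|$.

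With this in hand, I would simply apply $\phi$ to the classical Littlewood--Richardson rule in $\Lambda$ in both of its forms, $s_\mu \cdot s_\nu = \sum_\lambda c_{\mu,\nu}^{\lambda}\, s_\lambda$ and $s_{\lambda/\mu} = \sum_{\nu} c_{\mu,\nu}^{\lambda}\, s_\nu$ (see \cite{Ful, FulY, Mac}). Since $c_{\mu,\nu}^\lambda = 0$ unless $\mu, \nu \subseteq \lambda$ and $|\lambda| = |\mu| + |\nu|$ — which matches the index ranges in the statement — the $t$-degree computation above shows that, within each identity, every term carries the same power of $t$, namely $t^{|\mu| + |\nu|}$ in the first case and $t^{|\lambda| - |\mu|}$ in the second; cancelling this power yields precisely (1) and (2).

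The only genuinely substantial input is the classical Littlewood--Richardson rule itself (together with its skew version), whose combinatorial proof — via jeu de taquin, the plactic monoid, or Bender--Knuth-type involutions — is standard; I would cite it rather than reprove it, so for this lemma there is no real obstacle beyond the bookkeeping above. The only point needing a little care is that the lemma is asserted over an arbitrary commutative ring $R$ with unit: this is harmless because the LR rule is an identity with integer coefficients in the polynomial ring $\ZZ[h_1, h_2, \ldots]$, hence it persists after base change to $R$ and after the specialization $h_i \mapsto c_i$, so no positivity or characteristic assumption on $R$ enters.
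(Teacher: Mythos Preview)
Your proposal is correct. The reduction via the specialization $\phi \colon \Lambda_R \to R[t]$, $h_i \mapsto c_i t^i$, together with the Jacobi--Trudi identity and homogeneity, cleanly transports the classical identities $s_\mu s_\nu = \sum_\lambda c_{\mu,\nu}^\lambda s_\lambda$ and $s_{\lambda/\mu} = \sum_\nu c_{\mu,\nu}^\lambda s_\nu$ to the $\Delta$-determinants. One small remark: the row/column extraction of powers of $t$ is slightly delicate when $\lambda_i - \mu_j + j - i < 0$ (the exponent is negative but the entry is zero); a cleaner phrasing is simply to note that $s_{\lambda/\mu}$ is homogeneous of degree $|\lambda|-|\mu|$ in $\Lambda$, so $\phi(s_{\lambda/\mu})$ is a single monomial $a\,t^{|\lambda|-|\mu|}$, and setting $t=1$ identifies $a = \Delta_{\lambda/\mu}(c)$.

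As for comparison with the paper: there is nothing to compare. The paper states this lemma without proof, treating it as a standard fact from \cite{Ful, FulY, Mac}; no argument is given beyond the citation. Your write-up therefore supplies more than the paper does, and is entirely in the spirit of what the authors intend by the reference.
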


\begin{lemma}[Summation formula; see {\cite[Chapter I, (5.11)]{Mac}}] \label{lem:sum} Let $E_1, E_2, \ldots, E_m$ be $m$ vector bundles on a scheme $X$, and let $\lambda,\mu$ be partitions, then 
	$$\Delta_{\lambda / \mu} (E_1+E_2 + \ldots + E_m) = \sum_{(\nu)} \prod_{i=1}^m   \Delta_{\nu^{(i)}/\nu^{(i-1)}} (E_i) $$
where the summation runs through over all sequences $(\nu) = (\nu^{(0)},\nu^{(1)}, \ldots, \nu^{(m)})$of partitions, such that $\nu^{(0)} = \mu$, $\nu^{(m)} = \lambda$ and $\nu^{(0)} \subseteq \nu^{(1)} \subseteq \ldots \subseteq \nu^{(m)}$. In particular, 
	$$\Delta_{\lambda/ \mu}(E+F) = \sum_{\nu \, \mid \, \mu \subseteq \nu \subseteq \lambda} \Delta_{\lambda/ \nu}(E) \cdot \Delta_{\nu/ \mu}(F). $$
\end{lemma}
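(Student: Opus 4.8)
The plan is to establish the two-summand case (the ``in particular'' statement) first, by a Cauchy--Binet computation, and then to deduce the general $m$-summand formula from it by induction on $m$.

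The induction will be routine: for $m=1$ the identity is a tautology, and for the inductive step one uses that $\Delta_{\lambda/\mu}(-)$ is by construction a function of the single power series $\prod_i c(E_i)$, hence symmetric in $E_1, \ldots, E_m$; applying the two-summand identity to the pair $E_m$ and $E_1 + \cdots + E_{m-1}$ yields
$$\Delta_{\lambda/\mu}(E_1 + \cdots + E_m) = \sum_{\nu^{(m-1)}} \Delta_{\lambda/\nu^{(m-1)}}(E_m) \cdot \Delta_{\nu^{(m-1)}/\mu}(E_1 + \cdots + E_{m-1}),$$
and expanding the second factor by the inductive hypothesis over chains $\mu = \nu^{(0)} \subseteq \cdots \subseteq \nu^{(m-1)}$, together with the identity $\Delta_{\lambda/\nu^{(m-1)}}(E_m) = \Delta_{\nu^{(m)}/\nu^{(m-1)}}(E_m)$ with $\nu^{(m)} := \lambda$, produces precisely the asserted sum over chains $\mu = \nu^{(0)} \subseteq \cdots \subseteq \nu^{(m)} = \lambda$.

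For the two-summand case, I would set $e = c(E)$ and $f = c(F)$, so that $\Delta_{\lambda/\mu}(E+F) = \det\big((ef)_{\lambda_i - \mu_j + j - i}\big)_{1 \le i,j \le d}$, where $(ef)_n = \sum_{a+b=n} e_a f_b$ and $e_a = f_b = 0$ for $a, b < 0$ while $e_0 = f_0 = 1$. Writing $p_i := \lambda_i - i$ and $q_j := \mu_j - j$ --- strictly decreasing in their indices because $\lambda, \mu$ are partitions --- the $(i,j)$-entry is $(ef)_{p_i - q_j} = \sum_{k \in \ZZ} e_{p_i - k}\, f_{k - q_j}$, so this matrix factors as the product of $\big(e_{p_i - k}\big)_{1 \le i \le d,\, k \in \ZZ}$ and $\big(f_{k - q_j}\big)_{k \in \ZZ,\, 1 \le j \le d}$; the first has nonzero columns only for $k \le \lambda_1 - 1$ and the second nonzero rows only for $k \ge \mu_d - d$, so only finitely many $k$ contribute and Cauchy--Binet applies. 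It gives
$$\Delta_{\lambda/\mu}(E+F) = \sum_{k_1 > k_2 > \cdots > k_d} \det\big(e_{p_i - k_j}\big)_{1 \le i,j \le d} \cdot \det\big(f_{k_i - q_j}\big)_{1 \le i,j \le d},$$
the sum over $d$-element subsets $\{k_1 > \cdots > k_d\}$ of $\ZZ$. Via the bijection $\nu_i := k_i + i$ between such subsets and generalized partitions $\nu_1 \ge \cdots \ge \nu_d$ one has $k_i = \nu_i - i$, hence $\det\big(e_{p_i - k_j}\big) = \det\big(e_{\lambda_i - \nu_j + j - i}\big) = \Delta_{\lambda/\nu}(E)$ and likewise $\det\big(f_{k_i - q_j}\big) = \Delta_{\nu/\mu}(F)$; a term vanishes unless both skew Schur functions are nonzero, which forces $\mu \subseteq \nu \subseteq \lambda$ (in particular $\nu$ is an honest partition), and that recovers the stated range of summation.

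The one step requiring care will be justifying Cauchy--Binet over the infinite index set $\ZZ$, which is taken care of by the vanishing of negative-degree coefficients truncating the product to a finite one, together with keeping the index shift $n \mapsto n - i$ and the two ordering conventions (strictly decreasing $k$'s versus weakly decreasing parts of $\nu$) consistent throughout; the rest is bookkeeping. An alternative avoiding Cauchy--Binet would be induction on $|\lambda| - |\mu|$ using the single-box branching recursion for $\Delta_{\lambda/\mu}(E+F)$; and of course one may instead simply invoke \cite[Ch.\ I, (5.11)]{Mac}.
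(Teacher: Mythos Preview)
Your proof is correct. The paper does not actually prove this lemma; it simply records the statement with a reference to \cite[Chapter I, (5.11)]{Mac} and moves on. So there is no ``paper's own proof'' to compare against --- you have supplied one where the paper was content to cite.

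For what it is worth, your Cauchy--Binet argument is the standard Jacobi--Trudi route to this identity and is carried out cleanly: the index bookkeeping (the shifts $p_i = \lambda_i - i$, $q_j = \mu_j - j$, $k_i = \nu_i - i$) is correct, the finiteness justification for truncating the $\ZZ$-indexed product is sound, and the reduction to honest partitions $\mu \subseteq \nu \subseteq \lambda$ via the vanishing of skew Schur determinants is the right way to close. The induction from $m=2$ to general $m$ is routine as you say. Macdonald's own argument in \cite{Mac} proceeds somewhat differently, via the adjunction between multiplication and comultiplication in the ring of symmetric functions, but your determinantal approach is equally valid and arguably more in the spirit of how the paper uses $\Delta_{\lambda/\mu}$ throughout (namely, as a determinant in Chern classes rather than as a symmetric function).
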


The {Littlewood-Richardson coefficients} $c_{\mu, \nu}^{\lambda}$ has many symmetries, described by {\em Berenstein--Zelevinsky (BZ)}'s triple multiplicities. For $\ell,m,n \in \ZZ_{\ge 0}^{d-1}$, Berenstein--Zelevinsky defined the a nonnegative integer $c_{\ell,m, n}$, {\em BZ number}, which counts the number of BZ patterns of type $(d-1;\ell,m,n)$, see \cite{BZ,Fom}. Let $\lambda, \mu, \nu$ are partitions with $d$-entries such that  $|\lambda| = |\mu| + |\nu|$, then BZ number is related to Littlewood-Richardson coefficient by:
	$$c_{\mu, \nu}^{\lambda} = c_{\ell(-\lambda), \ell(\mu), \ell(\nu)},$$
where for a generalize partition $\lambda=(\lambda_1,\ldots, \lambda_d)$, $\ell(\lambda): = (\lambda_1 - \lambda_2, \ldots, \lambda_{d-1} - \lambda_{d}) \in \ZZ_{\ge 0}^{d-1}$. For $\ell = (\ell_1, \ldots, \ell_{d-1}) \in  \ZZ_{\ge 0}^{d-1}$, denote $\ell^* : = (\ell_{d-1}, \ldots, \ell_1)  \in \ZZ_{\ge 0}^{d-1}$.

\begin{lemma}[Berenstein--Zelevinsky \cite{BZ}] \label{lem:BZ} The BZ numbers $c_{\ell, m, n}$ has a symmetry group of order $12$ generated by permutations of $\ell,m,n$ and the involution $(\ell,m,n) \mapsto (\ell^*, m^*, n^*)$, i.e.
	$$c_{\ell,m,n} = c_{m, \ell, n} = c_{\ell, n,m} = c_{n,m,\ell} = c_{m,n,\ell} = c_{n,\ell,m}, \qquad c_{\ell,m,n} = c_{\ell^*, m^*, n^*}.$$
\end{lemma}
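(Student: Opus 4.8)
The plan is to translate the statement into one about Littlewood--Richardson coefficients via the identity $c^{\lambda}_{\mu,\nu}=c_{\ell(-\lambda),\,\ell(\mu),\,\ell(\nu)}$ recalled just above, and then establish the two generating symmetries on that side using the representation theory of $\SL_d$. The first point is that both operations in the statement have transparent meaning on partitions. Permuting the three $(d-1)$-tuples $\ell,m,n$ corresponds, via the map $\alpha\mapsto\ell(\alpha)$, to permuting the three generalized partitions $-\lambda,\mu,\nu$, since $\ell=\ell(-\lambda)$, $m=\ell(\mu)$, $n=\ell(\nu)$. And the reversal $\alpha\mapsto\alpha^{*}$ on tuples corresponds to negation on generalized partitions: a one-line computation gives $\ell(\alpha)^{*}=\ell(-\alpha)$ for every $\alpha=(\alpha_{1}\ge\cdots\ge\alpha_{d})$, because $-\alpha=(-\alpha_{d},\ldots,-\alpha_{1})$ and the successive differences get reversed. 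Thus it suffices to show that, viewed as structure constants for the product of $\GL_d$-characters (equivalently Schur polynomials in $d$ variables, extended to generalized partitions via $V_{-\alpha}=V_{\alpha}^{\vee}$ and invariant under a common additive shift of all parts), the number $c^{\lambda}_{\mu,\nu}$ is invariant under (i) all six permutations of the triple $(-\lambda,\mu,\nu)$, and (ii) the simultaneous negation $(-\lambda,\mu,\nu)\mapsto(\lambda,-\mu,-\nu)$.

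For (i), use the standard interpretation $c^{\lambda}_{\mu,\nu}=\dim\bigl(V_{\mu}\otimes V_{\nu}\otimes V_{\lambda}^{\vee}\bigr)^{\SL_d}=\dim\bigl(V_{-\lambda}\otimes V_{\mu}\otimes V_{\nu}\bigr)^{\SL_d}$, which holds whenever $|\mu|+|\nu|=|\lambda|$ (the only case in which $c^{\lambda}_{\mu,\nu}\ne 0$). The dimension of the space of $\SL_d$-invariants in a tensor product of finitely many finite-dimensional modules is independent of the order of the factors, which is exactly the $S_{3}$-symmetry permuting $(-\lambda,\mu,\nu)$, hence permuting $(\ell,m,n)$. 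For (ii), recall that for any finite-dimensional $\SL_d$-module $W$ one has $\dim W^{\SL_d}=\dim (W^{\vee})^{\SL_d}$ (invariants agree with coinvariants for a reductive group, and dualizing is exact); applying this to $W=V_{-\lambda}\otimes V_{\mu}\otimes V_{\nu}$ and using $(V_{\alpha})^{\vee}=V_{-\alpha}$ gives $c^{\lambda}_{\mu,\nu}=\dim\bigl(V_{\lambda}\otimes V_{-\mu}\otimes V_{-\nu}\bigr)^{\SL_d}$, i.e.\ invariance under negating the contents of all three slots, which unwinds to $(\ell,m,n)\mapsto(\ell^{*},m^{*},n^{*})$. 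As an alternative, (i)--(ii) can be proved purely combinatorially in the spirit of Berenstein--Zelevinsky's original argument: BZ patterns of type $(d-1;\ell,m,n)$ are nonnegative integer arrays on the vertices of a hexagonal graph cut out by inequalities manifestly invariant under the symmetry group of the hexagon, which has order $12$.

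It remains to assemble the group. The negation involution of (ii) commutes with each of the permutations in (i) (it is central), so the group generated is a quotient of $S_{3}\times\ZZ/2$; and since negation is not itself one of the six permutations of $(-\lambda,\mu,\nu)$—checked on a single explicit triple with $-\lambda,\mu,\nu$ pairwise distinct and not closed under negation—the group is precisely $S_{3}\times\ZZ/2$, of order $12$. Transporting through $\alpha\mapsto\ell(\alpha)$ identifies it with the group generated by the permutations of $(\ell,m,n)$ and the involution $(\ell,m,n)\mapsto(\ell^{*},m^{*},n^{*})$, and reading off the six displayed equalities finishes the proof.

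I expect the only real friction to be bookkeeping rather than mathematics: one must fix the dictionary between Schur polynomials in $d$ variables, characters of $\GL_d$, and the $(d-1)$-tuples $\ell(\alpha)$ carefully enough that the passage to generalized partitions (negative parts $=$ dual modules) and the choice of representatives modulo a common shift are consistent across all three slots simultaneously; once that is set up, each individual symmetry is a one-line consequence of the invariance of invariant-space dimensions under reordering and dualizing the tensor factors.
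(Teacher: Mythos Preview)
The paper does not supply a proof of this lemma; it is quoted directly from \cite{BZ} as a background fact. So there is no ``paper's own proof'' to compare against beyond the original reference, where Berenstein--Zelevinsky argue combinatorially via the manifest dihedral symmetry of the hexagonal BZ-pattern diagram (the alternative you allude to at the end of your second paragraph).

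Your representation-theoretic route is correct and is the standard modern way to see these symmetries. The translation $\ell(\alpha)^{*}=\ell(-\alpha)$ is right, and the identification $c^{\lambda}_{\mu,\nu}=\dim\bigl(V_{-\lambda}\otimes V_{\mu}\otimes V_{\nu}\bigr)^{\SL_d}$ immediately gives both the $S_{3}$-symmetry (reorder tensor factors) and the involution (dualize). Compared to the combinatorial BZ-pattern argument, your approach is more conceptual and requires no case analysis, at the cost of invoking the dictionary between $\GL_d$/$\SL_d$ characters and generalized partitions; the BZ-pattern argument is self-contained but relies on having their explicit description of the patterns. Either is adequate here since the lemma is only used as input.
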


It follows immediately from BZ-lemma that for any $\lambda, \mu, \nu \in B_{d,\ell}$ such that $|\lambda| = |\mu| + |\nu|$, for any $k \ge 0$, $c_{\mu, \nu}^{\lambda} =  c_{\mu+k, \nu}^{\lambda+k} = c_{\mu, \nu+k}^{\lambda+k}$, and $c_{\mu, \nu}^{\lambda} = c_{\nu, \mu}^{\lambda} = c_{\mu, \lambda^c}^{\nu^c} = c_{\mu, \nu^c}^{\lambda+\ell} = c_{m-\mu, n - \nu}^{m+n - \lambda}$ where $ \forall m, n \in \ZZ$ such that $m \ge |\mu|$, $n \ge |\nu|$. Therefore $\Delta_{\lambda/\mu} = \Delta_{\mu^c/\lambda^c} = \Delta_{(\lambda+k)/(\mu+k)}$. 

\begin{lemma}[See \cite{La, Ma} or {\cite[\S 4, Ex. 5, p.67]{Mac}}] \label{lem:tensor} If $E$ and $F$ are vector bundles over $X$ of rank $n$ and $m$ respectively, then the total Chern class of tensor product is
	$$c(E \otimes F) = \sum_{\mu \subseteq \lambda \subseteq (m)^n} d_{\lambda,\nu} \Delta_{\mu^t}(E) \Delta_{\lambda^c}(F) t^{nm - |\lambda| + |\mu|}$$
where $d_{\lambda,\nu} = \det \left[\binom{\lambda_i+n-i}{\mu_j + n - j} \right]_{1 \le i,j \le n}$. In particular, its top Chern class is:
	$$c_{\rm top}(E \otimes F) = c_{mn}(E \otimes F) = \sum_{\lambda \in B_{n,m}} \Delta_{\lambda^t} (E) \cdot \Delta_{\lambda^c}(F)$$
where $\lambda^c = m - \lambda$ denotes the complement inside $B_{n,m}$
	
\end{lemma}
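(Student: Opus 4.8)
The plan is to deduce the formula from a purely formal identity of symmetric functions via the splitting principle, and then to establish that identity by the bialternant (Jacobi--Trudi) formalism together with a binomial expansion. By the splitting principle it suffices to prove the identity in the universal case, i.e.\ in $\ZZ[x_1,\dots,x_n]^{S_n}\otimes_\ZZ\ZZ[y_1,\dots,y_m]^{S_m}$ with $c(E)=\prod_{i=1}^n(1+x_it)$ and $c(F)=\prod_{j=1}^m(1+y_jt)$, so that
\[
c(E\otimes F)=\prod_{i=1}^n\prod_{j=1}^m\bigl(1+(x_i+y_j)t\bigr).
\]
Recall (dual Jacobi--Trudi together with the transpose/complement identities of \S\ref{sec:Young}) that $\Delta_\alpha(c(E))=s_{\alpha^t}(x_1,\dots,x_n)$ for every partition $\alpha$, where $s_\bullet$ denotes the Schur polynomial; hence $\Delta_{\mu^t}(E)=s_\mu(x)$ and $\Delta_{\lambda^c}(F)=s_{(\lambda^t)^c}(y)$, the second complement taken in $B_{m,n}$. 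Thus the task becomes: expand the product above in the basis $\{s_\mu(x)\,s_\nu(y)\}$ and match coefficients with the binomial determinants $d_{\lambda,\mu}:=\det\bigl(\binom{\lambda_i+n-i}{\mu_j+n-j}\bigr)_{1\le i,j\le n}$.

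First I would group the factors by the Chern roots of $E$. For each fixed $i$,
\[
\prod_{j=1}^m\bigl(1+(x_i+y_j)t\bigr)=\sum_{k=0}^m (x_i t)^k\,e_{m-k}\bigl(1+y_1t,\dots,1+y_mt\bigr)=:\sum_{k\ge 0}a_k\,x_i^k ,
\]
where $a_k=a_k(y,t)$ is symmetric in the $y_j$ and vanishes for $k>m$. A short count of monomials gives $e_p(1+y_1t,\dots,1+y_mt)=\sum_{q\ge0}\binom{m-q}{p-q}t^q e_q(y)$, so that $a_k=\sum_{q\ge0}\binom{m-q}{k}t^{k+q}e_q(y)$. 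Next I would invoke the multiplicative bialternant identity: for a formal series $g(z)=\sum_{k\ge0}a_kz^k$,
\[
\prod_{i=1}^n g(x_i)=\sum_{\mu}\det\bigl(a_{\mu_i+j-i}\bigr)_{1\le i,j\le n}\,s_\mu(x),
\]
the sum running over $\mu\in B_{n,m}$ (vanishing of $a_k$ for $k>m$ forces $\mu_1\le m$); this is proved by multiplying through by the Vandermonde $\prod_{i<j}(x_i-x_j)$ and expanding the resulting alternant column by column. Note $\det(a_{\mu_i+j-i})$ is exactly $\Delta_\mu$ of the series $\sum a_kt^k$, and $\prod_i g(x_i)=c(E\otimes F)$.

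It then remains to evaluate $\det(a_{\mu_i+j-i})$ using the binomial formula for $a_k$. Writing $a_{\mu_i+j-i}=t^{\mu_i+j-i}\sum_q\binom{m-q}{\mu_i+j-i}\,t^q e_q(y)$ and pulling $t^{\mu_i-i}$ out of row $i$ and $t^{j}$ out of column $j$, one reduces (since $\sum_i(\mu_i-i)+\sum_j j=|\mu|$) to $t^{|\mu|}\det\bigl(\sum_q\binom{m-q}{\mu_i+j-i}\,t^q e_q(y)\bigr)_{i,j}$. Expanding this determinant multilinearly over its columns, collecting the factors $e_q(y)$, and re-summing over the complementary partition $\lambda$ inside the $n\times m$ rectangle --- a Cauchy--Binet / reindexing step --- converts it into $\sum_{\lambda\,\mid\,\mu\subseteq\lambda\subseteq(m^n)} d_{\lambda,\mu}\,t^{\,nm-|\lambda|}\,\det\bigl(e_{(\lambda^c)_i+j-i}(y)\bigr)_{i,j}$, with $\det(e_{(\lambda^c)_i+j-i}(y))=s_{(\lambda^t)^c}(y)=\Delta_{\lambda^c}(F)$. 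Combining with $s_\mu(x)=\Delta_{\mu^t}(E)$ and the total weight $t^{|\mu|}\cdot t^{\,nm-|\lambda|}=t^{\,nm-|\lambda|+|\mu|}$ yields the asserted formula. For the ``in particular'' statement, $c_{\mathrm{top}}(E\otimes F)=c_{mn}(E\otimes F)$ is the coefficient of $t^{nm}$, and $nm-|\lambda|+|\mu|=nm$ together with $\mu\subseteq\lambda$ forces $\mu=\lambda$; the matrix $\bigl(\binom{\lambda_i+n-i}{\lambda_j+n-j}\bigr)_{i,j}$ is upper unitriangular, so $d_{\lambda,\lambda}=1$ and $c_{mn}(E\otimes F)=\sum_{\lambda\in B_{n,m}}\Delta_{\lambda^t}(E)\,\Delta_{\lambda^c}(F)$ --- equivalently, the classical Cauchy-type (``resultant'') expansion of $\prod_{i,j}(x_i+y_j)$.

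The main obstacle is the last reorganisation: the entries of $\det(a_{\mu_i+j-i})$ are themselves sums of binomials times $e_q(y)$, and extracting from their determinant the clean product structure $d_{\lambda,\mu}\cdot\Delta_{\lambda^c}(F)$ requires a careful multilinear (Cauchy--Binet) expansion, book-keeping of $t$-degrees, and the correct passage between a partition, its transpose, and its complement inside the $n\times m$ rectangle. All of this is classical; I would follow the computations of \cite{La,Ma} and \cite[\S 4, Ex.~5, p.~67]{Mac}.
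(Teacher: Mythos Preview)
The paper does not supply its own proof of this lemma; it is stated with a bare citation to \cite{La,Ma} and \cite[\S4, Ex.~5]{Mac}, and the result is only used as input to later computations (most notably in Lemma~\ref{lem:virtual:flip}). Your sketch---splitting principle, the bialternant expansion of $\prod_i g(x_i)$ in Schur functions, and the Cauchy--Binet reorganisation of the binomial entries---is exactly the classical argument recorded in those references, and is correct as an outline; there is nothing in the paper to compare it against beyond the citation itself.
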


\subsection{Grassmannian bundles}  \label{sec:Gr} The results of this subsection are classical, see for example \cite{Ful, Manin}. However for the purpose later usage of this paper, we present slight differently. Let $E$ be a vector bundle of rank $n$ over $X$, and let $d$ be an integer such that $1 \le d \le n$, denote by $\pi \colon \Gr_d(E) = \foQuot(E^\vee, d) \to X$ the Grassmannian bundle which parametrises rank $d$ subbundles of $E$ (or equivalently, rank $d$ locally free quotients of $E^\vee$). Denote $\shU = \shU(E)$ and $\shQ = \shQ(E)$ the universal subbundle resp. quotient bundle of rank $d$ resp. $\ell: = n-d$. Therefore there is a tautological short exact sequence 
	$$0 \to \shU \to \pi^* E \to \shQ \to 0.$$
The upshot is that the Chow group $CH(\Gr_d(E))$, regarded as a module over $CH(X)$, is freely generated by either one of the following basis, parametrised by a Young diagram $\lambda \in B_{d,\ell}$:
	\begin{align*}
	\Delta_{\lambda}  &: = \Delta_{\lambda}(-\shU) \equiv 
	 \det \big(c_{\lambda_i^t + j-i}(\shU^\vee)\big)_{1 \le i,j \le \ell} \in CH^{|\lambda|}(\Gr_d(E));  \\
	\Delta_{\lambda} ' &:= \Delta_{\lambda}(\shQ) \equiv \det \big(c_{\lambda_i + j-i}(\shQ)\big)_{1 \le i,j \le d}  \in CH^{|\lambda|}(\Gr_d(E)).
	\end{align*} 	
	
\begin{lemma}[Change of basis] \label{lem:Gr:change_basis} The basis $\{\Delta_{\lambda} : = \Delta_{\lambda}(-\shU) \}_{\lambda \in B_{d,\ell}}$ and $\{\Delta_{\lambda} ': = \Delta_{\lambda}(\shQ) \}_{\lambda \in B_{d,\ell}}$ are related by an invertible ``upper triangular" linear transformation:
	\begin{align*}
	\Delta_{\lambda}' = \sum_{\mu \,\mid\, (0) \subseteq \mu \subseteq \lambda}  \Delta_{\lambda/\mu}(E) \cdot \Delta_\mu, \qquad
	\Delta_{\lambda} = \sum_{\mu \,\mid\, (0) \subseteq \mu \subseteq \lambda}  \Delta_{\lambda/\mu}(-E) \cdot \Delta_\mu'.
	\end{align*}
In particular, for any fixed $\lambda \in B_{\ell,d}$, $\Span \{\Delta_{\mu} \}_{\mu \subseteq \lambda}= \Span \{\Delta_{\mu}' \}_{\mu \subseteq \lambda}$.
\end{lemma}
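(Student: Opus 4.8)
The plan is to read the two identities off the $K$-theoretic content of the tautological sequence and then apply the summation formula of Lemma~\ref{lem:sum}; the ``upper triangular'' invertibility will follow at once from the vanishing properties of skew Schur functions recorded in \S\ref{sec:Young}. Throughout I take as known the classical Grassmannian bundle theorem (see \cite{Ful}) that each of $\{\Delta_\lambda\}_{\lambda\in B_{d,\ell}}$ and $\{\Delta_\lambda'\}_{\lambda\in B_{d,\ell}}$ is a $CH(X)$-basis of $CH(\Gr_d(E))$, so the only thing to prove is the change-of-basis formulas.

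First I would record that the tautological sequence $0\to\shU\to\pi^*E\to\shQ\to 0$ gives, by the Whitney sum formula, $c(\pi^*E)=c(\shU)\cdot c(\shQ)$, so as invertible formal power series with constant term $1$ one has $c(\shQ)=c(\pi^*E)\cdot c(\shU)^{-1}$ and $c(\shU)^{-1}=c(\pi^*E)^{-1}\cdot c(\shQ)$. Although Lemma~\ref{lem:sum} is stated for honest bundles, its proof establishes it as an identity of symmetric functions (Macdonald), so the two-factor case $\Delta_{\lambda/\mu}(c)=\sum_{\mu\subseteq\nu\subseteq\lambda}\Delta_{\lambda/\nu}(a)\,\Delta_{\nu/\mu}(b)$ holds for any power series $a,b,c$ with constant term $1$ and $c=a\cdot b$, in particular for total Chern classes of virtual bundles, consistently with the conventions of \S\ref{sec:Young}. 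Applying it with $a=c(\pi^*E)$, $b=c(\shU)^{-1}$, $c=c(\shQ)$, $\mu=(0)$, and using $\Delta_{\nu/(0)}(c(\shU)^{-1})=\Delta_\nu(-\shU)=\Delta_\nu$, gives
$$ \Delta_\lambda'=\Delta_\lambda(\shQ)=\sum_{\mu\,\mid\,(0)\subseteq\mu\subseteq\lambda}\Delta_{\lambda/\mu}(\pi^*E)\cdot\Delta_\mu=\sum_{\mu\,\mid\,(0)\subseteq\mu\subseteq\lambda}\Delta_{\lambda/\mu}(E)\cdot\Delta_\mu, $$
the coefficients lying in $CH(X)$ since $\Delta_{\lambda/\mu}(\pi^*E)=\pi^*\Delta_{\lambda/\mu}(E)$. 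The second formula follows identically, with the roles of $\shU$ and $\shQ$ interchanged via $c(\shU)^{-1}=c(\pi^*E)^{-1}\cdot c(\shQ)$.

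Finally I would deduce invertibility and the span assertion. Fix a total order on $B_{d,\ell}$ refining $\subseteq$. Since $\Delta_{\lambda/\mu}(E)=0$ unless $\mu\subseteq\lambda$ and $\Delta_{\lambda/\lambda}(E)=\Delta_{(0)}(c(E))=1$ (the skew Schur function depends only on the skew shape, here empty), the matrix $\big(\Delta_{\lambda/\mu}(E)\big)_{\lambda,\mu}$ is unipotent upper triangular over $CH(X)$, hence invertible; applying the summation formula once more to $c(E)\cdot c(E)^{-1}=1$ yields $\sum_{\nu}\Delta_{\lambda/\nu}(E)\,\Delta_{\nu/\mu}(-E)=\Delta_{\lambda/\mu}(1)=\delta_{\lambda,\mu}$, so $\big(\Delta_{\lambda/\mu}(-E)\big)$ is the inverse matrix and the two displayed formulas are mutually inverse changes of basis. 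For a fixed $\lambda$, the first formula writes each $\Delta_\mu'$ with $\mu\subseteq\lambda$ as a $CH(X)$-combination of the $\Delta_\nu$ with $\nu\subseteq\mu\subseteq\lambda$, and the second does the reverse, giving $\Span\{\Delta_\mu\}_{\mu\subseteq\lambda}=\Span\{\Delta_\mu'\}_{\mu\subseteq\lambda}$. The only genuinely delicate point is the passage to virtual bundles in the summation formula; everything else is routine bookkeeping with Young diagrams, so I expect no real obstacle.
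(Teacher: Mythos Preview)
Your proof is correct and follows essentially the same route as the paper: use the $K$-theory relation $\shQ=\pi^*E-\shU$ (equivalently $-\shU=\shQ-\pi^*E$) from the tautological sequence and apply the summation formula of Lemma~\ref{lem:sum} with $\mu=(0)$. Your concern about extending Lemma~\ref{lem:sum} to virtual bundles is already absorbed by the conventions of \S\ref{sec:Young}, where $\Delta_{\lambda/\mu}$ is defined for arbitrary power series (hence for classes in $K_0$); and your explicit verification that the two matrices are mutually inverse via $\sum_\nu \Delta_{\lambda/\nu}(E)\,\Delta_{\nu/\mu}(-E)=\Delta_{\lambda/\mu}(1)=\delta_{\lambda,\mu}$ is a nice addition the paper leaves implicit.
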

Here for a set of classes $\Delta_i \in CH^*(\Gr_d(E))$, $i \in I$, its $\Span$ denotes the subgroup $\Span\{\Delta_i\}_{i \in I} = \{\sum_i \Delta_i \cap \pi^* \alpha_i \mid \alpha_i \in CH(X)\} \subseteq CH(\Gr_d(E))$.

\begin{proof} In the $K_0(X)$ the following holds: $ -\shU = \shQ - \pi^* E$,  equivalently $ \shQ =  \pi^* E - \shU$. Therefore the results follows easily from summation formula Lem. \ref{lem:sum} in the case $\mu = (0)$.
\end{proof}

The following lemma is a variation of the well-know duality lemma, see also \cite[Thm. 2.5]{DP}. For the sake of completeness we present a proof in our setting. 
\begin{lemma}[Duality] \label{lem:Gr:dual} For any $\lambda, \mu \in B_{d,\ell}$, $\alpha \in CH(X)$,
	\begin{align*}
	\pi_* (\Delta_{\lambda^c} \cdot \Delta_{\mu} \cap \pi^* \alpha ) = \Delta_{\mu/\lambda}(-E) \cap \alpha, \qquad 
	\pi_* (\Delta_{\lambda^c}' \cdot \Delta_{\mu}' \cap  \pi^* \alpha ) = \Delta_{\mu/\lambda}(E) \cap \alpha.
	\end{align*}
In particular, above cycle classes are zero unless $\lambda \subseteq \mu$. Furthermore, the two basis $\{\Delta_\lambda\}$ and $\{\Delta_{\mu}'\}$ are dual to each other in the following sense:
	\begin{align*}
	\pi_* (\Delta_{\lambda^c} \cdot \Delta_{\mu}' \cap \pi^* \alpha ) = \delta_{\lambda,\mu} \cdot \alpha, \qquad 
	\pi_* (\Delta_{\lambda^c}' \cdot \Delta_{\mu} \cap  \pi^* \alpha ) = \delta_{\lambda,\mu}  \cdot \alpha.
	\end{align*}
\end{lemma}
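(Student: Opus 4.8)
The four identities are not independent, so the plan is to prove the first one,
\[
\pi_*\bigl(\Delta_{\lambda^c}\cdot\Delta_\mu\cap\pi^*\alpha\bigr)=\Delta_{\mu/\lambda}(-E)\cap\alpha ,
\]
in full and then to deduce the other three formally, using only the symmetric-function identities of \S\ref{sec:Young} and the change of basis Lemma \ref{lem:Gr:change_basis}.

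\emph{Step 1 (the one computational input).} The crux is the Gysin formula for a single Schur class: for every partition $\nu$ with at most $d$ parts,
\[
\pi_*\bigl(\Delta_\nu(-\shU)\cap\pi^*\alpha\bigr)=\Delta_{\nu/(\ell^d)}(-E)\cap\alpha ,
\]
where $\Delta_{\nu/(\ell^d)}(-E):=0$ unless $(\ell^d)\subseteq\nu$; in particular it vanishes for $\nu\in B_{d,\ell}$ except $\nu=(\ell^d)$, for which it gives $\pi_*\Delta_{(\ell^d)}(-\shU)=1$. I would prove this by the splitting principle: pass to the partial flag bundle $\rho\colon\mathrm{Fl}(1,2,\dots,d;E)\to X$, on which $\shU$ becomes a filtered bundle with line subquotients $L_1,\dots,L_d$, so that $\Delta_\nu(-\shU)$ pulls back to the Schur polynomial $s_\nu\bigl(-c_1(L_1),\dots,-c_1(L_d)\bigr)$; since $\rho$ is a tower of projective bundles, one applies at each stage the projective-bundle pushforward $p_*\bigl(c_1(\mathcal{O}(1))^{r-1+k}\bigr)=\Delta_{(k)}(-E')$ for the relevant rank-$r$ bundle $E'$, and then collapses the resulting iterated determinant to the stated skew Schur class by the (dual) Jacobi--Trudi identity together with the summation formula Lemma \ref{lem:sum}. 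This is where essentially all of the bookkeeping lives, and it is the step I expect to be the main obstacle; alternatively one invokes the known Gysin formula for flag bundles (cf.\ \cite{DP}).

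\emph{Step 2 (the first identity).} Expand the product by the Littlewood--Richardson rule Lemma \ref{lem:LR}(1), $\Delta_{\lambda^c}(-\shU)\cdot\Delta_\mu(-\shU)=\sum_\nu c^{\,\nu}_{\lambda^c,\mu}\,\Delta_\nu(-\shU)$, and observe that $\Delta_\nu(-\shU)=0$ as soon as $\nu$ has more than $d$ parts, so only $\ell(\nu)\le d$ contribute. Applying Step 1 termwise reduces the claim to the combinatorial identity
\[
\sum_{\nu\,:\,\ell(\nu)\le d} c^{\,\nu}_{\lambda^c,\mu}\,\Delta_{\nu/(\ell^d)}(-E)=\Delta_{\mu/\lambda}(-E)\qquad(\lambda,\mu\in B_{d,\ell}),
\]
which I would verify by pairing both sides against the basis $\{\Delta_\rho(-E)\}$: by Lemma \ref{lem:LR}(2) the coefficient of $\Delta_\rho(-E)$ on the right is $c^{\,\mu}_{\lambda,\rho}$, while the surviving terms on the left are $\Delta_{\nu/(\ell^d)}(-E)=\Delta_\kappa(-E)$ with $\nu=(\ell^d)+\kappa$, so the left coefficient is $c^{(\ell^d)+\kappa}_{\lambda^c,\mu}$; the Berenstein--Zelevinsky symmetries Lemma \ref{lem:BZ} (and the complementation identities derived there) give $c^{(\ell^d)+\kappa}_{\lambda^c,\mu}=c^{\,\mu}_{\lambda,\kappa}$. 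The vanishing unless $\lambda\subseteq\mu$ is then immediate, since $c^{\,\mu}_{\lambda,\rho}\ne0$ forces $\lambda\subseteq\mu$.

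\emph{Step 3 (the remaining three).} The primed analogue $\pi_*(\Delta'_{\lambda^c}\cdot\Delta'_\mu\cap\pi^*\alpha)=\Delta_{\mu/\lambda}(E)\cap\alpha$ follows by applying Step 2 to $\Gr_\ell(E^\vee)$, via the canonical identification $\Gr_d(E)=\Gr_\ell(E^\vee)$: under it $\shU(E)=\shQ(E^\vee)^\vee$ and $\shQ(E)=\shU(E^\vee)^\vee$, so $\Delta_\mu\bigl(-\shU(E^\vee)\bigr)=\Delta_{\mu^t}\bigl(\shQ(E)\bigr)$ (since $\Delta_\mu(-V^\vee)=\Delta_{\mu^t}(V)$ for any bundle $V$), transposition of Young diagrams intertwines the complements in $B_{\ell,d}$ and $B_{d,\ell}$, and $\Delta_{\mu/\lambda}(-E^\vee)=\Delta_{\mu^t/\lambda^t}(E)$; substituting all of this into the first identity on $\Gr_\ell(E^\vee)$ produces exactly the primed one on $\Gr_d(E)$. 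Finally, for the two duality statements, substitute the change of basis $\Delta'_\mu=\sum_{\rho\subseteq\mu}\Delta_{\mu/\rho}(E)\,\Delta_\rho$, resp.\ $\Delta_\mu=\sum_{\rho\subseteq\mu}\Delta_{\mu/\rho}(-E)\,\Delta'_\rho$, from Lemma \ref{lem:Gr:change_basis}, apply the first, resp.\ primed first, identity termwise, and collapse $\sum_\rho\Delta_{\mu/\rho}(E)\,\Delta_{\rho/\lambda}(-E)$ (resp.\ the variant with $E$ and $-E$ interchanged) to $\Delta_{\mu/\lambda}(1)=\delta_{\lambda,\mu}$ by the summation formula Lemma \ref{lem:sum}. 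Steps 2 and 3 are purely formal manipulations with the identities already recorded in \S\ref{sec:Young}; Step 1 is the substantive part.
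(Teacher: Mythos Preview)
Your overall architecture matches the paper's: establish the Gysin formula $\pi_*\Delta_\nu(-\shU)$ for a single Schur class, expand $\Delta_{\lambda^c}\cdot\Delta_\mu$ by Littlewood--Richardson and use the BZ symmetry $c^{\nu+\ell}_{\lambda^c,\mu}=c^{\mu}_{\lambda,\nu}$, and then derive the primed and mixed identities by change of basis (the paper also mentions the $\Gr_d(E)\simeq\Gr_\ell(E^\vee)$ route you use for the primed formula). Steps~2 and~3 are essentially identical to the paper's.

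The genuine difference is in Step~1. You go to the full flag bundle and iterate the projective-bundle pushforward, which is correct but, as you note, carries real bookkeeping. The paper instead argues internally on $\Gr_d(E)$: for any $\lambda$ with $\lambda_1>\ell$ one has $\Delta_\lambda(\shQ)=0$ by rank, and expanding via $\shQ=\pi^*E+(-\shU)$ and the summation formula gives the relation
\[
\Delta_\lambda(-\shU)+\sum_{\mu\subsetneq\lambda}\pi^*\Delta_{\lambda/\mu}(E)\cdot\Delta_\mu(-\shU)=0.
\]
Pushing this forward yields a triangular recursion that uniquely determines all $\pi_*\Delta_{\mu+\ell}$ from the single input $\pi_*\Delta_{(\ell^d)}=1$; comparing with the Schur-function identity $\sum_\mu\Delta_{\lambda/\mu}(E)\,\Delta_\mu(-E)=\delta_{\lambda,(0)}$ forces $\pi_*\Delta_{\mu+\ell}=\Delta_\mu(-E)$. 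This is shorter and avoids leaving the Grassmannian; your splitting-principle route is more hands-on and makes the connection to the classical Segre/projective-bundle pushforward explicit, at the cost of the iterated determinant manipulation you flag.
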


\begin{proof} We only show the first identity; the second will follow whether by a similar computation, or from the first one by change of basis $\Delta_{\lambda}' = \sum_{\nu \subseteq \lambda} \Delta_{\lambda/\nu} (E) \cdot \Delta_{\nu}$:
	\begin{align*}
	&\pi_* (\Delta_{\lambda^c}' \cdot \Delta_{\mu}' \cap  \pi^* \alpha )  = \sum_{\nu^c \subseteq \lambda^c, \tau\subseteq \mu} \Delta_{\lambda^c/\nu^c}(E) \cdot  \Delta_{\mu/\tau}(E) \cap \pi_* (\Delta_{\nu^c} \cdot \Delta_{\tau} \cap \pi^* \alpha ) \\
	&= \sum_{\nu \supseteq \lambda, \tau \subseteq \mu}    \Delta_{\nu/\lambda}(E)  \cdot \Delta_{\mu/\tau}(E)  \cdot \Delta_{\tau/\nu}(-E) = \Delta_{\mu/\lambda}(E);
	\end{align*}
(Or one could observe that through identification $\Gr_d(E) \simeq \Gr_\ell(E^\vee)$, then $\shQ^\vee \simeq \shU(E^\vee)$,  $\Delta_{\lambda}(\shQ(E)) = \Delta_{\lambda^t}(-\shQ(E)^\vee) = \Delta_{\lambda^t}(-\shU(E^\vee))$, hence reduces to the first case.)

From degree reason we know $\pi_* \Delta_\lambda = 0$ for any $\lambda \in B_{d,\ell}$, unless $\lambda = (\ell^d)$ is the maximal element, in which case $\pi_* \Delta_{(\ell^d)} = 1$. For any partition $\lambda = (\lambda_1, \ldots, \lambda_d)$ with $\lambda_1>\ell$, by rank reason $\Delta_{\lambda}(\shQ) = 0$, since $\shQ = \pi^* E + (-\shU)$, by summation formula Lem. \ref{lem:sum}
	$$\Delta_{\lambda} + \sum_{\mu \, \mid \, \mu \subsetneq \lambda} \pi^*\Delta_{\lambda/\mu}(E)  \cdot \Delta_{\mu} = 0.$$
Therefore $\Delta_\lambda$ can be expressed by elements $\Delta_\mu$ with $\mu \subsetneq \lambda$. Hence by induction one obtains that $\pi_* \Delta_\lambda = 0$ for any $\lambda \nsupseteq (\ell^d)$. On the other hand, for any $\lambda \neq 0$, similarly $\Delta_{\lambda+\ell}(\shQ) = 0$ implies relations $\sum_{\mu} \pi^* \Delta_{\lambda /\mu} (E) \cdot \Delta_{\mu + \ell} = 0$. Pushing forward to $X$, one obtains
	$$ \sum_{\mu \, \mid \, \mu \subseteq \lambda} \Delta_{\lambda /\mu} (E) \cdot (\pi_* \Delta_{\mu + \ell}) = 0$$
 for all $\lambda \ne (0)$; note we already know $\pi_* \Delta_{(0)+
\ell} = 1$. Hence above relations uniquely determines the class $\pi_* \Delta_{\lambda + \ell}$ for all $\lambda$ by induction. On the other hand we know that $ \delta_{\lambda, (0)} = \Delta_\lambda (1) = \Delta_\lambda(E+ (-E)) =  \sum_{\mu} \Delta_{\lambda /\mu} (E) \Delta_{\mu}(-E)$, therefore comparing these relations one obtains that $\pi_* \Delta_{\lambda + \ell} = \Delta_{\lambda}(-E)$.

Back to the general case. For any $\lambda, \mu \in B_{d,\ell}$, by Littlewood--Richardson rule,
	\begin{align*}
	\Delta_{\lambda^c} \cdot \Delta_{\mu} = \sum_{\nu \in B_{\ell,d} } c_{\lambda^c, \, \mu}^{\nu + \ell} \Delta_{\nu + \ell} +  \sum_{\overline{\nu} \nsupseteq (\ell^d)} c_{\lambda^c, \, \mu}^{\overline{\nu} } \Delta_{\overline{\nu}}  =    \sum_{\nu \in B_{\ell,d} } c_{\lambda, \nu}^{\mu} \Delta_{\nu + \ell} +  \sum_{\overline{\nu} \nsupseteq (\ell^d)} c_{\lambda^c, \, \mu}^{\overline{\nu} } \Delta_{\overline{\nu}}.
	\end{align*}
Only the first term survives under $\pi_*$, hence by above calculation and projection formula,
	$$\pi_*(\Delta_{\lambda^c} \cdot \Delta_{\mu} \cap \pi^* \alpha) = \sum_\nu c_{\lambda,\nu}^\mu \Delta_\nu (-E) \cap \alpha  = \Delta_{\mu/\lambda} (-E) \cap \alpha.$$
The duality statement follows from the above equality by change of basis Lem. \ref{lem:Gr:change_basis}:
	$$\pi_* (\Delta_{\lambda^c} \cdot \Delta_{\mu}' \cap \pi^* \alpha) = (\sum_{\tau \subseteq \mu}  \Delta_{\mu/\tau}(E)  \Delta_{\tau/\lambda}(-E))  \cap \alpha = \delta_{\lambda,\mu} \alpha,$$
and similarly for the other equality.
\end{proof}

\begin{lemma}[Projectors] \label{lem:Gr:projector} For any $\lambda \in B_{d,\ell}$, if we denote 
	$$\pi_\lambda^*(\blank) := \Delta_{\lambda} \cap \pi^*(\blank)  \quad \text{and} \quad \pi_\lambda'^*(\blank) = \Delta_{\lambda}' \cap \pi^*(\blank),$$
 (both has homological degree $d\ell - |\lambda|$) and furthermore define for any $k\in \ZZ$:
	\begin{align*}
	& \pi_{\lambda *} (\blank): =  \sum_{\mu \, \mid  \, \lambda \subseteq \mu \subseteq (\ell^d)} \Delta_{\mu/\lambda}(E) \cap  \pi_* (\Delta_{\mu^c} \cap (\blank) ) \colon 
	& CH_{k}(\Gr_{d}(E)) \to CH_{k- d \ell + |\lambda|} (X) \\
	&   \pi_{\lambda *}' (\blank) :=  \sum_{\mu \, \mid \, \lambda \subseteq \mu \subseteq (\ell^d)} \Delta_{\mu/\lambda}(-E) \cap  \pi_* (\Delta_{\mu^c}' \cap (\blank) )
	\colon & CH_{k}(\Gr_{d}(E)) \to CH_{k- d \ell  + |\lambda|} (X).
	\end{align*}	
Then the following holds:
	$$ \pi_{\lambda *} \, \pi_{\mu}^* = \delta_{\lambda,\mu} \, \Id_{CH(X)}, \qquad  \pi_{\lambda *}' \,  \pi_{\mu}'^* = \delta_{\lambda,\mu}  \, \Id_{CH(X)}.$$	
\end{lemma}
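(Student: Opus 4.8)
The plan is to reduce the statement entirely to two results already established above: the Duality Lemma (Lem.~\ref{lem:Gr:dual}) and the Summation formula (Lem.~\ref{lem:sum}); no further geometric input is needed. I would prove the unprimed identity $\pi_{\lambda *}\,\pi_\mu^* = \delta_{\lambda,\mu}\,\Id_{CH(X)}$ in detail and then note that the primed identity is the verbatim same computation with the roles of $\shU$ and $\shQ$ interchanged (equivalently, after the identification $\Gr_d(E)\simeq \Gr_\ell(E^\vee)$ used in the proof of Lem.~\ref{lem:Gr:dual}).

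First I would unwind the definitions. For $\alpha\in CH(X)$ we have $\pi_\mu^*(\alpha) = \Delta_\mu \cap \pi^*\alpha$, so by the definition of $\pi_{\lambda *}$,
\begin{align*}
\pi_{\lambda *}\,\pi_\mu^*(\alpha)
= \sum_{\nu \,\mid\, \lambda \subseteq \nu \subseteq (\ell^d)} \Delta_{\nu/\lambda}(E) \cap \pi_*\big(\Delta_{\nu^c}\cdot \Delta_\mu \cap \pi^*\alpha\big).
\end{align*}
The first identity of the Duality Lemma gives $\pi_*(\Delta_{\nu^c}\cdot \Delta_\mu \cap \pi^*\alpha) = \Delta_{\mu/\nu}(-E)\cap \alpha$, which vanishes unless $\nu\subseteq\mu$. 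Hence the outer index set collapses to $\{\nu \mid \lambda \subseteq \nu \subseteq \mu\}$ (the condition $\nu\subseteq(\ell^d)$ being automatic, as $\mu\in B_{d,\ell}$), and
\begin{align*}
\pi_{\lambda *}\,\pi_\mu^*(\alpha)
= \Big(\sum_{\nu \,\mid\, \lambda \subseteq \nu \subseteq \mu} \Delta_{\mu/\nu}(-E)\cdot \Delta_{\nu/\lambda}(E)\Big)\cap \alpha.
\end{align*}

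Next I would recognize the bracketed sum via the Summation formula Lem.~\ref{lem:sum}, applied with the two K-theory classes $-E$ and $E$ (taking $\mu$ as the outer and $\lambda$ as the inner partition): the sum over $\lambda\subseteq\nu\subseteq\mu$ is exactly $\Delta_{\mu/\lambda}\big((-E)+E\big) = \Delta_{\mu/\lambda}\big(c(E)^{-1}c(E)\big) = \Delta_{\mu/\lambda}(1)$. Since $\Delta_{\mu/\lambda}(1)=0$ unless $\mu=\lambda$ and $\Delta_{\lambda/\lambda}(1)=\Delta_{(0)}(1)=1$ (recorded in \S\ref{sec:Young}), this equals $\delta_{\lambda,\mu}$, whence $\pi_{\lambda *}\,\pi_\mu^*(\alpha)=\delta_{\lambda,\mu}\,\alpha$.

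For the primed statement the computation is identical: using $\pi_\mu'^*(\alpha)=\Delta_\mu'\cap\pi^*\alpha$ together with the second identity of the Duality Lemma, $\pi_*(\Delta_{\nu^c}'\cdot\Delta_\mu'\cap\pi^*\alpha)=\Delta_{\mu/\nu}(E)\cap\alpha$, one gets $\pi_{\lambda *}'\,\pi_\mu'^*(\alpha) = \big(\sum_{\lambda\subseteq\nu\subseteq\mu}\Delta_{\mu/\nu}(E)\cdot\Delta_{\nu/\lambda}(-E)\big)\cap\alpha = \Delta_{\mu/\lambda}(1)\cap\alpha = \delta_{\lambda,\mu}\,\alpha$. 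I do not expect a genuine obstacle here: the only point requiring care is the bookkeeping of skew Schur functions — specifically that $\Delta_{\mu/\nu}(-E)$ (and skew classes in general) vanish when $\nu\not\subseteq\mu$, which is precisely what shrinks the a priori larger index set $\{\lambda\subseteq\nu\subseteq(\ell^d)\}$ down to $\{\lambda\subseteq\nu\subseteq\mu\}$ so that Lem.~\ref{lem:sum} applies word for word.
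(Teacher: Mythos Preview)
Your proof is correct and follows essentially the same approach as the paper: unwind the definitions, apply the Duality Lemma to evaluate $\pi_*(\Delta_{\nu^c}\cdot\Delta_\mu\cap\pi^*\alpha)$, and then collapse the resulting sum via the Summation formula to $\Delta_{\mu/\lambda}(1)=\delta_{\lambda,\mu}$. The only cosmetic difference is that the paper rewrites $\Delta_{\mu/\lambda}=\Delta_{\lambda^c/\mu^c}$ (via the BZ symmetries) before invoking Lem.~\ref{lem:sum}, whereas you apply the summation formula directly; your version is if anything slightly cleaner.
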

\begin{proof} For any $\lambda, \tau \in B_{d,\ell}$, by duality lemma,
	\begin{align*}
	& \pi_{\lambda *} ( \pi_{\tau}^* \alpha)  = \pi_{\lambda *} ( \Delta_\tau \cap \pi^* \alpha) = \sum_{\mu \, \mid  \, \lambda \subseteq \mu \subseteq (\ell^d)} \Delta_{\mu/\lambda}(E) \cdot  \pi_* (\Delta_{\mu^c} \cdot \Delta_\tau  \cap \pi^* \alpha) \\
	& =  \sum_{\mu^c \, \mid \, (0) \subseteq \mu^c \subseteq (\ell^d)} \Delta_{\lambda^c/\mu^c}(E) \cdot \Delta_{\nu^c/\tau^c}(-E) \cap \alpha = \Delta_{\lambda^c/\tau^c} (E + (-E)) \cap \alpha = \delta_{\lambda,\tau} \cdot \alpha. \end{align*}
As before, $\pi_{\lambda}'^* \, \pi_{\mu\,*}' = \delta_{\lambda,\mu}$ follows from a similar argument, or from $\Gr_d(E) \simeq \Gr_\ell(E^\vee)$. 
\end{proof}

\begin{remark} It follows easily from Lem. \ref{lem:Gr:change_basis} that the maps in two basis are related by:
	\begin{align*}
	&	\pi_\lambda'^* (\blank)= \sum_{\mu \,\mid\,  (0) \subseteq \mu \subseteq \lambda} \Delta_{\lambda/\mu}(E) \cdot \pi_\mu^* (\blank), 
	\qquad \pi_\lambda^*  (\blank)= \sum_{\mu \,\mid\, (0) \subseteq \mu \subseteq \lambda} \Delta_{\lambda/\mu}(-E) \cdot \pi_\mu^* (\blank); \\
	&	\pi_{\lambda\,*}' (\blank) = \sum_{\mu \, \mid  \, \lambda \subseteq \mu \subseteq (\ell^d)} \Delta_{\mu/\lambda}(-E) \cdot \pi_{\mu\,*} (\blank), 
	\qquad \pi_{\lambda\,*} (\blank) = \sum_{\mu \, \mid  \, \lambda \subseteq \mu \subseteq (\ell^d)} \Delta_{\mu/\lambda}(E) \cdot \pi_{\mu\,*}' (\blank).
	\end{align*}
Notice that the formula for $\pi_{\lambda\,*}' $ could also follow easily from the for $\pi_{\lambda\,*}$.
\end{remark}

The classical results for $\Gr_d(E) \to X$ in \cite{Ful, Manin} can be now formulated as follows:

\begin{theorem}[Grassmannian bundle formula]  \label{thm:Grassmannian} For any $k \in \ZZ$, there are isomorphisms:
	\begin{align*}
	\bigoplus_{\lambda \in B_{d,\ell}} \pi_\lambda^* = \bigoplus_{\lambda \in B_{d,\ell}}  \Delta_{\lambda} \cap \pi^*(\blank)   \colon \bigoplus_{\lambda \in B_{d,\ell}}  CH_{k-d\ell+|\lambda|}(X) \simeq CH_k(\Gr_d(E)); \\
	\bigoplus_{\lambda \in B_{d,\ell}} \pi_\lambda'^* = \bigoplus_{\lambda \in B_{d,\ell}} \Delta_{\lambda}' \cap \pi^*(\blank)  \colon \bigoplus_{\lambda \in B_{d,\ell}}  CH_{k-d\ell+|\lambda|}(X) \simeq CH_k(\Gr_d(E)).
	\end{align*}
The two basis are related by an invertible linear transformation explicitly given by Lem. \ref{lem:Gr:change_basis}, with projectors respectively given by Lem. \ref{lem:Gr:projector}. Therefore the following holds:
	$$\Id_{CH(\Gr_d(E))} = \sum_{\lambda \in B_{d,\ell}} \pi_{\lambda}^* \,  \pi_{\lambda *}  =\sum_{\lambda \in B_{d,\ell}} \pi_{\lambda}'^* \,  \pi_{\lambda *}'. $$
By Manin's principle, the same maps induce isomorphisms of Chow motives:
	$$ \bigoplus_{\lambda \in B_{d,\ell}} \pi_\lambda^* \colon \bigoplus_{\lambda \in B_{d,\ell}} \foh(X)(d\ell-|\lambda|) \simeq \foh(\Gr_d(E)), \qquad  \bigoplus_{\lambda \in B_{d,\ell}} \pi_\lambda'^* \colon \bigoplus_{\lambda \in B_{d,\ell}} \foh(X)(d\ell-|\lambda|) \simeq \foh(\Gr_d(E)).$$
\end{theorem}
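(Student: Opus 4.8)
The plan is to derive the theorem from the classical fact recalled at the start of \S\ref{sec:Gr}: that $CH(\Gr_d(E))$ is a free $CH(X)$-module on $\{\Delta_\lambda\}_{\lambda\in B_{d,\ell}}$ (the Grassmannian bundle theorem of \cite{Ful, Manin}; one proof realizes the full flag bundle, and hence $\Gr_d(E)$ as a partial flag bundle, as an iterated projective bundle over $X$ and inducts through the projective bundle formula). Since $B_{d,\ell}$ is finite, all direct sums below are finite. As $\pi$ has relative dimension $d\ell$ and $\Delta_\lambda\in CH^{|\lambda|}(\Gr_d(E))$, the operator $\pi_\lambda^*(\blank)=\Delta_\lambda\cap\pi^*(\blank)$ shifts dimension by $d\ell-|\lambda|\ge 0$; collecting the generators by degree therefore rewrites the free-module isomorphism exactly as the first displayed map $\bigoplus_\lambda\pi_\lambda^*\colon\bigoplus_\lambda CH_{k-d\ell+|\lambda|}(X)\simeq CH_k(\Gr_d(E))$.

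For the primed basis I would invoke the change-of-basis Lemma \ref{lem:Gr:change_basis}: one has $\Delta_{\lambda/\mu}(E)=0$ unless $\mu\subseteq\lambda$, and $\Delta_{\lambda/\lambda}(E)=1$, so the transition matrix $\big(\Delta_{\lambda/\mu}(E)\big)$ is triangular with unit diagonal for the inclusion order on $B_{d,\ell}$, hence invertible over $CH(X)$. Consequently $\{\Delta_\lambda'\}_{\lambda\in B_{d,\ell}}$ is again a $CH(X)$-basis, and the same degree bookkeeping makes $\bigoplus_\lambda\pi_\lambda'^*$ an isomorphism in each degree; this yields the second displayed map and records that the two presentations differ by the explicit invertible transformation of Lemma \ref{lem:Gr:change_basis}.

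Next I would read off the orthonormality relations $\pi_{\mu*}\pi_\lambda^*=\delta_{\lambda,\mu}\,\Id_{CH(X)}$ and $\pi_{\mu*}'\pi_\lambda'^*=\delta_{\lambda,\mu}\,\Id_{CH(X)}$ directly from Lemma \ref{lem:Gr:projector}, noting that all the operators in sight are $CH(X)$-linear by the projection formula. Since $\bigoplus_\lambda\pi_\lambda^*$ is an isomorphism and $\bigoplus_\mu\pi_{\mu*}$ is a one-sided inverse of it (by these relations), the latter is the two-sided inverse; evaluating the composition in the opposite order on an arbitrary $\xi\in CH(\Gr_d(E))$ gives $\xi=\sum_\lambda\pi_\lambda^*(\pi_{\lambda*}\xi)$, that is, $\Id_{CH(\Gr_d(E))}=\sum_\lambda\pi_\lambda^*\pi_{\lambda*}$, and identically $\Id=\sum_\lambda\pi_\lambda'^*\pi_{\lambda*}'$.

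For the statement about motives I would appeal to Manin's identity principle \cite{Manin}: each of $\pi_\lambda^*$ and $\pi_{\lambda*}$ (and their primed analogues) is induced by an explicit algebraic self-correspondence, obtained by composing cap products with polynomials in the Chern classes of $\shU$, $\shQ$, $\pi^*E$ with the graph of $\pi$ and its transpose; since cap product with Chern classes and flat pullback along $\pi$ commute with arbitrary base change $X'\to X$, these correspondences together with all the identities above (the orthonormality of the $\pi_\lambda^*,\pi_{\lambda*}$ and the completeness relation $\sum_\lambda\pi_\lambda^*\pi_{\lambda*}=\Id$) persist universally, so the mutually inverse families assemble to isomorphisms $\bigoplus_\lambda\foh(X)(d\ell-|\lambda|)\simeq\foh(\Gr_d(E))$ of covariant Chow motives, and likewise for the primed family. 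The one step I expect to require genuine care — essentially the only non-bookkeeping step — is this base-change stability: one must check that every cycle-class identity invoked has been established functorially in $X$ rather than merely for the fixed $X$, which is exactly why it is important that Lemmas \ref{lem:Gr:dual}--\ref{lem:Gr:projector} are phrased throughout in terms of Chern classes; granting that, the verification is routine.
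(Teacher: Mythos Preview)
Your proposal is correct and follows essentially the same approach as the paper: both appeal to the classical Grassmannian bundle result in \cite{Ful, Manin}, use Lemma~\ref{lem:Gr:projector} for the projector relations, Lemma~\ref{lem:Gr:change_basis} for the primed basis, and Manin's identity principle for motives. The only cosmetic difference is the order of the logic: the paper obtains injectivity from Lemma~\ref{lem:Gr:projector} and sketches surjectivity by reducing to the trivial-bundle case and the Schubert cell stratification, whereas you take the free-module statement as input (as the paper itself does at the opening of \S\ref{sec:Gr}) and then deduce the completeness relation $\Id=\sum_\lambda\pi_\lambda^*\pi_{\lambda*}$ from it.
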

\begin{proof} See Fulton's book \cite{Ful}. Notice the injectivity of $\bigoplus \pi_\lambda^*$ and $\bigoplus \pi_\lambda'^*$ follows directly from Lem. \ref{lem:Gr:projector}. To show  surjectivity, it suffices to consider the case when $X$ is irreducible and $E$ is trivial, in which case the two basis coincides $\Delta_\lambda = \Delta_\lambda'$, and projectors of Lem. \ref{lem:Gr:projector} both take the simplest form $\pi_{\lambda *} = \pi_{\lambda *}' = \pi_* (\Delta_{\lambda^c} \cap (\blank))$. The theorem in this case follows from the absolute case $X = \Spec \kk$, when $\Delta_{\lambda^c} \cdot \Delta_{\mu} = \delta_{\lambda, \mu}$ holds by duality Lem. \ref{lem:Gr:dual}, and that $\Gr_d(E)$ admits an affine stratification by Schubert cells $\{\Delta_\lambda \cap [\Gr_d(E)]\}$.  \end{proof}

\begin{example} If $d=1$, $P:= \Gr_1(E) = \PP_{\rm sub}(E)$ is the projective bundle, and all Young diagram $\lambda \in B_{1,n-1}$ is of the form $\lambda = (i)$, $i \in [0,n-1]$. Then the two basis are given by $\{ \Delta_{(i)} = \zeta^i : = c_1(\sO(1))^i \}_{i \in [0, n-1]}$ and $\{ \Delta_{(i)} ' = c_i (\shT_{P/X}(-1))\}_{i  \in [0,n-1]}$. Lem. \ref{lem:Gr:change_basis} is nothing but
	$$c_i (\shT_{P/X}(-1)) = \sum_{j=0}^i c_{i-j}(E) \cdot \zeta^j, \qquad \zeta^i =  \sum_{j=0}^i s_{i-j}(E) \cdot c_i (\shT_{P/X}(-1)),$$
where $s_k(E)$ is the Segre class of $E$. Duality Lem. \ref{lem:Gr:dual} says for any $i,j \ge 0$ and $\alpha \in CH(X)$,
	$$\pi_* (\zeta^{i} \cdot \zeta^{j} \cap \pi^* (\alpha)) = s_{j+i-n+1}(E) \cap \alpha, \quad \pi_* (c_{i}(\shT_{P/X}(-1)) \cdot c_{j} (\shT_{P/X}(-1)) \cap \pi^* (\alpha)) = c_{j+i-n+1} (E) \cap \alpha.$$ 
Therefore the theorem translates into projective bundle formulae in two different basis, i.e. both the maps $\bigoplus_{i=0}^{n-1} \pi_i^* =\zeta^i \cap \pi^*(\blank)$ and $\bigoplus_{i=0}^{n-1} \pi_i'^* =c_i(\shT_{P/X}(-1)) \cap \pi^*(\blank)$ induce an isomorphism $ \bigoplus_{i=0}^{n-1}  CH_{k-(n-1)+i}(X) \simeq CH_k(P)$,
with projectors of Lem. \ref{lem:Gr:projector} given by:
	\begin{align*} 
	\pi_{i *} = \sum_{j=i}^{n-1} c_{j-i}(E) \pi_* (\zeta^{n-1-j} \cdot (\blank)), \quad
	\pi_{i *}' = \sum_{j=i}^{n-1} s_{j-i}(E) \pi_* (c_{n-1-j}(\shT_{P/X}(-1)) \cdot (\blank)).
	\end{align*}	
\end{example}

\section{Generalised Cayley's trick and virtual flips} \label{sec:Cayley}

\subsection{Generalised Cayley's trick} \label{sec:Cayley} The treatment of this section follows closely the Cayley's trick case in \cite[\S 3.1]{J19}. Let $E$ be a vector bundle of rank $n$ on $X$, and $s \in \Gamma(X,E)$ be a regular section, $Z=Z(s) \subset X$ be the zero locus. Let $d$ be an integer $1 \le d \le n-1$ and denote $\ell : = n-d$.  Let $\sG : = \Coker (s \colon \sO_X \to E)$ be the cokernel, denote $\shH_s : = \foQuot_d(\sG)$ and there is  a natural inclusion $\iota \colon \shH_s \hookrightarrow \foQuot_d(E) = \Gr_d (E^\vee)$ induced by $E \twoheadrightarrow \sG$. Over $G: = \Gr_d(E^\vee)$, denote by $\shU = \shU(E^\vee)$ (resp. $\shQ = \shQ(E^\vee)$) the universal rank $d$ (resp. $\ell : = n-d$) subbundle resp. (quotient bundle), then there is a short exact sequence 
	$$0 \to \shU \to \pi^* E^\vee \to \shQ \to 0.$$
 Notice the restriction of $\shU^\vee$ to $\shH_s$ is the tautological rank $d$ quotient bundle of $\sG$, i.e. $\pi^* \sG \twoheadrightarrow \shU^\vee(\sG) = \iota^* \shU^\vee$, which will be simply denoted by $\shU^\vee$ by abuse of notations. The situation is summarised in the following diagram, with names of maps as indicated:
\begin{equation}\label{diagram:Cayley}
	\begin{tikzcd}[row sep= 2.6 em, column sep = 2.6 em]
	G_Z:=\Gr_d(E^\vee|_Z) \ar{d}[swap]{p} \ar[hook]{r}{j} & \shH_s : =\foQuot_d(\sG) \ar{d}{\pi} \ar[hook]{r}{\iota} & G:=\Gr_d(E^\vee) \ar{ld}[near start]{q} 
	\\
	Z \ar[hook]{r}{i}         & X  
	\end{tikzcd}	
\end{equation}

We first introduce the following notations similar to \S \ref{sec:Gr}: for any $\lambda \in B_{d,\ell}$, denote 
	\begin{align*}
	\Delta_{\lambda}  &: = \Delta_{\lambda}(-\shU) \in CH^{|\lambda|}(G), \qquad \Delta_{\lambda} ' := \Delta_{\lambda}(\shQ) \in CH^{|\lambda|}(G).
	\end{align*} 	
By abuse of notations, we use the same notations $\Delta_{\lambda}  = \Delta_{\lambda} (-\shU(G_Z))$ and $\Delta_{\lambda} '  = \Delta_{\lambda} (\shQ(G_Z))$ to denote the corresponding classes on $CH(G_Z)$. 
 Then define $p_\lambda^*(\blank) := \Delta_{\lambda} \cap p^*(\blank)$, $p_\lambda'^*(\blank) = \Delta_{\lambda}' \cap p^*(\blank),$ $q_\lambda^*(\blank) := \Delta_{\lambda} \cap q^*(\blank)$, $q_\lambda'^*(\blank) = \Delta_{\lambda}' \cap q^*(\blank)$ as in \S \ref{sec:Gr}, and define the projectors $p_{\lambda *}$, $p_{\lambda *}'$, $q_{\lambda *}$, $q_{\lambda *}'$ by the same formulae of Lem. \ref{lem:Gr:projector}. Second, for any $\lambda \in B_{d,\ell-1}$, denote 
	\begin{align*}
	& \pi_\lambda^*(\blank) := \Delta_{\lambda} (-\shU)  \cap \pi^*(\blank) \colon & CH_{k-d(\ell-1) +|\lambda|}(X) \to CH_{k}(\shH_s), \\
	& \pi_{\lambda *} (\blank): =  \sum_{\mu \in B_{d, \ell-1}} \Delta_{\mu/\lambda}(\sG^\vee) \cap  \pi_* (\Delta_{\mu^c} (-\shU) \cap (\blank)) \colon  & CH_{k}(\shH_s) \to CH_{k-d(\ell-1) +|\lambda|}(X).
	\end{align*}
(Here $\mu^c  = \ell - 1 - \mu$.) Then clearly all pullbacks commute: $\pi_\lambda^* = \iota^* q_\lambda^*$, $p_\lambda^* = j^* \pi_\lambda^*$. For the projectors, 
it follows from $\Delta_{\mu/\lambda}(\sG^\vee) = \iota^* \Delta_{\mu/\lambda}(E^\vee)$, $\Delta_{\ell - 1 -\mu} (-\shU(\sG)) = \iota^* \Delta_{\ell - 1 -\mu}$, therefore $\iota_* \Delta_{\ell - 1 -\mu} (-\shU(\sG))  =  c_{d}(\shU^\vee) \cap \Delta_{\ell - 1 -\mu} =  \Delta_{(1^d)}  \cap \Delta_{\ell - 1 -\mu}  = \Delta_{\ell -\mu} $, and the following holds
	$$\pi_{\lambda *} = q_{\lambda+1\, *} \iota_*, \qquad \pi_{\lambda *} j_* = i_* p_{\lambda+1\,*}, \qquad \forall \lambda \in B_{d,\ell-1}.$$
 Finally, for any $\lambda \in B_{d-1,\ell}$, $k \in \ZZ$ (notice that $c(\sG) = c(\sE)$)), denote:
		\begin{align*} 
		&\Gamma_\lambda^* : = j_* \, p_{\lambda}'^* = j_* (\Delta_{\lambda}(\shQ) \cap p^*(\blank))  \colon  & CH_{k-(d-1)\ell +|\lambda|}(Z)  \to CH_{k}(\shH_s), \\
		&\Gamma_{\lambda *} : = p_{(\lambda^t+1)^t *}' \, j^*  =  \sum_{\mu \in B_{d-1,\ell}} \Delta_{\mu/\lambda}(-\sG^\vee) \, p_* (\Delta_{\mu^c} (\shQ)\cap j^*(\blank)) \colon &  CH_{k}(\shH_s) \to CH_{k-(d-1)\ell +|\lambda|}(Z).
		\end{align*}
(Here $\mu^c = \ell - \mu$.) The main result of this subsection is the following, which simultaneously generalises blowup formula ($d = n-1$, $\ell = 1$) and Cayley's trick ($d=1$; see \cite{J19}):

\begin{theorem}[Generalized Cayley's trick] \label{thm:Cayley} In the above situation, for any $k \in \ZZ$:
\begin{enumerate}
	 \item There exists a split short exact sequence:
	\begin{align*}
		0 \to \bigoplus_{\lambda \in B_{d,\ell-1}} CH_{k-d(\ell-1)+|\lambda|}(Z) \xrightarrow{f}  \bigoplus_{\lambda \in B_{d,\ell-1}} CH_{k-d(\ell-1)+|\lambda|}(X) \oplus CH_{k}(G_Z) \xrightarrow{g} CH_k(\shH_s) \to 0,
	\end{align*}
where the maps $f$ and $g$ is given by 
	\begin{align*}
	& f  \colon \bigoplus_{\lambda \in B_{d,\ell-1}} \gamma_\lambda \mapsto (-  \bigoplus_{\lambda \in B_{d,\ell-1}} i_*  \gamma_\lambda , ~ \sum_{\lambda \in B_{d,\ell-1}} p_{\lambda+1}^* \, \gamma_\lambda), \quad \\
	& g \colon ( \bigoplus_{\lambda \in B_{d,\ell-1}} \alpha_\lambda, ~\varepsilon) \mapsto  \sum_{\lambda \in B_{d,\ell-1}} \pi_{\lambda}^*\alpha_\lambda + j_* \varepsilon,
	\end{align*}
and a left inverse of $f$ is given by 
	$(\bigoplus_{\lambda \in B_{d,\ell-1}} \alpha_\lambda, ~\varepsilon) \mapsto \bigoplus_{\lambda \in B_{d,\ell-1}} p_{\lambda+1\,*} \varepsilon$.
	\item Then the short exact sequence of $(1)$ induces an isomorphism of Chow groups:
	\begin{align*} 
		 \bigoplus_{\lambda \in B_{d,\ell-1}} \pi_\lambda^* \oplus \bigoplus_{\lambda \in B_{d-1,\ell}} \Gamma_\lambda^*  \colon  \bigoplus_{\lambda \in B_{d,\ell-1}} CH_{k-d(\ell-1)+|\lambda|}(X)  \oplus  \bigoplus_{\lambda \in B_{d-1,\ell}} CH_{k-d\ell +|\lambda|}(Z)  \xrightarrow{\sim} CH_{k}(\shH_s).
	\end{align*}
Furthermore, the following relation holds: for any $\lambda, \mu \in B_{d,\ell-1}$, $\nu,\tau \in B_{d-1,\ell}$,
	\begin{align*}
	&\pi_{\lambda *} \, \pi_{\mu}^* = \delta_{\lambda,\mu} \, \Id_{CH(X)}, \quad \Gamma_{\nu *} \, \Gamma_{\tau}^* = (-1)^\ell \delta_{\nu,\tau} \, \Id_{CH(Z)}, \quad \pi_{\lambda *}  \Gamma_{\nu}^*=0, \quad \Gamma_{\nu *} \pi_{\lambda}^*=0; ~~\text{and} 
	\end{align*}	
	\begin{align*}
	\Id_{CH(\shH_s)} =  \sum_{\lambda \in B_{d,\ell-1}} \pi_\lambda^* \,\pi_{\lambda *} + \sum_{\lambda \in B_{d-1,\ell}} \Gamma_\lambda^* \, \Gamma_{\lambda *}.
	\end{align*}	
	\end{enumerate}
\end{theorem}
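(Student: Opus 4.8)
The strategy is to establish part (1) first by a localization/blowup-type argument, and then derive part (2) as a formal consequence. For part (1), the key geometric input is that $\shH_s = \foQuot_d(\sG) \subset G = \Gr_d(E^\vee)$ is cut out inside $G$ by the vanishing of the composition $\sO_G \xrightarrow{q^*s} q^*E \to \shU^\vee$, i.e. $\shH_s$ is the zero locus of a regular section of the rank-$d$ bundle $\shU^\vee$ on $G$ (regularity follows from the expected-dimension hypothesis, just as in \cite[\S 3.1]{J19}). Hence the normal bundle of $\shH_s \hookrightarrow G$ is $\shU^\vee|_{\shH_s}$, and $G_Z = \Gr_d(E^\vee|_Z) = \shH_s \cap q^{-1}(Z)$ sits inside $\shH_s$ as the zero locus of the induced section of $\shU^\vee|_{\shH_s}$ restricted appropriately — more precisely, $G_Z$ is the degeneracy locus where $q^*s$ lands in the subbundle $\shU \subset q^* E$, so $G_Z \hookrightarrow \shH_s$ has normal bundle the restriction of $\shU^\vee$ to $G_Z$ as well, with $j$ a regular embedding of codimension $d$. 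Then I would write down the basic exact sequence relating $CH(\shH_s)$, $CH(G_Z)$ and $CH$ of the complement $\shH_s \setminus G_Z$, and identify $\shH_s \setminus G_Z \to X\setminus Z$ as an honest $\Gr_d(\ell)$-bundle (since over $X \setminus Z$ the section $s$ is nowhere zero so $\sG$ is locally free of rank $\ell$ there, and $\foQuot_d$ of a rank-$\ell$ bundle is $\Gr_d(\ell-1)$... wait, rank $d$ quotients of rank $\ell$ give $\Gr_{\ell-d}(\ell)\cong \Gr_d(\ell)$-type fibers of dimension $d(\ell-1)$, matching the index shift $d(\ell-1)$ in the statement). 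Using the Grassmannian bundle formula Thm. \ref{thm:Grassmannian} for this open part and the self-intersection/excess-intersection formula for the closed part $G_Z$, together with the key cohomological fact $j^* j_* = c_d(\shU^\vee|_{G_Z}) \cap (\blank) = \Delta_{(1^d)} \cap (\blank)$, one assembles the split exact sequence: the maps $f$, $g$ are exactly the clutching maps of the localization sequence, and the claimed left inverse of $f$ is checked by the identity $p_{\lambda+1,*}\,p_{\mu+1}^* = \delta_{\lambda\mu}\,\Id$ from Lem. \ref{lem:Gr:projector} applied on $G_Z$.

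For part (2), the plan is purely formal bookkeeping, granted part (1). Splitting the short exact sequence, one gets $CH_k(\shH_s) \cong \big(\bigoplus_{\lambda \in B_{d,\ell-1}} CH_*(X)\big) \oplus \operatorname{coker}$, and the cokernel is computed from the change-of-basis on $G_Z = \Gr_d(E^\vee|_Z)$: the image of $f$ inside $CH(G_Z)$ is precisely $\Span\{p^*_{\mu}\}_{\mu \supseteq (1^d)}$ — equivalently $\Span\{\Delta_\mu\}_{(1^d)\subseteq\mu\subseteq(\ell^d)}$ — and by Lem. \ref{lem:Gr:change_basis} (the ``in particular'' assertion about spans) the complementary summand is spanned by the $\Delta'_\lambda(\shQ) = \Delta_\lambda(\shQ(G_Z))$ with $\lambda$ ranging over $\mu$ not containing $(1^d)$, i.e. over $B_{d-1,\ell}$. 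Pushing these complementary classes forward along $j$ gives exactly the maps $\Gamma^*_\lambda = j_* p'^*_\lambda$, which is why the target of the isomorphism in (2) is $\bigoplus_{B_{d,\ell-1}} CH(X) \oplus \bigoplus_{B_{d-1,\ell}} CH(Z)$. The orthogonality relations $\pi_{\lambda*}\pi_\mu^* = \delta_{\lambda\mu}\Id$, $\Gamma_{\nu*}\Gamma_\tau^* = (-1)^\ell \delta_{\nu\tau}\Id$, $\pi_{\lambda*}\Gamma_\nu^* = 0 = \Gamma_{\nu*}\pi_\lambda^*$ are then verified one at a time: the first via $\pi_{\lambda*} = q_{\lambda+1,*}\iota_*$ and $\pi_\mu^* = \iota^* q_\mu^*$ together with $\iota^*\iota_* = c_d(\shU^\vee|_{\shH_s})\cap(\blank) = \Delta_{(1^d)}\cap(\blank)$ on $\shH_s$ and the Grassmannian projector identities on $G$; the mixed vanishings from degree reasons plus the fact that $\Gamma^*$ lands in classes supported on $G_Z$ while $\pi_{\lambda*}$ involves pushforward from $\shH_s$; and the $(-1)^\ell$ sign in $\Gamma_{\nu*}\Gamma_\tau^*$ from the self-intersection formula $j^* j_* = (-1)^{?}(\cdots)$ combined with the transpose/complement shift $(\lambda^t+1)^t$ in the definition of $\Gamma_{\lambda*}$. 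Finally the completeness relation $\Id_{CH(\shH_s)} = \sum \pi_\lambda^*\pi_{\lambda*} + \sum \Gamma_\lambda^*\Gamma_{\lambda*}$ follows because the right-hand side is an idempotent (by the orthogonality relations) whose image is all of $CH(\shH_s)$ (by the isomorphism just proved), hence equals the identity.

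The main obstacle I anticipate is the careful combinatorial matching in part (2): verifying that the ``leftover'' basis on $G_Z$ after quotienting by $\im f$ is canonically indexed by $B_{d-1,\ell}$ with the precise class $\Delta_\lambda(\shQ(G_Z))$, and nailing down the exact Young-diagram shift (the $(\lambda^t+1)^t$ appearing in $\Gamma_{\lambda*}$, the shift by $1$ column relating $B_{d,\ell-1}$-indexed classes on $\shH_s$ to $B_{d,\ell}$-indexed classes on $G$ via $\iota_*\Delta_{\ell-1-\mu} = \Delta_{\ell-\mu}$) so that all the projector identities come out with the correct Kronecker deltas and the correct sign $(-1)^\ell$. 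The geometric skeleton — regular section, localization sequence, Grassmannian bundle formula on the open part, self-intersection on the closed part — is routine; the bookkeeping of which Schur classes survive and how the $B_{d,\ell-1} \sqcup B_{d-1,\ell}$ decomposition of the index set reflects the bijection $B_{d,\ell} \cong B_{d,\ell-1} \sqcup B_{d-1,\ell}$ (diagrams with first column of full height $d$ versus not) is where the real work lies, and it must be done consistently with the sign conventions of \cite{Ful}.
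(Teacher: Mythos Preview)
Your proposal is essentially the same approach as the paper's: the paper organizes the argument into a preparatory lemma with parts (a)--(d) (key excess formula $\pi_\lambda^* i_* = j_* p_{\lambda+1}^*$; the projector identity $\pi_{\lambda*}\pi_\mu^* = \delta_{\lambda\mu}$ via $\iota_*\iota^* = \Delta_{(1^d)}\cap(\blank)$; an injectivity step on $CH(G_Z)$; and surjectivity via the localization sequence over $U = X\setminus Z$ where $\shH_s|_U$ is an honest Grassmannian bundle), then deduces (1) and (2) formally. Your sketch hits all of these ingredients and correctly identifies the combinatorial bijection $B_{d,\ell} = \{\lambda+1 : \lambda \in B_{d,\ell-1}\} \sqcup B_{d-1,\ell}$ as the backbone.

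One point to sharpen: the mixed orthogonalities $\pi_{\lambda*}\Gamma_\nu^* = 0$ and $\Gamma_{\nu*}\pi_\lambda^* = 0$ do \emph{not} follow from ``degree reasons plus support on $G_Z$''. The paper's argument is a span argument: for the first, one uses flat base change on the ambient square of (\ref{diagram:Cayley}) to get $\iota_* j_* p_\nu'^*(\blank) = q_\nu'^* i_*(\blank) \in \Span\{\Delta_\mu'\}_{\mu \in B_{d-1,\ell}} = \Span\{\Delta_\mu\}_{\mu \in B_{d-1,\ell}}$ (the equality of spans by Lem.~\ref{lem:Gr:change_basis}), and then $\pi_{\lambda*} = q_{\lambda+1,*}\iota_*$ kills this because $\{\lambda+1 : \lambda \in B_{d,\ell-1}\}$ is disjoint from $B_{d-1,\ell}$. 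The other direction is dual: $j^*\pi_\lambda^* = p_\lambda^* i^*$ lands in $\Span\{\Delta_\mu'\}_{\mu \in B_{d,\ell-1}}$, disjoint from $\{(\nu^t+1)^t : \nu \in B_{d-1,\ell}\}$. So the vanishing is combinatorial (disjoint index sets under the bijection you already named), not a degree count. Also, for the sign: the normal bundle of $j$ is $\shQ^\vee$ (from $0 \to \shQ^\vee \to p^*E|_Z \to \shU^\vee \to 0$), so $j^*j_* = c_\ell(\shQ^\vee)\cap(\blank) = (-1)^\ell \Delta_{(\ell)}'\cap(\blank)$, and then $\Delta_{(\ell)}' \cdot \Delta_\tau' = \Delta_{(\tau^t+1)^t}'$ gives $\Gamma_{\nu*}\Gamma_\tau^* = (-1)^\ell \delta_{\nu\tau}$ directly.
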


The theorem can be proved by very similar steps as the Cayley's trick case in \cite[\S 3.1]{J19}, which in turn parallels the proof of blowup case in \cite[\S 6.7]{Ful}. We first show:

\begin{lemma}
\begin{enumerate}
	\item [(a)] (Key formula). For all $\alpha \in CH(X)$, $\lambda \in B_{d,\ell-1}$,
	$$\pi^*_\lambda \, i_* \, \alpha = j_* (\Delta_{(1^d)} \cap p_\lambda^* \alpha)  = j_* \, p_{\lambda+1}^* \, \alpha \in CH(\shH_s).$$
	\item [(b)]  For any $\alpha \in CH(X)$, $\pi_{\lambda *} \, \pi_{\mu}^*  \, \alpha = \delta_{\lambda,\mu} \, \alpha$ for any $\lambda, \mu \in B_{d,\ell-1}$.
	\item [(c)] For $\varepsilon \in CH(G_Z)$, if $j^*\,j_*\,\varepsilon  = 0$ and $p_{\lambda+1\,*} \varepsilon =0$ for all $\lambda \in B_{d,\ell-1}$, then $\varepsilon = 0$
	\item [(d)]  
		\begin{enumerate}
		\item[(i)] For any $\beta \in CH(\shH_s)$ there is an $\varepsilon \in CH(G_Z)$ such that 
			$$\beta = \sum_{\lambda \in B_{d,\ell-1} } \pi_\lambda^*\, \pi_{\lambda\,*} \,\beta + j_* \,\varepsilon.$$
		 \item[(ii)] For any $\beta \in CH(\shH_s)$, if $j^* \beta = 0$, and $\pi_{\lambda \, *} \beta = 0$ for all $\lambda \in B_{d, \ell-1}$, then $\beta=0$.
		 \end{enumerate}
\end{enumerate}
\end{lemma}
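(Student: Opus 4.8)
The statements (a)--(d) are the standard ingredients needed to run the Fulton-type blowup argument in the setting of $\shH_s\hookrightarrow G$, so the plan is to reduce each to the Grassmannian-bundle calculus of \S\ref{sec:Gr} together with the excess intersection / key formula for the section $s$. For (a), the key formula, I would compute $\iota_*$ of the class $\Delta_\lambda(-\shU(\sG))$ on $\shH_s$ inside $G$: since $\shH_s\subset G$ is cut out by the image of a section of a vector bundle (the composite $\shU\hookrightarrow q^*E^\vee \to q^*\sO_X = \pi^*\sO_X$, i.e. $s$ viewed via $\shU$), its class is $c_d(\shU^\vee)=\Delta_{(1^d)}$, and likewise $j\colon G_Z\hookrightarrow\shH_s$ pulls back this section to the zero section over $Z$. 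Then $\pi_\lambda^*\,i_*\alpha = \iota^*q_\lambda^*\,i_*\alpha$; push-pull across the fiber square relating $G_Z, \shH_s, G, Z, X$ and use $[\shH_s]=\Delta_{(1^d)}\cap[G]$ plus the projection formula to land on $j_*(\Delta_{(1^d)}\cap p_\lambda^*\alpha)$, and finally $\Delta_{(1^d)}\cap p_\lambda^* = p_{\lambda+1}^*$ because adding a full column of height $d$ to $\lambda$ multiplies $\Delta_\lambda(-\shU)$ by $c_d(\shU^\vee)$ — this is exactly the Pieri-type identity $\Delta_{(1^d)}\cdot\Delta_\lambda = \Delta_{\lambda+1}$ valid inside $B_{d,\ell}$ (here $\lambda\in B_{d,\ell-1}$ so $\lambda+1\in B_{d,\ell}$ and the product does not overflow).

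For (b), I would expand $\pi_{\lambda *}\,\pi_\mu^*\alpha$ using the definition of $\pi_{\lambda *}$ and $\pi_\mu^* = \Delta_\mu(-\shU)\cap\pi^*(\blank)$; since $\shH_s$ is \emph{not} a Grassmannian bundle, I cannot apply Lem.~\ref{lem:Gr:dual} directly, but the point is that the classes $\Delta_{\mu^c}(-\shU)\cdot\Delta_\mu(-\shU)$ on $\shH_s$ are pulled back from $G$ via $\iota$ only up to the relation imposed by $\shH_s\subset G$; instead I would use (a) together with $\pi_{\lambda *} = q_{\lambda+1\,*}\,\iota_*$ and $\iota_*\pi_\mu^* = \iota_*\iota^*q_\mu^* = \Delta_{(1^d)}\cdot q_\mu^* = q_{\mu+1}^*$ on $G$, reducing to $q_{\lambda+1\,*}\,q_{\mu+1}^*=\delta_{\lambda+1,\mu+1}\,\Id=\delta_{\lambda,\mu}\,\Id$ by Lem.~\ref{lem:Gr:projector} on the genuine Grassmannian bundle $G\to X$. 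This is clean provided $\lambda+1,\mu+1\in B_{d,\ell}$, which holds.

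For (c), this is the injectivity input: over $Z$ the subscheme relation degenerates and $j\colon G_Z\hookrightarrow\shH_s$ is a Grassmannian bundle $\Gr_d(E^\vee|_Z)$, with $N_{G_Z/\shH_s}$ a line bundle (the restriction of the conormal-type bundle of $\shH_s$ in $G$, which on $G_Z$ is $\shU^\vee|_{G_Z}$ pulled back appropriately — concretely $j^*\shO_{\shH_s}(\shH_s)$ whose first Chern class equals $\Delta_{(1^d)}|_{G_Z}$ up to the Grassmannian relations; more precisely $j^*j_*\varepsilon = c_1(N)\cap\varepsilon$ where $c_1(N)$ acts via cap with $\det\shU^\vee$). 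Expanding $\varepsilon$ in the basis $\{\Delta_\lambda(-\shU(G_Z))\cap p^*(\blank)\}_{\lambda\in B_{d,\ell}}$ of $CH(G_Z)$ over $CH(Z)$, the hypothesis $p_{\lambda+1\,*}\varepsilon=0$ kills all coefficients indexed by $\lambda+1$ with $\lambda\in B_{d,\ell-1}$, i.e. all $\mu\in B_{d,\ell}$ with $\mu_d\ge 1$, leaving only those $\mu$ with $\mu_d=0$ (equivalently $\mu\in B_{d,\ell}$ with last row empty); the hypothesis $j^*j_*\varepsilon=0$, i.e. $(c_1\det\shU^\vee)\cap\varepsilon=0$, together with the fact that multiplication by $\det\shU^\vee$ is \emph{injective} on the span of those remaining $\mu$ (it shifts $\mu\mapsto\mu+1$, an injection $B_{d,\ell-1}\hookrightarrow B_{d,\ell}$), forces $\varepsilon=0$. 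I would write this out using the duality pairing on $G_Z$ to extract coefficients. Part (d) then follows formally: (i) is obtained by setting $\varepsilon$ to be the difference $\beta - \sum\pi_\lambda^*\pi_{\lambda *}\beta$ and checking, via (a), (b) and the excess formula $j^*j_* = c_1(N)\cap(\blank)$, that this class is $j_*$ of something — the standard argument is that its restriction to the open complement $\shH_s\setminus G_Z \hookrightarrow G\setminus G_Z$, which \emph{is} an open piece of the Grassmannian bundle $G$, is detected by the $\pi_{\lambda *}$'s, so by the localization sequence it comes from $G_Z$; and (ii) combines (c) with (d)(i) and (b).

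\medskip

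The main obstacle I expect is (c) together with the excess-intersection bookkeeping in (d)(i): one must pin down exactly which line bundle $N_{G_Z/\shH_s}$ is and how $c_1$ of it acts on the Grassmannian-bundle basis of $CH(G_Z)$ — this is where Lem.~\ref{lem:deg:normal} (identifying normal bundles of degeneracy-type loci) and the precise Pieri identity $\det\shU^\vee\cdot\Delta_\mu(-\shU)=\Delta_{\mu+1}(-\shU)$ do the real work, and where an off-by-one in the index sets $B_{d,\ell-1}$ versus $B_{d,\ell}$ would break the argument. Everything else is a transcription of \cite[\S6.7]{Ful} / \cite[\S3.1]{J19} with Grassmannian bundles in place of projective bundles, using the projector formalism of Lem.~\ref{lem:Gr:projector} in place of the simple powers of $\zeta$.
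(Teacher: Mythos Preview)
Your plan for (a), (b), and (d) tracks the paper's proof closely and is correct. The genuine gap is in (c): you assert that $N_{G_Z/\shH_s}$ is a line bundle, so that $j^*j_* = c_1(N)\cap(\blank)$ acts by capping with (the first Chern class of) $\det\shU^\vee$. This is false unless $\ell=1$ (the blowup case). The codimension of $G_Z$ in $\shH_s$ is $\ell = n-d$: comparing the normal-bundle sequence for $G_Z\hookrightarrow\shH_s\hookrightarrow G$ (where $N_{G_Z/G}=p^*E|_Z$ and $N_{\shH_s/G}=\shU^\vee$) with the dual tautological sequence on $G_Z$ gives $N_{G_Z/\shH_s}\simeq \shQ^\vee|_{G_Z}$, a rank-$\ell$ bundle. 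Hence $j^*j_*(\blank)=c_\ell(\shQ^\vee)\cap(\blank)$, i.e.\ cap product with $\pm\Delta_{(\ell)}'=\pm\Delta_{(\ell)}(\shQ)$, not with $\Delta_{(1^d)}(-\shU)$.

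This breaks your injectivity step. You correctly identify the surviving classes after imposing $p_{\lambda+1\,*}\varepsilon=0$ as $\Span\{\Delta_\mu\}_{\mu\in B_{d-1,\ell}}$ (those $\mu$ with $\mu_d=0$). But your proposed operator, multiplication by $\Delta_{(1^d)}$, sends $\Delta_\mu\mapsto\Delta_{\mu+1}$, and this \emph{overflows} whenever $\mu_1=\ell$ --- which is allowed in $B_{d-1,\ell}$ --- so the map is not injective on that span. The paper's fix is to change basis via Lem.~\ref{lem:Gr:change_basis} to $\Span\{\Delta_\mu'\}_{\mu\in B_{d-1,\ell}}$, on which capping with $\Delta_{(\ell)}'$ acts by $\Delta_\mu'\mapsto\Delta_{(\mu^t+1)^t}'$, an honest injection $B_{d-1,\ell}\hookrightarrow B_{d,\ell}$ with no overflow. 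Your worry about ``off-by-one in the index sets'' was well placed, but the resolution requires the correct rank-$\ell$ normal bundle $\shQ^\vee$ and the $\shQ$-basis, not the $\shU$-basis.
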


\begin{proof}

For (a), since the excess bundle for the diagram (\ref{diagram:Cayley}) is given by $\sV = p^* \sN_i /\sN_j \simeq j^* \shU^\vee = \shU^\vee(G_Z)$, by excess bundle formula (\cite[Thm. 6.3, Prop. 6.2(1) \&6.6]{Ful},
	$$\pi^* \,  i_* (\blank) = j_* \, \pi^!_{G_Z} (\blank) = j_* (c_{d}(\sV) \cap p^*(\blank)) =  j_* (\Delta_{(1^d)} \cap p^*(\blank).$$ 
The results then follows directly from projection formula and Littlewood--Richardson rule.

For (b), the result follows directly from the same properties on $G$ Lem. \ref{lem:Gr:projector}, as $\pi_\lambda^* = \iota^* q_\lambda^*$, $\pi_{\lambda *} = q_{\lambda+1\, *} \iota_*$, and $\iota_* \iota^* (\blank) = c_{d}(\shU^\vee) \cap (\blank) = \Delta_{(1^d)} \cap (\blank)$.

For (c), since $\varepsilon = \sum_{\lambda \in B_{d,\ell}} p_\lambda^* \varepsilon_\lambda$, where $\varepsilon_\lambda = p_{\lambda *} \, \varepsilon \in CH(X)$. By the given condition we know $\varepsilon_{\lambda+1}=0$ for $\lambda \in B_{d,\ell-1}$. Notice that $B_{d,\ell} \backslash \{ \lambda+1 \mid \lambda \in B_{d,\ell-1} \} = B_{d-1,\ell}$. Therefore 
	$$\varepsilon =  \sum_{\lambda \in B_{d-1,\ell}} p_\lambda^* \varepsilon_\lambda  \in \Span \{\Delta_\lambda \}_{\lambda \in B_{d-1,\ell}} = \Span \{\Delta_{\lambda}' \}_{\lambda \in B_{d-1,\ell}},$$
by Lem. \ref{lem:Gr:change_basis}. Since $j^* j_* (\blank) = c_\ell(\shQ) \cap (\blank)  = \Delta_{(\ell)}' \cap (\blank)$ is injective on $\Span \{\Delta_{\lambda}' \}_{\lambda \in B_{d-1,\ell}}$ as $\Delta_{(\ell)}' \cap (\Delta_{\lambda}') = \Delta_{(\lambda^t+1)^t}'$, therefore $j^*j_*(\varepsilon) = 0$ implies $\varepsilon=0$.

For (d) (i), similarly to \cite[\S 3.1]{J19} Step (d)(i), over $U: = X \backslash Z$, the vector bundle $\sG_U^\vee \subset E_U^\vee$ is a linear sub-bundle, $\shH_s|_U = \Gr_d(\sG_U^\vee) \subseteq \Gr_d(E_U^\vee) = G|_U $ is a sub-Grassmannian, which is a locally complete intersection cut out by a regular section $\shU^\vee$, induced canonically by the section $s \in \Gamma(X, E) =  \Gamma(G, \shU^\vee)$. Apply Grassmannian bundle formula to $\shH_s|_U = \Gr_d(\sG_U^\vee)$, every $\beta \in CH(\shH_s|_U )$ can be written as $\beta = \sum_{\lambda \in B_{d,\ell-1}} \pi_\lambda^* \alpha_\lambda$ for some $\alpha_\lambda \in CH(X)$. But then $\pi_{\lambda*} (\beta)= q_{\lambda+1 *} \iota_* (\beta) =  q_{\lambda+1 *} (\Delta_{(1)^d} \cdot \Delta_\lambda \cdot q^* \alpha_\lambda)  = \alpha_{\lambda}$. Hence $\beta - \sum_{\lambda \in B_{d,\ell-1} } \pi_\lambda^*\, \pi_{\lambda\,*} \,\beta =0$ over $U$. Therefore (i) holds by exact sequence $CH(G_Z) \to CH(\shH_s) \to CH(\shH_{s}|_U) \to 0$.

For (d)(ii), by (i) $\beta = j_* \varepsilon$ for some $\varepsilon \in CH(G_Z)$, it then follows that $i_* p_{\lambda+1 \,*} \varepsilon = \pi_{\lambda*} \beta =0$ for all $\lambda \in B_{d,\ell-1}$. Let $\varepsilon = \varepsilon_1 + \varepsilon_2$, where $\varepsilon_2 = \sum_{\lambda \in B_{d,\ell-1}} p_{\lambda+1}^* p_{\lambda+1 \, *} \varepsilon$, and $\varepsilon_1= \sum_{\lambda \in B_{d-1,\ell}} p_{\lambda}^* p_{\lambda *} \varepsilon$ satisfies $p_{\lambda+1 \, *} \varepsilon_1 = 0$ for all $\lambda \in B_{d,\ell-1}$. Then by (a), $j_* \varepsilon_2 = \sum_{\lambda \in B_{d,\ell-1}} \pi_\lambda^* \, i_* \, p_{\lambda+1 \, *} \varepsilon = 0$, hence $\beta = j_* \varepsilon = j_* \varepsilon_1$. Therefore $j^*  j_* \varepsilon_1 = j^* \beta = 0$. By (c), $\varepsilon_1 = 0$ and therefore $\beta =  j_* \varepsilon_1 = 0$. \end{proof}

\begin{proof}[Proof of Thm. \ref{thm:Cayley}] For (1), the result follows easily from above lemma: from the (a) the sequence is a complex, i.e. $g \circ f= 0$; surjectivity follows from (d)(i); the left inverse $h$ of $f$ follows directly from Grassmannian case Lem. \ref{lem:Gr:projector}. For exactness can be proved similar to (d)(ii). In fact suppose there exists $\alpha_\lambda \in CH(X)$, $\lambda \in B_{d,\ell-1}$ and $\varepsilon \in CH(G_Z)$ such that $\sum_{\lambda \in B_{d,\ell-1} } \pi_\lambda^*\, \alpha_\lambda + j_* \,\varepsilon = 0$. Therefore $\alpha_\lambda = - \pi_{\lambda\,*} ( j_* \,\varepsilon ) = - i_* \, p_{\lambda+1\,*} \varepsilon$. Denote $\varepsilon_2 = \sum_{\lambda \in B_{d,\ell-1}} p_{\lambda+1}^* p_{\lambda+1 \, *} \varepsilon$ as (d)(ii), then by (a), $j_* \varepsilon_2 = \sum_{\lambda \in B_{d,\ell-1}} \pi_\lambda^* \, i_* \, p_{\lambda+1 \, *} \varepsilon = - \sum_{\lambda \in B_{d,\ell-1}} \pi_\lambda^* \alpha_\lambda = j_* \varepsilon$. Then $\varepsilon_1: = \varepsilon - \varepsilon_2$ satisfies $j_* \varepsilon_1 = 0$ and $ p_{\lambda+1 \, *} \varepsilon_1 = 0$ for all $\lambda \in B_{d,\ell-1}$, therefore $\varepsilon_1 = 0$ by (c) of above Lemma. Hence $\varepsilon = \varepsilon_2 = \sum_{\lambda \in B_{d,\ell-1}} p_{\lambda+1}^* p_{\lambda+1 \, *} \varepsilon$. This shows the exactness of the sequence.

For (2), the relations among $\pi_{\lambda\,*}$ and $\pi_{\lambda}^*$ is (b) of above Lemma; Similarly, by \ref{lem:Gr:projector} for $G_Z$, 
	\begin{align*}
	\Gamma_{\nu *} \, \Gamma_{\tau}^* (\blank)= p_{(\nu^t+1)^t *}' j_* j^* p_{\tau}'^* (\blank) = p_{(\nu^t+1)^t *}' (c_{\ell}(\shQ^\vee)\cap p_{\tau}'^* (\blank) )  \\
	= p_{(\nu^t+1)^t *}' (\Delta_{(\ell)}' \cdot p_{\tau}'^* (\blank) ) =  (-1)^\ell  p_{(\nu^t+1)^t *}'  p_{(\tau^t+1)^t}'^* (\blank) = \delta_{\nu,\tau} \Id.
	\end{align*}
For orthogonality, notice that for all $\nu \in B_{d-1,\ell}$, by the flatness of ambient square of (\ref{diagram:Cayley}),
	\begin{align*}
	\iota_* \, j_* \, p_{\nu}'^* (\blank)  = q_{\nu}'^* i_* (\blank) \in \Span \{\Delta_{\lambda}' \}_{\lambda \in B_{d-1,\ell}} = \Span \{\Delta_{\lambda} \}_{\lambda \in B_{d-1,\ell}} \subseteq CH(G).
	\end{align*}
hence $\pi_{\lambda *}  \Gamma_{\nu}^* = q_{\lambda+1 \,*} \iota_* \, j_* \, p_\nu'^* =0$ for any $\lambda \in B_{d,\ell-1}$. 
Similarly, for all $\lambda \in B_{d,\ell-1}$, 
	\begin{align*}
	j^* \,  \pi_{\lambda}^* (\blank)= p_{\lambda}^* \,i^* (\blank) \in \Span \{\Delta_\lambda \}_{\lambda \in B_{d,\ell-1}} = \Span \{\Delta_{\lambda}' \}_{\lambda \in B_{d, \ell-1}} \subseteq CH(G_Z).
	\end{align*}
Since $B_{d,\ell} \backslash B_{d, \ell-1} = \{ (\nu^t+1)^t \mid \lambda \in B_{d-1,\ell} \}$, $\Gamma_{\nu *} \pi_{\lambda}^* =p_{(\nu^t+1)^t \,*}' j^* \,  \pi_{\lambda}^* (\blank) = 0$ for any $\nu \in B_{d-1,\ell}$. 

Now the desired decomposition of $\Id_{CH(\shH_s)}$ follows from the exact sequence of statement (1). In fact, for any $\beta \in CH(\shH_s)$, there exists $\alpha_\lambda$ and $\varepsilon$ such that  $\beta = \sum_{\lambda \in B_{d,\ell-1} } \pi_\lambda^*\, \alpha_\lambda + j_* \,\varepsilon$. By replacing $\varepsilon$ by $\varepsilon_1 := \varepsilon -  \sum_{\lambda \in B_{d,\ell-1}} p_{\lambda+1}^* p_{\lambda+1 \, *} \varepsilon$ and $\alpha_\lambda$ by $\alpha_\lambda + i_* (p_{\lambda+1\,*} \varepsilon)$, we may assume 
	$\varepsilon \in \Span \{\Delta_\lambda \}_{\lambda \in B_{d-1,\ell}} = \Span \{\Delta_{\lambda}' \}_{\lambda \in B_{d-1,\ell} }\subseteq CH(G_Z).$
Hence $\varepsilon = \sum_{\nu \in B_{d-1,\ell}} p_{\nu}'^* \varepsilon_\nu$ for some $\varepsilon_\nu \in CH(Z)$, and $\beta = \sum_\lambda \pi_\lambda^*\, \alpha_\lambda +  \sum_{\nu \in B_{d-1,\ell}} \Gamma_{\nu}^* \varepsilon_\nu$. Now it follows from above orthogonality relations that $\alpha_\lambda = \pi_{\lambda_*} \beta$, $\varepsilon_\nu = \Gamma_{\nu *} \beta$. Hence $\beta = \sum \pi_\lambda^*\, \pi_{\lambda_*} \beta  +  \sum_{\nu} \Gamma_{\nu}^*  \Gamma_{\nu *} \beta$. \end{proof}

Similarly as \cite{J19}, it follows immediately from Manin's identity principle that:
\begin{corollary} \label{cor:Cayley} If $X$, $\shH_s = \foQuot_d(\sG)$ and $Z$ are smooth and projective over some ground field $\kk$, then there is an isomorphism of Chow motives over $\kk$:
	\begin{align*} \label{eqn:cayley.chow.motif}
	 \bigoplus_{\lambda \in B_{d,\ell-1}} \pi_\lambda^* \oplus \bigoplus_{\lambda \in B_{d-1,\ell}} \Gamma_\lambda^* \colon  \bigoplus_{\lambda \in B_{d,\ell-1}} \foh(X)(d(\ell-1) - |\lambda|)\oplus  \bigoplus_{\lambda \in B_{d-1,\ell}} \foh(Z)(d\ell - |\lambda|) \xrightarrow{\sim} \foh(\shH_s),
	\end{align*}
where recall $\ell = n-d = \rank \sG - d +1$.
\end{corollary}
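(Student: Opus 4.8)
The plan is to deduce Corollary~\ref{cor:Cayley} from Theorem~\ref{thm:Cayley}(2) via Manin's identity principle, exactly as in \cite{J19}. The point is that Theorem~\ref{thm:Cayley}(2) is not merely an abstract isomorphism of Chow groups: the maps $\pi_\lambda^*$ and $\Gamma_\lambda^*$, together with the projectors $\pi_{\lambda\,*}$ and $(-1)^\ell\Gamma_{\lambda\,*}$, are all induced by \emph{algebraic correspondences}, and the relations $\pi_{\lambda*}\pi_\mu^* = \delta_{\lambda,\mu}\,\Id$, $\Gamma_{\nu*}\Gamma_\tau^* = (-1)^\ell\delta_{\nu,\tau}\,\Id$, the two vanishing relations $\pi_{\lambda*}\Gamma_\nu^* = \Gamma_{\nu*}\pi_\lambda^* = 0$, and the completeness relation $\Id_{CH(\shH_s)} = \sum_\lambda \pi_\lambda^*\pi_{\lambda*} + \sum_\nu \Gamma_\nu^*\Gamma_{\nu*}$ are identities of correspondences once one checks they hold after applying to $CH$ of every base change $T\to\Spec\kk$.

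\medskip

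First I would make explicit the correspondence classes. Since $X$, $\shH_s$ and $Z$ are now smooth projective, each of the operators above is cap-product with a Chern/Schur class followed by a push/pull along the smooth projective morphisms in diagram~(\ref{diagram:Cayley}); by the standard dictionary these are given by cycles on the relevant products, e.g.\ $\pi_\lambda^*$ by $(\id\times\pi)_*(\Delta_\lambda\cap[\Delta_{\shH_s}])\in CH(X\times\shH_s)$ and $\Gamma_\lambda^*$ by the graph of $j$ composed with $p^*$ twisted by $\Delta_\lambda(\shQ)$, living in $CH(Z\times\shH_s)$; similarly for the projectors on $CH(\shH_s\times X)$ and $CH(\shH_s\times Z)$, with the Tate twists recorded by the codimension shifts $d(\ell-1)-|\lambda|$ and $d\ell-|\lambda|$ appearing in the statement. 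Second, I would invoke Manin's identity principle (in the covariant formulation of the paper's Convention section): to verify that $\sum_\lambda\pi_\lambda^*\pi_{\lambda*} + \sum_\nu\Gamma_\nu^*\Gamma_{\nu*}$ equals $\Id_{\foh(\shH_s)}$ as a correspondence in $CH(\shH_s\times\shH_s)$, and that the composites $\pi_{\lambda*}\pi_\mu^*$ etc.\ are the asserted (scalar multiples of) identities in $CH(X\times X)$, $CH(Z\times Z)$, it is enough to check these equalities of correspondences after pulling back along every $T\to\Spec\kk$ and evaluating on $CH$ --- and this is precisely the content of Theorem~\ref{thm:Cayley}(2), since that theorem was proved over an arbitrary base and hence applies to $X_T$, $\shH_{s,T}$, $Z_T$ verbatim. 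Because the correspondences satisfy the orthogonality and completeness relations of a system of mutually orthogonal projectors, they split $\foh(\shH_s)$ as the asserted direct sum of Tate twists of $\foh(X)$ and $\foh(Z)$.

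\medskip

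There is essentially no new obstacle here: the only thing to be careful about is bookkeeping of the Tate twists and of the sign $(-1)^\ell$ in the $\Gamma$-projectors, i.e.\ that $(-1)^\ell\Gamma_{\lambda*}$ (not $\Gamma_{\lambda*}$) is the correct left inverse of $\Gamma_\lambda^*$, so the idempotents $\Gamma_\lambda^*\cdot(-1)^\ell\Gamma_{\lambda*}$ are genuine projectors. The one place where the smoothness/projectivity hypothesis is genuinely used (beyond making Chow motives defined) is in passing from operators on Chow groups to honest correspondences and in applying Manin's principle; the base-change invariance needed for Manin's principle is exactly why Theorem~\ref{thm:Cayley} was stated and proved for a general base scheme. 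Thus the proof is a formal citation: ``as in \cite{J19}, the relations of Theorem~\ref{thm:Cayley}(2) hold after arbitrary base change, so by Manin's identity principle the same correspondences induce the claimed isomorphism of Chow motives.''
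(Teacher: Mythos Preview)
Your proposal is correct and follows essentially the same approach as the paper: the paper simply states ``Similarly as \cite{J19}, it follows immediately from Manin's identity principle,'' and you have spelled out exactly what that one-line invocation entails. Your additional remarks about the correspondences being explicit cycle classes, the base-change stability of Theorem~\ref{thm:Cayley}(2), and the sign $(-1)^\ell$ in the $\Gamma$-projectors are all accurate elaborations of this standard argument.
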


\begin{example} \label{ex:cayley.blowup}
	\begin{enumerate}[leftmargin = *]
		\item ({\em Cayley's trick}). If $d=1$, $\ell =n-1=\rank \sG$, in this case $\shH_s = \PP(\sG) : =  \Proj \Sym_{\sO_X}^\bullet  \sG \subset  G = \PP_X(E) := \Proj \Sym_{\sO_X}^\bullet  E$ is a hypersurface, $B_{d, \ell-1} = [0, n-2]$ and $B_{d-1,\ell} =\{0\}$. Hence Thm. \ref{thm:Cayley} implies the formula of {usual Cayley's trick} \cite[\S 3.1]{J19}: 
			$$ \bigoplus_{i = 0}^{n-2} \pi_i^* \oplus (j_* \circ p^*) \colon   \bigoplus_{i = 0}^{n-2}CH_{k-(n-2)+i}(X) \oplus CH_{k-(n-1)}(Z) \simeq CH_k(\shH_s),$$
		where $ \pi_i^* = c_1(\sO(1))^i \cap \pi^*(\blank)$. Similarly for motives, Cor. \ref{cor:Cayley} becomes:
		$$ \bigoplus_{i = 0}^{n-2} \pi_i^* \oplus (j_* \circ p^*) \colon  \bigoplus_{i = 0}^{n-2} \foh(X)(n-2-i) \oplus \foh(Z)(n-1) \simeq \foh(\shH_s).$$
		\item ({\em Blowup}). If $\ell = 1$, $d = n-1$, $n = \codim (Z \subset X)$. Then $\shH_s = \Bl_Z X$ is the {\em blowup}, $G_Z = \PP_Z(E^\vee)  = \PP_{Z, \rm sub}(N_{Z/X})$, $B_{d,\ell-1} = \{0\}$, $B_{d-1,\ell} = \{0, 1^t, 2^t, \ldots, (n-2)^t\}$. Hence Thm. \ref{thm:Cayley} implies the {blowup formula} \cite{Ful}:
			$$\pi^* \oplus \bigoplus_{i=0}^{n-2} \Gamma_{(i^t)}^* \colon CH_{k}(X) \oplus \bigoplus_{i=0}^{n-2} CH_{k-(n-1)+i}(Z) \simeq CH_k( \Bl_Z X),$$
	where $i \in [0, n-2]$,  $\Gamma_{(i^t)}^* = j_* (c_1(\sO_{ \PP_{\rm sub}(N_{Z/X})}(1))^i \cap p^*(\blank))$. Similarly, Cor. \ref{cor:Cayley} becomes:
		$$ \pi^* \oplus \bigoplus_{i=0}^{n-2} \Gamma_{(i^t)}^*  \colon \foh(X) \oplus   \bigoplus_{i = 0}^{n-2} \foh(Z)(n-1-i) \simeq \foh(\Bl_Z X).$$	
	\end{enumerate}
\end{example}

\subsection{Virtual Grassmannian flips} \label{sec:virtual.flips}
Let $V, W$ be vector bundles of rank $n,m$ on a scheme $Z$, $n \ge m$. Let $0 \le d_- \le d_+$ and $0 \le \ell_- \le \ell_+$ be integers such that $d_- + \ell_- = m$, $d_+ + \ell_+ = n$. Denote $\delta_d: = d_+ - d_-$, $\delta_\ell : = \ell_+ - \ell_-$, then $\delta_d + \delta_\ell = n - m = : \delta$. Consider $G_+ : = \Gr_{d_+}(V)$ and $G_- : = \Gr_{d_-}(W)$, and denote $\shU_\pm$, $\shQ_\pm$ the corresponding universal bundles. Let $\Gamma_Z: = G_+ \times_Z G_-$, and denote $r_\pm \colon \Gamma_Z \to G_\pm$ the projection. Therefore we have a Cartesian diagram:
	\begin{equation*}
		\begin{tikzcd}[row sep= 2.6 em, column sep = 2.6 em]
	\Gamma_Z = G_+ \times_Z G_-  \ar{d}[swap]{r_-} \ar{r}{r_+} &G_+ \ar{d}{\pi_+} 
	\\
	G_- \ar{r}{\pi_-}         &Z
		\end{tikzcd}	
		\end{equation*}	 

\begin{lemma}[Virtual Grassmannian flips]  \label{lem:virtual:flip}
\begin{enumerate}
	\item For any $\nu \in B_{\delta_d, \delta_\ell}$ and any $k \in \ZZ$, denote $\nu^c \in B_{\delta_d, \delta_\ell}$ the complement of $\nu$ inside $B_{\delta_d, \delta_\ell}$, consider the following maps:
	\begin{align*}
		&\Psi^{\nu}: = \Psi^{\nu}_{(d_-,d_+)}: =r_{+\,*} (c_{\rm top}(\shQ_-^\vee \otimes \shU_+^\vee) \cup \Delta_{\nu}(-\shU_+) \cap r_-^*(\blank)) \colon		& CH_{k-d_+ \cdot\delta_\ell + |\nu|}(G_-) \to CH_{k}(G_+) ;\\
		&\Psi_{\nu}^{\rm std}: =\Psi^{\rm std}_{(d_-,d_+), \,\nu} := r_{-\,*} (c_{\rm top}(\shQ_+^\vee \otimes \shU_-^\vee) \cup \Delta_{\nu^c}(\shQ_+) \cap r_+^*(\blank)) \colon	&  CH_{k}(G_+) \to CH_{k-d_+ \cdot\delta_\ell + |\nu|}(G_-).
	\end{align*}
Then $\Psi_{\nu}^{\rm std} \circ \Psi^{\nu} = (-1)^{d_- \cdot \delta_\ell} \Id_{CH(G_-)}$. In particular, $\Psi^{\nu}$ is injective. 
	\item For any fixed $\nu_{\rm fix} \in B_{\delta_d, \delta_\ell}$, the map $\bigoplus_{\nu \subseteq \nu_{\rm fix}} \Psi^{\nu}$ is injective, with image
		\begin{align*}
		\Im ( \bigoplus_{\nu \subseteq \nu_{\rm fix}} \Psi^{\nu}) & =  \Span \{ \Delta_{\tau}(-\shU_+) \mid (\ell_-^{\delta_d}) \subseteq \tau \subseteq \nu_{\rm fix} + (\ell_-^{d_+}), ~\tau \in B_{d_+,\ell_-}\}  \\
		& =  \Span \{ \Delta_{\lambda \oslash \nu}(-\shU_+) \mid \lambda \in B_{d_-,\ell_-}, \nu \in B_{\delta_d,\delta_\ell}, \nu \subseteq \nu_{\rm fix} \} \subseteq CH(G_+).
		\end{align*}
\end{enumerate}
\end{lemma}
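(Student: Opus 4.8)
The plan is to compute the composite $\Psi^{\rm std}_\nu\circ\Psi^\nu$ head‑on, viewing both maps as correspondences carried by the double fibration $\Gamma_Z=G_+\times_Z G_-$, and then to feed everything into the Grassmannian‑bundle machinery of \S\ref{sec:Gr} together with the Schur/Littlewood--Richardson calculus of \S\ref{sec:Young}. The first reduction is the observation that $r_-\colon\Gamma_Z\to G_-$ is itself the Grassmannian bundle $\Gr_{d_+}(\pi_-^*V)$, with universal sub‑ and quotient bundles $r_+^*\shU_+$ and $r_+^*\shQ_+$, and symmetrically $r_+\colon\Gamma_Z\to G_+$ is $\Gr_{d_-}(\pi_+^*W)$ with universal bundles $r_-^*\shU_-,r_-^*\shQ_-$; iterating Thm.~\ref{thm:Grassmannian}, $CH(\Gamma_Z)$ is free over $CH(Z)$ on $\{\Delta_\lambda(-\shU_+)\cdot\Delta_\mu(-\shU_-)\}$. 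Since $\Delta_\nu(-\shU_+)$ is pulled back from $G_+$, the projection formula gives $\Psi^\nu(\alpha)=\Delta_\nu(-\shU_+)\cup\Psi^{(0)}(\alpha)$ (where $\Psi^{(0)}$ is the case $\nu=(0)$), so it suffices to understand $\Psi^{(0)}$ together with how multiplication by $\Delta_\nu(-\shU_+)$ interacts with the answer. Expanding $c_{\rm top}(\shQ_-^\vee\otimes\shU_+^\vee)$ and $c_{\rm top}(\shQ_+^\vee\otimes\shU_-^\vee)$ by Lemma~\ref{lem:tensor} turns $\Psi^\nu$ and $\Psi^{\rm std}_\nu$ into $CH(Z)$‑linear combinations of elementary push--pulls $r_{\pm\,*}(\text{Schur class}\cap r_\mp^*(\blank))$.

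To evaluate $\Psi^{\rm std}_\nu\circ\Psi^\nu$ I would push the inner $r_{+\,*}$ past the outer $r_+^*$ using the Cartesian square and the projection formula, bringing the whole expression onto $\Gamma_Z$, and then integrate once along $r_+$ and once along $r_-$. Each integration is exactly the content of the duality and projector lemmas (Lem.~\ref{lem:Gr:dual}, Lem.~\ref{lem:Gr:projector}): a product $\Delta_{\lambda^c}\cdot\Delta'_\mu$ pushed down a Grassmannian bundle collapses to $\delta_{\lambda,\mu}$, up to an explicit skew‑Schur factor and a sign $(-1)^{\rk}$ from replacing $\shQ$ by $\shQ^\vee$. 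Performing both integrations collapses the double sum from the previous step to a single sum over partitions, weighted by products of skew Schur classes $\Delta_{\bullet/\bullet}(\pm\pi^*V)$ and $\Delta_{\bullet/\bullet}(\pm\pi^*W)$. The resulting identity is purely formal: it is settled by the Littlewood--Richardson rule (Lem.~\ref{lem:LR}) and the summation formula (Lem.~\ref{lem:sum}), and --- this is the crucial step --- by the Berenstein--Zelevinsky symmetries of the LR coefficients (Lem.~\ref{lem:BZ}), which are precisely what make the complementary shape $\nu^c$ appearing in $\Psi^{\rm std}_\nu$ cancel all but one term of the sum coming from $\Psi^\nu$, leaving $(-1)^{d_-\delta_\ell}\,\Id_{CH(G_-)}$ after bookkeeping the accumulated dual‑bundle signs. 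This proves (1), and with it the injectivity of $\Psi^\nu$ and its left inverse $(-1)^{d_-\delta_\ell}\Psi^{\rm std}_\nu$.

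For part (2), injectivity of $\bigoplus_{\nu\subseteq\nu_{\rm fix}}\Psi^\nu$ follows by running the same computation for the mixed composites $\Psi^{\rm std}_\mu\circ\Psi^\nu$ with $\mu,\nu\subseteq\nu_{\rm fix}$: the Littlewood--Richardson and degree constraints force these to vanish unless $\mu$ and $\nu$ are comparable, so the matrix $(\Psi^{\rm std}_\mu\circ\Psi^\nu)_{\mu,\nu\subseteq\nu_{\rm fix}}$ is triangular (with respect to $\subseteq$) with invertible diagonal $(-1)^{d_-\delta_\ell}\Id$ by (1), hence $\bigoplus\Psi^\nu$ is a split injection. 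For the image I would first check, purely combinatorially from the definition of $\oslash$ in \S\ref{sec:Young}, that for $\tau$ the condition $(\ell_-^{\delta_d})\subseteq\tau\subseteq\nu_{\rm fix}+(\ell_-^{d_+})$ is equivalent to $\tau=\lambda\oslash\nu$ with $\lambda\in B_{d_-,\ell_-}$ and $\nu\subseteq\nu_{\rm fix}$, so that the two stated descriptions of the span agree (up to the evident adjustment of the ambient box). Then, pushing $\Psi^\nu(\alpha)$ to $G_+$ and expanding in the basis $\{\Delta_\tau(-\shU_+)\}_{\tau\in B_{d_+,\ell_+}}$ over $CH(Z)$ (Thm.~\ref{thm:Grassmannian}), the factor $c_{\rm top}(\shQ_-^\vee\otimes\shU_+^\vee)$ --- integrated over the $r_+$‑direction --- restricts the occurring shapes $\tau$ to those containing the rectangle $(\ell_-^{\delta_d})$, while the extra factor $\Delta_\nu(-\shU_+)$ and the Littlewood--Richardson rule confine them to $\tau\subseteq\nu_{\rm fix}+(\ell_-^{d_+})$; this gives $\Im(\bigoplus\Psi^\nu)\subseteq\Span\{\cdots\}$. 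The reverse inclusion is a leading‑term analysis: letting $\alpha$ run over the basis of $CH(G_-)$ over $CH(Z)$ and $\nu$ over $\nu\subseteq\nu_{\rm fix}$, the classes $\Psi^\nu(\alpha)$ form an ``upper‑triangular'' family whose leading terms are exactly the $\Delta_{\lambda\oslash\nu}(-\shU_+)$, so the inclusion is an equality.

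The main obstacle is the combinatorial core shared by both parts: organizing the multifold Schur sum of the second step so that the Berenstein--Zelevinsky symmetries apply cleanly and leave a single surviving term with the correct sign, and, for (2), pinning down precisely which $\Delta_\tau(-\shU_+)$ occur in $\Psi^\nu(\alpha)$ and with what leading coefficients. The bookkeeping of transposes, of complements taken inside different rectangles ($B_{\delta_d,\delta_\ell}$ versus $B_{d_+,\ell_+}$), and of the gluing operation $\oslash$ is where the argument is most delicate, and matching the output against the clean statement of the lemma is the real work; the geometric inputs (projection formula, base change, Lemmas~\ref{lem:Gr:dual}--\ref{lem:Gr:projector}) are routine.
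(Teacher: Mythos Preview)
Your proposal is correct and follows essentially the same route as the paper. Both arguments expand the two top Chern classes via Lemma~\ref{lem:tensor}, reduce the composite $\Psi^{\rm std}_\nu\circ\Psi^\nu$ to a sum of products of Littlewood--Richardson coefficients by integrating over $G_+$ using the duality Lemma~\ref{lem:Gr:dual}, and then show combinatorially that only one term survives; for part~(2) both use the upper-triangular/leading-term structure of $\Psi^\nu$ on the basis $\{\Delta_\lambda(-\shU_-)\}$. The only differences are in presentation: the paper works on the triple product $G_-\times_Z G_+\times_Z G_-$ and phrases the crucial vanishing as an explicit shape analysis (the diagram $\tau=\alpha\oslash\beta$ forces $\beta=\nu$, hence $\lambda,\mu$ are determined), whereas you frame the same cancellation as an application of the BZ symmetries; and for injectivity in~(2) the paper argues directly from the leading-term computation rather than via the triangular matrix $(\Psi^{\rm std}_\mu\circ\Psi^\nu)$, though the latter is exactly what appears later in Theorem~\ref{thm:main:local}.
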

Recall $\lambda \oslash \nu$ denotes the ordinary Young diagram obtained by placing $\lambda$ below the rectangle $(\delta_d \times  \ell_-)$ and placing $\nu$ to the right, i.e. of shape ${\scriptsize \young(*\nu,\lambda)}$, see \S \ref{sec:Young}.

\begin{proof} (1). The map $\Psi_{\nu}^{\rm std} \circ \Psi^{\nu} $ is given by the convolution of the correspondences 
	\begin{align} \label{eqn:virtual:convolution}
	\begin{split}
	&(c_{\rm top}(\shQ_+^\vee \otimes \shU_-^\vee)  \cup \Delta_{\nu^c}(\shQ_+) ) * (c_{\rm top}(\shQ_-^\vee \otimes \shU_+^\vee)  \cup \Delta_{\nu}(-\shU_+) ) \\
	& = p_{13*} \Big(p_{12}^* \big (c_{\rm top}(\shQ_-^\vee \otimes \shU_+^\vee)  \cdot \Delta_{\nu}(-\shU_+) \big)  \cdot  p_{23}^* \big(c_{\rm top}(\shQ_+^\vee \otimes \shU_-^\vee)  \cdot \Delta_{\nu^c}(\shQ_+)\big) \Big)
	\end{split}
	\end{align}
in $CH(G_- \times_Z G_-)$, where $p_{ij}$'s are the obvious projections from $G_- \times_Z G_+ \times_Z G_-$. To avoid confusion we denote the product by $G_-^{(1)} \times_Z G_+ \times_Z G_-^{(2)}$. It follows from Lem. \ref{lem:tensor} that:
	\begin{align*}
		 & c_{\rm top}(\shQ_-^{(1)\vee} \otimes \shU_+^\vee)   = \sum_{\lambda \in B_{d_+, \ell_-}} \Delta_{\lambda^t}(\shU_+^\vee) \cdot \Delta_{\lambda^{c_{+-}}}(\shQ_-^{(1)\vee}) = \sum_{\lambda \in B_{d_+, \ell_-}} \Delta_{\lambda}(-\shU_+) \cdot \Delta_{\lambda^{c_{+-}}}(\shQ_-^{(1)\vee}), \\
		& c_{\rm top}(\shQ_+^\vee \otimes \shU_-^{(2)\vee})  = (-1)^{\ell_+ d_-} c_{\rm top}(\shQ_+ \otimes \shU_-^{(2)}) =  (-1)^{\ell_+ d_-} \sum_{\mu \in B_{d_-, \ell_+}} \Delta_{\mu^t}(\shU_-^{(2)}) \cdot \Delta_{\mu^{c_{-+}}}(\shQ_+).
	\end{align*}
Here we use $(\blank)^{c_{+-}}$ or $(\blank)^{c_{-+}}$ to distinguish taking complements inside the box $B_{d_+,\ell_-}$ or $B_{d_-,\ell_+}$. Hence the convolution $(-1)^{\ell_+ d_-} \cdot$ (\ref{eqn:virtual:convolution}) is:
	\begin{align*}
	&    p_{13*} \Big( \sum_{\substack{\lambda \in B_{d_+, \ell_-}, \\ \mu \in B_{d_-, \ell_+}}}  \Delta_{\lambda^{c_{+-}}}(\shQ_-^{(1)\vee})  \Delta_{\mu^t}(\shU_-^{(2)}) \big( \Delta_{\lambda}(-\shU_+) \Delta_{\nu}(-\shU_+)   \Delta_{\mu^{c_{-+}}}(\shQ_+)  \Delta_{\nu^{c_{\delta,\delta}}}(\shQ_+) \big) \Big) \\
	&  =   \sum_{\substack{\lambda \in B_{d_+, \ell_-}, \\ \mu \in B_{d_-, \ell_+}}}  \Delta_{\lambda^{c_{+-}}}(\shQ_-^{(1)\vee})  \Delta_{\mu^t}(\shU_-^{(2)}) \cdot \int_{G_+} \Delta_{\lambda}(-\shU_+) \Delta_{\nu}(-\shU_+)   \Delta_{\mu^{c_{-+}}}(\shQ_+)  \Delta_{\nu^{c_{\delta,\delta}}}(\shQ_+).
	\end{align*}
(Here $(\blank)^{c_{\delta,\delta}}$ denotes taking complements inside the box $B_{\delta_d,\delta_\ell}$.) By Littlewood--Richardson rule Lem. \ref{lem:LR} and duality Lem. \ref{lem:Gr:dual}, the latter integration over $G_+$ is equal to the summation of products of Littlewood--Richardson coefficients:
	\begin{align} \label{eqn:proof:virtual}
	\sum_{\tau \in B_{d_+,\ell_+}} c_{\lambda,\nu}^{\tau} \cdot c_{\mu^{c_{-+}},\nu^{c_{\delta,\delta}}}^{\tau^{c_{+,+}}}. 
	\end{align}
We claim this quantity is zero unless $\lambda = (\alpha^t+(\delta_d^{\ell_-}))^t$ and $\mu = \alpha$ for some $\alpha \in B_{d_-,\ell_-}$. Assume it is nonzero. The following diagram may help understand the situation:
\begin{center}
\begin{tikzpicture}[scale=0.5]
    \draw (0,0) rectangle (10,8);
    \draw (7,8) -- (7,5) -- (10,5);
    
    \draw (-0.4,0) -- (-0.4,8);
    \draw (-0.2,0) -- (-0.6,0);
    \draw (-0.6,8) -- (-0.2,8);
    \node [left] at (-0.4,4) {$d_+$};
    
     \draw (10.4,0) -- (10.4,5);
    \draw (10.6,0) -- (10.2,0);
    \draw (10.6,5) -- (10.2,5);
    \node [right] at (10.4,2.5) {$d_-$};
    
     \draw (10.4,5) -- (10.4,8);
    \draw (10.2,5) -- (10.6,5);
    \draw (10.2,8) -- (10.6,8);
    \node [right] at (10.4, 6.5) {$\delta_d$};
    
     \draw (0,-0.4) -- (10,-0.4);
    \draw (0,-0.2) -- (0,-0.6);
    \draw (10,-0.2) -- (10,-0.6);
    \node [below] at (5,-0.4) {$\ell_+$};
    
      \draw (0,8.4) -- (7,8.4);
    \draw (0,8.2) -- (0,8.6);
    \draw (7,8.2) -- (7,8.6);
    \node [above] at (3.5,8.4) {$\ell_-$};
    
      \draw (7,8.4) -- (10,8.4);
    \draw (7,8.2) -- (7,8.6);
    \draw (10,8.2) -- (10,8.6);
    \node [above] at (8.5, 8.4) {$\delta_{\ell}$};
    
    \draw [fill=light-gray] (0,1) -- (0,8) -- (6,8) -- (6,7) -- (4,7) -- (4,6)  -- (3,6)  -- (3,3) -- (2,3) -- (2,2) -- (1,2) -- (1,1) -- (0,1);
    \node at (2,6.5) {$\lambda$};
    
        \draw (2,0) -- (10,0) -- (10,4) -- (8,4) -- (8,3) -- (5,3) -- (5,1) -- (2,1) -- (2,0);
    \node at (8.5, 1.75) {$\mu^c$};
    
       \draw [fill=light-gray] (7,5) -- (7,8) -- (10,8) -- (10,7) -- (9,7) -- (9,6) -- (8,6) -- (8,5) -- (7,5);
    \node at (8.25,7) {$\nu$};
    \node at (9.5,5.7) {$\nu^c$};
    
      \draw [dashed] (0,5) -- (7,5) -- (7,0);
      \draw [fill=black] (7,5) circle [radius=0.12];
         \node [below left] at (7,5) {$p$};
\end{tikzpicture}
\qquad 
\begin{tikzpicture}[scale=0.5]
    \draw (0,0) rectangle (10,8);
    
    \draw (-0.4,0) -- (-0.4,8);
    \draw (-0.2,0) -- (-0.6,0);
    \draw (-0.6,8) -- (-0.2,8);
    \node [left] at (-0.4,4) {$d_+$};
    
     \draw (10.4,0) -- (10.4,5);
    \draw (10.6,0) -- (10.2,0);
    \draw (10.6,5) -- (10.2,5);
    \node [right] at (10.4,2.5) {$d_-$};
    
     \draw (10.4,5) -- (10.4,8);
    \draw (10.2,5) -- (10.6,5);
    \draw (10.2,8) -- (10.6,8);
    \node [right] at (10.4, 6.5) {$\delta_d$};
    
     \draw (0,-0.4) -- (10,-0.4);
    \draw (0,-0.2) -- (0,-0.6);
    \draw (10,-0.2) -- (10,-0.6);
    \node [below] at (5,-0.4) {$\ell_+$};
    
      \draw (0,8.4) -- (7,8.4);
    \draw (0,8.2) -- (0,8.6);
    \draw (7,8.2) -- (7,8.6);
    \node [above] at (3.5,8.4) {$\ell_-$};
    
      \draw (7,8.4) -- (10,8.4);
    \draw (7,8.2) -- (7,8.6);
    \draw (10,8.2) -- (10,8.6);
    \node [above] at (8.5, 8.4) {$\delta_{\ell}$};
    
    \draw [fill=light-gray] (0,0) -- (0,8) -- (9,8) -- (9,6) -- (8,6) -- (8,5)  -- (6,5)  -- (6,4) -- (5,4) -- (5,2) -- (1,2) -- (1,0) -- (0,0);
    \node at (4,5.75) {$\tau = \alpha \oslash \beta $};
        \node at (7.5,2.5) {$\tau^c = \beta^c \oslash \alpha^c $};
     \node at (3,3.5) {$\alpha$};
     \node at (5,1.25) {$\alpha^c$};
    \node at (8.25,7) {$\beta$};
    \node at (9.5,5.7) {$\beta^c$};
    
      \draw [help lines] (0,5) -- (7,5) -- (7,0);
          \draw [help lines] (7,8) -- (7,5) -- (10,5);     
           \draw [fill=black] (7,5) circle [radius=0.12];
         \node [below right] at (7,5) {$p$};
        
\end{tikzpicture} 
\end{center}
First, as $\tau \subseteq (\delta_d \times \ell_+) \cup (d_+  \times \ell_-)$, hence $\tau^{c_{++}} \supseteq (d_- \times \delta_\ell)$. Similarly $\tau \supseteq (\delta_d \times \ell_-)$. Therefore $\tau$ has to have the form $\tau = \alpha \oslash \beta$ for $\alpha \in B_{d_-,\ell_-}, \beta \in B_{\delta_d, \delta_\ell}$, and $\tau^{c_{++}} = \beta^{c_{\delta,\delta}} \oslash \alpha^{c_{-,-}}$ (i.e. the path for $\tau$ has to pass through the point $p$ in above diagram). Therefore $\lambda \subseteq (\alpha^t + (\delta_d \times \ell_-)^t)^t$ (i.e. contained in shape ${\scriptsize \young(*,\alpha)}$). Hence $\nu \supseteq \beta$ by Littlewood-Richardson rule. (One way to see this is, as we need to be able to express down $\tau={\scriptsize \young(*\beta,\alpha)}$ as a strict $\nu$-expansion of $\lambda \subseteq {\scriptsize \young(*,\alpha)}$, therefore by Littlewood-Richardson rule \cite{Ful, FulY}, the first row of boxes of $\beta$ has to be filled by ${\scriptsize \young(1)}$ from the first row of $\nu$, and the second row of boxes of $\beta$ has to be filled by ${\scriptsize \young(2)}$ from the second row of $\nu$, etc, hence $\beta \subseteq \nu$.) Similarly $\mu^{c_{-+}} \subseteq (\alpha^{c_{--}} + (d_- \times \delta_\ell))$ and this implies $\nu^{c_{\delta \delta}} \supseteq \beta^{c_{\delta \delta}}$. Therefore $\nu = \beta$. This forces $\lambda = (\alpha^t+(\delta_d^{\ell_-}))^t$ and $\mu^{c_{-+}} = \alpha^{c_{--}} + (\delta_\ell^{d_-})$ i.e. $\mu = \alpha$. In this case (\ref{eqn:proof:virtual}) is $1$, and therefore the convolution (\ref{eqn:virtual:convolution}) becomes:
	\begin{align*}	
	(-1)^{\ell_+ d_-} \sum_{\alpha \in B_{d_-,\ell_-}}  \Delta_{\alpha^{c}}(\shQ_-^{(1)\vee})  \Delta_{\alpha^t}(\shU_-^{(2)})  
	 =  (-1)^{\ell_+ d_-} \cdot  (-1)^{\ell_- d_-}  c_{\rm top} (\shU_-^{(2)\vee} \boxtimes_X \shQ_-^{(1)})  
	\end{align*}
which equals the diagonal $(-1)^{\delta_\ell d_-} \cdot [\Delta_{G_-}]$. Hence $\Psi_{\nu}^{\rm std} \circ \Psi^{\nu} =(-1)^{\delta_\ell d_-}  \Id_{CH(G_-)}$.

(2). As the maps are all ``$CH(Z)$-linear", by Grassmannian bundle formula Thm. \ref{thm:Grassmannian} we need only to consider their actions on basis. First consider the case $\nu_{\rm fix}=(0)$. For $\lambda \in B_{d_-,\ell_-}$, denote $\widetilde{\lambda} : = (\lambda^t + (\delta_d^{\ell_-}))^t$, i.e the Young diagram of shape ${\scriptsize \young(*,\lambda)}$ obtained by placing $\lambda$ below the rectangle $(\delta_d \times  \ell_-)$. Then up to signs $\Psi^{(0)}$ and $\Psi_{(0)}^{\rm std}$ induces a bijection between $\{\Delta_{\lambda}(-\shU_-)\}_{\lambda \in B_{d_-,\ell_-}}$ and $\{\Delta_{\widetilde{\lambda}}(-\shU_+) \}_{\lambda \in B_{d_-,\ell_-}}$:
	$$\Psi^{(0)} \colon \Delta_{\lambda}(-\shU_-) \mapsto \pm \Delta_{\widetilde{\lambda}}(-\shU_+), \qquad \Psi_{(0)}^{\rm std} \colon \Delta_{\widetilde{\lambda}}(-\shU_+) \mapsto \pm \Delta_{\lambda}(-\shU_-).$$
The results clearly hold. Next, for any $\nu$, then $\Psi^\nu$ maps for any $\Delta_{\lambda}(-\shU_-)$ up to a sign to:
	\begin{align*}
	\pm \Psi^\nu (\Delta_{\lambda}(-\shU_-)) =  \Delta_{\widetilde{\lambda}}(-\shU_+) \cdot \Delta_{\nu}(-\shU_+)  
	= \Delta_{\lambda  \oslash \nu}(-\shU_+) + \sum_{ \mu \in B_{d_-,\ell_-}, \,\tau \subsetneq \nu} c_{\widetilde{\lambda},\nu}^{\mu \oslash \tau} \Delta_{\mu \oslash \tau}(-\shU_+),
	\end{align*}
where the second summand belongs to $\Im(\bigoplus_{\tau \subsetneq \nu} \Psi^{\tau})$. Hence up to $\Im(\bigoplus_{\tau \subsetneq \nu} \Psi^{\tau})$ and up to signs, the images of basis $\{\Delta_{\lambda}(-\shU_-)\}_{\lambda \in B_{d_-,\ell_-}}$ of $CH(G_-)$ under the map $\Psi^\nu$ hit exactly each basis $\Delta_{\mu}(-\shU_+)$ once, $\mu \in B_{d_-,\ell_-} \oslash \nu$. As all maps are $CH(Z)$-linear, and the sets $B_{d_-,\ell_-} \oslash \nu$ are disjoint for different $\nu$, inductively we have for any $\nu_{\rm fix}$, the map $\bigoplus_{\nu \subseteq \nu_{\rm fix}} \Psi^{\nu}$ is injective, and the image is the subgroup described by the lemma.
\end{proof}

Next we fix $d_+ := d$, $\ell_+ := \ell = n-d$, and let $d_-$ vary. 
Let $d_- = d - i$, where $0 \le i \le \delta: = n-m$. Then we have a series of maps:
	$$\Psi_{(d-i,d)}^{\nu^{(i)}} \colon CH_{k - d(\delta-i) + |\nu^{(i)}|} (G_{d-i}(W)) \to CH_k( G_{d}(V))$$
for all $i \in  [0,\delta]$ and $\nu^{(i)} \in B_{i, \delta -i}$; similarly for $\Psi^{\rm \,std}_{(d-i,d), \,\nu^{(i)}}$. For the set of indices $(i \in [0,\delta], \nu^{(i)} \in B_{i, \delta -i})$, consider the following partial order:
	\begin{align} \label{eqn:porder}
	  (i, \nu^{(i)}) \underset{ (\preceq)}{\prec} (j, \tau^{(j)})  \overset{\rm def}{\iff} i < j \text{~or~} i=j, \,\nu^{(i)} \underset{ (\subseteq)}{\subsetneq} \tau^{(j)}.
	\end{align}
Then the following is what happens along each stratum for the general $\foQuot$--formula:
\begin{theorem}  \label{thm:main:local}
	\begin{enumerate}
		\item The maps of Lem. \ref{lem:virtual:flip} are semiorthogonal in the following sense: for any pair $(i,  \nu^{(i)} \in B_{i, \delta -i})$ and $(j, \tau^{(j)} \in B_{j, \delta -j})$, the following holds:
				$$\Psi^{\rm \,std}_{(d-j,d), \,\tau^{(j)}} \circ \Psi_{(d-i,d)}^{\nu^{(i)}}  = 0 \qquad \text{if} \qquad (i, \nu^{(i)}) \nsucceq (j, \tau^{(j)}).$$
		\item For any $d \ge 0$, $k \ge 0$, there is an isomorphism 
	\begin{align*}
	\Psi^*:=\bigoplus_{i=0}^\delta \bigoplus_{\nu^{(i)} \in B_{i,\delta-i}} \Psi_{(d-i,d)}^{\nu^{(i)}}  \colon  \bigoplus_{i=0}^\delta \bigoplus_{\nu^{(i)} \in B_{i,\delta-i}} CH_{k - d(\delta-i) + |\nu^{(i)}|} (G_{d-i}(W)) \xrightarrow{\sim} CH_k( G_{d}(V)).
	\end{align*}
	\end{enumerate}
\end{theorem}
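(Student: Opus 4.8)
The plan is to bootstrap everything from Lemma~\ref{lem:virtual:flip}, organised around the poset $\mathcal I=\{(i,\nu^{(i)}):0\le i\le\delta,\ \nu^{(i)}\in B_{i,\delta-i}\}$ equipped with the order \eqref{eqn:porder}. Note first that all the operators $\Psi^{\nu^{(i)}}_{(d-i,d)}$ and $\Psi^{\rm std}_{(d-j,d),\tau^{(j)}}$ are assembled from flat pullbacks, proper pushforwards and cap products with Schur classes of tautological bundles along Grassmannian bundles over $Z$; in particular each Chow group in sight is, by Theorem~\ref{thm:Grassmannian}, freely spanned over $CH_\ast(Z)$ by the Schur classes $\Delta_\mu(-\shU)$ of the relevant tautological subbundle $\shU$, so it suffices to check the asserted identities on those generators. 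The combinatorial backbone is the following partition. For $0\le i\le\delta$ set $\ell_-^{(i)}:=m-(d-i)$ and let $S_i\subseteq B_{d,\,n-d}$ be the image of $B_{d-i,\ell_-^{(i)}}\times B_{i,\delta-i}\to B_{d,\,n-d}$, $(\lambda,\nu)\mapsto\lambda\oslash\nu$; concretely $S_i=\{\mu:\mu_i\ge m-d+i\ge\mu_{i+1}\}$ (with the conventions $\mu_0=+\infty$, $\mu_{d+1}=0$). For each $\mu\in B_{d,\,n-d}$ the function $f(i):=\mu_i-(m-d+i)$ is strictly decreasing (by at least $1$ once $i\ge1$) with $f(0)>0>f(\delta+1)$, so $\mu$ lies in exactly one $S_i$; hence $B_{d,\,n-d}=\bigsqcup_{i=0}^{\delta}S_i$, and $(\lambda,\nu)\mapsto\lambda\oslash\nu$ is a bijection onto each $S_i$ (some $S_i$ may be empty, matching the empty Grassmannians $G_{d-i}(W)$).

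Surjectivity of $\Psi^\ast$ in (2) is then immediate. Applying Lemma~\ref{lem:virtual:flip}(2) with $(d_-,d_+)=(d-i,d)$ and $\nu_{\rm fix}$ the full rectangle of $B_{i,\delta-i}$ identifies $\Im\!\big(\bigoplus_{\nu^{(i)}\in B_{i,\delta-i}}\Psi^{\nu^{(i)}}_{(d-i,d)}\big)$ with $\Span\{\Delta_\mu(-\shU_+):\mu\in S_i\}$. Summing over $i$, using the partition above and Theorem~\ref{thm:Grassmannian}, $\Im(\Psi^\ast)=\sum_i\Span\{\Delta_\mu(-\shU_+):\mu\in S_i\}=\Span\{\Delta_\mu(-\shU_+):\mu\in B_{d,\,n-d}\}=CH_\ast(G_d(V))$.

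The essential point, and the main obstacle, is the semiorthogonality (1). Here I would write $\Psi^{\rm std}_{(d-j,d),\tau^{(j)}}\circ\Psi^{\nu^{(i)}}_{(d-i,d)}$ as the convolution over the triple fibre product $G_{d-i}(W)\times_Z G_d(V)\times_Z G_{d-j}(W)$ of the two defining correspondence classes, with the central factor $G_d(V)$ integrated out. The two ``top Chern class of a tensor product'' factors --- one built from $\shU_+$ and the universal quotient bundle on $G_{d-i}(W)$, the other from $\shQ_+$ and the universal subbundle on $G_{d-j}(W)$ --- are expanded via Lemma~\ref{lem:tensor}; collecting the Schur factors that depend on $\shU_+$ or $\shQ_+$ (these include $\Delta_{\nu^{(i)}}(-\shU_+)$ and $\Delta_{(\tau^{(j)})^c}(\shQ_+)$), multiplying them out by the Littlewood--Richardson rule (Lemma~\ref{lem:LR}), and integrating over the fibres of $G_d(V)$ by the duality Lemma~\ref{lem:Gr:dual}, one is left with a sum over an intermediate Young diagram $\tau\in B_{d,\,n-d}$ of products of two Littlewood--Richardson coefficients --- one feeding $\nu^{(i)}$, one feeding $(\tau^{(j)})^c$ --- times Schur classes on $G_{d-i}(W)\times_Z G_{d-j}(W)$. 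The crux is then a Young-diagram bounding-box analysis in the spirit of the proof of Lemma~\ref{lem:virtual:flip}(1): the rectangles forced by the two tensor-product expansions confine $\tau$ to a nested three-block shape, and comparing the blocks shows that every surviving coefficient vanishes unless $j\le i$, and --- when $j=i$ --- unless $\tau^{(i)}\subseteq\nu^{(i)}$, i.e.\ unless $(j,\tau^{(j)})\preceq(i,\nu^{(i)})$; the equality case $i=j$, $\tau^{(i)}=\nu^{(i)}$ recovers Lemma~\ref{lem:virtual:flip}(1). I expect the delicate part to be the book-keeping of the repeated complementation operations and the signs; this can be shortened by first using Lemma~\ref{lem:virtual:flip}(2) to replace $\Im\Psi^{\nu^{(i)}}_{(d-i,d)}$ by $\Span\{\Delta_{\lambda\oslash\nu}(-\shU_+):\lambda\in B_{d-i,\ell_-^{(i)}},\,\nu\subseteq\nu^{(i)}\}$, which turns (1) into the vanishing of $\Psi^{\rm std}_{(d-j,d),\tau^{(j)}}$ on a single Schur generator --- a single rather than double convolution.

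Injectivity of $\Psi^\ast$ in (2) then follows formally from (1) together with Lemma~\ref{lem:virtual:flip}(1). Suppose $\sum_{(i,\nu^{(i)})\in\mathcal I}\Psi^{\nu^{(i)}}_{(d-i,d)}(\alpha_{i,\nu^{(i)}})=0$ with not all $\alpha_{i,\nu^{(i)}}$ zero, and choose $(j,\tau^{(j)})$ maximal in $\mathcal I$ among indices with $\alpha_{i,\nu^{(i)}}\ne0$. Applying $\Psi^{\rm std}_{(d-j,d),\tau^{(j)}}$: by (1) every term with $(i,\nu^{(i)})\nsucceq(j,\tau^{(j)})$ dies, and by maximality any surviving index must equal $(j,\tau^{(j)})$, so Lemma~\ref{lem:virtual:flip}(1) gives $0=(-1)^{(d-j)(\delta-j)}\alpha_{j,\tau^{(j)}}$, a contradiction. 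Hence $\Psi^\ast$ is injective, and with the surjectivity above it is the claimed isomorphism.
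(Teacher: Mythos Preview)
Your proposal is correct and follows essentially the same route as the paper. For part (1), the paper takes precisely what you call the ``shortened'' approach: it computes the action of $\Psi^{\rm std}_{(d-j,d),\tau^{(j)}}$ on a single Schur class $\Delta_\lambda(-\shU_+)$, obtaining a sum of Littlewood--Richardson coefficients $c^{\lambda^c}_{\mu^{c_{-+}},\,(\tau^{(j)})^{c_{\delta,\delta}}}$, and then runs the same bounding-box analysis you describe (forcing $\lambda\supseteq(\ell_-^{(j)})^j$, and for $i=j$ forcing the $\nu$-block to contain $\tau^{(j)}$), invoking Lemma~\ref{lem:virtual:flip}(2) for the description of $\Im\Psi^{\nu^{(i)}}_{(d-i,d)}$.

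For part (2) there is a minor organisational difference. The paper builds an explicit two-sided inverse $\Psi_*$ by a Gram--Schmidt style downward induction over your poset $\mathcal I$: at each step one sets $\Psi_{(d-i,d),\nu^{(i)}}:=\Psi^{\rm std}_{(d-i,d),\nu^{(i)}}\circ\big(\Id-\sum_{(j,\tau^{(j)})\succ(i,\nu^{(i)})}\pm\Psi^{\tau^{(j)}}_{(d-j,d)}\Psi_{(d-j,d),\tau^{(j)}}\big)$, so the isomorphism comes packaged with a formula for its inverse. Your argument instead separates surjectivity (via the partition $B_{d,\ell_+}=\bigsqcup_i S_i$, which the paper states as a remark but does not use in the proof) from injectivity (via the maximal-index trick). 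Both are standard ways to invert an upper-triangular system; yours is slightly cleaner for the bare isomorphism statement, while the paper's explicit inverse is the form that gets reused in the proof of Theorem~\ref{thm:main}.
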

On the level Young diagram, the theorem corresponds to, for any given fixed $\delta \ge 0$, a decomposition of the set $B_{d,\ell}$ of Young diagrams into disjoint unions:
	\begin{align*}
	B_{d,\ell} =  \bigsqcup_{i = 0}^{\delta} B_{d-i,\ell - \delta+i} \oslash  B_{i,\delta - i},  \quad \text{where} \quad B_{1} \oslash B_{2} : = \{\lambda \oslash \nu \mid \lambda \in B_{1}, \, \nu \in B_{2} \}.
	\end{align*}
(Here the $\oslash$ are taken with respect to each pair ($B_1$, $B_2$), see \S \ref{sec:Young}.)  The situation is illustrated in the following diagram:
\begin{center}
\begin{tikzpicture}[scale=0.45]
    \draw (0,0) rectangle (10,8);
      \draw [dashed] (4,8) -- (10,2);
    \draw (0,5) -- (7,5) -- (7,0);
    \draw (7,8) -- (7,5) -- (10,5);
    
     \draw (-0.4,0) -- (-0.4,5);
    \draw (-0.6,0) -- (-0.2,0);
    \draw (-0.6,5) -- (-0.2,5);
    \node [left] at (-0.4,2.5) {$d-i$};
    
       \draw (0,-0.4) -- (7,-0.4);
    \draw (0,-0.2) -- (0,-0.6);
    \draw (7,-0.2) -- (7,-0.6);
    \node [below] at (3.5,-0.4) {$\ell - \delta+i$};
    
      \draw (7,8.4) -- (10,8.4);
    \draw (7,8.2) -- (7,8.6);
    \draw (10,8.2) -- (10,8.6);
    \node [above] at (8.5, 8.4) {$\delta-i$};
    
      \draw (10.4,5) -- (10.4,8);
    \draw (10.2,5) -- (10.6,5);
    \draw (10.2,8) -- (10.6,8);
    \node [right] at (10.4, 6.5) {$i$};
    
    \draw [fill=light-gray] (0,0) -- (0,5) -- (6,5) -- (6,4) -- (4,4) -- (4,2) -- (2,2) -- (2,1) -- (1,1) -- (1,0) -- (0,0);
    \node at (2.5,3.5) {$\lambda$};
    
       \draw [fill=light-gray] (7,5) -- (7,8) -- (10,8) -- (10,7) -- (9,7) -- (9,6) -- (8,6) -- (8,5) -- (7,5);
    \node at (8.25,7) {$\nu$};
\end{tikzpicture}
\end{center}

\begin{proof} (1) Denote $\nu:=\nu^{(i)}, \tau:=\tau^{(j)} $, we need to show $\Psi^{\rm \,std}_{(d-j,d), \tau} \circ \Psi_{(d-i,d)}^{\nu} =0$ if $i < j$ or $i = j, \nu \nsupseteq  \tau$. From the proof of Lem. \ref{lem:virtual:flip}, the map $\Psi^{\rm \,std}_{(d-j,d),\,\tau}$ sends $\Delta_{\lambda}(-\shU_+)$, $\lambda \in B_{d,\ell}$ to:
	\begin{align*}
	\pm \sum_{\mu \in B_{d-j, \ell}} \Delta_{\mu}(-\shU_-) \cdot \int_{G_+} \Delta_{\lambda}(-\shU_+)  \Delta_{\mu^{c_{-+}}}(\shQ_+)  \Delta_{\tau^{c_{\delta,\delta}}}(\shQ_+) = \pm \sum_{\mu \in B_{d-j, \ell}} c_{\mu^{c_{-+}}, \tau^{c_{\delta,\delta}}}^{\lambda^c} \cdot \Delta_{\mu}(-\shU_-)
	\end{align*}
Similar to the analysis from the proof of Lem. \ref{lem:virtual:flip}, if the Littlewood-Richardson coefficient is nonzero, then firstly $\lambda^c \subseteq (d \times(\delta-j)) \cup ((d-j)\times \ell)$, hence $\lambda \supseteq (\ell \times j)$. From the description of image of Lem. \ref{lem:virtual:flip} (2), this implies $\Psi^{\rm \,std}_{(d-j,d), \tau} \circ \Psi_{(d-i,d)}^{\nu} =0$ for any $i < j$. Next consider $i=j$, and $\lambda = \alpha \oslash \beta$. By the same argument of Lem. \ref{lem:virtual:flip}, from Littilewood--Richardson rule $c_{\mu^{c_{-+}}, \tau^{c_{\delta,\delta}}}^{\lambda^c} \ne 0$ implies $\beta^{c_{\delta,\delta}} \subseteq \tau^{c_{\delta,\delta}}$ i.e. $\beta \supseteq \tau$. Together with Lem. \ref{lem:virtual:flip} (2), this implies $\Psi^{\rm \,std}_{(d-j,d), \tau} \circ \Psi_{(d-j,d)}^{\nu} =0$ whenever $\nu \nsupseteq \tau$.

(2) The above semi-orthogonality property allows us to easily construct an inverse of the map of $\Psi^*: = \bigoplus_{i=0}^\delta \bigoplus_{\nu^{(i)} \in B_{i,\delta-i}} \Psi_{(d-i,d)}^{\nu^{(i)}}$ by induction, similarly to the case of \cite{Manin}. We define a series of maps
	$\Psi_{(d-i,d), \,\nu^{(i)}}  \colon CH_{k}(\Gr_d(V)) \to CH_{k-d(\delta-i) + |\nu^{(j)}|}(\Gr_{d-i}(W))$
as follows:
\begin{itemize}
	\item In base case, there are two possibilities. If $d > \delta$, then the maximal index is $i_{\rm max} = \delta$ and $\nu_{\rm max}^{(\delta)} =(0)$. In this case, define 
		$$\Psi_{(d-\delta,d), \,\nu_{\rm max}^{(\delta)}} :=  \Psi^{\rm \,std}_{(d-\delta,d), \,(0)} =  r_{(d-\delta,d), -\,*} (c_{\rm top}(\shQ_+^\vee \otimes \shU_{(d-\delta,d), -}^\vee) \cap r_{(d-\delta,d),+}^*(\blank)).$$ 
	If $d \le \delta$, then the maximal index is $i_{\rm max} = d$ and $\nu_{\rm max}^{(d)} = ((\delta-d)^d)$, then define 
		$$\Psi_{(0,d), \,\nu_{\rm max}^{(d)}} :=  \Psi^{\rm \,std}_{(0,d), \,((\delta-d)^d)} =  r_{(0,d), -\,*} (c_{\rm top}(\shQ_+^\vee \otimes \shU_{(0,d),-}^\vee) \cap r_{(0,d),+}^*(\blank)).$$
	\item Assume $\Psi_{(d-j,d), \,\tau^{(j)}}$ are defined for all $(j, \tau^{(j)}) \succ (i, \nu^{(i)})$, then define 
		$$\Psi_{(d-i,d), \,\nu^{(i)}} : = \Psi^{\rm std}_{(d-i,d), \,\nu^{(i)}} \circ \left(\Id -  \sum_{(j, \tau^{(j)}) \succ (i, \nu^{(i)})} (-1)^{(d-j)(\delta-j)}  \, \Psi_{(d-j,d)}^{\tau^{(j)}} \, \Psi_{(d-j,d), \,\tau^{(j)}} \right).$$
\end{itemize}
From semiorthogonality (1) and Lem. \ref{lem:virtual:flip}, it is direct to verify that 
	$$\Psi_* :=\left((-1)^{(d-i)(\delta-i)}  \, \Psi_{(d-i,d), \,\nu^{(i)}}\right)_{(i, \nu^{(i)})}$$
 is the inverse of $\Psi^*$. 
\end{proof}

\section{Main results}

Let $\sG$ be a coherent sheaf of homological dimension $\le 1$ on $X$, and denote
	$$\delta: = \rank \sG \quad \text{and} \quad \sK: = \sExt^1_{X}(\sG,\sO_X) \in \Coh (X).$$
Fix a positive integer $d\in [1,\delta]$. For any $j \in [0,d]$, consider the following schemes:
	\begin{align*}
	 \shZ_{d}^+ : = \foQuot_{X,d}(\sG) \quad \text{and} \quad \pi_{d-j}^- \colon \shZ_{d-j}^- : = \foQuot_{X,d - j}(\sK), ~j\in[0,d].
	\end{align*}  
Denote $\Gamma_{(d-j,d)}:= \shZ_{d-j}^- \times_X \shZ_{d}^+$ the fiber product, and consider the fibered diagram:
	\begin{equation} \label{diagram:Gamma}
	\begin{tikzcd}[row sep= 3 em, column sep = 5 em]
		\Gamma_{(d-j,d)} \ar{r}{r^+_{(d-j,d)}}  \ar{d}[swap]{r^-_{(d-j,d)}} &  \shZ_{d}^+ \ar{d}{\pi}\\
		\shZ_{d-j}^- \ar{r}{\pi'_{d-j}} & X
	\end{tikzcd}
	\end{equation}
Let $\shU := \sQ_d^\vee$ be dual of the tautological rank $d$ locally free quotient on $\shZ_{d}^+$, i.e. $\pi^* \sG \twoheadrightarrow \shU^\vee$ is the tautological quotient sequence of \S \ref{sec:Quot} for the Quot-scheme $\foQuot_{X,d}(\sG)$. \footnote{The reason for the dual notation is that if $\sG = E$ is locally free, then $\shZ_d^+ = \Gr_d(E^\vee)$ and $\shU = \shU_d(E^\vee)$ is the tautological rank $d$ subbundle on the Grassmannian.}
Then for any $j \in [0,d]$, $\nu^{(j)} \in B_{j,\delta-j}$, $k \in \ZZ$, consider the following map:
	$$\Gamma_{(d-j,d)}^{\nu^{(j)}}: =r^+_{(d-j,d)*}(\Delta_{\nu^{(j)}}(-\shU) \cap r^{-\,*}_{(d-j,d)} (\blank) ) \colon CH_{k - d(\delta-j)+|\nu^{(j)}|} (\shZ_{d-j}^-) \to CH_k(\shZ_{d}^+).$$

The goal of this chapter is the prove the following theorem:
\begin{theorem} \label{thm:main} Let $X$ be a Cohen--Macaulay scheme of pure dimension, and $\sG$ be a coherent sheaf of rank $\delta \ge 0$ on $X$ of homological dimension $\le 1$. Assume $X^{\ge \delta+i}(\sG)$ is reduced, and
	$${\rm codim} (X^{\ge \delta+i}(\sG) \subset X) \ge i(\delta+i) \quad \text{ for all} \quad i \ge 1.$$ Then for any $k \ge 0$, there is an isomorphism of Chow groups:
		\begin{align}\label{eqn:main.thm}
	\Gamma^*:=\bigoplus_{j=0}^{\min\{\delta,d\}} \bigoplus_{\nu^{(j)} \in B_{j,\delta-j}} \Gamma_{(d-j,d)}^{\nu^{(j)}} \colon  \bigoplus_{j=0}^{\min\{\delta,d\}}  \bigoplus_{\nu^{(j)} \in B_{j,\delta-j}} CH_{k - d(\delta-j)+|\nu^{(j)}|} (\shZ_{d-j}^-)  \xrightarrow{\sim} CH_k(\shZ_{d}^+).
	\end{align}
\end{theorem}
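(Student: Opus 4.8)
\emph{Strategy.} The proof follows the pattern of the Grassmannian bundle formula (Thm.~\ref{thm:Grassmannian}) and of the generalised Cayley's trick (Thm.~\ref{thm:Cayley}), with the virtual Grassmannian flips of \S\ref{sec:virtual.flips} supplying the local model. After standard reductions one may assume $\sG=\Coker(\sigma\colon\sF\to\sE)$ for a two-term locally free resolution with $\rank\sE-\rank\sF=\delta$, so that $\sK=\Coker(\sigma^\vee\colon\sE^\vee\to\sF^\vee)$; as in the proof of Lem.~\ref{lem:deg:normal} one may further pass to the total space $|\Hom_X(\sF,\sE)|$, on which the degeneracy loci are generic determinantal varieties and the hypotheses hold automatically, and return to $X$ along the regular section $s_\sigma$ via deformation to the normal cone. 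Then $\sE\twoheadrightarrow\sG$ realises $\shZ_d^+=\foQuot_d(\sG)$ as the zero scheme, inside the Grassmannian bundle $G^+:=\Gr_d(\sE^\vee)$, of the tautological section of $\shU^\vee\otimes\pi^*\sF^\vee$, which by the expected-codimension hypothesis is a \emph{regular} section; dually $\shZ_{d-j}^-=\foQuot_{d-j}(\sK)\hookrightarrow\Gr_{d-j}(\sF)$ is the regular zero scheme of a section of $\shU'^\vee\otimes\pi'^*\sE$. Pushing the correspondences $\Gamma_{(d-j,d)}$ into $\Gr_{d-j}(\sF)\times_X G^+$ and expanding with the tensor-product formula Lem.~\ref{lem:tensor} identifies them, fibrewise over $X$, with the convolution kernels of Lem.~\ref{lem:virtual:flip} --- the two regular sections supplying exactly the $c_{\rm top}$ factors there --- so that every identity needed below is reduced to Chow-group calculus on Grassmannian bundles (\S\ref{sec:Gr}--\S\ref{sec:Young}).

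\emph{Injectivity.} I would construct an explicit left inverse of $\Gamma^*$. By the above identification, the semiorthogonality of Thm.~\ref{thm:main:local}(1) and the relations of Lem.~\ref{lem:virtual:flip} upgrade to statements about the $\Gamma^{\nu^{(j)}}_{(d-j,d)}$: using the refined Gysin maps of the embeddings $\iota^\pm$ one obtains $c_{\rm top}(\shQ_+^\vee\otimes\shU_-^\vee)$-twisted pushforwards $\Gamma_{(d-j,d),\tau^{(j)}}\colon CH(\shZ_d^+)\to CH(\shZ_{d-j}^-)$ with $\Gamma_{(d-j,d),\tau^{(j)}}\circ\Gamma^{\nu^{(i)}}_{(d-i,d)}=\pm\,\Id$ when $(i,\nu^{(i)})=(j,\tau^{(j)})$ and $=0$ unless $(i,\nu^{(i)})\succeq(j,\tau^{(j)})$ in the order \eqref{eqn:porder}. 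The inductive bookkeeping of the proof of Thm.~\ref{thm:main:local}(2) then assembles from these a one-sided inverse $\Gamma_*$ of $\Gamma^*$; in particular $\Gamma^*$ is injective.

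\emph{Surjectivity.} This is the heart of the matter, proved by Noetherian induction on $X$ along the stratification $X\supseteq X^{\ge\delta+1}(\sG)\supseteq X^{\ge\delta+2}(\sG)\supseteq\cdots$ (the hypotheses being inherited by $X^{\ge\delta+1}(\sG)$, which is Cohen--Macaulay of the expected dimension), with base case $\sG$ locally free, handled by Thm.~\ref{thm:Grassmannian}. Over $U:=X\setminus X^{\ge\delta+1}(\sG)$ the sheaf $\sG$ is locally free of rank $\delta$, and the unique summand of $\Gamma^*$ not supported over $X^{\ge\delta+1}(\sG)$ (the one with $d-j=0$) restricts over $U$ to the Grassmannian bundle formula, so $\Gamma^*|_U$ is onto $CH(\shZ_d^+|_U)$; by the localisation sequence $CH(\pi^{-1}(X^{\ge\delta+1}(\sG)))\to CH(\shZ_d^+)\to CH(\shZ_d^+|_U)\to 0$ it remains to account for the classes supported over $X^{\ge\delta+1}(\sG)$. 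Restricting to a stratum $X_i^\circ:=X^{\ge\delta+i}(\sG)\setminus X^{\ge\delta+i+1}(\sG)$, whose normal bundle is $K^\vee\otimes C$ with $K:=\Ker(\sigma|_{X_i^\circ})$ of rank $i$ and $C:=\sG|_{X_i^\circ}$ of rank $\delta+i$ (Lem.~\ref{lem:deg:normal}), deformation to the normal cone identifies the graded pieces of $CH(\pi^{-1}(X^{\ge\delta+1}(\sG)))$ with Chow groups of the Grassmannian local model $\Gr_d(C^\vee)\leftarrow\Gr_d(C^\vee)\times_{X_i^\circ}\Gr_{d-j}(K)\rightarrow\Gr_{d-j}(K)$, and the excess-intersection computation shows that the $\Gamma^{\nu^{(j)}}_{(d-j,d)}$ with $j\ge1$ restrict there to signs times the virtual-flip maps $\Psi^{\nu}_{(d-j,d)}$ of Lem.~\ref{lem:virtual:flip}. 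Thm.~\ref{thm:main:local}(2) then says exactly that these generate every class on the deeper strata; feeding this back through the localisation sequences together with the inductive hypothesis closes the induction, so $\Gamma^*$ is onto.

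\emph{The main obstacle.} The subtle step is surjectivity --- specifically, matching the globally defined maps $\Gamma^{\nu^{(j)}}_{(d-j,d)}$, built over the possibly singular determinantal schemes $\shZ_{d-j}^-$, with the strata-wise Grassmannian model of Thm.~\ref{thm:main:local}. One must verify that deformation to the normal cone produces precisely the predicted excess Chern classes with no lower-order corrections, and that the filtration on $CH(\pi^{-1}(X^{\ge\delta+1}(\sG)))$ is split with exactly the expected graded pieces and no cross-terms between distinct strata. The expected-codimension hypothesis is indispensable here: it is what makes $\iota^\pm$ regular embeddings, forces the generic determinantal models to be Cohen--Macaulay of the right dimension, and guarantees that $\shZ_{d-j}^-\to X^{\ge\delta+(d-j)}(\sG)$ is an (IH-)small resolution, so that the normal-cone degeneration behaves just as in the smooth Grassmannian-bundle model.
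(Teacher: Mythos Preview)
Your surjectivity argument is essentially the paper's: stratify by the $X_i:=X^{\ge\delta+i+1}(\sG)$, use the localisation sequences, and on each stratum invoke the key excess-intersection identity (this is exactly Lem.~\ref{lem:commute}: $\Gamma^{\nu}\,k_{i*}=j_{i*}\,\Psi^{\nu}$) together with the virtual-flip surjectivity Thm.~\ref{thm:main:local}(2). The deformation-to-the-normal-cone language is unnecessary --- the excess formula of Lem.~\ref{lem:stratum} already gives the ``sign times $\Psi^{\nu}$'' identification you want --- but the logic is right.

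The injectivity argument, however, has a genuine gap. You claim that the semiorthogonality of Thm.~\ref{thm:main:local}(1) \emph{upgrades} to global relations $\Gamma_{(d-j,d),\tau}\circ\Gamma_{(d-i,d)}^{\nu}=\pm\delta$, producing an explicit left inverse $\Gamma_*$. But the maps $\Psi^{\rm std}_{\tau}$ in Thm.~\ref{thm:main:local} are built from $\Delta_{\tau^c}(\shQ_+)$, where $\shQ_+$ is the tautological quotient on the \emph{stratum} Grassmannian $\Gr_d(G_Z^\vee)$ with $G_Z=\sG|_Z$ of rank $\delta+i+1$; this bundle does not globalise to $\shZ_d^+$. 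If instead you push into the ambient $\Gr_d(\sE^\vee)\times_X\Gr_{d-j}(\sF)$ via $\iota^\pm$ as you suggest, the top Chern classes you pick up are $c_{\rm top}(\sF^\vee\otimes\shU_+^\vee)$ and $c_{\rm top}(\sE\otimes\shU_-^{\prime\vee})$, which are \emph{not} the kernels $c_{\rm top}(\shQ_-^\vee\otimes\shU_+^\vee)$ of Lem.~\ref{lem:virtual:flip}; the convolution you'd have to compute mixes contributions from all strata at once, and the semiorthogonality does not follow from Thm.~\ref{thm:main:local}(1) alone.

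The paper avoids this by \emph{not} constructing a global left inverse. Instead it sets up, for each $i$, a commutative diagram of localisation short exact sequences comparing $\Im k_{i*}\to\Im j_{i*}$ with the map over $X\setminus X_{i+1}$ and over $X\setminus X_i$, and proves inductively that $\Gamma^*|_{\Im k_{i*}}$ is injective. The point is that only the \emph{stratum} maps $\Psi^{\shU}_{\mu}$ (not $\Psi^{\rm std}_{\mu}$) obey the commutation $k_*\Psi^{\shU}_{\mu}=\Gamma^{\shU}_{\mu}\,j_*$ of Lem.~\ref{lem:commute}, so one cannot directly transport Thm.~\ref{thm:main:local}(1) to the $\Gamma$'s; rather, one runs the \emph{same inverse induction} on the partial order \eqref{eqn:porder} as in Thm.~\ref{thm:main:local}(2), but now with the extra step of converting $\Psi^{\rm std}_{\nu}$ to $\Psi^{\shU}_{\nu}$ using the change-of-basis Lem.~\ref{lem:Gr:change_basis} --- this conversion is only valid \emph{after} the induction hypothesis kills the higher-order terms. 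That bookkeeping is the actual content of the injectivity proof and is missing from your sketch.
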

 
 As mentioned in the introduction, there are in general two types of behaviours of the $\shZ_d^+$:
\begin{itemize}
 		\item If $d \le  \delta$, then $\pi \colon \shZ_d^+ \to X$ is generically a Grassmannian $\Gr_{d}(\delta)$-bundle. 
		\item If $d >  \delta$, then $\shZ_d^+$ is supported over $X^{\ge d}(\sG)$. In fact $\shZ_d^+ \dashrightarrow \shZ_{d-\delta}^-$ is a flip (or a flop if $\delta=0$), both of them map birationally onto $X^{\ge d}(\sG)$.
\end{itemize}

As above construction commutes with base-change, by Manin's identity principle \cite{Manin}:
\begin{corollary} \label{cor:main.motif} In above situation, if $X$, $\shZ_{d-j}^-$, $j \in [0,d]$ and $\shZ_d^{+}$ are smooth and projective over $\kk$, then there is an isomorphism of covariant integral Chow motives over $\kk$:
	\begin{align*} \label{eqn:cayley.chow.motif}
	\bigoplus_{j=0}^{\min\{\delta,d\}}  \bigoplus_{\nu^{(j)} \in B_{j,\delta-j}} [\Gamma_{(d-j,d)}^{\nu^{(j)}}] \colon  \bigoplus_{j=0}^{\min\{\delta,d\}}  \bigoplus_{\nu^{(j)} \in B_{j,\delta-j}}  \foh(\shZ_{d-j}^-) (d(\delta-j)-|\nu^{(j)}|) \xrightarrow{\sim} \foh(\shZ_{d}^+).
	\end{align*}
\end{corollary}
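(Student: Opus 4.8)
The plan is to deduce Corollary~\ref{cor:main.motif} from Theorem~\ref{thm:main} by Manin's identity principle, and then to prove Theorem~\ref{thm:main}. Each $\Gamma_{(d-j,d)}^{\nu^{(j)}}$ is given by the algebraic correspondence $\Delta_{\nu^{(j)}}(-\shU)\cap r^{-\,*}_{(d-j,d)}(\blank)$ attached to the fibre square~\eqref{diagram:Gamma}, and this entire construction --- the Quot-schemes $\shZ^{\pm}_{\bullet}$, the tautological bundle $\shU$, the diagram~\eqref{diagram:Gamma} --- commutes with base change $T\to X$; since the hypotheses (Cohen--Macaulay, reducedness and expected codimension of the $X^{\ge\delta+i}(\sG)$) survive replacing $X$ by $X\times Y$ for $Y$ smooth, Manin's principle reduces the motivic statement to the Chow-group isomorphism of Theorem~\ref{thm:main} after all such base changes. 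So I concentrate on Theorem~\ref{thm:main}.

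First I would reduce to the case $\sG=\Coker(\sigma\colon\sF\to\sE)$ with $\sF,\sE$ locally free: such a presentation exists locally on $X$ because $\sG$ has homological dimension $\le 1$, and since the maps $\Gamma_{(d-j,d)}^{\nu^{(j)}}$, the Chow groups, and the excision sequences used below are all compatible with restriction to opens, a Mayer--Vietoris induction over an open cover lets me take the presentation global. Then $\sK=\Coker(\sigma^\vee)$, $X^{\ge\delta+i}(\sG)=D_{\rank\sF-i}(\sigma)$, and $\shZ_d^+=\foQuot_{X,d}(\sG)$ sits in $\Gr_d(\sE^\vee)=\foQuot_{X,d}(\sE)$ as the zero scheme of the section of $\pi^*\sF^\vee\otimes\shU^\vee$ induced by $\sigma$, while $\shZ_{d-j}^-=\foQuot_{X,d-j}(\sK)$ sits in $\Gr_{d-j}(\sF)$ as the zero scheme cut out by $\sigma^\vee$. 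Under the codimension hypotheses, Lemma~\ref{lem:deg:normal} identifies the relevant degeneracy loci as local complete intersections with normal bundle $K^\vee\otimes C$; this is what makes the excess-intersection bookkeeping below explicit.

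The core will be a stratify-and-glue argument. I would stratify $X=\bigsqcup_{a\ge0}S_a$ with $S_a=X^{\ge\delta+a}(\sG)\setminus X^{\ge\delta+a+1}(\sG)$, the locus where $\sigma$ has constant rank $\rank\sF-a$; over $S_a$ both $\sG$ and $\sK$ restrict to locally free sheaves, of ranks $\delta+a$ and $a$, so $\shZ_d^+|_{S_a}=\Gr_d(\sG^\vee|_{S_a})$ and $\shZ_{d-j}^-|_{S_a}=\Gr_{d-j}(\sK^\vee|_{S_a})$ are genuine Grassmannian bundles over $S_a$ with ``$\delta$-invariant'' $(\delta+a)-a=\delta$ independent of $a$. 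A deformation-to-the-normal-cone / excess-intersection computation, fed by $N\simeq K^\vee\otimes C$, should identify the restriction of $\Gamma_{(d-j,d)}^{\nu^{(j)}}$ to each stratum with the virtual Grassmannian-flip map $\Psi_{(d-j,d)}^{\nu^{(j)}}$ of Lemma~\ref{lem:virtual:flip}, the factor $c_{\rm top}(\shQ_-^\vee\otimes\shU_+^\vee)$ in $\Psi$ being exactly the excess class produced by the degeneracy locus; then Theorem~\ref{thm:main:local} gives the desired decomposition over every stratum. To propagate it over $X$ I would run a descending induction on $a$: the excision sequences relating the Chow groups of $\shZ_d^+$ (and of each $\shZ_{d-j}^-$) over $\pi^{-1}(X^{\ge\delta+a+1})\subseteq\pi^{-1}(X^{\ge\delta+a})$ with open complement over $S_a$ form a morphism of right-exact sequences via $\Gamma^*$, with the closed (deepest) stratum as base case and a diagram chase giving surjectivity one step up. For injectivity and the explicit splitting I would construct global projectors $\Gamma_{(d-j,d),\,\nu^{(j)}}$ modelled on the maps $\Psi^{\rm std}$, using the Koszul/excess-bundle formula for the lci embeddings $\shZ_d^+\hookrightarrow\Gr_d(\sE^\vee)$ and $\shZ_{d-j}^-\hookrightarrow\Gr_{d-j}(\sF)$; the semiorthogonality $\Gamma_{(d-j,d),\,\tau}\circ\Gamma_{(d-i,d)}^{\nu}=0$ for $(i,\nu)\nsucceq(j,\tau)$ and the normalization $\Gamma_{(d-j,d),\,\nu}\circ\Gamma_{(d-j,d)}^{\nu}=\pm\,\Id$ then reduce, via the projection formula, to the Littlewood--Richardson, Chern-class-of-tensor-product and Grassmannian-duality identities (Lemmas~\ref{lem:tensor}, \ref{lem:LR} and~\ref{lem:Gr:dual}) already established for Lemma~\ref{lem:virtual:flip}. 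Combined with the excision surjectivity, this makes $\Gamma^*$ an isomorphism.

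The hard part will be the gluing: showing that the globally defined correspondences $\Gamma_{(d-j,d)}^{\nu^{(j)}}$ restrict over each stratum \emph{exactly} to the virtual-flip maps $\Psi_{(d-j,d)}^{\nu^{(j)}}$, so that the ``virtual'' Grassmannian flips computed stratum by stratum assemble into one genuine decomposition of $CH_k(\shZ_d^+)$ with no residual terms. This is precisely where the Cohen--Macaulay hypothesis and the expected-codimension condition are indispensable: they force each $X^{\ge\delta+i}(\sG)$ to be lci with normal bundle $K^\vee\otimes C$ (Lemma~\ref{lem:deg:normal}), so that the excess contributions appearing when $\Delta_{\nu^{(j)}}(-\shU)$ is pushed forward along $r^+_{(d-j,d)}$ are exactly the Chern-class factors built into $\Psi^{\rm std}$ and $\Psi^{\nu^{(j)}}$. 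Controlling these uniformly across the whole stratification, and checking that the resulting filtration of $CH_k(\shZ_d^+)$ has graded pieces indexed precisely by the pairs $(j,\nu^{(j)})$, is the technical heart of the argument.
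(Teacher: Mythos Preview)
Your reduction of the corollary to Theorem~\ref{thm:main} via Manin's identity principle is exactly what the paper does: the whole construction (the Quot schemes, the fibre squares \eqref{diagram:Gamma}, the tautological bundle $\shU$, and hence the correspondences $\Gamma_{(d-j,d)}^{\nu^{(j)}}$) commutes with smooth base change $X \rightsquigarrow X\times Y$, so the Chow-group isomorphism of Theorem~\ref{thm:main} applied after every such base change yields the motivic statement.

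Your sketch of Theorem~\ref{thm:main} is also broadly in line with the paper --- stratify by rank of $\sG$, identify the restriction of $\Gamma^{\nu^{(j)}}$ to each stratum with the virtual-flip map $\Psi^{\nu^{(j)}}$ of \S\ref{sec:virtual.flips} (this is the content of Lemmas~\ref{lem:stratum} and~\ref{lem:commute}, and your observation that the excess class is $c_{\rm top}(\shQ_-^\vee\otimes\shU_+^\vee)$ is precisely right), and get surjectivity from excision plus Theorem~\ref{thm:main:local}. Two differences are worth flagging. First, the Mayer--Vietoris step to globalise the presentation $\sG=\Coker\sigma$ is unnecessary and a bit delicate for Chow groups; the paper simply notes that the normal-bundle computations of Lemma~\ref{lem:stratum} are local, so one may assume a presentation locally, while the maps $\Gamma^{\nu^{(j)}}$ themselves are defined globally without any presentation.

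Second, and more substantively, your injectivity plan diverges from the paper. You propose building global projectors $\Gamma_{(d-j,d),\nu}$ satisfying global semiorthogonality $\Gamma_{(d-j,d),\tau}\circ\Gamma_{(d-i,d)}^{\nu}=0$ and $\Gamma_{(d-j,d),\nu}\circ\Gamma_{(d-j,d)}^{\nu}=\pm\Id$. The paper explicitly warns (Appendix~A) that such global projectors are in general hard to write down, and only does so for the top and lowest strata. Instead, the paper's injectivity runs as follows: form a ladder of excision sequences \eqref{eqn:ses} for both source and target, and show by induction on the stratum index $i$ that $\Gamma^*$ is injective on $\Im k_{i*}$. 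The inductive step uses only the \emph{local} semiorthogonality of Theorem~\ref{thm:main:local} on the stratum $X_{i\backslash i+1}$, together with the commutation relations $j_*\Psi^\nu=\Gamma^\nu k_*$ and $k_*\Psi^{\shU}_\mu=\Gamma^{\shU}_\mu j_*$ of Lemma~\ref{lem:commute}; a change-of-basis from $\Psi^{\rm std}$ to $\Psi^{\shU}$ (via Lemma~\ref{lem:Gr:change_basis}) is needed so that the relevant map passes through $j_*$. This avoids ever asserting a global orthogonality relation. Your Schur-function identities would certainly feed into such an argument, but they do not by themselves produce global projectors across the whole stratification; if you attempt your route directly you will likely find that the putative relation $\Gamma_{(d-j,d),\nu}\circ\Gamma_{(d-j,d)}^{\nu}=\pm\Id$ only holds modulo contributions from deeper strata, which is precisely why the paper's induction is set up the way it is.
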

	
\begin{remark} For the readers' convenience, let us also state the results in contravariant setting. The isomorphism of Chow group is: for any $k \in \ZZ$,
	\begin{align*}
	\bigoplus_{j=0}^{\min\{\delta,d\}}  \bigoplus_{\nu^{(j)} \in B_{j,\delta-j}} \Gamma_{(d-j,d)}^{\nu^{(j)}} \colon  \bigoplus_{j=0}^{\min\{\delta,d\}}  \bigoplus_{\nu^{(j)} \in B_{j,\delta-j}} CH^{k - (d-j)(\delta-j)-|\nu^{(j)}|} (\shZ_{d-j}^-)  \xrightarrow{\sim} CH^k(\shZ_{d}^+).
	\end{align*}
If we use $h$ to denote the contravariant Chow motive and $L$ the Lefschetz motif, then:
	\begin{align*} 
	\bigoplus_{j=0}^{\min\{\delta,d\}}  \bigoplus_{\nu^{(j)} \in B_{j,\delta-j}} [\Gamma_{(d-j,d)}^{\nu^{(j)}}] \colon  \bigoplus_{j=0}^{\min\{\delta,d\}}  \bigoplus_{\nu^{(j)} \in B_{j,\delta-j}}  h(\shZ_{d-j}^-) \otimes L^{(d-j)(\delta-j)+|\nu^{(j)}|} \xrightarrow{\sim} h(\shZ_{d}^+).
	\end{align*}
\end{remark}

\subsection{Proof of main theorem}

The following is the direct analogue of Lem. 4.9 of \cite{J19}.

\begin{lemma}  \label{lem:stratum} Assume $\sG$ is a coherent sheaf on $X$ of homological dimension $\le 1$ and rank $r$. For a fixed integer $i \ge 0$, assume $\sG$ has constant rank $r+i+1$ over a reduced locally complete intersection subscheme $Z \subset X$ of codimension $(i+1)(r+i+1)$, and has rank $\le r+i$ over $X \backslash Z$. Denote $\sK = \sExt^1(\sG, \sO)$,  and $i \colon Z \hookrightarrow X$ the inclusion, $G_Z: =   i^* \sG$, $K_Z : = i^* \sK$ are vector bundles over $Z$ of rank $r+i+1$ and $i+1$ respectively. Let $d_-,d_+$ be integers such that $0 \le d_- \le i+1$, $0 \le d_+ \le r+i+1$ and $d_+ - r \le d_- \le d_+$. Denote $G_+: = \Gr_{d_+}(G_Z^\vee)$, $G_- := \Gr_{d_-}(K_Z^\vee)$, and $\shU_{\pm}$, $\shQ_{\pm}$ the corresponding universal subbundles and quotient bundles, and denote $\shZ_+: = \foQuot_{d_+}(\sG)$, $\shZ_-: = \foQuot_{d_-}(\sK)$. Consider the following base-change diagram for the fibered product $\Gamma: = \shZ_+ \times_X \shZ_-$, with names of maps as indicated:
	\begin{equation} \label{diagram:strata}
	\begin{tikzcd}[back line/.style={}, row sep=1.2 em, column sep=2.6 em]
& \Gamma_Z = G_+ \times_Z G_- \ar[back line]{dd}[near start]{r_{Z-}} \ar[hook]{rr}{\ell} \ar{ld}[swap]{r_{Z+}}
  & & \Gamma : = \shZ_+ \times_X \shZ_- \ar{dd}{r_-} \ar{ld}{r_+} \\
G_+ \ar{dd}[swap]{\pi_Z}  
  & & \shZ_+   \ar[crossing over, hookleftarrow, swap]{ll}[near end]{j}\\
&G_- \ar{ld}[swap]{\pi_Z'} \ar[back line, hook]{rr}[near start]{k} 
  & & \shZ_- \ar{ld}[near start]{\pi'}  \\
Z  \ar[hook]{rr}{i} & & X  \ar[crossing over, leftarrow]{uu}[near end, swap]{\pi}
	\end{tikzcd}
	\end{equation}
where  $\Gamma_Z : = Z \times_X \Gamma =G_+ \times_Z G_-$. Then the normal bundles are given by 
	\begin{align*}
	& \sN_i = G_Z \otimes K_Z ,  &\sN_j =  \shQ_+^\vee \boxtimes K_Z, \\
	&\sN_k =G_Z \boxtimes \shQ_-^\vee,  & \sN_\ell = \shQ_+^\vee  \boxtimes \shQ_-^\vee. 
	\end{align*}
The excess bundle for the front square is given by $\sV = \shU_+^\vee \boxtimes K_Z$, and the excess bundle for the back square is $\sV' =  \shU_+^\vee \boxtimes \shQ_-^\vee$. Therefore 
	\begin{equation*}
		\pi^* \, i_* (\blank) = j_*(c_{\rm top}(\sV) \cap \pi_Z^*(\blank)), \qquad r_{-}^* k_* (\blank) = \ell_* (c_{\rm top}(\sV') \cap r_{Z-}^*(\blank)).
	\end{equation*}
Similarly the excess bundle for the bottom square is given by $\sW = G_Z \boxtimes \shU_-^\vee$, and for the top square is $\sW' =\shQ_+^\vee \boxtimes \shU_-^\vee$. Therefore 
	\begin{equation*}
		\pi'^* \, i_* (\blank) = k_*(c_{\rm top}(\sW) \cap \pi_Z'^*(\blank)), \qquad r_{+}^* j_* (\blank) = \ell_* (c_{\rm top}(\sW') \cap r_{Z+}^*(\blank)).
	\end{equation*}
\end{lemma}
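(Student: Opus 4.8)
All four statements are local on $X$, so I would begin by choosing a two–term locally free resolution $0\to\sF\xrightarrow{\sigma}\sE\to\sG\to 0$ (available since $\sG$ has homological dimension $\le 1$), so that $\sK=\Coker(\sigma^\vee\colon\sE^\vee\to\sF^\vee)$, and by the constancy of $\rank(\sigma|_Z)$ one gets on $Z$ the short exact sequences of vector bundles $0\to K_Z^\vee\to\sF|_Z\to I_Z\to 0$ and $0\to I_Z\to\sE|_Z\to G_Z\to 0$, with $I_Z:=\Im(\sigma|_Z)$, $G_Z=\Coker(\sigma|_Z)$ and $K_Z=\Coker(\sigma^\vee|_Z)=\Ker(\sigma|_Z)^\vee$ (their duals will also be used repeatedly). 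The formula $\sN_i=G_Z\otimes K_Z$ is then precisely Lem.~\ref{lem:deg:normal} applied to $\sigma$, since $N_{Z/X}=\Ker(\sigma|_Z)^\vee\otimes\Coker(\sigma|_Z)$.

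For $\sN_j=N_{G_+/\shZ_+}$ and $\sN_k=N_{G_-/\shZ_-}$ I would use a uniform device: realize the Quot scheme as the zero scheme of a regular section of an explicit bundle on a Grassmannian bundle over $X$, and likewise the small locus over $Z$ inside the Grassmannian bundle restricted over $Z$. Composing $\pi^*\sF\xrightarrow{\sigma}\pi^*\sE$ with the tautological quotient realizes $\shZ_+=\foQuot_{d_+}(\sG)$ as the zero scheme inside $Y_+:=\Gr_{d_+}(\sE^\vee)$ of a section of $\shU_{Y_+}^\vee\otimes\pi^*\sF^\vee$ (regular because the degeneracy–locus hypotheses, together with the dimension bookkeeping of Thm.~\ref{thm:main}, force $\shZ_+$ to have the expected dimension); since $G_+=\shZ_+\times_X Z$ and $\sigma|_Z$ factors through $I_Z$, restricting over $Z$ realizes $G_+$ as the zero scheme inside $Y_+|_Z$ of a regular section of $\shU_{Y_+}^\vee\otimes\pi^*I_Z^\vee$, whose zero scheme is moreover $\Gr_{d_+}(G_Z^\vee)=G_+$. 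Feeding the conormal sequences for the two chains $G_+\subset\shZ_+\subset Y_+$ and $G_+\subset Y_+|_Z\subset Y_+$ into each other (using $N_{Y_+|_Z/Y_+}=\pi^*N_{Z/X}$ by smoothness of $Y_+\to X$), and rewriting via the sequences above and the tautological $0\to\shQ_+^\vee\to\pi_Z^*G_Z\to\shU_+^\vee\to 0$, all the $I_Z$– and $\sF$–terms cancel and one is left with $\sN_j=\shQ_+^\vee\otimes\pi_Z^*K_Z=\shQ_+^\vee\boxtimes K_Z$; the identical computation with $(\sG,\sE,\sF,\sigma)$ replaced by $(\sK,\sF^\vee,\sE^\vee,\sigma^\vee)$ — now embedding $\shZ_-\subset\Gr_{d_-}(\sF)$ as the zero scheme of the section induced by $\sigma^\vee$ (again regular, by a codimension count) — gives $\sN_k=G_Z\boxtimes\shQ_-^\vee$, with $G_Z=\Coker(\sigma|_Z)$ reappearing because $\Ker(\sigma^\vee|_Z)=G_Z^\vee$. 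These manipulations in fact produce honest short exact sequences of bundles once one checks the connecting maps are the tautological ones, but in any case only the $K$–classes (equivalently the top Chern classes) are needed for the applications.

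The delicate case is $\sN_\ell=N_{\Gamma_Z/\Gamma}$, because $\Gamma=\shZ_+\times_X\shZ_-$ is in general a \emph{proper excess} intersection inside $Y_+\times_X Y_-$, so the naive complete–intersection model of $\Gamma$ fails. The fix is to note that the tautological rank‑$d_-$ subbundle $\shU$ of $\pi'^*\sF$ on $\shZ_-$ is contained in $\Ker\sigma$ — this is exactly the defining condition of $\shZ_-$ — so $\sigma$ factors through $\pi'^*\sF/\shU$ there, and consequently $\Gamma$ is the zero scheme of a \emph{regular} section of $\pi^*(\sF/\shU)^\vee\otimes\shU_{Y_+}^\vee$ on $\shZ_-\times_X Y_+$. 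Granting this, one runs the same conormal bookkeeping as before for the chains $\Gamma_Z\subset\Gamma\subset\shZ_-\times_X Y_+$ and $\Gamma_Z\subset(\shZ_-\times_X Y_+)\times_X Z\subset\shZ_-\times_X Y_+$, the latter inclusion being the flat pullback of $k\colon G_-\hookrightarrow\shZ_-$ (so its normal bundle is the already–computed $\sN_k=G_Z\boxtimes\shQ_-^\vee$); after the cancellations and one use of the tautological sequence on $G_+$ this collapses to $\sN_\ell=\shQ_+^\vee\otimes\shQ_-^\vee=\shQ_+^\vee\boxtimes\shQ_-^\vee$ (formally $\sN_j$ is the case $d_-=0$). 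I expect the verification that the section cutting out $\Gamma$ in $\shZ_-\times_X Y_+$ really is regular — i.e. that the excess involved in forming $\Gamma$ is already absorbed by the factorization through $\pi^*\sF/\shU$ — to be the main technical point; it is a dimension count resting on the strata and the $\shZ_\pm$ all having their expected dimensions.

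Finally the excess bundles and the four push–pull identities are formal. Each of the four squares in (\ref{diagram:strata}) is Cartesian with both maps regular embeddings, so the excess bundle of such a square is the quotient (pullback of the outer normal bundle) $/$ (inner normal bundle); substituting the four normal bundles just computed and using the tautological sequences on $G_\pm$ and $\Gamma_Z$ identifies these quotients as $\sV=\shU_+^\vee\boxtimes K_Z$, $\sV'=\shU_+^\vee\boxtimes\shQ_-^\vee$, $\sW=G_Z\boxtimes\shU_-^\vee$ and $\sW'=\shQ_+^\vee\boxtimes\shU_-^\vee$. The equalities $\pi^*i_*(\blank)=j_*(c_{\rm top}(\sV)\cap\pi_Z^*(\blank))$ and its three analogues are then the excess intersection formula for refined Gysin maps along a regular embedding, \cite[Thm.~6.3, Prop.~6.2(1) \& 6.6]{Ful}.
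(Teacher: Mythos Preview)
Your proposal is correct, and the overall structure---local resolution, Lem.~\ref{lem:deg:normal} for $\sN_i$, then the four excess formulae from \cite[Thm.~6.3]{Ful}---matches the paper's. The technique at the key step, however, differs in a way worth recording. For $\sN_j$ (and by symmetry $\sN_k$), rather than comparing conormal sequences for two LCI chains through $G_+$ inside $Y_+$, the paper observes that on $\shZ_+$ itself the map $\pi^*\sigma$ factors through a map of locally free sheaves $\widetilde{\sigma}\colon \pi^*F \to \shQ^\vee(E^\vee)$ (since $\pi^*F\to\pi^*E\to\shU^\vee$ vanishes on $\shZ_+$ by definition), and that $G_+=\pi^{-1}(Z)$ is precisely the locus where $\widetilde{\sigma}$ drops to its minimal rank $m-(i+1)$. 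One then simply \emph{re-applies} Lem.~\ref{lem:deg:normal} to $\widetilde{\sigma}$ on the base $\shZ_+$, obtaining $\sN_j=\Ker(\widetilde{\sigma}|_{G_+})^\vee\otimes\Coker(\widetilde{\sigma}|_{G_+})$ directly, and identifies kernel and cokernel as $\pi_Z^*K_Z^\vee$ and $\shQ_+^\vee$ via the tautological sequences. For $\sN_\ell$ the paper recurses again: on the new base $\shZ_+$ one considers $\widetilde{\sigma}^\vee\colon\shQ(E^\vee)\to\pi^*F^\vee$, whose cokernel is $\pi^*\sK$, so the Quot scheme over $\shZ_+$ is $\Gamma$ with deepest stratum $\Gamma_Z$, and re-running the $\sN_j$ argument yields $\sN_\ell=\shQ_+^\vee\boxtimes\shQ_-^\vee$.

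What the paper's approach buys is uniformity and the avoidance of any auxiliary complete-intersection model of $\Gamma$: one never has to check that the section cutting out $\Gamma$ inside $\shZ_-\times_X Y_+$ is regular, because Lem.~\ref{lem:deg:normal} only requires the ambient (here $\shZ_+$, resp.\ $\Gamma$) to be Cohen--Macaulay and the stratum to have expected codimension. Your approach, by contrast, is more hands-on and makes the $K$-theoretic bookkeeping fully explicit, at the cost of the regularity checks you correctly flag as the main technical point. Both routes give honest identifications of normal bundles (not merely $K$-classes), and your deduction of the excess bundles and push--pull identities from \cite[Thm.~6.3, Prop.~6.2(1) \& 6.6]{Ful} is identical to the paper's.
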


\begin{proof} As the statements are local, it suffices to assume that there are vector bundles $E, F$ over $X$ of rank $n,m$, a morphism $\sigma \colon F \to E$ such that $\sG \simeq \Coker \sigma$ and  $n - m = \rank \sG$. Then over $Z$, the kernel and cokernel of $\sigma$ are locally free sheaves by Lem. \ref{lem:deg:normal}. Thus there is an exact sequence of vector bundles over $Z$:
	$$0 \to K_Z^\vee \to F|_Z \to E|_Z \to  G_Z \to 0.$$ 
Since $\Coker (K_Z^\vee \to F_Z) = \im (\sigma|_Z)=: B_Z$ has constant rank, therefore it is a vector subbundle of $E|_Z$, and above sequence splits into two short exact sequences of vector bundles $0 \to K_Z^\vee \to F|_Z \to B_Z \to 0$ and $0 \to B_Z \to E|_Z \to  G_Z \to 0$. Then the normal bundle $\sN_i = G_Z \otimes K_Z$ follows from Lem. \ref{lem:deg:normal}. 

We next show the other normal bundle also fits into this framework. For example, consider the embedding $j \colon G_+ \hookrightarrow \shZ_+$. Notice there is a natural embedding $\shZ_+ \hookrightarrow \Gr_{d_+}(E^\vee)$ induced by $E \twoheadrightarrow \sG$. Denote the tautological sequence on $\Gr_{d_+}(E^\vee)$ by $0 \to \shU(E^\vee) \to E^\vee \to \shQ(E^\vee) \to 0$. Then $\shZ_+$ is precisely the loci where the composition $F \to E \to \shU^\vee(E^\vee)$ is zero, therefore over $\widetilde{X}: = \shZ_+ \subset \Gr_{d_+}(E^\vee)$, $\pi^*\sigma \colon \pi^* F \to \pi^* E$ factors through a map between vector bundles 
	$$\widetilde{\sigma} \colon \widetilde{F} \to  \widetilde{E}, \quad \text{where} \quad \widetilde{F}:= \pi^* F,  \quad \widetilde{E}:= \shQ^\vee(E^\vee).$$
Notice by our assumption $d_+ \le r+ i + 1$, i.e. $\rank B = m-(i+1) \le n - d_+ =\rank \shQ^\vee(E^\vee)$, therefore over every closed point $x \in X$ the rank of $\sigma$ agrees with the rank of $\widetilde{\sigma}$. Therefore exactly over $\widetilde{Z}: = \pi^{-1}(Z) \simeq G_+ \subset \widetilde{X}$, the rank of $\widetilde{\sigma}$ achieves its minimal. Hence $\widetilde{\sigma} \colon \widetilde{F} \to  \widetilde{E}$ has constant (minimal) rank $m-(i+1)$ over $\widetilde{\shZ} \subset \widetilde{X}$.

Notice there is a natural embedding $i_{G_+}: G_+ \hookrightarrow \Gr_{d_+}(E|_Z^\vee)$ induced by the surjection $E|_Z \twoheadrightarrow G_Z$. If we denote the tautological sequences on $\Gr_{d_+}(E|_Z^\vee)$ by 
	$0 \to \shU(E|_Z^\vee) \to E|_Z^\vee \to  \shQ(E|_Z^\vee) \to 0,$
then $\shU(E|_Z^\vee) = \shU(E^\vee)|_Z$, $\shQ(E|_Z^\vee) = \shQ(E^\vee)|_Z$. Dualizing above sequence,
	$$0 \to \shQ^\vee(E|_Z^\vee)  \to E|_Z \to \shU^\vee(E|_Z^\vee) \to 0,$$
 and comparing with the (dual) tautological sequence on $G_+ = \Gr_{d_+}(G_Z)$:
	$$0 \to \shQ_+^\vee \to G_Z \to \shU_+^\vee \to 0,$$ 
since $E|_Z \twoheadrightarrow G_Z$ induces isomorphism of quotients $i_{G_+}^* \shU^\vee(E|_Z^\vee) \simeq \shU_+^\vee$. Hence over $\widetilde{Z}$ there is a commutative diagram:
	\begin{equation*}
	\begin{tikzcd}
		\pi_Z^* B_Z  \ar[dashed]{r} \ar[equal]{d} & \shQ^\vee(E|_Z^\vee) |_{\widetilde{Z}} \ar{d} \ar[dashed]{r} & \shQ_+^\vee\ar{d}  \\
		\pi_Z^* B_Z  \ar{r} & \pi_Z^* E_Z \ar{r} \ar{d} & \pi_Z^* G_Z \ar{d} \\
			& i_{G_+}^* \shU^\vee(E|_Z^\vee)  \ar{r}{\sim}  &  \shU_+^\vee,
	\end{tikzcd}
	\end{equation*}
where the three columns and the last two rows are exact, hence it induces a short exact sequence on the first row. Hence there is an exact sequence of vector bundles on $G_+$:
		$$0 \to \pi_Z^* K_Z^\vee \to \widetilde{F}|_{ \widetilde{Z}}  \xrightarrow{\widetilde{\sigma}|_{\widetilde{Z}}} \widetilde{E}|_{ \widetilde{Z}} \to \shQ_+^\vee \to 0,$$
where the middle map $\widetilde{\sigma}|_{\widetilde{Z}}$ factors through $\widetilde{F}|_{ \widetilde{Z}}  \twoheadrightarrow \pi_Z^* B \hookrightarrow  \widetilde{E}|_{ \widetilde{Z}}$.
 	Therefore from Lem. \ref{lem:deg:normal}, normal bundle $\sN_j \simeq \shQ_+^\vee \boxtimes K_Z$ and the excess bundle 
	$\sV = \pi_Z^* \sN_i/ \sN_j \simeq \shU_+^\vee \boxtimes K_Z.$
	
The rest of statements follow from a similar argument:  for example, the statement about $\sN_\ell$ and the corresponding excess bundle follows from consider the map $\widetilde{\sigma}^\vee \colon \widetilde{E}^\vee \to  \widetilde{F}^\vee$ over $\widetilde{Z}= G_+ \subseteq \widetilde{X} = \shZ_+$; and the rest two situations  follow from replacing $+$ by $-$.
\end{proof}

\begin{lemma}\label{lem:commute} In the situation of Lem. \ref{lem:stratum}, for any two Young diagrams $\nu, \mu$ inside a fixed box $B$ of $\le d_+$ entires (for example $B=B_{d_+ - d_-, \ell_+ - \ell_-}$), consider the following maps
	\begin{align*}
	& \Psi^\nu (\blank): = r_{Z+*}(c_{\rm top}(\sV') \cup \Delta_{\nu}(-\shU_+) \cap  r_{Z-}^*(\blank))   \colon & CH(G_-) \to CH(G_+); \\
	& \Psi^{\shU}_{\mu}(\blank): = r_{Z-*}(c_{\rm top}(\sW') \cup \Delta_{\mu^c}(-\shU_+) \cap r_{Z+}^*(\blank))  \colon & CH(G_+) \to CH(G_-); \\
	& \Gamma^\nu (\blank): = r_{+*}(\Delta_{\nu}(-\shU_+(\sG)) \cap  r_{-}^*(\blank))   \colon & CH(\shZ_-) \to CH(\shZ_+);\\
	& \Gamma^{\shU}_{\mu}(\blank): = r_{-*}(\Delta_{\mu^c}(-\shU_+(\sG)) \cap r_{+}^*(\blank))  \colon & CH(\shZ_+) \to CH(\shZ_-).
	\end{align*}
Then the following holds:
		$$\Gamma^\nu k_* (\blank) = j_* \Psi^\nu (\blank), \qquad \Gamma^{\shU}_{\mu} j_* (\blank) = k_* \Psi^{\shU}_{\mu}(\blank).$$
\end{lemma}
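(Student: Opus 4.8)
The plan is to deduce both identities from the two excess-intersection formulas already established in Lemma~\ref{lem:stratum}, combined with the projection formula for the regular closed embedding $\ell\colon\Gamma_Z\hookrightarrow\Gamma$ and the compatibility of the tautological subbundle under restriction. I would treat $\Gamma^\nu\,k_* = j_*\,\Psi^\nu$ in detail and obtain $\Gamma^{\shU}_\mu\,j_* = k_*\,\Psi^{\shU}_\mu$ by the same argument, swapping the roles of the ``back square'' and the ``top square'' of diagram~(\ref{diagram:strata}) (and correspondingly $\sV'$ with $\sW'$, $k$ with $j$, and $r_-$ with $r_+$). The key preliminary observations are: (i) by Lemma~\ref{lem:stratum}, the back square of (\ref{diagram:strata}) is Cartesian with $k$ a regular embedding and excess bundle $\sV' = \shU_+^\vee\boxtimes\shQ_-^\vee$, so $r_-^*\,k_*(\blank) = \ell_*\big(c_{\rm top}(\sV')\cap r_{Z-}^*(\blank)\big)$ by the excess intersection formula \cite[Thm.~6.3]{Ful}; and (ii) since $r_+\circ\ell = j\circ r_{Z+}$ and the tautological subbundle $\shU_+$ on $G_+\simeq\shZ_+\times_X Z$ is the restriction $j^*\shU_+(\sG)$ of the one on $\shZ_+$, the Schur class $\Delta_\nu(-\shU_+(\sG))$ on $\Gamma$ satisfies $\ell^*\big(r_+^*\Delta_\nu(-\shU_+(\sG))\big) = \Delta_\nu(-r_{Z+}^*\shU_+)$ on $\Gamma_Z$, i.e. it restricts to the Schur class appearing in $\Psi^\nu$.

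Granting these, the first identity is a short chain of manipulations: starting from $\Gamma^\nu(k_*\beta) = r_{+*}\big(\Delta_\nu(-\shU_+(\sG))\cap r_-^*k_*\beta\big)$ for $\beta\in CH(G_-)$, I would substitute the excess formula (i) for $r_-^*k_*$, then use the projection formula for $\ell$ together with (ii) to move the Schur class inside $\ell_*$, and finally apply $r_{+*}\,\ell_* = j_*\,r_{Z+*}$ (from $r_+\circ\ell = j\circ r_{Z+}$) to arrive at $j_*\,\Psi^\nu(\beta)$. For the second identity one runs the same computation with the top square of (\ref{diagram:strata}), whose excess bundle is $\sW' = \shQ_+^\vee\boxtimes\shU_-^\vee$, giving $r_+^*\,j_*(\blank) = \ell_*\big(c_{\rm top}(\sW')\cap r_{Z+}^*(\blank)\big)$, and then using $r_{-*}\,\ell_* = k_*\,r_{Z-*}$ and $\ell^*\big(r_+^*\Delta_{\mu^c}(-\shU_+(\sG))\big) = \Delta_{\mu^c}(-r_{Z+}^*\shU_+)$.

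I do not expect a genuine obstacle: essentially all the content is carried by Lemma~\ref{lem:stratum}, which supplies both the Cartesianness of the two auxiliary squares (with $\Gamma_Z$ as the fibre product in each case) and the corresponding excess bundles, so the remaining work is bookkeeping --- keeping straight which square, which excess bundle, and which of the four projections $r_\pm,\,r_{Z\pm}$ enters at each step. The one point I would be careful to state explicitly is that the squares are Cartesian \emph{scheme-theoretically} (so that the excess intersection formula applies on the nose), which reduces to the facts that $\foQuot$ commutes with base change and that $\sG|_Z$, $\sK|_Z$ are locally free of the expected ranks over $Z$; these are already contained in the hypotheses and in Lemmas~\ref{lem:deg:normal} and \ref{lem:stratum}.
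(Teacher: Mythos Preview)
Your proposal is correct and follows exactly the same route as the paper's proof: apply the excess intersection formula from Lemma~\ref{lem:stratum} to rewrite $r_-^* k_*$ (resp.\ $r_+^* j_*$), then use the projection formula for $\ell$ together with the compatibility $\ell^*\Delta_\nu(-\shU_+(\sG)) = \Delta_\nu(-\shU_+)$ and the functoriality $r_{+*}\ell_* = j_* r_{Z+*}$ (resp.\ $r_{-*}\ell_* = k_* r_{Z-*}$). Your write-up is in fact more explicit than the paper's, which compresses all of this into a single displayed chain of equalities and the phrase ``similarly for the other identity.''
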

Notice the above map $\Psi^{\nu}$ is exactly the same map $\Psi^{\nu}$ defined in Lem. \ref{lem:virtual:flip}.
\begin{proof} From excess bundle formula (see \cite[Thm. 6.3]{Ful} and  \cite[Prop. 6.2(1), Prop. 6.6]{Ful}),
\begin{align*}
	&\Gamma^\nu k_* (\blank) = r_{+*} (\Delta_{\mu}(-\shU_+(\sG)) \cap r_-^*k_* (\blank))  = r_{+*} \ell_{*} (\ell^*\Delta_{\mu}(-\shU_+(\sG)))  \cap c_{\rm top}(\sV') \cap r_{Z-}^* (\blank) )  \\
	&= j_* r_{Z+\,*}  (c_{\rm top}(\sV') \cup \Delta_{\mu}(-\shU_+) \cap r_{Z-}^* (\blank) ) = j_* \Psi^\nu (\blank),
	\end{align*}
and similarly for the other identity.
\end{proof}

We are now ready to prove the main theorem \ref{thm:main}. 
\begin{proof}[Proof of Thm. \ref{thm:main}]
Denote $X_i:= X^{\ge \delta+i+1}(\sG)$ for $i \ge -1$, then there is a stratification $\ldots \subset X_{i+1} \subset X_{i} \subset \ldots \subset X_{1} \subset X_{0} \subset X_{-1} = X$. Then this stratification induces stratifications of $\shZ_d^+$, $\shZ_{d-j}^-$, $j \in [0,d]$ and $\Gamma_{(d-j,d)}$ through pullbacks from $X$, see diagram (\ref{diagram:Gamma}). 

For simplicity, for each pair $(i,\ell)$ with $\ell  \ge  i \ge -1$, denote by $X_{i\backslash \ell} : = X_{i} \backslash X_{\ell}$, $(\shZ_d^+)_{i\backslash \ell}:= (\shZ_d^+)_i \backslash (\shZ_d^+)_{\ell}$, $(\shZ_{d-j}^-)_{i\backslash \ell}: =(\shZ_{d-j}^-)_i \backslash (\shZ_{d-j}^-)_{\ell}$ and $(\Gamma_{(d-j,d)})_{i\backslash \ell} : =(\Gamma_{(d-j,d)})_i \backslash (\Gamma_{(d-j,d)})_{\ell}$. 
For each $i \ge -1$, we will denote the natural inclusions by: $i_{i} \colon X_i \hookrightarrow X$, $j_{i} \colon (\shZ_d^+)_i \hookrightarrow \shZ_d^+$,  $k_{(j), i} \colon (\shZ_{d-j}^-)_{i} \hookrightarrow \shZ_{d-j}^-$ and $\ell_{(j), i} \colon (\Gamma_{(d-j,d)})_i \hookrightarrow \Gamma_{(d-j,d)}$. For $i \ge 0$, we also denote $i_{i,i-1} \colon X_{i} \hookrightarrow X_{i-1}$ the natural inclusion, and $j_{i,i-1}$, $k_{(j), i,i-1}$ and $\ell_{(j), i,i-1}$ are defined similarly.

Then over each stratum $X_{i \backslash i+1} = X_{i} \backslash X_{i+1}$, $\sG$ (resp. $\sK$) has constant $\delta+i+1$ (resp. $i+1$), and the diagram (\ref{diagram:Gamma}) for all $j$ becomes: 
\begin{equation*}
		\begin{tikzcd}[row sep= 3 em, column sep = 8 em]
	\bigsqcup_{j=0}^{\delta} (\Gamma_{(d-j,d)})_{i \backslash i+1}  \ar{d}[swap]{\Gr_d(\delta+i+1)\text{-bundle}} \ar{r}{\bigsqcup_{j=0}^{\delta}\Gr_{d-j}(i+1)\text{-bundle}} & (\shZ_d^+)_{i \backslash i+1}  \ar{d}{\Gr_d(\delta+i+1) \text{-bundle}} 
	\\
	\bigsqcup_{j=0}^{\delta} (\shZ_{d-j}^-)_{i \backslash i+1}   \ar{r}{\bigsqcup_{j=0}^{\delta} \Gr_{d-j}(i+1)\text{-bundle}}         &X_{i \backslash i+1}
		\end{tikzcd}		
\end{equation*}
For any fixed integer $i \ge 0$, under the condition of Thm. \ref{thm:main}, then $Z: = X_{i\backslash i+1} \subset X \backslash X_{i+1} = X_{-1 \backslash i+1}$ is a locally complete intersection subscheme of codimension $(i+1)(\delta+i+1)$, and $\sG$ has constant rank $\delta+i+1$ over $Z$. Therefore the conditions of Lem. \ref{lem:stratum} \& \ref{lem:commute} are satisfied by $Z \subset X \backslash X_{i+1}$ and $\sG$. The rest of the proof follows a very similar strategy as \cite{J19}.

\medskip \textit{Surjectivity of the map (\ref{eqn:main.thm}).} Similar to \cite{J19}, for each $i \ge -1$, $\exists$ exact sequence:
	$$
	\begin{tikzcd}
	 CH((\shZ_d^+)_{i \backslash i+1})  \ar{r}{j_{i\,*} }& CH(\shZ_d^+ \backslash (\shZ_d^+)_{ i+1})    \ar{r} &  CH(\shZ_d^+\backslash (\shZ_d^+)_{i})  \ar{r} & 0.
	\end{tikzcd}
	$$
Therefore by induction we see $CH(\shZ_d^+)$ is generated by the images of $j_{i \,*} \colon CH((\shZ_d^+)_{i \backslash i+1}) \to CH(\shZ_d^+)$ for all strata $(\shZ_d^+)_{i \backslash i+1}$, $i \ge -1$, where $i=-1$ corresponds to the open stratum. 

Hence we need only show that the image of the map (\ref{eqn:main.thm}) contains the image of all $j_{i \,*} \colon CH((\shZ_d^+)_{i \backslash i+1}) \to CH(\shZ_d^+)$. The top stratum $i=-1$ follows from Grassmannian bundle formula. If $i \ge 0$, set $Z: = X_{i\backslash i+1} \subset X \backslash X_{i+1}$ as above. For simplicity of notations we omit the subindex $i$ and denote $j_{*} :=j_{i \,*}$, $k_{*}: =k_{i \,*}$, etc. Therefore we are in this situation of Lem. \ref{lem:stratum} \& \ref{lem:commute}, which reduces to the virtual Grassmann flip case \S \ref{sec:virtual.flips}. 
In particular, Thm. \ref{thm:main:local} implies that for any $\gamma \in CH((\shZ_d^+)_{i \backslash i+1})$, there exists $\gamma_{(j, \nu^{(j)})} \in CH(\shZ_{d-j}^{-})$ such that 
	$$\gamma = \sum_{j=0}^\delta \sum_{\nu^{(j)} \in B_{j,\delta-j}} \Psi_{(d-j,d)}^{\nu^{(j)}} (\gamma_{(j, \nu^{(j)})} ).$$ 
However from Lem. \ref{lem:commute},
	$j_* \, \Psi_{(d-i,d)}^{\nu^{(i)}} = \Gamma_{(d-i,d)}^{\nu^{(i)}} \, k_{(j) \,*}.$
Therefore 
	$$j_* \gamma = \sum_{j=0}^\delta \sum_{\nu^{(j)} \in B_{j,\delta-j}} \Gamma_{(d-i,d)}^{\nu^{(i)}} \left( k_{(j)\,*} (\gamma_{(j, \nu^{(j)})})) \right) = \Gamma^* \left(\bigoplus_{(j,\nu^{(j)})} k_{(j)\,*} (\gamma_{(j, \nu^{(j)})}) \right).$$
This shows the surjectivity of the map (\ref{eqn:main.thm}).

\medskip \textit{Injectivity of the map (\ref{eqn:main.thm}).}  For each $i \ge -1$, we denote $k_{i\,*}$ the following map:
	$$k_{i\,*}: = \bigoplus_{j=0}^{\delta} k_{(j), i\,*}  \colon \bigoplus_{j=0}^{\delta} CH((\shZ_{d-j}^-)_{i \backslash i+1})  \to \bigoplus_{j=0}^{\delta} CH((\shZ_{d-j}^-)_{-1 \backslash i+1}).$$
Then there is a commutative diagram of {\em short exact sequences}:
	\begin{equation} \label{eqn:ses}
	\begin{tikzcd}[row sep=2 em, column sep=1.6 em]
	0  \ar{r} & \Im k_{i\,*}  \ar{r} \ar{d}{\Gamma^*|_{\Im k_{0\,*}}}  &  \bigoplus_{j=0}^{\delta} CH((\shZ_{d-j}^-)_{-1 \backslash i+1})  \ar{r}  \ar{d}{\Gamma^*|_{-1 \backslash i+1}} &   \bigoplus_{j=0}^{\delta} CH((\shZ_{d-j}^-)_{-1 \backslash i})  \ar{r} \ar{d}{\Gamma^*|_{-1 \backslash i}} & 0. \\
	0   \ar{r} &  \Im j_{i\,*}   \ar{r} & CH((\shZ_d^+)_{-1 \backslash i+1})  \ar{r} &  CH((\shZ_d^+)_{-1 \backslash i})  \ar{r} & 0.
	\end{tikzcd}
	\end{equation}
If we assume for each $i \ge 0$ the map $\Gamma^*|_{\Im k_{i\,*}}$ is injective, then the injectivity of map (\ref{eqn:main.thm}) follows from induction. In fact, we can show by induction that the map of middle column is injective for all $i$. For the base case, there are two possibilities. If $d \le \delta$, then $\shZ_0^-= X$, $\shZ_1^- = \PP(\sK)$ are the only two schemes of $\{\shZ_{d-j}^-\}$ supported over $X_{1}$, and (\ref{eqn:ses}) becomes:
	\begin{equation*}
	\begin{tikzcd}[row sep=2 em, column sep=1.6 em]
	0  \ar{r} & \Im k_{0\,*} \ar{r} \ar[hook]{d}{\Gamma^*|_{\Im k_{i\,*}}}  & CH(\PP(\sK)_{-1 \backslash 1}) \oplus CH(X_{-1 \backslash 1}) \ar{r}  \ar{d}{\Gamma^*|_{-1 \backslash 1}} & CH(X)  \ar{r} \ar[hook]{d}{\Gamma^*} & 0. \\
	0   \ar{r} & \Im j_{0\,*}   \ar{r} & CH((\shZ_d^+)_{-1 \backslash 1})  \ar{r} &  CH((\shZ_d^+)_{-1 \backslash 0})  \ar{r} & 0,
	\end{tikzcd}
	\end{equation*}
The injectivity of last column follows from Grassmannian bundle formula. Therefore the middle map is injective for the base case $i_{\rm min}=0$. If $d > \delta$, then $\shZ_{d-\delta}^{-}$ is supported over $X_{d-\delta-1}$, and the base case is $i_{\rm min} = d - \delta$, where (\ref{eqn:ses}) becomes:
	\begin{equation*}
	\begin{tikzcd}[row sep=2 em, column sep=1.6 em]
	0  \ar{r} & \Im k_{i_{\rm min}\,*} \ar[equal]{r} \ar[hook]{d}{\Gamma^*|_{\Im k_{i_{\rm min}\,*}}}  & CH((\shZ_{d-\delta}^-)_{-1 \backslash i_{\rm min}+1})  \ar{r}  \ar{d}{\Gamma^*|_{-1 \backslash i_{\rm min}+1}} & 0  \ar{r} \ar{d}{0} & 0. \\
	0   \ar{r} & \Im j_{i_{\rm min}\,*}   \ar[hook]{r} & CH((\shZ_{d}^+)_{-1 \backslash i_{\rm min}+1})  \ar{r} &  CH((\shZ_{d}^+)_{-1 \backslash i_{\rm min}})  \ar{r} & 0.
	\end{tikzcd}
	\end{equation*}
Therefore the middle map is always injective. The induction step follows directly from diagram (\ref{eqn:ses}). Hence the middle map is injective for all $i$, in particular for $i=i_{\rm max}$  \footnote{i.e. $i_{\rm max}$ is the minimal positive number such that $X_{i_{\rm max}+1} = \emptyset$; since $X$ is locally Noetherian of pure dimension, $i_{\rm max}$ always exists.}. Then $\Gamma^* = \Gamma^*|_{-1 \backslash i_{\max}+1}$ is injective on the whole space, the theorem is proved.

\medskip
It remains to prove for each $i \ge 0$ the map $\Gamma^*|_{\Im k_{i\,*}}$ is injective. Set $Z: = X_{i\backslash i+1} \subset X \backslash X_{i+1}$ as above, and for simplicity of notations we will omit the subindex $i$ of all notations in the rest of the proof. The goal is to show that for any $\displaystyle \gamma= \bigoplus_{(j, \nu^{(j)})} \gamma_{(j, \nu^{(j)})} \in \bigoplus_{(j, \nu^{(j)})} CH(\shZ_{d-j}^{-})$,
	$$\Gamma^* \,  k_* \, \gamma =  j_* \, \Psi^* \, \gamma =0 \quad \implies \quad k_* \, \gamma=0.$$
Here $\Psi^*$ denotes the map of isomorphism of Thm. \ref{thm:main:local} (2), and the commutativity $\Gamma^* \,  k_* = j_* \, \Psi^*$ is from Lem. \ref{lem:commute}. Similar to  proof of Thm. \ref{thm:main:local}, we show by (inverse) induction that $k_* \, \gamma_{(j, \nu^{(j)})} = 0$ for all indices $(j, \nu^{(j)})$. Recall the partial order $\prec$ defined by (\ref{eqn:porder}).

\begin{itemize}[leftmargin = *]
	\item {\em Base case.} Start with the maximal index. There are two possibilities. If $\delta \ge d$, then the maximal element is $(j_{\rm max},  \nu^{(j)}_{\rm max}) = (\delta, (0))$, then from Thm. \ref{thm:main:local},
		$$\gamma_{(\delta, (0))} = \Psi_{(d-\delta,d), (0)} (\Psi^* (\gamma)), \quad \text{where} \quad \Psi_{(d-\delta,d), (0)} =  r_{Z-*}(c_{\rm top}(\sW')  \cap r_{Z+}^*(\blank)),$$
where for the last expressions the notations are from Lem. \ref{lem:stratum}. But from Lem. \ref{lem:commute} (notice $\Psi_{(d-\delta,d), (0)} = \Psi_{(d-\delta,d), (0)}^\shU$ in this case),  $k_* \, \Psi_{(d-\delta,d), (0)} =  \Gamma_{(d-\delta, d), (0)}^{\shU} , j_*$, therefore
		$$
		k_* \,\gamma_{(\delta, (0))}  = k_* \, \Psi_{(d-\delta,d), (0)}^{\shU} (\Psi^* (\gamma)) = \Gamma_{(d-\delta, d), (0)}^{\shU} \circ j_* \, \Psi^* (\gamma) = 0. $$ 
	If $d \le \delta$, then the maximal index is $(j_{\rm max}, \nu_{\rm max}^{(j_{\rm max})}) = (d, ((\delta-d)^d))$. From Thm. \ref{thm:main:local},
		$$\gamma_{(d, ((\delta-d)^d))} = \Psi_{(0,d), ((\delta-d)^d)} (\Psi^* (\gamma)),$$
	then similarly Lem. \ref{lem:commute} (as $\Psi_{(0,d), ((\delta-d)^d)} =  \Psi_{(0,d), ((\delta-d)^d)}^\shU$ holds) implies: 
		$$ k_* \,\gamma_{(d, ((\delta-d)^d))}  = k_* \,\Psi_{(0,d), ((\delta-d)^d)} (\Psi^* (\gamma))  = \Gamma_{(0,d), ((\delta-d)^d)}^{\shU} \circ j_* \, \Psi^* (\gamma) = 0. $$
	\item {\em Induction step.} If $k_* \,\gamma_{(\ell, \tau^{(\ell)})}  =0 $ for all $(\ell, \tau^{(\ell)}) \succ (j, \nu^{(j)})$, if we replace $\gamma$ by: 
	$$\gamma_{\rm new} := \gamma - \sum_{(\ell, \tau^{(\ell)}) \succ (j, \nu^{(j)})} \gamma_{(\ell, \tau^{(\ell)})},$$ 
	then $k_* \,\gamma_{\rm new}  = k_* \, \gamma$. Hence we may assume $\gamma_{(\ell, \tau^{(\ell)})}  =0$ for all $(\ell, \tau^{(\ell)}) \succ (j, \nu^{(j)})$. Now
	$$ \gamma_{(j, \nu^{(j)})} =(-1)^{(d-j)(d-\delta)} \cdot \Psi^{\rm std}_{(d-j,d), \,\nu^{(j)}} \circ \Psi^* \, \gamma,$$
by semi-orthogonality of Thm. \ref{thm:main:local} (1). As $-\shU_+ = \shQ_+ - G_Z^\vee \in K_0(Z)$, hence by Lem. \ref{lem:Gr:change_basis}
	\begin{align*}
	\Psi^{\shU}_{(d-j,j), \, \nu^{(j)}} (\Psi^* \, \gamma) = \Psi^{\rm std}_{(d-j,d), \,\nu^{(j)}} ( \Psi^* \, \gamma) + \sum_{\mu \supsetneq \nu^{(j)}} \Delta_{\nu^{(j), c}/\mu^c}(-G_Z^\vee) \cdot  \Psi^{\rm std}_{(d-j,d), \,\mu} ( \Psi^* \, \gamma).
	\end{align*}
	But by our assumption from the induction hypothesis,  
		$$\displaystyle \Psi^* \gamma \in \Im \left( \bigoplus_{(t, \tau^{(t)}) \nsucc (j, \nu^{(j)})} \Psi_{(d-t,d)}^{\tau^{(t)}} \right),$$
where the indices satisfy $(t, \tau^{(t)}) \nsucceq (j, \mu)$ for any $\mu \supsetneq \nu^{(j)}$. Therefore by semiorthogonality of Thm. \ref{thm:main:local} (1), $ \Psi^{\rm std}_{(d-j,d), \,\mu} ( \Psi^* \, \gamma) =0$.  
	Hence 
	$$\Psi^{\shU}_{(d-j,d), \, \nu^{(j)}} (\Psi^* \, \gamma)  =\Psi^{\rm std}_{(d-j,d), \,\nu^{(j)}} ( \Psi^* \, \gamma) = \pm  \gamma_{(j, \nu^{(j)})}.$$
	On the other hand, by Lem. \ref{lem:commute} 
		$$k_* \, \gamma_{(j, \nu^{(j)})} = \pm \,k_* \, \Psi^{\shU}_{(d-j,d), \, \nu^{(j)}} (\Psi^* \, \gamma) =  \pm \, \Gamma_{(d-j,d),\, \nu^{(j)}}^{\shU} \, j_* (\Psi^* \, \gamma) = 0.$$
	Hence the induction step is proved.
\end{itemize}
By induction, the claim is proved. This concludes the proof of theorem \ref{thm:main}. \end{proof}


\section{Applications}
\subsection{Blowup of determinantal subschemes}


\begin{lemma}\label{lem:Quot=Bl} Let $X$ be any irreducible $\kk$-scheme, $\sigma \colon F \to E$ be a  $\sO_X$-module morphism between vector bundles which is generically injective, $\rank F = m$, $\rank E = m + \delta$, $\delta \ge 1$, and denote $\sG: = \Coker \sigma$ the cokernel. Let $Z: = X^{\ge \delta+1}(\sG) \subset X$ be the determinantal subscheme, i.e. its ideal $\sI_{Z}$ is generated by the $m \times m$ minors of $\sigma$. Assume that:
	$$\codim (X^{\ge\delta +i}(\sG) \subset X) \ge i\cdot \delta +1 \qquad \forall i \ge 1.$$
Then $\foQuot_{\delta}(\sG)$ is irreducible and isomorphic to $\Bl_{Z} X$, the blowup of $X$ along $Z \subset X$.
\end{lemma}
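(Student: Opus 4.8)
The plan is to identify $Y:=\foQuot_\delta(\sG)$ with the closure, inside a Pl\"ucker projective bundle over $X$, of the locus it visibly parametrises over $X\setminus Z$, and then to recognise that same closure as $\Bl_Z X$.

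First I would set up the generic picture and the ambient Grassmannian bundle. Over $X\setminus Z$ the sheaf $\sG$ is locally free of rank $\delta$, so for any $T\to X\setminus Z$ the only rank-$\delta$ locally free quotient of $\sG_T$ is $\sG_T$ itself; hence $\pi\colon Y\to X$ is an isomorphism over $X\setminus Z$. Since $\sigma$ is injective with cokernel $\sG$, the surjection $E\twoheadrightarrow\sG$ realises $Y$ as a closed subscheme of the Grassmannian bundle $\Gr:=\foQuot_\delta(E)$: on $\Gr$ one has the tautological rank-$\delta$ quotient $\pi_\Gr^*E\twoheadrightarrow\shQ$, and $Y$ is the zero scheme of the section $\overline{\sigma}\in\Gamma\big(\Gr,\sHom(\pi_\Gr^*F,\shQ)\big)$ coming from $\pi_\Gr^*F\to\pi_\Gr^*E\to\shQ$. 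Here $\sHom(\pi_\Gr^*F,\shQ)$ has rank $m\delta$ and $\dim\Gr=\dim X+m\delta$.

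The heart of the argument is a dimension count. The scheme-theoretic fibre $Y\times_X\kappa(x)$ represents, on $\kappa(x)$-schemes, the functor of rank-$\delta$ locally free quotients of the vector space $\sG\otimes\kappa(x)$ (a morphism to $X$ through $x$ pulls $\sG$ back to the pull-back of $\sG\otimes\kappa(x)$), so $Y\times_X\kappa(x)\cong\Gr(i,\delta+i)$ has dimension $i\delta$ whenever $\rk\sG(x)=\delta+i$. Feeding in the hypothesis $\codim\big(X^{\ge\delta+i}(\sG)\subset X\big)\ge i\delta+1$ for $i\ge1$ shows that every irreducible component of $Y$ lying over $Z$ has dimension $\le\dim X-1$, hence codimension $>m\delta$ in $\Gr$; since $Y$ is locally cut out of $\Gr$ by $m\delta$ equations, Krull's height theorem forbids such components. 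Therefore $Y$ is irreducible of dimension $\dim X$ with $Y\setminus\pi^{-1}(Z)\cong X\setminus Z$ as a dense open, and $\overline{\sigma}$ is a regular section; consequently $Y\hookrightarrow\Gr$ is a local complete intersection of the expected codimension $m\delta$, so $Y$ is Cohen--Macaulay (as $\Gr$ is smooth over $X$), and being generically reduced it is integral.

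Finally I would compare closures in the Pl\"ucker projective bundle. Composing $Y\hookrightarrow\Gr=\foQuot_\delta(E)$ with the Pl\"ucker embedding into the projective bundle $P\to X$ of lines in $\wedge^m E$ gives a closed immersion under which the point of $Y$ over $x\in X\setminus Z$ is the line $\Im\big((\wedge^m\sigma)_x\big)\subset\wedge^m E_x$ (over $X\setminus Z$ the relevant rank-$m$ subbundle of $E$ is $\Im\sigma\cong F$). As $Y$ is integral with that dense open, $Y$ is the scheme-theoretic closure in $P$ of the locally closed set $\{\,(x,[(\wedge^m\sigma)_x])\mid x\in X\setminus Z\,\}$. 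On the other hand $\sI_Z$ is exactly the content ideal of $\wedge^m\sigma\colon\wedge^m F\to\wedge^m E$ --- the ideal generated by the $m\times m$ minors of $\sigma$ --- so by the standard graph description of the blow-up of the content ideal of a morphism into a vector bundle, $\Bl_Z X$ is likewise the closure in $P$ of the same locally closed set. Two integral closed subschemes of $P$ that are closures of the same reduced locally closed subset coincide, giving $\foQuot_\delta(\sG)=Y\cong\Bl_Z X$; in particular $\foQuot_\delta(\sG)$ is irreducible. The step I expect to be the main obstacle is making this last paragraph airtight: pinning down that $\sI_Z$ is the content ideal of $\wedge^m\sigma$ and invoking (or reproving) the graph-closure description of the blow-up of such an ideal, while checking that the Pl\"ucker twist (lines in $\wedge^m E$ versus lines in $(\wedge^m F)^\vee\otimes\wedge^m E$) does not affect the resulting projective bundle. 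A secondary point needing care is the reducedness of $Y$, which rests on the complete-intersection argument above (or on a direct verification that $\foQuot_\delta(\sG)$ is Cohen--Macaulay under the stated codimension hypotheses).
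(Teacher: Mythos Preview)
Your approach is correct and largely parallels the paper's. Both embed $\foQuot_\delta(\sG)$ into the Grassmannian bundle $\Gr_\delta(E^\vee)=\foQuot_\delta(E)$ as the zero locus of the composite $\pi^*F \to \pi^*E \to \shU^\vee$, perform the same fibrewise dimension count over the strata $X^{\ge \delta+i}(\sG)$ to control components over $Z$ (your Krull height step makes explicit what the paper leaves terse), and pass to the Pl\"ucker projective bundle.

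The one substantive difference is the final identification with $\Bl_Z X$. The paper verifies the universal property of the blowup directly: it first observes that $\pi^{-1}\sI_Z$ is invertible on $\foQuot_\delta(\sG)$, being generated by $\wedge^m\widetilde{\sigma}$ where $\widetilde{\sigma}\colon\pi^*F\to\shQ^\vee$ is the induced map; it then shows that any $f\colon T\to X$ with $f^{-1}\sI_Z\cdot\sO_T$ invertible factors uniquely through $\foQuot_\delta(\sG)$, by producing the map $T\to\PP(\wedge^m E^\vee)$ from the surjection $f^*(\wedge^m E^\vee)\twoheadrightarrow f^*\det F^\vee\otimes f^{-1}\sI_Z$, noting it lands in $\foQuot_\delta(\sG)|_{X\setminus Z}$ over the dense open $T\setminus f^{-1}(Z)$, and extending by properness and irreducibility of $\foQuot_\delta(\sG)$. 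Your route instead identifies both $\foQuot_\delta(\sG)$ and $\Bl_Z X$ as scheme-theoretic closures of the same graph inside the Pl\"ucker bundle.

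Each choice buys something. The paper's universal-property argument needs only \emph{irreducibility} of $\foQuot_\delta(\sG)$, not integrality, so it sidesteps precisely the reducedness issue you flagged as secondary; in particular it does not require $X$ to be Cohen--Macaulay, which this lemma does not assume. Your closure comparison is more geometric and makes the link to $\Bl_Z X$ transparent, but for the scheme-theoretic closure of $Y|_{X\setminus Z}$ in $P$ to coincide with $Y$ you need $Y$ integral, and your Cohen--Macaulay justification for reducedness tacitly uses that $\Gr$ is Cohen--Macaulay, hence that $X$ is. This is harmless in the applications (and in the paper's main theorem, where $X$ is assumed Cohen--Macaulay), but if you want the lemma in the stated generality the paper's route is cleaner.
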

This type of results are well-known, for example the case $\delta=1$ could be found in \cite{ES}. 
We present a proof in our setting for the sake of completeness.
\begin{proof} Let $q \colon \Gr_\delta(E^\vee) \to X$ be the Grassmannian bundle over $X$, with tautological sequence 
		$$0 \to \shU \to q^*E^\vee \to \shQ \to 0,$$
where $\shU$ is the universal subbundle of rank $\delta$, and $\shQ$ is the universal quotient bundle of rank $m$. As $\bigwedge^m \shQ$ is relative ample over $X$, hence it defines a relative Pl\"ucker embedding:
		$$\Gr_\delta(E^\vee) = \Gr(E^\vee,m) \hookrightarrow \PP(\bigwedge^m q_* \shQ) \subset \PP(\bigwedge^m E^\vee).$$
Denote $\pi \colon \foQuot_\delta(\sG) \to X$ the projection. Same as the proof of Lem. \ref{lem:stratum}, the surjection $E \twoheadrightarrow \sG$ induces an inclusion $\foQuot_\delta(\sG) \subset \Gr_\delta(E^\vee)$, and $\foQuot_\delta(\sG)$ is identified with the loci where the composition $q^* F \to q^* E \to \shU^\vee$ is zero, or equivalently, where the map $q^* \sigma$ factors through $\widetilde{\sigma} \colon \pi^*F \to \shQ^\vee$. As $\widetilde{\sigma}$ is generically injective, hence $\pi^{-1} \sI_Z \subset \sO_{\foQuot_\delta(\sG)}$ is invertible, generated by $\bigwedge^m \widetilde{\sigma} \in \pi^*  \det F^\vee \otimes \det \shQ^\vee$. As $\foQuot_{\delta}(\sG)$ is a $\Gr_{\delta}(\delta+i)$-bundle over $X^{\ge \delta +i}(\sG) \backslash X^{\ge \delta +i+1}(\sG)$, therefore the dimension condition of the lemma implies that 
	$$\dim \pi^{-1}(X^{\ge \delta +i}(\sG) \backslash X^{\ge \delta +i+1}(\sG)) \le \dim X - i\delta - 1 + i \delta \le \dim X - 1,$$
for all $i \ge 1$. (Note $\codim (Z \subset X) = \delta +1$, thus the isomorphism $\pi \colon \foQuot_{\delta}(\sG)|_{X\backslash Z} \simeq X\backslash Z$ is over an open dense subset $X\backslash Z$.) Hence $\foQuot_{\delta}(\sG)$ is irreducible.

By the definition of blowup, it remains to show that for any $f \colon Y \to X$ such that $f^{-1} \sI_Z \cdot \sO_Y$ is invertible, then $f$ factors uniquely through $\pi$. Notice $f^{-1} \sI_Z $ is generated by the minors of the map $f^{*}\sigma \colon f^* F \to f^* E$, hence by definition there is a surjection
	$$f^* (\det F \otimes \bigwedge^m N^\vee) \twoheadrightarrow f^{-1}\sI_Z \subseteq \sO_Y,$$
thus a surjection $f^* (\bigwedge^m N^\vee) \twoheadrightarrow f^*\det F^\vee \otimes f^{-1}\sI_Z $. As $f^{-1}\sI_Z$ is invertible, by Grothendieck's characterisation of projectization Example \ref{ex:proj}, this defines a map 
	$$\phi \colon Y \to  \PP(\bigwedge^m E^\vee)$$
lifting $f$. (Notice that this also implies $f^* \sigma$ is generically injective.) Over the open dense subset $Y \backslash  f^{-1}(Z)$, $f^* \sigma \colon f^* M \hookrightarrow f^* N$ defines a vector subbundle, therefore $\phi|_{Y \backslash  f^{-1}(Z)}$ factors through $\Gr_\delta(E^\vee) \subset \PP(\bigwedge^m E^\vee)$ and an injection into $\foQuot_\delta(\sG)|_{X \backslash Z}$. Since $\foQuot_\delta(\sG)$ is proper over $X$ and irreducible,  $\phi|_{Y \backslash  f^{-1}(Z)}$ extends to a unique map $Y \to \foQuot_\delta(\sG)$.
\end{proof}	

\begin{theorem}[Blowup of determinantal subscheme] \label{thm:Bl:det} Under the same condition of Thm. \ref{thm:main} and assume $X$ is irreducible. Then the top degeneracy loci $Z := X^{\ge \delta +1}(\sG) \subset X$ is a Cohen--Macaulay determinantal subscheme of codimension $\delta +1$, and admits a stratification:
	$$ \cdots \subset Z_{i+1} \subset Z_{i} \subset \cdots \subset Z_{1} \subset Z_{0} = Z, \qquad Z_i := X^{\ge \delta + i +1}(\sG), i \ge 0.$$
Then 
	$$\widetilde{Z_i} : = \foQuot_{X, i+1}(\sExt^1_{X}(\sG,\sO_X)) \to Z_i, \qquad i \ge 0$$
is an IH-small desingularization of $Z_{i}$. Denote $\Bl_{Z} X$ the blowing up of $X$ along $Z$ and $\widetilde{Z} : = \widetilde{Z_0}$, then for any $k \ge 0$, there is an isomorphism of Chow groups:
	\begin{align*}
	CH_k(\Bl_{Z} X) \simeq CH_k(X) \oplus \bigoplus_{\ell =0}^{\delta-1} CH_{k-\delta+\ell }(\widetilde{Z}) \oplus \bigoplus_{i =1}^{\delta-1} \bigoplus_{\nu^{(i)} \in B_{\delta-(i+1), i+1}}CH_{k- (i+1) \cdot \delta+|\nu^{(i)}|}(\widetilde{Z_i}),
	\end{align*}
where the map is induced by fiber products as Thm. \ref{thm:main}. Furthermore if all schemes involved are smooth and projective over $\kk$, then the same map induces a decomposition of integral covariant Chow motives:
	\begin{align*}
	\foh(\Bl_{Z} X) \simeq \foh(X) \oplus \bigoplus_{\ell =0}^{\delta-1} \foh(\widetilde{Z})(\delta-\ell)  \oplus \bigoplus_{i = 1}^{\delta-1} \bigoplus_{\nu^{(i)} \in B_{\delta-(i+1), i+1}} \foh(\widetilde{Z_i}) ((i+1)\cdot \delta -|\nu^{(i)}|).
	\end{align*}
\end{theorem}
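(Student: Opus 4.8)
The plan is to deduce Theorem~\ref{thm:Bl:det} as a special case of the main theorem~\ref{thm:main} applied to the specific coherent sheaf $\sG = \Coker(\sigma\colon F\to E)$ with $d=\delta$. First I would record the identification $\foQuot_{X,\delta}(\sG)\simeq \Bl_Z X$ from Lemma~\ref{lem:Quot=Bl}, which requires exactly the codimension hypotheses $\codim(X^{\ge\delta+i}(\sG)\subset X)\ge i\delta+1$ that are part of the hypotheses of Theorem~\ref{thm:main} (note $i(\delta+i)\ge i\delta+1$ when $i\ge1$, so the Quot-formula hypothesis is slightly stronger than what Lemma~\ref{lem:Quot=Bl} needs, hence automatically satisfied here). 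Next I would identify the $\shZ^-$ side: $\sK=\sExt^1_{\sO_X}(\sG,\sO_X)$ has generic rank $0$, so $\shZ^-_{\delta-j}=\foQuot_{X,\delta-j}(\sK)$, and by the definition given in the theorem statement $\widetilde{Z_i}=\foQuot_{X,i+1}(\sK)$; thus under the substitution $i+1=\delta-j$, i.e. $j=\delta-1-i$, we have $\shZ^-_{\delta-j}=\widetilde{Z_{\delta-1-j}}$ for $0\le j\le\delta-1$, while $j=\delta$ gives $\shZ^-_0=\foQuot_{X,0}(\sK)=X$ by convention.

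With these identifications in hand, the proof is essentially a bookkeeping translation of the decomposition~(\ref{eqn:main.thm}). Plugging $d=\delta$ into Theorem~\ref{thm:main}, the index $j$ ranges over $0,\dots,\delta$ (since $\min\{\delta,d\}=\delta$), and for each $j$ the inner sum runs over $\nu^{(j)}\in B_{j,\delta-j}$ with a grading shift $k-\delta(\delta-j)+|\nu^{(j)}|$ and target $CH(\shZ^-_{\delta-j})$. I would separate the $j=\delta$ term (giving $CH_k(X)$, since then $\delta(\delta-j)=0$ and $B_{\delta,0}=\{(0)\}$), the $j=\delta-1$ term (giving $\widetilde{Z_0}=\widetilde Z$ with $B_{\delta-1,1}$, a single column, so $\nu^{(\delta-1)}=(1^\ell)$ for $\ell=0,\dots,\delta-1$ contributing $CH_{k-\delta+\ell}(\widetilde Z)$), and the remaining terms $0\le j\le \delta-2$, which after reindexing $i:=\delta-1-j$ (so $i$ ranges $1,\dots,\delta-1$) become $\bigoplus_{\nu^{(i)}\in B_{\delta-1-i,\,i+1}}CH_{k-(i+1)\delta+|\nu^{(i)}|}(\widetilde{Z_i})$. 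A small check: the box $B_{j,\delta-j}$ with $j=\delta-1-i$ is $B_{\delta-1-i,i+1}$, matching the stated $B_{\delta-(i+1),i+1}$, and $\delta(\delta-j)=\delta(i+1)$, matching the stated shift. The Chow motive statement follows identically from Corollary~\ref{cor:main.motif} under the smooth projective hypothesis, translating Tate twists the same way.

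The remaining geometric assertions---that $Z$ is Cohen--Macaulay of codimension $\delta+1$, the existence of the stratification $Z_i=X^{\ge\delta+i+1}(\sG)$, and that $\widetilde{Z_i}\to Z_i$ is an IH-small desingularization---I would handle separately: $Z$ being Cohen--Macaulay determinantal of the expected codimension is standard for determinantal loci under the expected-dimension hypothesis (e.g. via the Eagon--Northcott complex, see the references cited in \S\ref{sec:deg}), and IH-smallness of $\widetilde{Z_i}=\foQuot_{X,i+1}(\sK)\to Z_i$ follows by computing fiber dimensions: over the stratum $Z_{i'}\setminus Z_{i'+1}$ the fiber is $\Gr_{i+1}(i'+1)$ of dimension $(i+1)(i'-i)$, while the codimension of that stratum in $Z_i$ is $(i'+1)(\delta+i'+1)-(i+1)(\delta+i+1)$, and one checks $2(i+1)(i'-i)<$ this codimension for $i'>i$, which is the smallness inequality. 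The main obstacle, such as it is, is purely combinatorial: getting the reindexing $j\leftrightarrow i$ and the three-way split of the summation exactly right so the grading shifts and Young-diagram boxes line up with the stated formula; there is no new geometric input beyond Lemma~\ref{lem:Quot=Bl} and Theorem~\ref{thm:main}.
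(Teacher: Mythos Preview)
Your proposal is correct and follows essentially the same approach as the paper: the Chow and motive decompositions are read off from Theorem~\ref{thm:main} and Corollary~\ref{cor:main.motif} via the identification $\Bl_Z X\simeq\foQuot_{X,\delta}(\sG)$ of Lemma~\ref{lem:Quot=Bl}, and the only new content is the IH-smallness check, which both you and the paper handle by the same fiber-versus-codimension dimension count (the paper phrases it with $i'=i+a$ and computes the difference as $a^2+a\delta>0$, which is exactly your inequality). Your bookkeeping of the reindexing $j\leftrightarrow i=\delta-1-j$ is in fact more explicit than the paper's proof, which simply says ``We only need to show the IH-small statements'' and leaves the translation implicit.
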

Notice above formula of Chow motives plus the IH-small statements implies isomorphisms of Hodge structures on intersection cohomologies (if $\kk \subset \CC$, via Betti realizaiton), as mentioned in the introduction \S \ref{sec:intro:blowup}.

\begin{proof} We only need to show the IH-small statements. Consider $Z_{i+a} \subset Z_i$ for any $i \ge 1$ and $a \ge 1$. The expected dimension condition implies
	$$\codim (Z_{i+a} \subset Z_i) = (i+a+1)(\delta+i+a+1) - (i+1)(\delta+i+1) = a(a + 2(i+2) + \delta).$$
On the other hand, $\widetilde{Z_i}$ has fiber $\Gr_{i+1}(i+a+1)$ over $Z_{i+a}\backslash Z_{i+a+1}$. Therefore 
  	$$\codim (Z_{i+a} \subset Z_i)  - 2 \dim \Gr_{i+1}(i+a+1) = a^2 + a\delta >0.$$
Hence $\widetilde{Z_i} \to Z_i$ is $IH$-small.
\end{proof}
\begin{example}\label{ex:blowup:det}
	\begin{enumerate}
		\item If $\sG = \Coker (\sO_X \xrightarrow{s} E)$ for a regular section $s \in H^0(X,E)$ of a vector bundle $E$, then $\delta = \rank E -1$, $\foQuot_{\rank E - 1}(\sG) \simeq \Bl_Z X$ is the blowup along a locally complete intersection subscheme, and $\widetilde{Z} = Z$, $Z_i = \emptyset$ for $i \ge 1$. The theorem reduces to the usual blowup formula Ex. \ref{ex:cayley.blowup}:
					$$CH_k(\Bl_{Z} X) \simeq CH_k(X) \oplus \bigoplus_{\ell =0}^{\rank E-2} CH_{k-(\rank E-1)+\ell }(Z).$$
		\item If $\rank \sG = \delta = 1$, then $\sG = \sI_Z \otimes \sL$ for some line bundle $\sL$, $Z \subset X$ is codimension $2$, and $\foQuot_{1}(\sG)\simeq \PP(\sI_Z) = \Bl_Z X$ is projectivization. Then the theorem reduces to blowup formula along Cohen--Macaulay codimension $2$ subscheme \cite{J19}:
			$$CH_k(\Bl_{Z} X) \simeq CH_k(X) \oplus CH_{k-1}(\widetilde{Z}).$$
	\end{enumerate}
\end{example}
Therefore above theorem is a generalisation of above known blowup formulae, and shows how higher degeneracy loci contributes to $\Bl_Z X$ when $Z$ is singular and $\codim_X Z \ge 3$.

\subsection{Applications to Brill--Noether theory of curves}
Let $C$ be a smooth projective curve. Recall the following varieties from \cite{ACGH}: for $d,r \in \ZZ$, the {\em Brill--Noether locus} is:
	$$W_{d}^r := W_{d}^r(C) : = \{\sL \mid \dim H^0(C,\sL) \ge r+1\} \subseteq \Pic^d(C),$$
which parametrizes complete linear series of degree $d$ and dimension at least $r$. The scheme 
	$$G_{d}^r :=G_d^r(C) = \{ \text{$g_d^r$'s on $C$} \}$$
parametrizes (not necessarily complete) linear series of degree $d$ and dimension exactly $r$, 
	\begin{align*}
		\Supp (G_d^r)  = \{ (\sL,V_{r+1}) \mid \sL \in \Pic^d(C), L_{r+1} \in \Gr_{r+1}(H^0(C,\sL))\}.
	\end{align*}
If $d \le g-1$, $G_{d}^r = \widetilde{W_{d}^r}$ is the blow-up of $W_{d}^r$ (see \cite[pp. 177]{ACGH}).

The expected dimension of Brill--Noether loci is the {\em Brill--Noether number}, defined as
	$$\rho:=\rho(g,r,d) := g - (r+1)(g-d+r).$$

The following are classical results of Brill--Neother theory, see \cite[IV,V]{ACGH}:

\begin{proposition} Let $C$ be a smooth curve of genus $g$, and assume $r \ge 0$ and $d \ge 1$.
\begin{enumerate}
	\item $G_d^r$ and $W_{d}^r$ are nonempty if $\rho(g,r,d) \ge 0$; 
	$G_d^r$ and $W_{d}^r$ are connected if $\rho(g,r,d) \ge 1$.
	\item For a general curve $C$ (in the sense of Petri), $G_d^r \ne \emptyset$ iff $\rho \ge 0$; same holds for $W_d^r$. If $\rho \ge 0$, then $G_d^r$ is reduced, smooth and of pure dimension $\rho$, and $(W_d^r)_{\rm sing} = W_d^{r+1}$ if $r \ge d-g$. If $\rho \ge 1$, then both $G_d^r$ and  $W_d^r$ are irreducible of dimension $\rho$.
\end{enumerate}
\end{proposition}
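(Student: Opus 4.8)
The statement is classical (see \cite[Chapters IV and V]{ACGH}); the plan is to recover it within the determinantal framework of \S\ref{sec:deg}. First I would realise $W_d^r$ as a degeneracy locus: fixing an effective divisor $D$ of large degree $m$ on $C$, a Poincar\'e bundle $\mathcal P$ on $C\times\Pic^d(C)$, and the projection $\pi\colon C\times\Pic^d(C)\to\Pic^d(C)$, one gets for $m\gg 0$ vector bundles $\mathcal E:=\pi_*\big(\mathcal P(D)\big)$ and $\mathcal F:=\pi_*\big(\mathcal P(D)|_{D\times\Pic^d(C)}\big)$ of ranks $d+m+1-g$ and $m$, together with a morphism $\gamma\colon\mathcal E\to\mathcal F$ whose fibrewise kernel over $[\mathcal L]$ is $H^0(C,\mathcal L)$. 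Hence $W_d^r=D_{d+m-g-r}(\gamma)$, whose expected codimension in $\Pic^d(C)$ is $(\rank\mathcal F-\ell)(\rank\mathcal E-\ell)=(g-d+r)(r+1)$ with $\ell=d+m-g-r$, i.e.\ expected dimension $\rho$; and $G_d^r$ is the locus in the Grassmann bundle $\Gr_{r+1}(\mathcal E)\to\Pic^d(C)$ where the tautological sub-bundle is annihilated by $\gamma$ --- in the language of this paper the $\foQuot$-type scheme resolving $W_d^r$, which for $d\le g-1$ is the blow-up of $W_d^r$ along $W_d^{r+1}$.

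For part (1), I would deduce nonemptiness when $\rho\ge 0$ from the Kempf--Kleiman--Laksov existence theorem: the cohomology class of $W_d^r$, computed by the Giambelli--Thom--Porteous formula, is a nonzero positive multiple of $\theta^{(r+1)(g-d+r)}$ (Poincar\'e's formula), and one promotes nonvanishing of the class to nonemptiness of the locus via positivity of $\mathcal E^\vee\otimes\mathcal F$ over the abelian variety $\Pic^d(C)$ (the existence half of the general degeneracy-locus theorems). Connectedness when $\rho\ge 1$ would then follow from the Fulton--Lazarsfeld connectedness theorem for degeneracy loci, using ampleness of $\mathcal E^\vee\otimes\mathcal F$ and $\dim W_d^r\ge 1$. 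The corresponding statements for $G_d^r$ come for free, since $G_d^r\to W_d^r$ is proper and surjective with connected (Grassmannian) fibres.

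For part (2), the crucial input is the Gieseker--Petri theorem: for a general curve the Petri map
\[
\mu_0(\mathcal L)\colon H^0(C,\mathcal L)\otimes H^0(C,K_C\otimes\mathcal L^\vee)\longrightarrow H^0(C,K_C)
\]
is injective for every $\mathcal L$. I would then run the standard deformation-theoretic tangent-space computation for $G_d^r$: at $(\mathcal L,V)$ the Zariski tangent space is controlled by $\Hom\big(V,H^0(\mathcal L)/V\big)$ together with the kernel of the Serre-dual of the restricted Petri map $V\otimes H^0(C,K_C\otimes\mathcal L^\vee)\to H^0(C,K_C)$, so injectivity of $\mu_0$ forces $\dim T_{(\mathcal L,V)}G_d^r=\rho$ at every point; hence $G_d^r$ is smooth, in particular reduced, of pure dimension $\rho$. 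If $\rho<0$ this is possible only when $G_d^r=\emptyset$, which together with part (1) gives $G_d^r\ne\emptyset\iff\rho\ge 0$ (and the same for $W_d^r$ via the surjection $\Supp G_d^r\twoheadrightarrow W_d^r$). For $(W_d^r)_{\mathrm{sing}}$: off $W_d^{r+1}$ one has $h^0(\mathcal L)=r+1$ and the tangent space to $W_d^r$ is $(\operatorname{Im}\mu_0(\mathcal L))^{\perp}\subset H^1(\mathcal O_C)$, of dimension $g-(r+1)(g-d+r)=\rho$, so $W_d^r$ is smooth there; on $W_d^{r+1}$ the determinantal structure forces the tangent space to jump, so $(W_d^r)_{\mathrm{sing}}=W_d^{r+1}$ (the hypothesis $r\ge d-g$ just ensures $W_d^r\ne\Pic^d(C)$). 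Finally, for $\rho\ge 1$: $G_d^r$ is connected by part (1) and smooth by the above, hence irreducible, and $W_d^r$ is irreducible as the image of the irreducible $G_d^r$ under a proper surjection.

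The two genuinely deep ingredients are the Kempf--Kleiman--Laksov existence theorem --- whose heart is the positivity of $\mathcal E^\vee\otimes\mathcal F$ on the abelian variety $\Pic^d(C)$ --- for part (1), and the Gieseker--Petri theorem for part (2); the latter, proved by degenerating $C$ to a suitable singular curve and carrying out a limit-linear-series / limit Petri-map analysis, is the real obstacle. Everything else --- the determinantal presentation, the numerology of expected dimensions, the tangent-space exact sequences, and the implication ``connected $+$ smooth $\Rightarrow$ irreducible'' --- is routine.
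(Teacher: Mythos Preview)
Your sketch is correct and follows the classical path: the determinantal presentation of $W_d^r$ and $G_d^r$ via a Poincar\'e bundle, the Kempf--Kleiman--Laksov existence argument through the Thom--Porteous class and positivity, Fulton--Lazarsfeld connectedness, and then the Gieseker--Petri theorem for the smoothness and dimension statements on a general curve. This is exactly the route in \cite[IV,V]{ACGH}.

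Note, however, that the paper does not actually prove this proposition. It is stated as background, with the sentence ``The following are classical results of Brill--Noether theory, see \cite[IV,V]{ACGH}'' immediately preceding it, and no proof is given --- the paper simply cites \cite{ACGH} and moves on to embed the setup into its Quot-scheme framework. So there is no ``paper's own proof'' to compare against; your proposal supplies what the paper deliberately omits, and does so along the lines of the cited reference.
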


The Brill--Neother theory fits naturally into the Quot-formula picture as follows. Following convention of  \cite{Tod}, let $n \ge 0$ be a non-negative integer, and consider the Picard scheme $X: = \Pic^{g-1+n}(C)$. Let $D$ be an effective divisor of large degree on $C$, and $\sL_{\rm univ}$ be the universal line bundle of degree $g-1+n$ on $C \times X$, $\pr_C, \pr_X$ be obvious projections, then
	$$\sV := (\RR \pr_{X *} (\pr_{C}^* \sO(D) \otimes \sL_{\rm univ}))^\vee \quad \text{and} \quad \sW := (\RR \pr_{X *} (\pr_{C}^* \sO_D(D) \otimes \sL_{\rm univ}))^\vee$$
are vector bundles on $X$ of rank $\deg(D) + n$ and resp. $\deg(D)$, and there is a natural map
	$\sigma \colon \sW \to \sV$
between the vector bundles. As in \cite{JL18}, denote 
	$$\sG: = \Coker (\sigma \colon \sW \to \sV) \quad \text{and} \quad \sK: = \Coker (\sigma^\vee \colon \sV^\vee \to \sW^\vee)$$
then $\sG$ has generic rank $n$ and homological dimension $\le 1$, $\sK$ is a torsion sheaf, and $\sK \simeq \sExt^1(\sG,\sO_X)$. For any point $x=[\sL] \in X$, the fibers of $\sG$ and $\sK$ are:
	$$\sG \otimes k(x) = H^0(C,\sL)^\vee \quad \text{and} \quad \sK \otimes k(x)  =H^1(C,\sL)\simeq H^0(C,\sL^\vee \otimes \omega_C)^\vee.$$ 
Therefore the Brill--Neother loci is nothing but degeneracy loci of $\sG$, and the scheme $G_{g-1+n}^r$ of linear series are exactly the Quot-schemes:
		$$W_{g-1+n}^{r}(C) = X^{\ge r+1}(\sG), \qquad G_{g-1+n}^r(C) = \foQuot_{X, r+1}(\sG),  \quad \forall r \in \ZZ.$$
Via the identification $\Pic^{g-1-n}(C)  \simeq  \Pic^{g-1+n}(C) =X$, $\sL \mapsto \sL^\vee \otimes \omega_C$, we also have:
		$$W_{g-1-n}^{r} (C) \simeq  X^{\ge r+1}(\sK), \qquad G_{g-1-n}^{r}(C) \simeq \foQuot_{X, r+1}(\sK),  \quad \forall r \in \ZZ.$$
Therefore the Quot--formula Thm. \ref{thm:main} immediately implies:
		
\begin{theorem}\label{thm:curves} If $C$ is a general curve (in the sense of Petri), then for any $n \ge 0$, $r \ge 0$, there is an isomorphism of Chow groups:
	\begin{align*} 
	\bigoplus_{j=0}^{\min\{n,r+1\}} \bigoplus_{\nu^{(j)} \in B_{j,n-j}} CH_{k - (r+1)(n-j)+|\nu^{(j)}|} (G_{g-1-n}^{r-j}(C))  \xrightarrow{\sim} CH_k(G_{g-1+n}^r(C)),
	\end{align*}
(where the map is given as Thm. \ref{thm:main}) and an isomorphism of Chow motives:
	\begin{align*} 
	\bigoplus_{j=0}^{\min\{n,r+1\}} \bigoplus_{\nu^{(j)} \in B_{j,n-j}} \foh(G_{g-1-n}^{r-j}(C))((r+1)(n-j)-|\nu^{(j)}|)  \xrightarrow{\sim} \foh(G_{g-1+n}^r(C)).
	\end{align*}
\end{theorem}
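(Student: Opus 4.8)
The plan is to deduce Theorem~\ref{thm:curves} directly from the Quot--formula of Theorem~\ref{thm:main}, together with Corollary~\ref{cor:main.motif} for the motivic statement, applied to the coherent sheaf $\sG = \Coker(\sigma\colon \sW \to \sV)$ on $X = \Pic^{g-1+n}(C)$ constructed above. The only genuine work is to check that the hypotheses of Theorem~\ref{thm:main} hold for a general curve, and then to translate the combinatorial data. I would begin by recalling, from \cite{JL18} and the discussion preceding the theorem, the dictionary
$$X^{\ge r+1}(\sG) = W_{g-1+n}^r(C), \qquad \foQuot_{X,r+1}(\sG) = G_{g-1+n}^r(C), \qquad \sK \simeq \sExt^1_{\sO_X}(\sG,\sO_X),$$
and, via the isomorphism $\Pic^{g-1-n}(C) \xrightarrow{\sim} X$ sending $\sL \mapsto \sL^\vee \otimes \omega_C$, the further identification $\foQuot_{X,m}(\sK) \simeq G_{g-1-n}^{m-1}(C)$ for every $m$.

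Next I would verify the hypotheses of Theorem~\ref{thm:main}. The scheme $X = \Pic^{g-1+n}(C)$ is an abelian variety, hence smooth --- in particular Cohen--Macaulay --- of pure dimension $g$; the sheaf $\sG$ has generic rank $\delta = n$ and, via the two--term presentation $\sW \to \sV \to \sG \to 0$ by vector bundles (with $\sigma$ generically injective), homological dimension $\le 1$, as established in \cite{JL18}. It remains to check that $X^{\ge \delta+i}(\sG)$ is reduced and of codimension $\ge i(\delta+i)$ in $X$ for every $i \ge 1$. Under the dictionary, $X^{\ge n+i}(\sG) = W_{g-1+n}^{\,n+i-1}(C)$; setting $d = g-1+n$ and $r = n+i-1$ one computes $g-d+r = i$, so the Brill--Noether number is
$$\rho(g,r,d) = g - (r+1)(g-d+r) = g - (n+i)\,i,$$
and the expected codimension of $X^{\ge n+i}(\sG)$ is $g - \rho = i(n+i) = i(\delta+i)$, as required. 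For a general curve in the sense of Petri, the classical Brill--Noether theory recalled above (the Gieseker--Petri theorem; see \cite[IV,V]{ACGH}) asserts that each $W_d^r$ is either empty or reduced of pure dimension exactly $\rho$, which yields both the codimension bound and reducedness. Hence Theorem~\ref{thm:main} applies.

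I would then read off the conclusion by specialising Theorem~\ref{thm:main} to $d := r+1$, $\delta = n$. Here $\min\{\delta,d\} = \min\{n,r+1\}$, $B_{j,\delta-j} = B_{j,n-j}$, $\shZ_d^+ = \foQuot_{X,r+1}(\sG) = G_{g-1+n}^r(C)$, $\shZ_{d-j}^- = \foQuot_{X,r+1-j}(\sK) \simeq G_{g-1-n}^{r-j}(C)$, and the degree shift $d(\delta-j)-|\nu^{(j)}| = (r+1)(n-j) - |\nu^{(j)}|$ reproduces exactly the asserted decomposition of Chow groups. For the motivic part, for a general $C$ each scheme appearing --- $X$ and all the $G_{g-1+n}^r(C)$, $G_{g-1-n}^{r-j}(C)$ --- is smooth (again by the Proposition above, $G_d^r$ is smooth of pure dimension $\rho$ whenever $\rho \ge 0$) and projective (being closed in a relative Grassmannian over $\Pic$), so Corollary~\ref{cor:main.motif} yields the stated isomorphism of integral Chow motives.

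The only non--formal ingredient is the Brill--Noether input: that for a general curve the degeneracy loci $W_{g-1+n}^{\,n+i-1}(C)$ have precisely the expected dimension and are reduced (and that the relevant $G_d^r$ are smooth). This is the Gieseker--Petri theorem, invoked as a black box via \cite{ACGH}; without the general--curve hypothesis these loci may fail to be of expected dimension, reduced, or irreducible, which is exactly why that hypothesis is imposed. Everything else is bookkeeping: the identifications $\foQuot_{X,m}(\sK) \simeq G_{g-1-n}^{m-1}(C)$ and $\sK \simeq \sExt^1_{\sO_X}(\sG,\sO_X)$ from \cite{JL18}, together with matching the index sets and the homological degree shifts.
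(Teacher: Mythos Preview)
Your proposal is correct and follows exactly the paper's approach: the paper simply asserts that ``the Quot--formula Thm.~\ref{thm:main} immediately implies'' the result, and you have spelled out the routine verification of the hypotheses (Cohen--Macaulay base, homological dimension $\le 1$, expected codimension and reducedness of the Brill--Noether loci via Gieseker--Petri for a general curve) together with the dictionary $d=r+1$, $\delta=n$, $\shZ_d^+ = G_{g-1+n}^r(C)$, $\shZ_{d-j}^- \simeq G_{g-1-n}^{r-j}(C)$ that matches the index sets and degree shifts. There is nothing more to add.
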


 \subsection{Applications to (nested) Hilbert schemes of points on surface.} Let $S$ be a smooth algebraic surface, for any $n \ge 0$, denote $\Hilb_n$ the Hilbert scheme of ideals of $S$ of colength $n$, and denote $Z_n \subset X:=\Hilb_n \times S$ the universal subscheme. Then by \cite{ES}, $Z_n$ is Cohen--Macaulay of codimension $2$, and hence $\sI_{Z_n}$ has homological dimension $1$ and $\sExt^1(\sI_{Z_n}, \sO_X) \simeq \omega_{Z_n}$ is the dualizing sheaf of $Z_{n}$.  
For any $d \ge 1$, recall the generlised nested Hilbert scheme defined in the introduction:
 	$$\Hilb_{n,n+d}^{\dagger}(S) : = \{ (I_n \supset I_{n+d}) \mid I_{n}/I_{n+d} \simeq \kappa(p)^{\oplus d} \text{~ for some $p \in S$}\} \subset \Hilb_n \times \Hilb_{n+d}.$$
 Notice that $I_{n}/I_{n+d} \simeq \kappa(p)^{\oplus d}$ (as $\sO_S$-module) is a quite strong condition if $d \ge 2$, which implies that their differences could not contain any curve--linear direction. Consider: 
 	$$\pi_1 :\Hilb_{n,n+d}^{\dagger}(S) \to \Hilb_{n} \times S, \quad \text{and} \quad \pi_2 :\Hilb_{n,n+d}^{\dagger}(S) \to \Hilb_{n+d} \times S,$$
where $\pi_1(I_n, I_{n+d})=(I_n, p = \supp(I_{n}/I_{n+d}))$ and $\pi_2(I_n, I_{n+d}) = (I_{n+d}, p = \supp(I_{n}/I_{n+d}))$. 
 
 \begin{lemma} $\Hilb_{n,n+d}^{\dagger}(S) \simeq \foQuot_{\Hilb_{n} \times S, d}(\sI_{Z_n}) $ and $\Hilb_{n,n+d}^{\dagger}(S) \simeq \foQuot_{\Hilb_{n+d} \times S, d}(\omega_{Z_{n+d}})$.
 \end{lemma}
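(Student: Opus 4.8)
The plan is to compare functors of points. For a $\kk$-scheme $T$, a $T$-point of $\Hilb_{n,n+d}^{\dagger}(S)$ is a pair of ideal sheaves $\sI_{n+d}\subseteq\sI_n$ on $S_T:=S\times T$, each defining a subscheme finite and flat over $T$ of relative length $n$ resp.\ $n+d$, such that $\sI_n/\sI_{n+d}$ is flat over $T$ with $(\sI_n/\sI_{n+d})\otimes\kappa(t)\cong\kappa(p_t)^{\oplus d}$ for all $t\in T$. The first thing I would record is the standard consequence of this last, ``punctual'' condition: flatness over $T$ together with the fibrewise description forces $\sI_n/\sI_{n+d}$ to be of the form $\iota_{p\,*}\sE$, where $p\colon T\to S$ is a (unique) morphism, $\iota_p\colon T\xrightarrow{\sim}\Gamma_p\hookrightarrow S_T$ is the inclusion of its graph, and $\sE$ is a locally free $\sO_T$-module of rank $d$. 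This is the module-theoretic analogue of the fact that a flat family of punctual subschemes supported at a varying point is pulled back along that point, and it is precisely where the hypothesis $\sI_n/\sI_{n+d}\cong\kappa(p)^{\oplus d}$ enters. I would also use throughout that the universal subscheme $Z_n$ is flat over $\Hilb_n$, so that pulling back the tautological sequence $0\to\sI_{Z_n}\to\sO\to\sO_{Z_n}\to0$ stays exact and $(\phi\times\id_S)^*\sI_{Z_n}$ agrees with the ideal $\sI_n$ of the family classified by $\phi\colon T\to\Hilb_n$.

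For the first isomorphism: given a $T$-point of $\Hilb^{\dagger}_{n,n+d}(S)$ as above, let $\phi\colon T\to\Hilb_n$ classify $\sI_n$; since $(\phi,p)\colon T\to\Hilb_n\times S=X$ factors as $(\phi\times\id_S)\circ\Gamma_p$, one gets $(\phi,p)^*\sI_{Z_n}=\sI_n|_{\Gamma_p}$. As $\sI_n/\sI_{n+d}$ is already an $\sO_{\Gamma_p}$-module, the surjection $\sI_n\twoheadrightarrow\sI_n/\sI_{n+d}$ factors through a surjection $(\phi,p)^*\sI_{Z_n}=\sI_n|_{\Gamma_p}\twoheadrightarrow\sE$ onto a rank-$d$ locally free $\sO_T$-module, i.e.\ a $T$-point of $\foQuot_{X,d}(\sI_{Z_n})$. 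Conversely, a $T$-point of $\foQuot_{X,d}(\sI_{Z_n})$ consists of a morphism $(\phi,p)\colon T\to\Hilb_n\times S$ and a surjection $q\colon(\phi,p)^*\sI_{Z_n}=\sI_n|_{\Gamma_p}\twoheadrightarrow\sE$; setting $\sI_{n+d}:=\ker\!\big(\sI_n\twoheadrightarrow\sI_n|_{\Gamma_p}\xrightarrow{q}\iota_{p\,*}\sE\big)$ gives $\sI_{n+d}\subseteq\sI_n$ with quotient $\iota_{p\,*}\sE$, and flatness of $\sO_{S_T}/\sI_{n+d}$ over $T$ follows from the exact sequence $0\to\iota_{p\,*}\sE\to\sO_{S_T}/\sI_{n+d}\to\sO_{S_T}/\sI_n\to0$ with $T$-flat outer terms, the relative length being $n+d$; the quotient $\sI_n/\sI_{n+d}=\iota_{p\,*}\sE$ is fibrewise $\kappa(p_t)^{\oplus d}$. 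These two assignments are mutually inverse and natural in $T$.

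The second isomorphism runs the same argument through relative Matlis duality. Writing $X'=\Hilb_{n+d}\times S$ and $\sA:=\sO_{S_T}/\sI_{n+d}$, which is finite and flat over $T$, the functor $(\blank)^{\vee}:=\sHom_{\sO_T}(\blank,\sO_T)$ is exact on finite $\sO_T$-flat $\sA$-modules, commutes with base change, and satisfies $\sF^{\vee\vee}\cong\sF$, while $\sA^{\vee}$ is the relative dualizing sheaf of $\sA$ over $T$. If $\psi\colon T\to\Hilb_{n+d}$ classifies $\sI_{n+d}$, then---up to a twist by a line bundle pulled back from $S$ or from $\Hilb_{n+d}$, which is harmless because $\foQuot_{X',d}(\sK)\cong\foQuot_{X',d}(\sK\otimes L)$ for any line bundle $L$---one identifies $(\psi,p)^{*}\omega_{Z_{n+d}}$ with $\sA^{\vee}$ restricted along $\Gamma_p$. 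Dualizing the inclusion $\sI_n/\sI_{n+d}\hookrightarrow\sA$ then yields a surjection $(\psi,p)^{*}\omega_{Z_{n+d}}\twoheadrightarrow(\sI_n/\sI_{n+d})^{\vee}\cong\iota_{p\,*}(\sE^{\vee})$ onto a rank-$d$ locally free $\sO_T$-module, i.e.\ a $T$-point of $\foQuot_{X',d}(\omega_{Z_{n+d}})$; conversely, a surjection $\beta\colon(\psi,p)^{*}\omega_{Z_{n+d}}\twoheadrightarrow\sE$ dualizes to an injection $\beta^{\vee}\colon\iota_{p\,*}(\sE^{\vee})\hookrightarrow\sA^{\vee\vee}\cong\sA$ whose image $\sM$ is fibrewise $\kappa(p_t)^{\oplus d}$; defining $\sI_n$ as the preimage of $\sM$ in $\sO_{S_T}$, flatness of $\sO_{S_T}/\sI_n$ over $T$ follows since $\beta^{\vee}$ remains injective on fibres, being the $\kappa(t)$-dual of the surjection $\beta\otimes\kappa(t)$. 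Mutual inverseness follows from double duality, and naturality in $T$ is clear; in particular $\Hilb_{n,n+d}^{\dagger}(S)$ is representable and equals each of the two Quot schemes.

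The step requiring the most care is the duality bookkeeping in the last paragraph: identifying $(\psi,p)^{*}\omega_{Z_{n+d}}$ with the $\sO_T$-linear dual of $\sO_{S_T}/\sI_{n+d}$ with the correct base-change behaviour, and keeping careful track of the (harmless) line-bundle twist relating the dualizing sheaf of $Z_{n+d}$ to $\sExt^1_{X'}(\sI_{Z_{n+d}},\sO_{X'})$. Everything else reduces to the routine verification that $T$-flatness, the relative lengths, and the punctual condition $\sI_n/\sI_{n+d}\cong\kappa(p)^{\oplus d}$ are preserved under the two constructions.
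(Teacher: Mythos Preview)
Your argument is correct and follows the same route as the paper: for the first isomorphism one uses the tautological surjection $\sI_n\twoheadrightarrow\sI_n/\sI_{n+d}\cong\kappa(p)^{\oplus d}$, and for the second one dualizes the short exact sequence $0\to\kappa(p)^{\oplus d}\to\sO_{\xi_{n+d}}\to\sO_{\xi_n}\to0$ to obtain $\omega_{\xi_{n+d}}\twoheadrightarrow\kappa(p)^{\oplus d}$. The paper only does the fibre-by-fibre identification and then asserts in one sentence that ``these isomorphisms can be naturally globalized and made functorial''; you have actually written out that functor-of-points globalization, including the line-bundle twist caveat, so your version is if anything more complete.
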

\begin{proof} Similar to \cite{ES}, for any $(I_n \supset I_{n+d}) \in \Hilb_{n,n+d}^{\dagger}(S)$, denote $\xi_n:=V(I_n) \subset S$, $\xi_{n+d}: = V(I_{n+d}) \subset S$ the corresponding zero subschemes, then there are short exact sequences:
	$$0 \to I_{n+d} \to  I_{n} \to  \kappa(p)^{\oplus d} \to 0, \qquad 0 \to  \kappa(p)^{\oplus d} \to \sO_{\xi_{n+d}} \to \sO_{\xi_n} \to 0.$$
The first one identifies the fiber of $\pi_1$ over the point $(I_{n}, p)$ with $\pi_1^{-1}(I_{n}, p) \simeq  \foQuot_{d}(I_{n}\otimes \kappa(p))$. Dualizing the second exact sequence, we get:
	$$0 \to \omega_{\xi_n} \to  \omega_{\xi_{n+d}} \to \kappa(p)^{\oplus d} \to 0.$$
This gives rise to a map from the fiber of $\pi_2$ over the point $(I_{n+d}, p)$ to the Quot-scheme, i.e. $\pi_2^{-1}(I_{n+d}, p) \to \foQuot_{d}(\omega_{\xi_{n+d}}\otimes \kappa(p))$, which is an isomorphism since dualizing again we see $\xi_n$ can be recovered from the map $\omega_{\xi_{n+d}} \twoheadrightarrow \kappa(p)^{\oplus d}$. 

These isomorphisms can be naturally globalized and made functorial, as the two short exact sequences about $I_n, I_{n+d}$ and $\omega_n, \omega_{n+d}$ can be naturally globalized to the relative case of families of subschemes and points. Hence we are done.
\end{proof}

In a recent note \cite{BCJ} we showed that the Brill--Noether loci $BN_{i,n} : = X^{\ge i+1} (\sI_{Z_n}) \subset \Hilb_n \times S$ are irreducible and all have expected dimensions:
 	$\codim (BN_{i,n} \subset  \Hilb_n \times S) = i(i+1), \forall i \ge 1.$
Therefore the the Quot--formula Thm. \ref{thm:main} implies
 \begin{theorem}\label{thm:Hilb} For any $n,d \ge 1, \forall k$,  the fiber products induce an isomorphism:
 	$$CH_k(\Hilb_{n,n+d}^{\dagger}(S)) \simeq CH_{k-d}(\Hilb_{n-d,n}^{\dagger}(S)) \oplus CH_{k}(\Hilb_{n-d+1,n}^{\dagger}(S)).$$
 \end{theorem}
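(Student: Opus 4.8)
The plan is to recognize $\Hilb_{n,n+d}^\dagger(S)$ as a Quot-scheme of the type treated by Theorem~\ref{thm:main}, apply that theorem with $\delta=1$, and then translate the two resulting summands back into nested Hilbert schemes via the preceding lemma. First I would put $X := \Hilb_n \times S$ and $\sG := \sI_{Z_n}$, so that $X$ is smooth (hence Cohen--Macaulay of pure dimension $2n+2$) and, by \cite{ES}, $\sG$ has homological dimension $\le 1$ with $\rank \sG = \delta = 1$ and $\sK := \sExt^1_X(\sG,\sO_X) \simeq \omega_{Z_n}$. By the preceding lemma, $\Hilb_{n,n+d}^\dagger(S) \simeq \foQuot_{X,d}(\sG)$.

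Next I would verify the hypotheses of Theorem~\ref{thm:main}: the relevant degeneracy loci are $X^{\ge 1+i}(\sG) = BN_{i,n}$, and the result cited from \cite{BCJ} says precisely that these are irreducible --- in particular reduced --- and of the expected codimension $i(i+1) = i(\delta+i)$ for all $i \ge 1$. With this, Theorem~\ref{thm:main} applies. Specializing the Quot-formula to $\delta=1$: since $d \ge 1$ one has $\min\{\delta,d\}=1$, so $j$ runs over $\{0,1\}$, while $B_{0,1}$ and $B_{1,0}$ each contain only the empty diagram; hence the double sum in (\ref{eqn:main.thm}) collapses to the two terms $CH_{k-d}(\foQuot_{X,d}(\sK))$ (from $j=0$) and $CH_k(\foQuot_{X,d-1}(\sK))$ (from $j=1$), the maps being induced by the fibre products of diagram (\ref{diagram:Gamma}).

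Finally I would reinterpret the two Quot-schemes on the right using the lemma in the form $\Hilb_{m,m+e}^\dagger(S) \simeq \foQuot_{\Hilb_{m+e}\times S,\,e}(\omega_{Z_{m+e}})$: taking $(m,e) = (n-d,\,d)$ identifies $\foQuot_{X,d}(\sK)$ with $\Hilb_{n-d,n}^\dagger(S)$, and taking $(m,e) = (n-d+1,\,d-1)$ identifies $\foQuot_{X,d-1}(\sK)$ with $\Hilb_{n-d+1,n}^\dagger(S)$; the edge case $d=1$ gives $\foQuot_{X,0}(\sK) = X = \Hilb_{n,n}^\dagger(S)$, consistent with the convention. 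Substituting into the displayed isomorphism yields the claim.

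I do not expect a conceptual obstacle, since Theorem~\ref{thm:main} carries out the substantive work; the one genuinely nontrivial ingredient is the expected-dimension and reducedness statement for the Brill--Noether loci $BN_{i,n}$, without which the hypotheses of the Quot-formula would be unavailable --- this is exactly what \cite{BCJ} supplies. The only point demanding care is bookkeeping: matching the parameter shifts $n \mapsto n-d$ and $n \mapsto n-d+1$ (together with $d \mapsto d-1$) correctly when re-applying the lemma, and confirming that the $d=1$ edge case is consistent with the conventions $\Hilb_{m,m}^\dagger(S) = \Hilb_m \times S$ and $\foQuot_{X,0}(\sK) = X$.
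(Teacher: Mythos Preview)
Your proposal is correct and follows exactly the same approach as the paper: identify $\Hilb_{n,n+d}^\dagger(S)$ as $\foQuot_{X,d}(\sI_{Z_n})$ over $X=\Hilb_n\times S$, invoke \cite{BCJ} for the expected-dimension condition on the Brill--Noether loci, apply Theorem~\ref{thm:main} with $\delta=1$, and re-identify the two resulting Quot-schemes of $\omega_{Z_n}$ via the lemma. One minor quibble: ``irreducible --- in particular reduced'' is not a valid inference in general, though the paper itself does not elaborate on reducedness either (presumably this is also supplied by \cite{BCJ}).
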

Notice if $d=1$, this is the formula for usual nested Hilbert scheme \cite{J19}; if $d \ge 2$, then both $\Hilb_{n,n+d}^{\dagger}(S)$ and $\Hilb_{n-d+1,n}^{\dagger}(S)$ are desingularizations of the Brill--Noether locus $BN_{d-1,n}$, and they are related by a flip $\Hilb_{n,n+d}^{\dagger}(S)  \dashrightarrow \Hilb_{n-d+1,n}^{\dagger}(S)$.

\appendix
\section{Projectors for top and lowest strata} In general the projectors for theorem \ref{thm:main} may be hard to compute. But for the contributions from top and lowest strata they can be expressed nicely.  Let $E$ and $F$ be locally free sheaves of rank $n$ and $m$ over a scheme $X$, $\sigma \in \Hom(F,E) = \Gamma(X,F^\vee \otimes X)$ be an injective $\sO_X$-linear map, and denote $\sG : = \Coker (\sigma \colon F \to E)$ the cokernel. Let $d$ be an integer between $1$ and $r  = \rank \sG = n - m$, and denote $\ell = n -d$. Assume $\sigma$ regarded as a section of $F^\vee \otimes E$ is regular, and denote $Z : = Z(\sigma) \subset X$ the zero loci. Similar to generalized Cayley's trick case, consider $G: = \Gr_d(E^\vee)$, let $\shU = \shU(E^\vee)$ (resp. $\shQ = \shQ(E^\vee)$) the universal rank $d$ (resp. $\ell : = n-d$) subbundle resp. (quotient bundle), i.e.  
	$0 \to \shU \to \pi^* E^\vee \to \shQ \to 0.$
Denote $\shZ_+: = \foQuot_d(\sG)$, then there is a natural inclusion $\iota \colon \shZ_+ \hookrightarrow \foQuot_d(E) = \Gr_d (E^\vee)$ induced by $E \twoheadrightarrow \sG$ as before,
assume $\shZ_+$ has expected dimension $\dim G - md$. Notice the restriction of $\shU^\vee$ to $\shZ_+$ is the tautological rank $d$ quotient bundle of $\sG$. The situation is summarised in the following diagram, with names of maps as indicated:
\begin{equation}\label{diagram:dom.strata}
	\begin{tikzcd}[row sep= 2.6 em, column sep = 2.6 em]
	G_Z:=\Gr_d(E^\vee|_Z) \ar{d}[swap]{p} \ar[hook]{r}{j} & \shZ_+ : = \foQuot_{d}(\sG) \ar{d}{\pi} \ar[hook]{r}{\iota} & G:=\Gr_d(E^\vee) \ar{ld}[near start]{q} 
	\\
	Z \ar[hook]{r}{i}         & X  
	\end{tikzcd}	
\end{equation}

\subsection{Top strata}
If $1 \le d \le r = n-m$, then $\shZ_+$ is generically a $\Gr_d(r)$-bundle over $X$. Notice that $r - d = \ell - m$.
\begin{lemma}  \label{lem:vb} For any $\lambda \in B_{d, \ell - m}$, $k \in \ZZ$, we define the following maps as Lem. \ref{lem:Gr:projector}, 
	\begin{align*}
	& \pi_\lambda^*(\blank) := \Delta_{\lambda} (-\shU)  \cap \pi^*(\blank) \colon &CH_{k-d(\ell - m) +|\lambda|}(X) \to CH_{k}(\shZ_+), \\
	& \pi_{\lambda *} (\blank): =  \sum_{\mu \in B_{d,\ell - m}} \Delta_{\mu/\lambda}(\sG^\vee) \cap  \pi_* (\Delta_{\mu^c} (-\shU) \cap (\blank)) \colon & CH_{k}(\shZ_+) \to CH_{k-d(\ell - m) +|\lambda|}(X).
	\end{align*}	 
where $\sG^\vee \equiv E^\vee - F^\vee \in K_0(X)$, i.e. $c(\sG^\vee)  = c(E^\vee)/c(F^\vee)$; and $\mu^c = (\ell - m) - \mu$ is the complement inside the rectangular $B_{d, \ell - m}$. Then the following holds:
	$$\pi_{\lambda *} \, \pi_{\mu}^* = \delta_{\lambda,\mu} \, \Id_{CH(X)}, \qquad \forall \lambda,\mu \in B_{d, \ell - m}.$$
Hence we have the following split-injective map of the ``vector bundle part" of Chow:
	$$\bigoplus_{\lambda \in B_{d,\ell - m}} \pi_\lambda^* = \bigoplus_{\lambda \in B_{d,\ell - m}}\Delta_{\lambda}(-\shU) \cap \pi^*(\blank)   \colon \bigoplus_{\lambda \in B_{d,\ell - m}}  CH_{k-d(\ell - m)+|\lambda|}(X) \hookrightarrow CH_k(\shZ_+). $$
Furthermore the following holds:
	\begin{align} \label{eqn:pi.vs.q} 
	\pi_{\lambda \, *} (\blank)=  \sum_{\nu \in B_{d,\ell - m}} \Delta_{\nu/\lambda}(-F^\vee)  \cap q_{\nu+m\, *} \, \iota_*(\blank),
	\end{align}
where $q_{\lambda \,*}$ is defined as in Lem. \ref{lem:Gr:projector}, i.e. with respect to the basis $\{\Delta_\lambda (-\shU)\}$.
\end{lemma}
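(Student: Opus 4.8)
The plan is to establish, in order, the identity \eqref{eqn:pi.vs.q}, then the orthogonality $\pi_{\lambda\,*}\pi_\mu^* = \delta_{\lambda,\mu}\Id_{CH(X)}$, and finally the split-injectivity of $\bigoplus_\lambda\pi_\lambda^*$, the last being immediate once the orthogonality is known since $\bigoplus_\mu\pi_{\mu\,*}$ is then a left inverse.

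For \eqref{eqn:pi.vs.q} I would argue purely formally. Writing $\pi = q\circ\iota$, so $\pi_* = q_*\circ\iota_*$, and noting that $\Delta_{\mu^c}(-\shU)$ on $\shZ_+$ is $\iota^*$ of the like-named class on $G$, the projection formula moves $\Delta_{\mu^c}(-\shU)$ past $\iota_*$. Using $c(\sG^\vee) = c(E^\vee)\,c(F^\vee)^{-1}$ and the summation formula (Lem.~\ref{lem:sum}) to write $\Delta_{\mu/\lambda}(\sG^\vee) = \sum_{\nu}\Delta_{\mu/\nu}(E^\vee)\,\Delta_{\nu/\lambda}(-F^\vee)$, then re-indexing $\mu\mapsto\mu+m$, $\nu\mapsto\nu+m$ and applying the shift-invariances $\Delta_{(\mu+m)/(\nu+m)}(E^\vee) = \Delta_{\mu/\nu}(E^\vee)$ and $(\nu+m)^{c_{d,\ell}} = \nu^{c_{d,\ell-m}}$, the inner sum becomes exactly the Grassmannian-bundle projector $q_{\nu+m\,*}$ (defined as in Lem.~\ref{lem:Gr:projector} with respect to the basis $\{\Delta_\lambda(-\shU)\}$) applied to $\iota_*(\blank)$; this is \eqref{eqn:pi.vs.q}.

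For the orthogonality I would feed $\pi_\mu^*\beta = \iota^* q_\mu^*\beta$ into \eqref{eqn:pi.vs.q}. The embedding $\shZ_+\hookrightarrow G$ is the zero scheme of the section $q^*F\to q^*E\to\shU^\vee$ of $\shU^\vee\otimes q^*F^\vee$, which is regular precisely by the expected-dimension hypothesis, so the self-intersection formula gives $\iota_*\iota^*(\blank) = c_{\rm top}(\shU^\vee\otimes q^*F^\vee)\cap(\blank)$ on $CH(G)$. Expanding $c_{\rm top}(\shU^\vee\otimes q^*F^\vee) = \sum_{\kappa\in B_{d,m}}\Delta_{\kappa}(-\shU)\,\Delta_{\kappa^c}(F^\vee)$ by Lem.~\ref{lem:tensor}, multiplying by $\Delta_\mu(-\shU)$ via the Littlewood--Richardson rule (Lem.~\ref{lem:LR}), and using the $CH(X)$-linearity of $q_{\nu+m\,*}$ together with $q_{\nu+m\,*}q_\sigma^* = \delta_{\nu+m,\sigma}$, one finds $q_{\nu+m\,*}(\iota_*\pi_\mu^*\beta) = \bigl(\sum_{\kappa\in B_{d,m}}c_{\mu,\kappa}^{\nu+m}\,\Delta_{\kappa^c}(F^\vee)\bigr)\cap\beta$. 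The key input is then the Littlewood--Richardson identity $c_{\mu,\chi^c}^{\nu+m} = c_{\nu,\chi}^{\mu}$ (for $\mu,\nu\in B_{d,\ell-m}$, $\chi\in B_{d,m}$), which by Lem.~\ref{lem:LR}(2) rewrites the parenthesised class as $\Delta_{\mu/\nu}(F^\vee)$; substituting into \eqref{eqn:pi.vs.q} and applying the summation formula once more yields $\pi_{\lambda\,*}\pi_\mu^*\beta = \sum_\nu\Delta_{\nu/\lambda}(-F^\vee)\,\Delta_{\mu/\nu}(F^\vee)\cap\beta = \Delta_{\mu/\lambda}(1)\cap\beta = \delta_{\lambda,\mu}\,\beta$.

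The hard part is the combinatorial identity $c_{\mu,\chi^c}^{\nu+m} = c_{\nu,\chi}^{\mu}$, which I would deduce from the Berenstein--Zelevinsky symmetries (Lem.~\ref{lem:BZ}): translating both sides into BZ-numbers via $c_{\alpha,\beta}^{\gamma} = c_{\ell(-\gamma),\ell(\alpha),\ell(\beta)}$ and using $\ell(\lambda+m) = \ell(\lambda)$, $\ell(-\lambda) = \ell(\lambda)^*$, $\ell(\lambda^c) = \ell(\lambda)^*$, the left side becomes $c_{\ell(\nu)^*,\ell(\mu),\ell(\chi)^*}$ and the right side $c_{\ell(\mu)^*,\ell(\nu),\ell(\chi)}$, which agree by the $*$-involution followed by a permutation of the three arguments. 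I expect this BZ step to be the only genuinely non-routine point; everything else is bookkeeping with the summation and Littlewood--Richardson rules together with the Grassmannian-bundle formalism of \S\ref{sec:Gr}. As a consistency check, for $d = 1$ the identity reduces to Pieri's rule and the second step above to the relation $c(F^\vee)\,c(F^\vee)^{-1} = 1$.
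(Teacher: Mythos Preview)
Your argument is correct and uses the same ingredients as the paper's proof (the self-intersection formula $\iota_*\iota^* = c_{\rm top}(F^\vee\otimes\shU^\vee)\cap(\blank)$, the top-Chern-class expansion of Lem.~\ref{lem:tensor}, the summation formula Lem.~\ref{lem:sum}, and a BZ symmetry from Lem.~\ref{lem:BZ}), but the logical order is reversed. The paper first proves $\pi_{\lambda*}\pi_\mu^* = \delta_{\lambda,\mu}$ directly: it expands $\pi_{\lambda*}(\pi_\mu^*\alpha)$ via the definition, uses the BZ identity $c_{(\ell-m)-\tau,\,m-\nu}^{\,(n-d)-\theta} = c_{\tau,\nu}^{\theta}$ together with the duality Lem.~\ref{lem:Gr:dual} to reduce to a triple sum $\sum_{\tau,\theta}\Delta_{\mu/\theta}(-E^\vee)\Delta_{\theta/\tau}(F^\vee)\Delta_{\tau/\lambda}(\sG^\vee)$, which collapses by the summation formula since $-E^\vee + F^\vee + \sG^\vee = 0$; only afterwards does it establish \eqref{eqn:pi.vs.q} by comparing coefficients. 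You instead prove \eqref{eqn:pi.vs.q} first purely formally from shift-invariance $\Delta_{(\mu+m)/(\nu+m)} = \Delta_{\mu/\nu}$ and the summation formula, then feed $\pi_\mu^*$ through it, replacing the explicit appeal to the duality lemma by the already-packaged projector relation $q_{\nu+m\,*}q_\sigma^* = \delta_{\nu+m,\sigma}$; your BZ identity $c_{\mu,\chi^c}^{\nu+m} = c_{\nu,\chi}^{\mu}$ is the same symmetry in a different guise, and the final collapse $\sum_\nu\Delta_{\nu/\lambda}(-F^\vee)\Delta_{\mu/\nu}(F^\vee) = \delta_{\lambda,\mu}$ is shorter because you have already absorbed $E^\vee$ into the Grassmannian projector. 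The paper in fact remarks at the end of its proof that ``one could also directly deduce the first statement from this relation,'' which is precisely your route.
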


\begin{proof} For simplicity denote $\Delta_{\lambda} = \Delta_{\lambda}(-\shU)$ for both the class on $\shZ_+$ and also for $\Gr_d(E^\vee)$, and note that there is no confusion as $\iota^* \Delta_\lambda (-\shU(E^\vee)) = \Delta_\lambda (-\shU)$. For any $\lambda,\mu \in B_{d, \ell - m}$,
\begin{align*}
	\pi_{\lambda *} (\pi_\mu^* \alpha) = \pi_{\lambda *} (\Delta_{\mu} \cdot \pi^* \alpha) 
	= \sum_{\tau \in B_{d,\ell - m}} \Delta_{\tau/\lambda} (\sG^\vee) \cdot q_* (\Delta_{(\ell - m) - \tau} \cdot c_{top}(F^\vee \otimes \shU^\vee) \cdot \Delta_\mu \cdot q^* \alpha) 
\end{align*} 
As $c_{top}(F^\vee \otimes \shU^\vee) = \sum_{\nu \in B_{d,m}} \Delta_\nu(F^\vee) \cdot \Delta_{m- \nu}$, therefore above computation equals to
	\begin{align*}
	 \pi_{\lambda *} (\pi_\mu^* \alpha)= \sum_{\tau \in B_{d,\ell - m}} \sum_{\nu \in B_{d,m}}  \Delta_{\tau/\lambda} (\sG^\vee)  \Delta_{\nu}(F^\vee) \cdot q_* (\Delta_{(\ell - m) -\tau} \cdot \Delta_{m-\nu} \cdot \Delta_{\mu} \cap q^*\alpha).
	\end{align*} 
From Littlewood-Richardson rule Lem. \ref{lem:LR} and BZ-lemma Lem. \ref{lem:BZ},
	$$\Delta_{(\ell - m) -\tau} \cdot \Delta_{m-\nu} = \sum_{\theta \in B_{d,n-d}} c_{(\ell - m)-\tau, \, m - \nu}^{n-d-\theta} \Delta_{n-d - \theta} = \sum_{\theta \in B_{d,n-d}} c_{\tau, \, \nu}^{\theta} \Delta_{n-d - \theta},$$
now by duality for Grassmannian bundle Lem. \ref{lem:Gr:dual}, above reduces to:
	\begin{align*}
	 \pi_{\lambda *} (\pi_\mu^* \alpha) & = \sum_{\tau \in B_{d,\ell - m}} \sum_{\nu \in B_{d,m}} \sum_{\theta \in B_{d,n-d}}   c_{\tau, \,\nu}^{\theta} \cdot \Delta_{\tau/\lambda} (\sG^\vee)  \cdot \Delta_{\nu}(F^\vee) \cdot \Delta_{\mu/\theta}(-E^\vee) \cap \alpha \\
	 & = \sum_{\tau \in B_{d,\ell - m} ,\theta \in B_{d,n-d}}  \Delta_{\mu/\theta}(-E^\vee) ( \sum_{\nu \in B_{d,m}} c_{\tau, \,\nu}^{\theta} \Delta_{\nu}(F^\vee)) \Delta_{\tau/\lambda} (\sG^\vee) \cap \alpha\\
	 & = \sum_{\tau \in B_{d,\ell - m} ,\theta \in B_{d,n-d}}  \Delta_{\mu/\theta}(-E^\vee) \Delta_{\theta/\tau}(F^\vee) \Delta_{\tau/\lambda} (\sG^\vee) \cap \alpha \\
	 & = \Delta_{\mu/\lambda} (-E^\vee+F^\vee+\sG^\vee) \cap \alpha = \delta_{\lambda,\mu}  \cdot \alpha.
	\end{align*} 
Notice that in every step of above summation, the term is zero unless the indices satisfy $\lambda \subseteq \tau \subseteq \theta \subseteq \mu$ and $\nu \subseteq \theta$, by the properties of $c_{\tau, \,\nu}^{\theta}$,$ \Delta_{\tau/\lambda}$ and $\Delta_{\mu/\theta}$. 

For the last statement, if $\iota_* (\sum_{\lambda \in B_{d,\ell - m}} \iota^* \Delta_\lambda \alpha_\lambda) =\sum_{\mu \in B_{d,n-d}} \Delta_\mu \beta_\mu \in CH(G)$, then:
	\begin{align*}
	& \beta_\mu = \sum_{\lambda \in B_{d, \ell-m}} \sum_{\nu \in B_{d,m}} c_{\lambda, m-\nu}^{\mu} \Delta_\nu(F^\vee) \cdot \alpha_\lambda \\
	&=  \sum_{\lambda \in B_{d, \ell-m}} \sum_{\nu \in B_{d,m}} c_{\mu, \nu}^{\lambda+m} \Delta_\nu(F^\vee) \cdot \alpha_\lambda = \sum_{\lambda \in B_{d, \ell-m}}\Delta_{\lambda+m/\mu}(F^\vee) \cdot \alpha_{\lambda}.
	\end{align*}
Hence $\{\alpha_\lambda\}$ and $\{\beta_\mu\}$ are related by: 
	\begin{align*}
	\beta_\mu =  \sum_{\lambda \in B_{d, \ell-m}}\Delta_{\lambda+m/\mu}(F^\vee) \cdot \alpha_{\lambda}, \qquad \alpha_{\lambda} =  \sum_{\nu \in B_{d,\ell - m}} \Delta_{\nu/\lambda}(-F^\vee)  \cdot \beta_{\nu+m}.
	 \end{align*}
Therefore the last equality (\ref{eqn:pi.vs.q}) holds.
Note that one could also directly deduce the first statement from this relation.
\end{proof}

\begin{remark} \label{rmk:lem:vb} We could also consider $\pi^{\shQ \,*}_{\lambda}(\blank): = \Delta_\lambda(\shQ) \cap (\blank) $ for $\lambda \in B_{d, \ell - m}$. Then by change of basis Lem. \ref{lem:Gr:change_basis} for $\shQ = -\shU - E^\vee$,
	$
		\pi^{\shQ \,*}_{\lambda} (\blank)= \sum_{\mu \subseteq \lambda} \Delta_{\lambda/\mu}(E^\vee) \cdot \pi_\mu^* (\blank).
	$
Consider:
	\begin{align*}
		\pi^{\shQ}_{\lambda\,*} (\blank) : = \sum_{\mu \, \mid  \, \lambda \subseteq \mu \subseteq ((\ell-m)^d)} \Delta_{\mu/\lambda}(-E^\vee) \cdot \pi_{\mu\,*} (\blank).
	\end{align*}
Then clearly $\pi^{\shQ}_{\lambda\,*} \pi^{\shQ \,*}_{\mu} = \delta_{\lambda,\mu} \, \Id_{CH(X)}$ holds. Therefore $\bigoplus_{\lambda \in B_{d,\ell-m}} \pi^{\shQ \,*}_{\lambda}$ also induces an embedding of Chow groups as  Lem. \ref{lem:vb}, with the same image. Furthermore,
	\begin{align*}
	\bigoplus_{\lambda \in B_{d,\ell - m}} \pi^{\shQ \,*}_{\lambda} \pi^{\shQ}_{\lambda\,*} = \bigoplus_{\lambda \in B_{d,\ell - m}} \pi_\lambda^* \, \pi_{\lambda\,*} \colon CH(\shZ_+) \to CH(\shZ_+)
	\end{align*}
is the same projector to the ``vector bundle part". We can also compute explicitly:
	\begin{align*}
		& \pi^{\shQ}_{\lambda\,*} (\blank) = \sum_{\mu \, \mid  \, \lambda \subseteq \mu \subseteq ((\ell-m)^d)} \Delta_{\mu/\lambda}(-E^\vee) \cdot \pi_{\mu\,*} (\blank), \\
		& =  \sum_{\mu} \Delta_{\mu/\lambda}(-E^\vee)  \cdot \sum_{\nu} \Delta_{\nu/\mu}(\sG^\vee) \cdot \pi_{*} (\sum_{\tau^c} \Delta_{\nu^c/\tau^c}(-E^\vee) \cdot \Delta_{\tau^c}(\shQ) \cap (\blank)) \\
	& =\sum_{\mu,\nu,\tau} \Delta_{\mu/\lambda}(-E^\vee) \cdot  \Delta_{\nu/\mu}(\sG^\vee)   \cdot \Delta_{\tau/\nu}(-E^\vee) \cdot  \pi_{*} (\Delta_{\tau^c}(\shQ) \cap (\blank)) \\
	& = \sum_{\tau \in B_{d,\ell - m} } \Delta_{\tau/\lambda}(-E^\vee - F^\vee)  \cdot \pi_{*} (\Delta_{\tau^c}(\shQ) \cap (\blank)).
		\end{align*}
Hence the explicit expression for $ \pi^{\shQ}_{\lambda\,*}$ is not as intrinsic as the one for $\pi_{\lambda\,*}$.
\end{remark}

\subsection{Lowest strata}
If $d \ge m$, over the lowest strata $Z$, $\shZ_{+}|_Z = : G_Z =  \Gr_d(E^\vee|_Z)$ is a Grassmannian $G_{d}(n)$-bundle. In this subsection we compute the contribution to Chow group from this strata. Notice if $m=0$ then $Z = \emptyset$, hence we need only consider $m \ge 1$.

\begin{lemma} \label{lem:lowest} For any $\lambda \in B_{d-m,\ell}$, $k \in \ZZ$, we define the following maps:
		\begin{align*} 
		&\Gamma_\lambda^*  := j_* (\Delta_{\lambda}(\shQ) \cap p^*(\blank))  \colon  & CH_{k-d\ell +|\lambda|}(Z)  \to CH_{k}(\shZ_+), \\
		&\Gamma_{\lambda \, *}  :=  \sum_{\mu \in B_{d-m,\ell}} \Delta_{\mu/\lambda}(-\sG^\vee) \cap p_* (\Delta_{\mu^c} (\shQ)\cap j^*(\blank)) \colon &  CH_{k}(\shZ_+) \to CH_{k-d\ell +|\lambda|}(Z),
		\end{align*}
where $-\sG^\vee  \equiv - E^\vee + F^\vee \in K_0(X)$ and $\mu^c = \ell - \mu$. Then the following holds:
	$$\Gamma_{\lambda \, *} \, \Gamma_{\mu}^* = (-1)^{\ell m} \cdot  \delta_{\lambda,\mu} \, \Id_{CH(Z)}, \qquad \forall \lambda,\mu \in B_{d-m, \ell}.$$
Hence we have a split-injective for the ``lowest strata part" of Chow:
	$$\bigoplus_{\lambda \in B_{d-m,\ell}} \Gamma_\lambda^*   \colon \bigoplus_{\lambda \in  B_{d-m,\ell}}  CH_{k-d\ell +|\lambda|}(Z) \hookrightarrow CH_k(\shZ_+). $$
Furthermore, the following holds:
	\begin{align}\label{eqn:gamma.vs.p}
	\Gamma_{\lambda \,*}  = \sum_{\mu \in B_{d-m,\ell}} \Delta_{\mu/\lambda}(F^\vee) \cdot p_{(\mu^t+m)^t  \,* }' \, j^*(\blank),
	\end{align}
where $p'_{\lambda \,*}$ is defined as in Lem. \ref{lem:Gr:projector}, i.e. with respect to the basis $\{\Delta_\lambda (\shQ)\}$.
\end{lemma}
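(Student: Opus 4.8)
The plan is to follow the proofs of the ``top strata'' Lemma \ref{lem:vb} and of the lowest-strata part of the generalised Cayley's trick (Theorem \ref{thm:Cayley}(2)) almost verbatim; the $(-1)^{\ell}$ appearing there is replaced by $(-1)^{m\ell}$, which is exactly the rank of the normal bundle $\sN_j$. \emph{Step 1 (normal bundle of $j$).} Over $Z=Z(\sigma)$ the section $\sigma$ vanishes identically, so $\sG|_Z\simeq E|_Z$ is locally free of rank $n$ and $\sK|_Z\simeq F^\vee|_Z$ is locally free of rank $m$; thus $Z$ is, in the language of Lemma \ref{lem:stratum}, the stratum ``$i=m-1$'', a reduced local complete intersection of codimension $mn$, and $G_Z=\Gr_d(E^\vee|_Z)=\shZ_+|_Z$. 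Applying Lemma \ref{lem:stratum} with $(d_+,d_-)=(d,0)$ gives $\sN_j\simeq\shQ^\vee\boxtimes\sK|_Z=p^*F^\vee\otimes\shQ^\vee$, of rank $m\ell$. Hence the self-intersection formula (\cite{Ful}) yields $j^*j_*(\blank)=c_{\rm top}(p^*F^\vee\otimes\shQ^\vee)\cap(\blank)$ on $CH(G_Z)$.

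\emph{Step 2 (the formula (\ref{eqn:gamma.vs.p})).} Given $\beta\in CH(\shZ_+)$, the Grassmannian bundle formula (Theorem \ref{thm:Grassmannian}) for $p\colon G_Z\to Z$ writes $j^*\beta=\sum_{\theta\in B_{d,\ell}}\Delta_\theta(\shQ)\cap p^*\gamma_\theta$ with $\gamma_\theta=p'_{\theta*}(j^*\beta)$. Substituting this into the definition of $\Gamma_{\lambda*}\beta$ and pushing forward along $p$ via the duality Lemma \ref{lem:Gr:dual} requires rewriting $\Delta_{\mu^c}(\shQ)$ --- $\mu^c$ being the complement of the summation index $\mu$ inside $B_{d-m,\ell}$ --- as $\Delta'_{\sigma^c}$ with $\sigma^c$ the complement inside $B_{d,\ell}$; the elementary identity $(\ell^d)-\mu^c=(\mu^t+m)^t$ does exactly this. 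Then only the terms with $\theta\supseteq(\mu^t+m)^t$ survive, and such a $\theta$ is necessarily of the form $(\widetilde\mu^t+m)^t$ with $\widetilde\mu\in B_{d-m,\ell}$, for which the skew class collapses to $\Delta_{\widetilde\mu/\mu}(E^\vee|_Z)$. Reassembling the resulting double sum over $\mu\subseteq\widetilde\mu$ by the summation formula Lemma \ref{lem:sum} and using $E^\vee-\sG^\vee=F^\vee$ in $K_0(Z)$ (i.e. $c(\sG^\vee)=c(E^\vee)/c(F^\vee)$) turns the coefficient of $\gamma_{(\widetilde\mu^t+m)^t}=p'_{(\widetilde\mu^t+m)^t*}(j^*\beta)$ into $\Delta_{\widetilde\mu/\lambda}(F^\vee)$, which is precisely (\ref{eqn:gamma.vs.p}). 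This is the exact analogue, with $\iota$ and $j$ interchanged, of the computation producing (\ref{eqn:pi.vs.q}) at the end of Lemma \ref{lem:vb}.

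\emph{Step 3 (orthogonality and split-injectivity).} Now combine Steps 1 and 2. Since $\Gamma_\mu^*=j_*p'_\mu{}^*$, they give
\[
\Gamma_{\lambda*}\Gamma_\mu^*(\blank)=\sum_{\nu\in B_{d-m,\ell}}\Delta_{\nu/\lambda}(F^\vee)\cdot p'_{(\nu^t+m)^t*}\big(c_{\rm top}(p^*F^\vee\otimes\shQ^\vee)\cdot\Delta_\mu(\shQ)\cap p^*(\blank)\big).
\]
Write $c_{\rm top}(p^*F^\vee\otimes\shQ^\vee)=(-1)^{m\ell}c_{\rm top}(p^*F\otimes\shQ)$, expand the latter by Lemma \ref{lem:tensor}, multiply out the $\shQ$-Schur classes by the Littlewood--Richardson rule (Lemma \ref{lem:LR}), and apply $p'_{(\nu^t+m)^t*}$ using the orthogonality $p'_{\theta*}p'_\eta{}^*=\delta_{\theta,\eta}$ of Lemma \ref{lem:Gr:projector} together with the projection formula; what remains is $(-1)^{m\ell}\sum_{\nu}\Delta_{\nu/\lambda}(F^\vee)\cdot\big(\sum_{\rho\in B_{m,\ell}}c^{(\nu^t+m)^t}_{\rho^c,\mu}\,\Delta_{\rho^t}(F)\big)\cap(\blank)$. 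Rewriting $\Delta_{\rho^t}(F)=\Delta_\rho(-F^\vee)$ and invoking the Berenstein--Zelevinsky symmetries of Lemma \ref{lem:BZ} (with the complement identity of Step 2) to see that $c^{(\nu^t+m)^t}_{\rho^c,\mu}=c^\mu_{\nu,\rho}$, Lemma \ref{lem:LR}(2) collapses the inner sum to $\Delta_{\mu/\nu}(-F^\vee)$, and the summation formula gives $\sum_\nu\Delta_{\nu/\lambda}(F^\vee)\Delta_{\mu/\nu}(-F^\vee)=\Delta_{\mu/\lambda}(1)=\delta_{\lambda,\mu}$. Hence $\Gamma_{\lambda*}\Gamma_\mu^*=(-1)^{m\ell}\delta_{\lambda,\mu}\,\Id_{CH(Z)}$, so $(-1)^{m\ell}\bigoplus_\lambda\Gamma_{\lambda*}$ is a left inverse of $\bigoplus_\lambda\Gamma_\lambda^*$ and the split-injectivity follows.

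\emph{Main obstacle.} Everything above is formal bookkeeping except for one point: the Littlewood--Richardson identity $c^{(\nu^t+m)^t}_{\rho^c,\mu}=c^\mu_{\nu,\rho}$. Its proof is a chase through the Berenstein--Zelevinsky symmetries of Lemma \ref{lem:BZ} while keeping scrupulous track of which box ($B_{m,\ell}$, $B_{d-m,\ell}$, or $B_{d,\ell}$) each complement is taken in --- precisely the type of manipulation that already occurs in Lemmas \ref{lem:Gr:dual}, \ref{lem:Gr:projector} and \ref{lem:vb}, and the place where sign and indexing slips are easiest to make.
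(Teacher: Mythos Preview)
Your proof is correct. The Littlewood--Richardson identity $c^{(\nu^t+m)^t}_{\rho^c,\mu}=c^{\mu}_{\nu,\rho}$ that you flag as the main obstacle does indeed follow from the Berenstein--Zelevinsky symmetries (transpose, then complement in $B_{\ell,d}$, then use the shift $c^{A+k}_{B+k,C}=c^A_{B,C}$ together with $V_{-\alpha}\simeq V_\alpha^*$), and once it is in hand your Steps 1--3 go through exactly as written.

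However, the paper's own argument is considerably shorter and avoids this combinatorial detour entirely. It observes that under the canonical identification $\Gr_d(E^\vee)\simeq\Gr_\ell(E)$ one has $\shQ^\vee\simeq\shU(E)$, so that $\Delta_\lambda(\shQ)=\Delta_{\lambda^t}(-\shU(E))$; together with $c_{\rm top}(F^\vee\otimes\shQ^\vee)=(-1)^{m\ell}c_{\rm top}(F\otimes\shQ)$, this converts the entire statement (including \eqref{eqn:gamma.vs.p}) into the already--proved Lemma~\ref{lem:vb} for $\Gr_\ell(E)$, with the roles $(d,\ell,\shU,E^\vee,F^\vee)\leftrightarrow(\ell,d,\shQ^\vee,E,F)$. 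Thus the paper gets the orthogonality, the split injection, and \eqref{eqn:gamma.vs.p} simultaneously in two lines, whereas your approach reproves Lemma~\ref{lem:vb} from scratch in the dual basis and must therefore redo the delicate LR/BZ bookkeeping. Your route has the virtue of being self--contained and of making the identity $c^{(\nu^t+m)^t}_{\rho^c,\mu}=c^{\mu}_{\nu,\rho}$ explicit; the paper's route explains \emph{why} such an identity must hold (it is the shadow of Grassmannian duality) without ever writing it down.
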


\begin{proof} If we set $\nu = \lambda^t \in B_{\ell,d-m}$ and consider $\Delta_{\nu} (-\shQ^\vee) = \Delta_{\lambda} (\shQ)$, where $\shQ^\vee \simeq \shU(E)$ under $\Gr_d(E^\vee) \simeq \Gr_\ell(E)$. Since $j^* j_* (\blank) = c_{\rm top}(F^\vee \otimes \shQ^\vee) \cap (\blank)$, and $c_{\rm top}(F^\vee \otimes \shQ^\vee) = (-1)^{m\ell} c_{\rm top}(F \otimes \shQ) =  (-1)^{m\ell} \sum_{\nu \in B_{\ell,m}} \Delta_{\nu}(F) \cdot \Delta_{m-\nu}(-\shQ^\vee)$. Therefore the computation is exactly the same as Lem. \ref{lem:vb} up to the sign $(-1)^{\ell m}$, with the role of $\shU$ played by $\shQ^\vee$, and $E^\vee$ (resp. $F^\vee$) by $E$ (resp. $F$), and the basis $\{\Delta_\lambda(-\shU)\}$ by $\{ \Delta_{\nu} (-\shQ^\vee)\}$. 
\end{proof}

\begin{remark} \label{rmk:lem:lowest} Similar to top strata, we could also consider $\Gamma^{\,\shU \,*}_{\lambda}(\blank): = j_* (\Delta_\lambda(-\shU) \cap p^*(\blank)) $ for $\lambda \in B_{d-m, \ell}$. By change of basis $-\shU = -\shQ + E^\vee$,
	$$
		\Gamma^{\,\shU \,*}_{\lambda} (\blank)= \sum_{\mu \, \mid  \, \mu \subseteq \lambda} \Delta_{\lambda/\mu}(-E^\vee) \cdot \Gamma_\mu^* (\blank).
	$$
Therefore if we consider the following maps:
	\begin{align*}
		\Gamma^{\,\shU}_{\lambda\,*} (\blank) : = \sum_{\mu \, \mid  \, \lambda \subseteq \mu \subseteq ((\ell)^{d-m})} \Delta_{\mu/\lambda}(E^\vee) \cdot \Gamma_{\mu\,*} (\blank),
	\end{align*}
then $\Gamma^{\,\shU}_{\lambda\,*}  \, \Gamma^{\,\shU \,*}_{\mu}= (-1)^{\ell m} \, \delta_{\lambda,\mu} \, \Id_{CH(Z)}$ holds. Therefore $\bigoplus_{\lambda \in B_{d-m,\ell}} \Gamma^{\,\shU \,*}_{\lambda}$ also induces an embedding of Chow groups as  Lem. \ref{lem:lowest}, with the same image. Furthermore:
	\begin{align*}
	\bigoplus_{\lambda \in B_{d-m,\ell}} \Gamma^{\,\shU \,*}_{\lambda} \Gamma^{\,\shU}_{\lambda\,*} = \bigoplus_{\lambda \in B_{d-m,\ell}} \Gamma_\lambda^* \, \Gamma_{\lambda\,*} \colon CH(\shZ_+) \to CH(\shZ_+)
	\end{align*}
is the same projector to the ``lowest strata part". As before, we can explicitly compute:
	\begin{align*}
		\Gamma^{\,\shU}_{\lambda\,*} (\blank)  = \sum_{\mu \in B_{d-m,\ell} } \Delta_{\mu/\lambda}(E^\vee + F^\vee)  \cap p_{*} (\Delta_{\mu^c}(-\shU) \cap j^*(\blank)).
		\end{align*}
Hence the explicit expression for $\Gamma^{\,\shU}_{\lambda\,*}$ is slightly less intrinsic than that for $\Gamma_{\lambda\,*}$.
\end{remark}

\begin{lemma} For any $\lambda \in B_{d,\ell-m}$, $\mu \in B_{d-m,\ell}$, let $\pi_{\lambda}^*, \pi_{\lambda *}$, $ \pi^{\shQ \,*}_{\lambda}, \pi^{\shQ}_{\lambda\,*}$  be defined as in Lem. \ref{lem:vb}, Rmk. \ref{rmk:lem:vb}, and $\Gamma_{\mu}^*, \Gamma_{\mu *}$, $\Gamma^{\,\shU \,*}_{\mu}, \Gamma^{\,\shU}_{\mu\,*}$ be defined as in Lem. \ref{lem:lowest}, Rmk. \ref{rmk:lem:lowest}. Then:
	\begin{align*}
	\pi_{\lambda \,*} \,  \Gamma_{\mu}^*= \pi_{\lambda \,*} \, \Gamma^{\,\shU \,*}_{\mu} =  \pi^{\shQ}_{\lambda\,*}  \,  \Gamma_{\mu}^* =   \pi^{\shQ}_{\lambda\,*} \,  \Gamma^{\,\shU \,*}_{\mu}  =0, \qquad 
	\Gamma_{\mu \, *} \, \pi_{\lambda}^* =  \Gamma_{\mu \, *}  \,  \pi^{\shQ \,*}_{\lambda}= \Gamma^{\,\shU}_{\mu\,*}  \, \pi_{\lambda}^* =  \Gamma^{\,\shU}_{\mu\,*}  \, \pi^{\shQ \,*}_{\lambda}=   0.
	\end{align*}	
\end{lemma}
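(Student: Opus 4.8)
The plan is to reduce the eight orthogonality identities to two of them, and then to prove those two by pushing each composition down to the ambient Grassmannian bundle $q\colon G=\Gr_d(E^\vee)\to X$ and its restriction $p\colon G_Z=\Gr_d(E^\vee|_{Z})\to Z$, where the Grassmannian-bundle orthogonality of Lem.~\ref{lem:Gr:projector} applies. First, by the change-of-basis relations recorded in Rmk.~\ref{rmk:lem:vb} and Rmk.~\ref{rmk:lem:lowest} (which themselves come from Lem.~\ref{lem:Gr:change_basis}): $\pi^{\shQ\,*}_\lambda$ is a $CH(X)$-linear combination of the $\pi^*_\mu$ with $\mu\subseteq\lambda$ inside $B_{d,\ell-m}$, the map $\pi^{\shQ}_{\lambda\,*}$ a combination of the $\pi_{\mu\,*}$ with $\mu\supseteq\lambda$, and likewise $\Gamma^{\,\shU\,*}_\mu$ and $\Gamma^{\,\shU}_{\mu\,*}$ are combinations of the $\Gamma^*_\nu$ resp. the $\Gamma_{\nu\,*}$ inside $B_{d-m,\ell}$. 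Hence, by bilinearity, all eight vanishings reduce to
\[
\pi_{\lambda\,*}\,\Gamma^*_\mu=0 \qquad\text{and}\qquad \Gamma_{\mu\,*}\,\pi^*_\lambda=0, \qquad \lambda\in B_{d,\ell-m},\ \mu\in B_{d-m,\ell}.
\]

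For the first I would use two factorizations already at hand: $\Gamma^*_\mu=j_*\bigl(\Delta_\mu(\shQ)\cap p^*(\blank)\bigr)$ by definition, and, by formula (\ref{eqn:pi.vs.q}), $\pi_{\lambda\,*}=\sum_{\nu}\Delta_{\nu/\lambda}(-F^\vee)\cap q_{\nu+m\,*}\,\iota_*$. Since Grassmannian-bundle formation commutes with base change, $G_Z=G\times_X Z=\shZ_+\times_X Z$, so $\iota j\colon G_Z\hookrightarrow G$ is the base change of $i\colon Z\hookrightarrow X$ along the flat map $q$; flat base change and the projection formula then give
\[
\iota_*\,j_*\bigl(\Delta_\mu(\shQ|_{G_Z})\cap p^*\beta\bigr)=\Delta_\mu(\shQ)\cap q^*i_*\beta .
\]
Expanding $\Delta_\mu(\shQ)$ in the unprimed basis $\{\Delta_\tau(-\shU)\}$ via Lem.~\ref{lem:Gr:change_basis} and then applying $q_{\nu+m\,*}\bigl(\Delta_\tau(-\shU)\cap q^*(\blank)\bigr)=\delta_{\nu+m,\tau}\,(\blank)$ from Lem.~\ref{lem:Gr:projector}, one gets
\[
\pi_{\lambda\,*}\,\Gamma^*_\mu(\beta)=\sum_{\nu\in B_{d,\ell-m}}\Delta_{\nu/\lambda}(-F^\vee)\,\Delta_{\mu/(\nu+m)}(E^\vee)\cap i_*\beta .
\]
This vanishes termwise: regarding $\mu\in B_{d-m,\ell}$ as a partition with $d$ parts forces $\mu_{d-m+1}=\dots=\mu_d=0$, whereas every part of $\nu+m$ with $\nu\in B_{d,\ell-m}$ is $\ge m\ge 1$ in the lowest-strata range, so $\nu+m\not\subseteq\mu$ and $\Delta_{\mu/(\nu+m)}(E^\vee)=0$.

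The identity $\Gamma_{\mu\,*}\,\pi^*_\lambda=0$ is dual, now pushing everything down to $G_Z$ instead of $G$. Here $\pi^*_\lambda=\iota^*q^*_\lambda$ factors through $\iota^*$, while by (\ref{eqn:gamma.vs.p}) $\Gamma_{\mu\,*}$ is a $CH(Z)$-linear combination, over $\rho\in B_{d-m,\ell}$, of $p'_{\sigma\,*}\,j^*$ with $\sigma=(\rho^{t}+m)^{t}$, hence factors through $j^*$. Compatibility of flat pullback along $q$ and of refined Gysin pullback along $i$ with the Cartesian square $G_Z\hookrightarrow G$ over $Z\hookrightarrow X$ gives $j^*\iota^*q^*_\lambda=(\iota j)^*q^*_\lambda=p^*_\lambda\circ i^*$, and Lem.~\ref{lem:Gr:change_basis} with Lem.~\ref{lem:Gr:projector} give $p'_{\sigma\,*}\,p^*_\lambda=\Delta_{\lambda/\sigma}(-E^\vee)\cap(\blank)$. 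Hence
\[
\Gamma_{\mu\,*}\,\pi^*_\lambda(\alpha)=\sum_{\rho\in B_{d-m,\ell}}\Delta_{\rho/\mu}(F^\vee)\,\Delta_{\lambda/(\rho^{t}+m)^{t}}(-E^\vee)\cap i^*\alpha ,
\]
which again vanishes: the first $m$ parts of the partition $(\rho^{t}+m)^{t}$ all equal $\ell$, while $\lambda\in B_{d,\ell-m}$ has $\lambda_1\le\ell-m<\ell$, so $(\rho^{t}+m)^{t}\not\subseteq\lambda$.

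I do not anticipate a serious obstacle here: once the problem is reduced to the two identities, the argument is a bookkeeping exercise with the rectangles $B_{\bullet,\bullet}$, together with two applications of base change along the Cartesian square of diagram (\ref{diagram:dom.strata}). The one point that needs care is keeping track of which rectangle each Young diagram inhabits, since this is precisely what makes the two skew Schur functions above vanish; the decisive shift by $m$ is the one already built into formulas (\ref{eqn:pi.vs.q}) and (\ref{eqn:gamma.vs.p}), which relate the projectors on $\shZ_+$ to those of the ambient Grassmannian bundle through the rank $md$ normal bundle of $\shZ_+\subseteq G$. One should also take care that the base-change identities $\iota_*j_*p^*=q^*i_*$ and $j^*\iota^*q^*=p^*i^*$ are applied in the form valid for a flat morphism $q$ and a regular embedding $i$ (with $\iota$ and $j$ themselves regular embeddings, under the expected-dimension hypothesis on $\shZ_+$).
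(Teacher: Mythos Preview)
Your proof is correct and follows essentially the same strategy as the paper: reduce via the change-of-basis relations in Rmk.~\ref{rmk:lem:vb} and Rmk.~\ref{rmk:lem:lowest} to two identities, then push to the ambient Grassmannian bundles via (\ref{eqn:pi.vs.q}), (\ref{eqn:gamma.vs.p}) and flat base change along diagram~(\ref{diagram:dom.strata}), where the disjointness of $\{\nu+m\}$ and $B_{d-m,\ell}$ (resp.\ of $\{(\rho^t+m)^t\}$ and $B_{d,\ell-m}$) forces vanishing. The only tactical difference is that the paper chooses $\pi_{\lambda\,*}\Gamma^{\,\shU\,*}_\mu$ and $\Gamma_{\mu\,*}\pi^{\shQ\,*}_\lambda$ as the base pair so that both sides already live in the same basis, yielding $q_{\nu+m\,*}q^*_\mu=\delta_{\nu+m,\mu}=0$ directly, whereas you choose $\pi_{\lambda\,*}\Gamma^*_\mu$ and $\Gamma_{\mu\,*}\pi^*_\lambda$ and absorb the basis mismatch into a skew Schur function $\Delta_{\mu/(\nu+m)}$ that vanishes for the same containment reason.
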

\begin{proof} For example, from (\ref{eqn:pi.vs.q}) of Lem. \ref{lem:vb}, we have
	\begin{align*}
		\pi_{\lambda \,*}\, \Gamma_{\mu \, *}^{\shU} =   \sum_{\nu \in B_{d,\ell - m}} \Delta_{\nu/\lambda}(-F^\vee)  \cap q_{\nu+m\, *} \iota_* j_* p_\mu^* =   \sum_{\nu \in B_{d,\ell - m}} \Delta_{\nu/\lambda}(-F^\vee)  \cap q_{\nu+m\, *} q_\mu^* i_* = 0,
	\end{align*}
where $\{ \nu + m \mid \nu \in B_{d,\ell -m} \} \cap B_{d-m,\ell} = \emptyset$ as long as $m \ge 1$. Similarly, from (\ref{eqn:gamma.vs.p}), 
	\begin{align*}
		\Gamma_{\mu \, *} \, \pi_{\lambda}^{\shQ\,*}  = \sum_{\nu \in B_{d-m,\ell}} \Delta_{\nu/\lambda}(F^\vee) \cdot p_{(\nu^t+m)^t  \,* } j^*\pi_{\lambda}^{\shQ\,*} = 
		 \sum_{\nu \in B_{d-m,\ell}} \Delta_{\nu/\lambda}(F^\vee) \cdot p_{(\nu^t+m)^t  \,* }' p_{\lambda}'^{*}  i^*= 0,
	\end{align*}
where $\{ (\nu^t + m)^t \mid \nu \in B_{d-m,\ell} \} \cap B_{d,\ell-m} = \emptyset$ as long as $m \ge 1$. All the other equalities follow from linear change of basis Rmk.  \ref{rmk:lem:vb} and \ref{rmk:lem:lowest}.
\end{proof}

\end{document}